\documentclass[11pt]{article}
\usepackage{fullpage,amssymb}
\usepackage{caption}
\usepackage{subcaption}
\usepackage{graphicx}
\usepackage{amsmath}
\usepackage{float}
\usepackage{listings}
\usepackage{xcolor}
\usepackage{amsthm}
\usepackage{algorithmic}
\usepackage{algorithm}
\usepackage[colorlinks,
linkcolor=blue,
anchorcolor=blue,
citecolor=blue]{hyperref}
\usepackage{natbib}
\usepackage{comment}
\usepackage{bbm}
\usepackage{multirow}
\usepackage{array}
\usepackage[shortlabels]{enumitem}
\usepackage{romannum}
\AtBeginDocument{\pagenumbering{arabic}}
\usepackage{multirow}
\usepackage{stmaryrd}
\usepackage{amsmath}

\setcitestyle{authoryear,round}

\newtheorem{theorem}{Theorem}
\newtheorem{lemma}{Lemma}
\newtheorem{definition}{Definition}

\newtheorem{corollary}{Corollary}
\newtheorem{assumption}{Assumption}
\newtheorem{remark}{Remark}

\newtheorem{example}{Example}

\def\l{\left}
\def\r{\right}
\def\wh{\widehat}

\def\EE{{\mathbb E}}

\def\II{{\mathbb I}}

\def\PP{{\mathbb P}}

\def\RR{{\mathbb R}}
\def\SS{{\mathbb S}}

\def\eps{\varepsilon}

\renewcommand{\ldots}{\cdots}

\begin{document}
\title{Sharp Concentration Inequalities: Phase Transition and Mixing of Orlicz Tails with Variance \thanks{Yinan Shen is Assistant Professor, RTPC (in fact postdoc) in Department of Mathematics, University of Southern California, Los Angeles, CA 90089 (E-mail: \textit{yinanshe@usc.edu}; ORCID: 0000-0001-9146-9549). 
		Jinchi Lv is Kenneth King Stonier Chair in Business Administration and Professor, Data Sciences and Operations Department, Marshall School of Business, University of Southern California, Los Angeles, CA 90089 (E-mail: \textit{jinchilv@marshall.usc.edu}; ORCID: 0000-0002-5881-9591).}}
\author{Yinan Shen and Jinchi Lv\\
University of Southern California\\
}

\maketitle



\begin{abstract}
In this work, we investigate how to develop sharp concentration inequalities for sub-Weibull random variables, including sub-Gaussian and sub-exponential distributions. Although the random variables may not be sub-Guassian, the tail probability around the origin behaves as if they were sub-Gaussian, and the tail probability decays align with the Orlicz $\Psi_\alpha$-tail elsewhere. Specifically, for independent and identically distributed (i.i.d.) $\{X_i\}_{i=1}^n$ with finite Orlicz norm $\|X\|_{\Psi_\alpha}$, our theory unveils that there is an interesting phase transition at $\alpha = 2$ in that $\PP\l(\l|\sum_{i=1}^n X_i \r| \geq t\r)$ with $t > 0$ is upper bounded by $2\exp\l(-C\max\l\{\frac{t^2}{n\|X\|_{\Psi_{\alpha}}^2},\frac{t^{\alpha}}{ n^{\alpha-1} \|X\|_{\Psi_{\alpha}}^{\alpha}}\r\}\r)$ for $\alpha\geq 2$, and by $2\exp\l(-C\min\l\{\frac{t^2}{n\|X\|_{\Psi_{\alpha}}^2},\frac{t^{\alpha}}{ n^{\alpha-1} \|X\|_{\Psi_{\alpha}}^{\alpha}}\r\}\r)$ for $1\leq \alpha\leq 2$ with some positive constant $C$. In many scenarios, it is often necessary to distinguish the standard deviation from the Orlicz norm when the latter can exceed the former greatly. To accommodate this, we build a new theoretical analysis framework, and our sharp, flexible concentration inequalities involve the variance and a mixing of Orlicz $\Psi_\alpha$-tails through the min and max functions. Our theory yields new, improved concentration inequalities even for the cases of sub-Gaussian and sub-exponential distributions with $\alpha = 2$ and $1$, respectively. We further demonstrate our theory on martingales, random vectors, random matrices, and covariance matrix estimation. These sharp concentration inequalities can empower more precise non-asymptotic analyses across different statistical and machine learning applications.
\end{abstract}

\textit{Keyword: } sharp concentration inequalities, sub-Weilbull distributions, sub-Gaussian and sub-exponential distributions, Orlicz tails, Variance and moments, Martingales and random matrices, Covariance matrix estimation


\section{Introduction} \label{new.Sec.intro}

Concentration inequalities are central to modern statistics and machine learning, especially in problems where tail probabilities determine finite-sample performance, such as matrix analysis \citep{minsker2017some,koltchinskii2016perturbation,adamczak2011restricted},  decision-making and inference \citep{hao2019bootstrapping,khamaru2025near,lin2025semiparametric}, and robust statistics \citep{minsker2018sub,depersin2022robust,ma2024high}. The goal of this paper is to study concentration inequalities for real-valued sub-Weibull random variables, i.e., random variables $X$ whose tails decay at an exponential--Weibull rate
$$\PP\l(|X|\geq t\r)\leq 2\exp\l(-\frac{t^{\alpha}}{K}\r),$$
where $K, \alpha>0$ are constants, and $t > 0$. The case of $\alpha=2$ corresponds to sub-Gaussian distributions, while the case of $\alpha=1$ corresponds to sub-exponential distributions. Throughout the paper, we work with random variables having finite Orlicz $\Psi_\alpha$-norm, defined as follows.
\begin{definition}[Orlicz $\|\cdot\|_{\Psi_\alpha}$-norm]
    For a given random variable $X$ and $\alpha\geq 1$, we define the Orlicz $\|\cdot\|_{\Psi_\alpha}$-norm as $$\|X\|_{\Psi_{\alpha}}:=\inf_{u>0}\l\{\EE\l\{\exp(|X|/u)^{\alpha} \r\}\leq 2\r\}.$$
\end{definition}

A large literature has contributed to developing concentration theory for sub-Weibull random variables; see, e.g., \cite{bennett1962probability}, \cite{talagrand1989isoperimetry}, \cite{talagrand1994supremum}, \cite{latala1997estimation}, \cite{boucheron2003concentration}, \cite{koltchinskii2011neumann}, \cite{adamczak2011restricted}, \cite{van2013bernstein}, \cite{ledoux2013probability}, \cite{rio2013extensions}, \cite{minsker2017some}, \cite{vershynin2018high}, \cite{hao2019bootstrapping}, \cite{zhang2022sharper}, \cite{kuchibhotla2022moving}, \cite{jeong2022sub}, and references therein. Yet for the fundamental tail probability
\begin{align*}
    \PP\l(\l|\sum_{i=1}^n X_i\r| \geq t\r)
\end{align*}
for random variables $X_i$'s and $t > 0$, the existing results in the literature still leave an important gap. On the one hand, for $\alpha>2$, existing concentration inequalities either require information \textit{stronger} than $\|X\|_{\Psi_\alpha}<\infty$ or \textit{fail} to deliver a sharp $\Psi_\alpha$ large-deviation tail. On the other hand, when the Orlicz norm $\|X\|_{\Psi_\alpha}$ and the standard deviation $\sigma_X$ are \textit{not} of the same order, the literature does \textit{not} simultaneously capture the variance-dominated small-deviation regime and the correct $\Psi_\alpha$-tail for large deviations.

Specifically, for $\alpha>2$, \cite{ledoux2013probability} and \cite{talagrand1989isoperimetry} proved the elegant inequality
\begin{align}
    \l\|\sum_{i=1}^n X_i\r\|_{\Psi_{\alpha}}\leq K_{\alpha}\l(\l\|\sum_{i=1}^nX_i \r\|_1+ \l\| \l(\l\|X_i \r\|_{\Psi_{s}}\r)\r\|_{\beta,\infty}\r),
    \label{eq:talagrand:norm}
\end{align}
where $\alpha<s<\infty$. However, $\l\|X_i \r\|_{\Psi_{s}}$ does not need to be finite: it is possible to have $\l\|X_i \r\|_{\Psi_{s}}=\infty$ while still having $\|X\|_{\Psi_\alpha}<\infty$. In that case, 
\eqref{eq:talagrand:norm} becomes trivial for bounding the $\Psi_\alpha$-norm of $\sum_{i=1}^n X_i$. For $\alpha\in[1,2]$, the same line of work established that 
\begin{align}
    \l\|\sum_{i=1}^n X_i\r\|_{\Psi_{\alpha}}\leq K_{\alpha}\l(\l\|\sum_{i=1}^nX_i \r\|_1+ \l( \sum_{i=1}^n\l\|X_i \r\|_{\Psi_{\alpha}}^\beta\r)^{\frac{1}{\beta}}\r),
    \label{eq:talagrand:norm2}
\end{align}
which has a particularly clean form, but still does not separate the role of variance from that of the Orlicz norm and as we will demonstrate later, Orlicz norm is not always a sharp characterization of tail probability. From a different perspective, the foundational work of \cite{koltchinskii2011neumann} showed that for $\alpha\geq 1$,
\begin{align}
    \PP\l(\l|\sum_{i=1}^n X_i\r|\geq t\r)\leq 2\exp\l(-C\min\l\{\frac{t^2}{n\sigma_X^2},\frac{t}{\|X\|_{\Psi_\alpha}\log^{\frac{1}{\alpha}}\l(\frac{2\|X\|_{\Psi_\alpha}}{ \sigma_X}\r)}\r\}\r),
    \label{eq:koltchinskii:conc}
\end{align}
which is sharp for sufficiently small deviations since it preserves the variance term. However, it yields only a sub-exponential tail for $\sum_{i=1}^n X_i$ regardless of $\alpha\geq 1$.

At the same time, the crude triangle inequality
$$\l\|\sum_{i=1}^n X_i\r\|_{\Psi_\alpha}\leq n\|X\|_{\Psi_\alpha}<\infty$$
shows that $\sum_{i=1}^n X_i$ still has finite $\Psi_\alpha$-norm under the assumption of $\|X\|_{\Psi_\alpha}<\infty$. Although this bound is far from sharp, it strongly suggests that a \textit{sharper} concentration theory should exist. This motivates the following questions:
\begin{center}
    \textit{For the tail probability $\PP\l(\l|\sum_{i=1}^n X_i\r|\geq t\r)$, can one obtain a sharp $\Psi_\alpha$-tail under only $\|X_i\|_{\Psi_\alpha}<\infty$ while simultaneously retaining variance-controlled concentration for sufficiently small $t$? More broadly, what is the corresponding concentration theory when $X_1,\ldots,X_n$ are dependent?}
\end{center}

In this paper, we aim to answer these questions and identify a sharp phase transition at $\alpha=2$ in univariate sub-Weibull concentration. The central novelty of our work is that sums of $\Psi_\alpha$ random variables display local sub-Gaussian behavior around the origin even when the summands themselves are not sub-Gaussian, while their large-deviation behavior retains the correct $\Psi_\alpha$-tail. This yields a sharp, density-free concentration theory in the regime of $\alpha\geq 2$, and Section~\ref{sec:univariate_var} further develops a variance-sensitive theory that remains statistically optimal when $\|X\|_{\Psi_\alpha}$ and $\sigma_X$ are not comparable. The corollary below illustrates the main phenomenon in the independent and identically distributed (i.i.d.) setting.

\begin{corollary}[Concentration for i.i.d. univariate] 
Assume that $X_1,\ldots,X_n$ are i.i.d. mean-zero real-valued random variables, and satisfy $\|X\|_{\Psi_{\alpha}}<\infty$ for some $\alpha\geq 1$. Then we have that 
\begin{equation}
    \begin{split}
        \PP\l(\l|\sum_{i=1}^n X_i \r| \geq t\r)&\leq 2\exp\l(-C\max\l\{\frac{t^2}{n\|X\|_{\Psi_{\alpha}}^2},\frac{t^{\alpha}}{ n^{\alpha-1} \|X\|_{\Psi_{\alpha}}^{\alpha}}\r\}\r) \ \ \text{ for } \alpha\geq 2,\\
      \PP\l(\l|\sum_{i=1}^n X_i \r| \geq t\r)&\leq 2\exp\l(-C\min\l\{\frac{t^2}{n\|X\|_{\Psi_{\alpha}}^2},\frac{t^{\alpha}}{ n^{\alpha-1} \|X\|_{\Psi_{\alpha}}^{\alpha}}\r\}\r) \ \ \text{ for } 1\leq \alpha\leq 2,
      \label{eq2:intro}
    \end{split}
\end{equation}
where $C>0$ is some constant that does not depend on $\alpha,t,n,X$.
  \label{cor:iid_conc_univariate}
\end{corollary}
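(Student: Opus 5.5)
We prove both regimes with a Chernoff bound driven by a two-regime estimate of the moment generating function. Normalize $K:=\|X\|_{\Psi_\alpha}=1$ by homogeneity. The core one-variable estimate is
\begin{equation*}
\log \EE e^{\lambda X}\leq C_1\min\l\{\lambda^2,\ \lambda^2+|\lambda|^{\beta}\r\}\ \text{ type bounds, where } \tfrac1\alpha+\tfrac1\beta=1 .
\end{equation*}
Concretely, we aim for the following two bounds for a mean-zero $X$ with $\|X\|_{\Psi_\alpha}\le 1$. First, for all $\lambda$,
\begin{equation*}
\log\EE e^{\lambda X}\le C\lambda^2 \qquad (\alpha\ge 2).
\end{equation*}
Second, for all $\lambda$,
\begin{equation*}
\log\EE e^{\lambda X}\le C(\lambda^2+|\lambda|^{\beta}).
\end{equation*}
For $\alpha\ge2$ we have $\beta\le2$, so the second bound matters for large $\lambda$ and is stronger there only when combined with the first through a minimum: $\log\EE e^{\lambda X}\le C\min\{\lambda^2,\ 1+|\lambda|^\beta\}$. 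For $1\le\alpha\le2$ we have $\beta\ge2$; the $\lambda^2$ term controls small $\lambda$ and $|\lambda|^\beta$ controls large $\lambda$, with $\beta=\infty$ at $\alpha=1$ meaning the bound is valid only for $|\lambda|\le c$.

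The MGF bounds come from two estimates.
\begin{itemize}
\item \emph{Small $\lambda$.} Expand $e^{\lambda X}\le 1+\lambda X+\lambda^2X^2e^{|\lambda X|}$. Use the mean-zero property and $\EE X^2e^{|\lambda X|}\lesssim 1$ for $|\lambda|\le c$, which follows from $\EE e^{|X|^\alpha}\le2$ with $\alpha\ge1$.
\item \emph{Large $\lambda$.} Apply Young's inequality $\lambda x\le |x|^\alpha+c_\alpha|\lambda|^\beta$ to get $\EE e^{\lambda X}\le 2e^{c|\lambda|^\beta}$, whose logarithm is at most $C|\lambda|^\beta$ once $|\lambda|\ge c$. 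For $\alpha\ge2$, the sub-Gaussian bound $\log\EE e^{\lambda X}\le C\lambda^2$ is the standard consequence of $\|X\|_{\Psi_2}\lesssim\|X\|_{\Psi_\alpha}$.
\end{itemize}
The constants must be tracked so that they do not depend on $\alpha$. This holds because $c_\alpha=(\alpha-1)\alpha^{-\beta}\dots$ stays bounded uniformly, and we only ever need a constant $C$ absorbing these quantities.

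By independence, $\log\EE e^{\lambda S_n}=n\log\EE e^{\lambda X}$, and Markov's inequality gives
\begin{equation*}
\PP(S_n\ge t)\le\exp\l(-\sup_\lambda[\lambda t-n\,\psi(\lambda)]\r),
\end{equation*}
where $\psi$ is the MGF bound above. We compute the Legendre transform in each regime and then repeat the argument for $-X$.
\begin{itemize}
\item \emph{Quadratic part.} The $\lambda^2$ piece gives the rate $t^2/(4Cn)$.
\item \emph{Power part.} The $|\lambda|^\beta$ piece gives $\sup_\lambda(\lambda t-Cn\lambda^\beta)\asymp t^\alpha/n^{\alpha-1}$, since $(t/n)^{\alpha}\cdot n=t^\alpha n^{1-\alpha}$ by conjugacy.
\item \emph{Regime $\alpha\ge2$.} Since $\psi\le C\min\{\lambda^2,1+\lambda^\beta\}$, the transform is at least the maximum of the two transforms. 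The additive $n$ coming from the constant $1$ costs nothing when $t^\alpha/n^{\alpha-1}\gtrsim n$, i.e.\ $t\gtrsim n$. In the complementary range $t\lesssim n$ with $\alpha\ge2$, we have $t^\alpha/n^{\alpha-1}\le t^2/n$, so the quadratic term already realizes the maximum. This yields the max bound.
\item \emph{Regime $1\le\alpha\le2$.} Since $\psi\le C(\lambda^2+\lambda^\beta)\le 2C\max\{\lambda^2,\lambda^\beta\}$, the transform is at least the minimum of the two transforms. This yields the min bound; at $\alpha=1$ the restriction $|\lambda|\le c$ reproduces Bernstein's inequality.
\end{itemize}

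The main obstacle is the $\alpha\ge2$ large-$\lambda$ estimate. The constant $1$ in $1+|\lambda|^\beta$ must be killed, or else the range $t\lesssim n$ must be handled separately as above. The Young-inequality step also has to produce constants uniform in $\alpha$ as $\alpha\to\infty$. I would first try to verify directly that $\log 2\cdot n$ is absorbed in the stated range, and if that fails, use the sharper inequality $\EE e^{\lambda X}\le e^{C\lambda^2}$ when $|\lambda|\le1$ together with $2e^{c\lambda^\beta}\le e^{C\lambda^\beta}$ when $|\lambda|\ge1$.
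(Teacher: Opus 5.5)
Your architecture matches the paper's. You bound $\log\EE e^{\lambda X}$ in two regimes and then apply Chernoff. When $\psi\le\min$ of two functions you get the max of the two Legendre transforms ($\alpha\ge2$); when $\psi\lesssim$ their sum you get the min ($1\le\alpha\le2$). The corollary is then Theorem~\ref{thm:conc_univariates} with $a_i\equiv1$, using $\alpha/\beta=\alpha-1$.

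The difference is how you produce the MGF bound. The paper dominates the Taylor series by $\sum_k (C\lambda\|X\|_{\Psi_\alpha})^k/[k/\beta]!$, using $\|X\|_k\le Ck^{1/\alpha}\|X\|_{\Psi_\alpha}$ and Stirling, and then regroups terms by the value of $[k/\beta]$. You instead use a second-order Taylor remainder, $\|X\|_{\Psi_2}\lesssim\|X\|_{\Psi_\alpha}$, and Young's inequality against $\EE e^{|X/K|^\alpha}\le 2$. Your route is shorter and more elementary. The paper's bookkeeping is set up so that it is reused verbatim under the moment condition of Definition~\ref{assm:sigma_L}, in Lemma~\ref{lem:momGeneratingFunct_SigmaL}, with $\sigma^2L^{k-2}$ in place of $\|X\|_{\Psi_\alpha}^k$.

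The step that fails as written is your justification that the constant is $\alpha$-free when $\alpha\ge2$. Boundedness of $c_\alpha$ is not what matters; the Legendre transform is. Indeed
\[
\sup_{\lambda\ge0}\{\lambda t-nC(\lambda K)^\beta\}=\frac{t^\alpha}{\alpha(C\beta)^{\alpha-1}n^{\alpha-1}K^\alpha}.
\]
So absorbing $\log 2$ (or any slack) into the coefficient of $|\lambda|^\beta$ costs a factor $(C\beta)^{-(\alpha-1)}/\alpha\to0$. This applies both to your $C(1+|\lambda|^\beta)$ and to your fallback $2e^{c\lambda^\beta}\le e^{C\lambda^\beta}$. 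Splitting at $t\asymp nK$ is also too coarse: on $nK\le t\le AnK$ you only have $t^\alpha/(n^{\alpha-1}K^\alpha)\le A^{\alpha-2}t^2/(nK^2)$.

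The repair is available within your route: keep Young sharp. The function $c_\alpha\mu^\beta$ with $c_\alpha=(\alpha-1)\alpha^{-\beta}$ is exactly the convex conjugate of $u\mapsto u^\alpha$ on $[0,\infty)$. Hence $\log\EE e^{\lambda X}\le\log2+c_\alpha(\lambda K)^\beta$, and biconjugation gives a Chernoff bound with no loss in the exponent:
\[
\PP\Bigl(\sum_{i=1}^n X_i\ge t\Bigr)\le 2^n\exp\Bigl(-\frac{t^\alpha}{n^{\alpha-1}K^\alpha}\Bigr).
\]
Now split at $(t/(nK))^\alpha=2\log2$.
\begin{enumerate}
\item[(i)] Above the threshold, this is at most $\exp(-t^\alpha/(2n^{\alpha-1}K^\alpha))$.
\item[(ii)] Below it, $(t/(nK))^{\alpha-2}\le2\log2$. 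The $\alpha$-free sub-Gaussian bound $\exp(-ct^2/(nK^2))$ is therefore already at most $\exp\bigl(-\tfrac{c}{2\log2}\max\{\cdot,\cdot\}\bigr)$.
\end{enumerate}

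The paper's own passage from Lemma~\ref{lem:momGeneratingFunct} to Theorem~\ref{thm:conc_univariates} produces the same $\alpha(C_1\beta)^{\alpha-1}$ in the denominator. So this sharp-Young step is what actually delivers the advertised $\alpha$-free constant. For $1\le\alpha\le2$ the issue does not arise, since there $\alpha(C\beta)^{\alpha-1}\le 2e\max\{C,1\}$.
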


Corollary~\ref{cor:iid_conc_univariate} above is a direct consequence of Theorem~\ref{thm:conc_univariates} (see Section \ref{sec:univariate}) and already shows why the new theory differs qualitatively from the existing literature. When $\alpha\geq 2$, the decisive feature of 
\eqref{eq2:intro} is the appearance of $\textit{max}$, rather than the familiar $\textit{min}$ in \cite{boucheron2003concentration,kuchibhotla2022moving,zhang2022sharper}; this is precisely the phase transition at $\alpha=2$ and it yields a strictly sharper tail in the large-deviation regime. When $1\leq \alpha\leq 2$, the bound recovers the correct order when $\|X\|_{\Psi_\alpha}\asymp\sigma_X$. In both regimes, the sum is locally sub-Gaussian: for all $\alpha\geq 1$ and $t\leq n\|X\|_{\Psi_\alpha}$,
\begin{align*}
    \PP\l(\l|\sum_{i=1}^n X_i\r|\geq t\r)\leq 2\exp\l(-C\frac{t^2}{n\|X\|_{\Psi_\alpha}^2}\r),
\end{align*}
while for $t\geq n\|X\|_{\Psi_\alpha}$,
\begin{align*}
    \PP\l(\l|\sum_{i=1}^n X_i\r|\geq t\r)\leq 2\exp\l(-C\frac{t^\alpha}{n^{\alpha-1}\|X\|_{\Psi_\alpha}^\alpha}\r),
\end{align*}
which is consistent with the fact that $\sum_{i=1}^n X_i$ has finite $\Psi_\alpha$-norm and sharpens substantially the crude triangle inequality. At the moment level, we prove that 
\begin{align*}
    &\EE\l\{\l|\frac{1}{\sqrt{n}}\sum_{i=1}^n X_i\r|^p\r\}\leq C_1^p p^{\frac{p}{2}}\|X\|_{\Psi_\alpha}^{p}
    +C_1^{\frac{p}{\alpha}}p^{\frac{p}{\alpha}}n^{\frac{p}{2}-\frac{p}{\alpha}}\|X\|_{\Psi_{\alpha}}^p\cdot\exp(-cn) \ \ \text{ when } \alpha\geq 1,\\
    &\EE\l\{\l|\frac{1}{\sqrt{n}}\sum_{i=1}^n X_i\r|^p\r\}\leq C^p\min\l\{p^{\frac{p}{2}},p^{\frac{p}{\alpha}}n^{\frac{p}{2}-\frac{p}{\alpha}}\r\}\cdot\|X\|_{\Psi_{\alpha}}^p \ \ \text{ when } \alpha\geq 2,
\end{align*}
which is unimprovable up to universal constants. These moment bounds improve the results in \cite{kuchibhotla2022moving,latala1997estimation} in two distinct regimes: 1) when $\alpha>2$ and 2) when $\alpha\in[1,2]$ with $1\ll n\lesssim p$. We defer the detailed discussion of concentration, moments, and $\Psi_\alpha$-norms for heterogeneous univariate summands to Section~\ref{sec:univariate}.

Many important distributions satisfy that $\sigma_X\ll\|X\|_{\Psi_\alpha}$, so separating the variance from the Orlicz norm is essential rather than superficial. A basic example is the Bernoulli random variable with success probability close to zero. To handle this regime, we introduce Definition~\ref{assm:sigma_L} (see Section \ref{sec:univariate_var}), a general moment framework that extends the sub-Gaussian characterizations in \cite{van2013bernstein} and \cite{alquier2013sparse}. This framework interacts naturally with the class of random variables having finite $\Psi_\alpha$-norm. In particular, given i.i.d. $X_1,\ldots,X_n$ with $\|X\|_{\Psi_\alpha}<\infty$, we show that there are infinitely many choices of $(\sigma,L)$ satisfying Definition~\ref{assm:sigma_L}; for suitable choices, we can obtain that for all $\alpha\geq 1$, 
\begin{align*}
    &\PP\l(\l|\sum_{i=1}^n X_i\r|\geq t\r)\leq 2\exp\l(-C_1\frac{t^2}{n\sigma_X^2}\r) \ \ \text{ for } t\leq cn\frac{\sigma_X^2}{\|X\|_{\Psi_\alpha}}\l(\log\l(\frac{2\|X\|_{\Psi_\alpha}}{\sigma_X}\r)\r)^{-\frac{1}{\alpha}},\\
    &\PP\l(\l|\sum_{i=1}^n X_i\r|\geq t\r)\leq 2\exp\l(-C_1\frac{t^\alpha}{n^{\alpha-1}\|X\|_{\Psi_\alpha}^\alpha}\r) \ \ \text{ for } t\geq n\|X\|_{\Psi_\alpha},
\end{align*}
and for $t$ between these two scales, the tail becomes an interpolation of $\Psi_1$- and $\Psi_2$-tails that connects the two endpoints. In particular, when $\alpha=1$, the second endpoint becomes $t\geq n\sigma_X$. We defer the details to Theorem~\ref{thm:conc_univariate_SigmaL} and Corollary~\ref{cor:sigmaL_inf} (see Section \ref{sec:univariate_var}). These results preserve the sharp variance scaling of \cite{koltchinskii2011neumann} for small deviations while recovering the missing sharp $\Psi_\alpha$-tail for large deviations. In this sense, our results sharpen the works of \cite{talagrand1994supremum}, \cite{ledoux2013probability}, \cite{kuchibhotla2022moving}, and complete the picture initiated by \cite{koltchinskii2011neumann}.

Our framework also extends beyond independent scalar sums. For martingales, we study the distribution of the limit $\lim_{n\to\infty}\sum_{k=1}^n a_kX_k$, where $\{X_k\}$ is a martingale. Our goal is different from that of \cite{rio2013extensions}, which controlled the limiting distribution under an assumed moment generating function bound. In contrast, we derive the relevant moment generating function behavior and convergence from moment conditions or from a finite conditional $\Psi_\alpha$-norm. For random vectors, we identify a different two-phase transition, now at $\alpha=4$, separating the regimes of $2\leq \alpha\leq 4$ and $\alpha\geq 4$. The resulting concentration behavior exhibits a nontrivial interplay among the decaying $\Psi_2$-, $\Psi_4$-, $\Psi_{\frac{\alpha}{2}}$-, and $\Psi_\alpha$-tails, together with a delicate interaction between the variance and $\Psi_\alpha$-norm. These results sharpen and extend Theorem~3.1.1 in the classical work of \cite{vershynin2018high} and the recent work of \cite{jeong2022sub}. The state-of-the-art \cite{jeong2022sub} proved a sharp bound for $X\in\mathbb{R}^d$ with i.i.d. components and $\operatorname{var}(X_i)=1$, $K:=\|X_i\|_{\Psi_2}\geq 1$, $\l\|\|X\|-\sqrt{d}\r\|_{\Psi_2}\leq CK\sqrt{\log K}$, which implies that 
 \begin{align*}
     \PP\l(\l|\|X\|-\sqrt{d} \r| \geq t\r)\leq 2\exp\l(-C\frac{t^2}{K^2\log K}\r).
 \end{align*}
We prove the \textit{sharper} tail probability
\begin{align*}
    \PP\l(\l|\|X\| -\sqrt{d}\sigma_X\r| \geq s \r)\leq \begin{cases}
        2\exp\l(-\frac{cs^2}{\|X\|_{\Psi_2}^2\log\l(\frac{2\|X\|_{\Psi_2}}{\sigma_X}\r)}\r)& \text{ for } 0 < s\leq \tau_1,\\
        2\exp\l(-\frac{cs^4}{d\sigma_X\|X\|_{\Psi_\alpha}^3}\r)& \text{ for } \tau_1\leq s\leq \tau_2,\\
        2\exp\l(-\frac{cs^2}{\|X\|_{\Psi_2}^2}\r)& \text{ for } s\geq \tau_2,
    \end{cases}
\end{align*}
where $\tau_1:=\sqrt{d}\sqrt{\frac{\sigma_X\|X\|_{\Psi_2}}{\log \l(\frac{2\|X\|_{\Psi_2}}{\sigma_X}\r)}}$ and $\tau_2:=\sqrt{d}\sqrt{\sigma_X\|X\|_{\Psi_2}}$. The same framework also applies to eigenvalue analysis for random matrices and covariance matrix estimation; see Section~\ref{sec:app}.

\subsection{Related works} \label{new.Sec.relalite}

The discussion above already identifies the major gap in the literature. We now position our results more systematically relative to the existing works. An early foundational work is \cite{talagrand1994supremum}, which studied concentrations of symmetric random variables with density $c_{\alpha}\exp(-|x|^\alpha)$ and obtained the moment generating function bound
\begin{equation}
    \begin{split}
        &\EE\exp(\lambda X)\leq \exp(C_\alpha \lambda^2 \EE\exp(|X|/C)) \ \ \text{ for all } \lambda\leq 1,\\
    &\EE\exp(\lambda X)\leq \exp(\lambda^\beta/\beta+\log(\EE\exp(|X|^\alpha/C\alpha) )) \ \ \text{ for all } \lambda>0.
    \end{split}
    \label{eq:talagrand:momG}
\end{equation}
Here, $\beta$ is the conjugate of $\alpha$, i.e., $\frac{1}{\alpha}+\frac{1}{\beta}=1$. For the symmetric density model considered in \cite{talagrand1994supremum}, the bound is sharp, but for general random variables---especially asymmetric ones and 
for sufficiently small $\lambda$---
\eqref{eq:talagrand:momG} is not sharp or sufficient. The 
generalizations of \cite{ledoux2013probability} and \cite{talagrand1989isoperimetry} led to 
\eqref{eq:talagrand:norm} and \eqref{eq:talagrand:norm2}. As discussed above, however, \eqref{eq:talagrand:norm} may be vacuous when $\alpha>2$, while \eqref{eq:talagrand:norm2} does not separate the variance from the Orlicz norm.

A second line of work includes \cite{boucheron2003concentration}, \cite{adamczak2011restricted}, \cite{kuchibhotla2022moving}, and \cite{zhang2022sharper}. These papers proved that for all $\alpha\in[1,\infty)$,
  \begin{align}
      \PP\l(\l|\sum_{i=1}^n X_i \r| \geq C\sqrt{ n}\|X\|_{\Psi_{\alpha}}\sqrt{t} + Cn^{\frac{1}{\beta}}\|X\|_{\Psi_\alpha}t^{\frac{1}{\alpha}}\r)\leq 2\exp\l(-t\r),
      \label{eq3:intro}
  \end{align}
which yields a sub-Gaussian component along with a $\Psi_\alpha$ component. However, for $\alpha\geq 2$ this is still a standard $\min$-type bound and therefore, misses the sharper $\max$-type behavior established in Theorem~\ref{thm:conc_univariates}. Moreover, \cite{latala1997estimation}, \cite{kuchibhotla2022moving}, and \cite{zhang2022sharper} established or employed the moment inequality
\begin{align*}
    \EE\l\{ \l|\frac{1}{\sqrt{n}}\sum_{i=1}^n X_i \r|^p\r\}\leq C_1^{\frac{p}{2}}p^{\frac{p}{2}}\|X\|_{\Psi_{\alpha}}^p+C_1^{\frac{p}{\alpha}}p^{\frac{p}{\alpha}}n^{\frac{p}{\beta}-\frac{p}{\alpha}}\|X\|_{\Psi_{\alpha}}^p \ \ \text{ for } \alpha\geq 1,
\end{align*}
which is governed by the maximum of $C_1^{\frac{p}{2}}p^{\frac{p}{2}}\|X\|_{\Psi_{\alpha}}$ and $C_1^{\frac{p}{\alpha}}p^{\frac{p}{\alpha}}n^{\frac{p}{\beta}-\frac{p}{\alpha}}\|X\|_{\Psi_{\alpha}}$. This line of work does not distinguish $\sigma_X$ from $\|X\|_{\Psi_\alpha}$. Consequently, 
\eqref{eq3:intro} is sharp for $\alpha\in[1,2]$ when $\sigma_X\asymp\|X\|_{\Psi_\alpha}$, but is not sharp for $\alpha>2$ and is not variance-sensitive when $\sigma_X\ll\|X\|_{\Psi_\alpha}$.

A third line of work, represented by \cite{koltchinskii2011neumann} and \cite{adamczak2008tail}, emphasizes the variance-controlled concentration around the origin. In addition to \cite{koltchinskii2011neumann}, \cite{adamczak2008tail} developed a related concentration inequality at $\alpha=1$ that is also 
adaptive to variance near the origin, but incurs an additional $\log(n)$ factor for sufficiently large deviations. The approaches in \cite{koltchinskii2011neumann} and \cite{adamczak2008tail} are complementary, and either can be sharper depending on the specific regime. Our contribution is to \textit{unify} sharp local variance behavior with the correct global $\Psi_\alpha$-tail in a single framework.
 
\begin{table}[t]
\centering
\begin{tabular}{ c|| c  }
\hline
\hline
 $\log(\frac{1}{2}\PP\l(|\sum_{k=1}^n X_k|\geq t\r))$ & $\alpha\geq 2$  \\ 
 \cite{ledoux2013probability} & $-\l(\frac{t}{K_\alpha\|\sum_k X_i\|_1+  \|( \|X_k\|_{\Psi_s}) \|_{\beta,\infty}}\r)^\alpha$ \\  
 \cite{boucheron2003concentration} & $-C_\alpha\min\l\{\frac{t^2}{n\|X\|_{\Psi_\alpha}^2},\frac{t^\alpha}{n^{\alpha-1}\|X\|_{\Psi_\alpha}^\alpha}\r\}+C_\alpha'$ \\
 \cite{koltchinskii2011neumann} & $-C\min\l\{\frac{t^2}{n\sigma_X^2},\frac{t}{\|X\|_{\Psi_\alpha}\log\l(\frac{2\|X\|_{\Psi_\alpha}}{\sigma_X}\r)}\r\}$ \\
 \cite{kuchibhotla2022moving}&$-C\min\l\{\frac{t^2}{n\|X\|_{\Psi_\alpha}^2},\frac{t^\alpha}{n^{\alpha-1}\|X\|_{\Psi_\alpha}^\alpha}\r\}$\\
 This work (Theorem~\ref{thm:conc_univariates}) &$-C\max\l\{\frac{t^2}{n\|X\|_{\Psi_\alpha}^2},\frac{t^\alpha}{n^{\alpha-1}\|X\|_{\Psi_\alpha}^\alpha}\r\}$ \\
 This work (distinguishing $\|X\|_{\Psi_\alpha}$ and $\sigma_X$)& Section~\ref{sec:univariate_var}\\
 \hline
 \hline
\end{tabular}
 \caption{Concentration inequalities when $\alpha\geq 2$. Here, $K_\alpha,C_\alpha,C_\alpha'$ represent positive constants that may depend on $\alpha$, following the notation in the original works, while positive constant $C$ does not depend on $\alpha,X,n$. }
\label{table1}
\end{table}

To appreciate these sharper bounds, Tables~\ref{table1} and~\ref{table2} summarize representative concentration inequalities for univariate random variables. The comparison makes the contribution of this paper transparent. When $\alpha>2$, the available bounds are either potentially vacuous or retain a $\min$-type tail; in contrast, Theorem~\ref{thm:conc_univariates} yields the sharp $\max$-type behavior in Table~\ref{table1}. When $\alpha\in[1,2]$, Theorem~\ref{thm:conc_univariates} matches the best known order when $\sigma_X\asymp\|X\|_{\Psi_\alpha}$, while the inequalities in Section~\ref{sec:univariate_var} sharpen the literature whenever $\sigma_X$ needs to be separated from $\|X\|_{\Psi_\alpha}$. 
Although our technical arguments are self-contained and do not rely directly on existing results, the cited works provide a rich source of elegant ideas that motivate our analysis. For concentration inequalities of bounded random variables, we refer interested readers to \cite{ahlswede2002strong}, \cite{recht2011simpler}, \cite{gross2010quantum}, and \cite{gross2011recovering}.

\begin{table}[t]
\centering
\begin{tabular}{ c|| c }
\hline
\hline
 $\log(\frac{1}{2}\PP\l(|\sum_{k=1}^n X_k|\geq t\r))$  &  $1\leq \alpha\leq 2$ \\ 
 \cite{ledoux2013probability} & $-\l(\frac{t}{K_\alpha\|\sum_k X_i\|_1+  n^{1-\frac{1}{\alpha}}\|X\|_{\Psi_\alpha} }\r)^{\alpha}$ \\  
 \cite{boucheron2003concentration} & $-C_\alpha\min\l\{\frac{t^2}{n\|X\|_{\Psi_\alpha}^2},\frac{t^\alpha}{n^{\alpha-1}\|X\|_{\Psi_\alpha}^\alpha}\r\}+C_\alpha'$ \\
 \cite{koltchinskii2011neumann} & $-C\min\l\{\frac{t^2}{n\sigma_X^2},\frac{t}{\|X\|_{\Psi_\alpha}\log\l(\frac{2\|X\|_{\Psi_\alpha}}{\sigma_X}\r)}\r\}$ \\
 \cite{kuchibhotla2022moving}&$-C\min\l\{\frac{t^2}{n\|X\|_{\Psi_\alpha}^2},\frac{t^\alpha}{n^{\alpha-1}\|X\|_{\Psi_\alpha}^\alpha}\r\}$\\
 This work (Theorem~\ref{thm:conc_univariates}) &$-C\min\l\{\frac{t^2}{n\|X\|_{\Psi_\alpha}^2},\frac{t^\alpha}{n^{\alpha-1}\|X\|_{\Psi_\alpha}^\alpha}\r\}$\\
 This work (distinguishing $\|X\|_{\Psi_\alpha}$ and $\sigma_X$)&Section~\ref{sec:univariate_var}\\
 \hline
 \hline
\end{tabular}
\caption{Concentration inequalities when $1\leq \alpha\leq 2$. Here, $K_\alpha,C_\alpha,C_\alpha'$ represent positive constants that may depend on $\alpha$, following the notation in the original works, while positive constant $C$ does not depend on $\alpha,X,n$. When $\sigma_X\asymp \|X\|_{\Psi_\alpha}$, our result aligns with \cite{kuchibhotla2022moving}; when $\sigma_X\ll\|X\|_{\Psi_\alpha}$, Section~\ref{sec:univariate_var} is sharper than the existing literature.}
\label{table2}
\end{table}

\subsection{Our contributions} \label{new.Sec.contriorga}

We summarize the major contributions in the same order as the paper. First, Lemmas~\ref{lem:momGeneratingFunct} and \ref{lem:momGeneratingFunct_SigmaL} establish new moment generating function bounds that improve and generalize \cite{talagrand1994supremum}. These lemmas are the analytical core of the paper and explain the phase transition at $\alpha=2$ through the regularity of the moment generating function. Building on them, Section~\ref{sec:univariate} proves refined concentration, moment, and Orlicz norm bounds for sums of independent random variables with $\|X\|_{\Psi_\alpha}<\infty$. For sufficiently small deviation $t$, the tail probability $\PP(|\sum X_i|\geq t)$ is sub-Gaussian, whereas for large enough $t$, it has the correct $\Psi_\alpha$-tail. In the regime of $\|X\|_{\Psi_\alpha}\asymp \sigma_X$, these bounds are optimal. To the best of our knowledge, this is the \textit{first} work to prove a sharp, density-free concentration inequality with a genuine $\Psi_\alpha$-tail for $\alpha\geq 2$ under the only assumption of $\|X\|_{\Psi_\alpha}<\infty$, thereby improving the state of the art in \cite{kuchibhotla2022moving}. Figure~\ref{fig:OrliczNorm} visualizes such improvement.

\begin{figure}[t]
    \centering
    \begin{subfigure}[b]{0.3\textwidth}
    \centering
        \includegraphics[width=\textwidth]{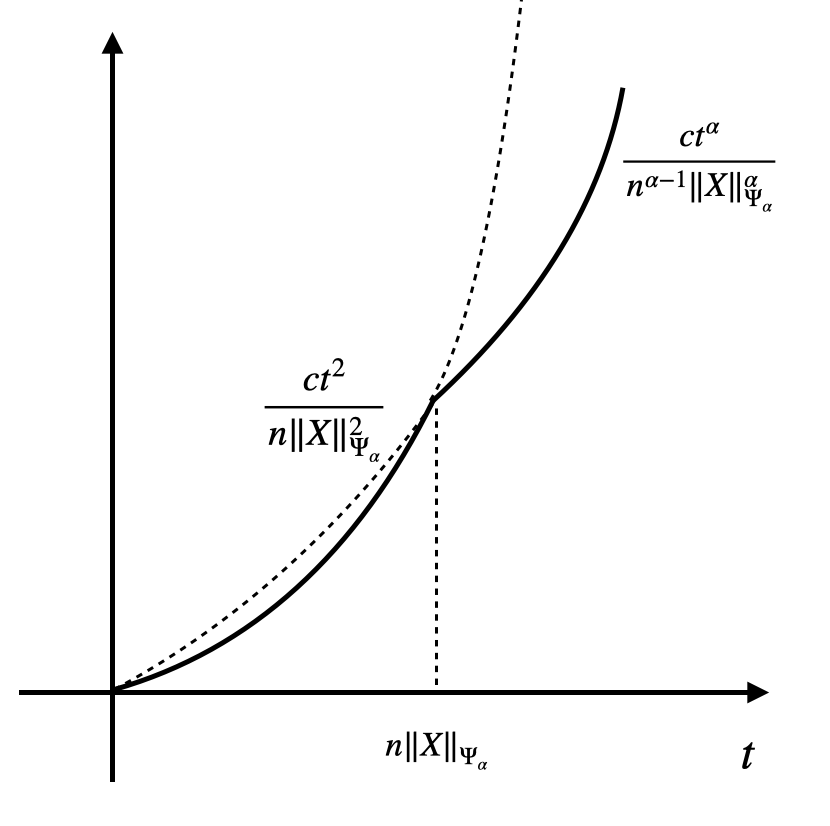}
        \caption{$\alpha\in[1,2]$}
        \label{fig:kuchi_alpha<2}
    \end{subfigure}
\hfill
    \begin{subfigure}[b]{0.3\textwidth}
    \centering
        \includegraphics[width=\textwidth]{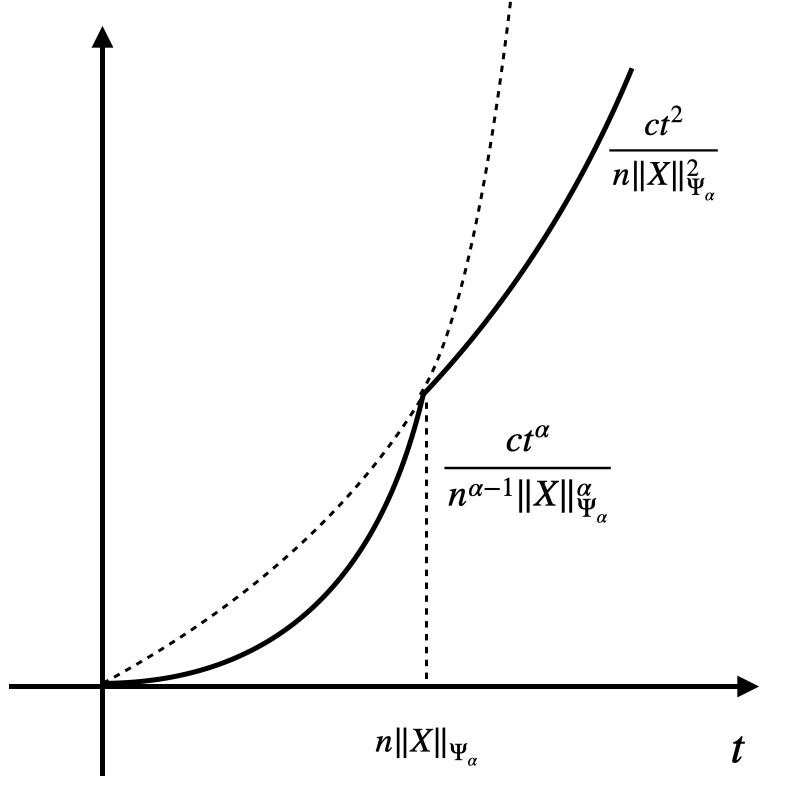}
        \caption{$\alpha\geq 2$}
        \label{fig:kuchi_alpha>2}
    \end{subfigure}
\hfill
    \begin{subfigure}[b]{0.33\textwidth}
    \centering
        \includegraphics[width=\textwidth]{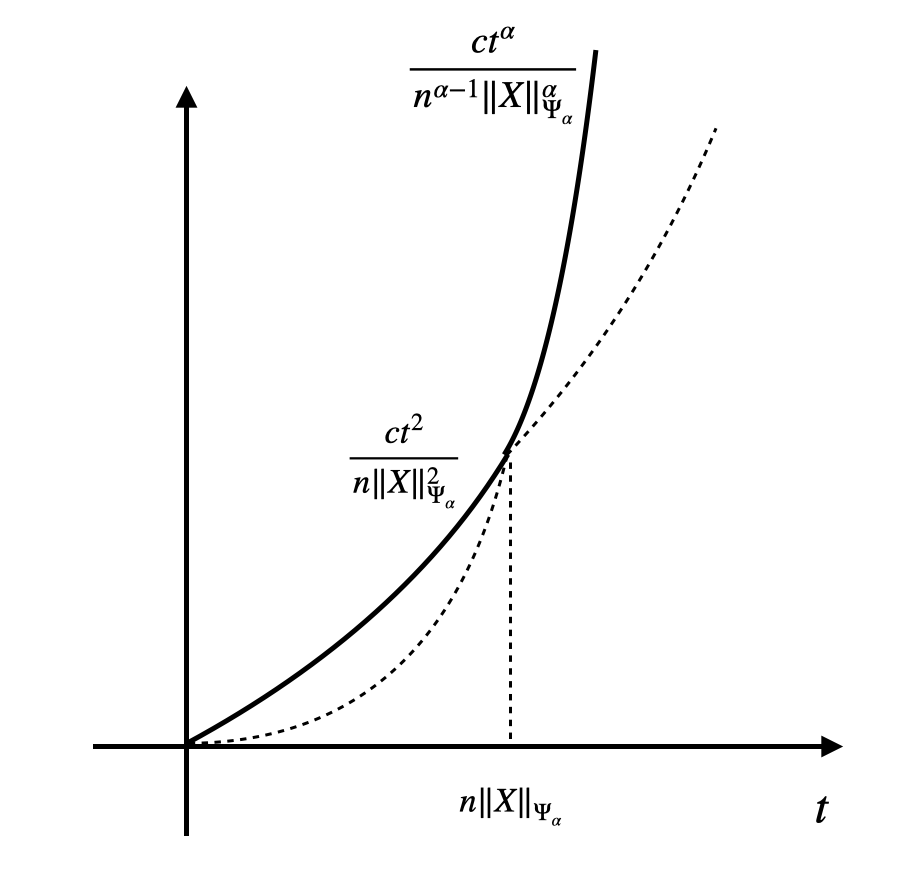}
        \caption{$\alpha\geq 2$}
        \label{fig:alpha>2}
    \end{subfigure}
    \caption{The $y$-axis represents the bound on $-\log(\frac{1}{2}\PP(|\sum_{i=1}^n X_i|\geq t))$. Figures~\ref{fig:kuchi_alpha<2} and \ref{fig:kuchi_alpha>2} are from \cite{kuchibhotla2022moving}. Figure~\ref{fig:alpha>2} corresponds to Theorem~\ref{thm:conc_univariates} or Theorem~\ref{thm:conc_univariate_SigmaL} when $\sigma_X$ does not need to be distinguished from $\|X\|_{\Psi_\alpha}$. The bound in Figure~\ref{fig:alpha>2} \textit{improves} that in Figure~\ref{fig:kuchi_alpha>2}. }
    \label{fig:OrliczNorm}
\end{figure}

Second, Section~\ref{sec:univariate_var} develops the variance-sensitive framework based on Definition~\ref{assm:sigma_L}. When $\alpha=2$, such framework reduces to conditions considered previously in \cite{van2013bernstein} and \cite{alquier2013sparse}; for general $\alpha$, it yields a \textit{new} interpolation between the variance and $\Psi_\alpha$-tails. Our main results, Theorem~\ref{thm:conc_univariate_SigmaL} and Corollary~\ref{cor:sigmaL_inf}, simultaneously produce sub-Gaussian tails depending only on variance for sufficiently small $t$, and rate-optimal $\Psi_\alpha$-tails for large $t$, even though the random variables themselves may not be sub-Gaussian. This combination \textit{cannot} be recovered by simply combining previous inequalities. In particular, for sub-exponential random variables, we prove that
\begin{multline*}
    \PP\l(\l|\sum_{i=1}^n X_i\r|\geq t\r) \leq 2\exp\l(-\max\l\{\min\l\{\frac{t^2}{n\sigma_X^2},\frac{t}{\|X\|_{\Psi_1}\log\l(\frac{2\|X\|_{\Psi_2}}{\sigma_X}\r)} \r\},\r.\r.\\\l.\l. \min\l\{\frac{t}{\|X\|_{\Psi_1}}, \frac{t^2}{n\sigma_X\|X\|_{\Psi_1}}\r\}\r\}\r),
\end{multline*}
which \textit{cannot} be obtained from the existing literature and improves, e.g., the Bernstein-type bounds such as those discussed in \cite{vershynin2018high}. More generally, for all  $\alpha\geq 1$, Corollary~\ref{cor:sigmaL_inf} yields sub-Gaussian tails of form
$$\exp\l(-ct^2/\sum_{i=1}^n \operatorname{var}(X_i)\r)$$
for sufficiently small $t$, while for large $t$, the tail probability becomes
$$\exp\l(-ct^\alpha/(\sum_{i=1}^n \|X_i\|_{\Psi_\alpha}^\beta)^{\frac{\alpha}{\beta}}\r),$$
which is rate-optimal. These results sharpen \cite{talagrand1989isoperimetry,talagrand1994supremum,ledoux2013probability,boucheron2003concentration,kuchibhotla2022moving} and complete \cite{koltchinskii2011neumann}. Figure~\ref{fig:OrliczNorm_sigmanorm} compares the corresponding tails.

\begin{figure}[t]
    \centering
    \begin{subfigure}[b]{0.3\textwidth}
    \centering
        \includegraphics[width=\textwidth]{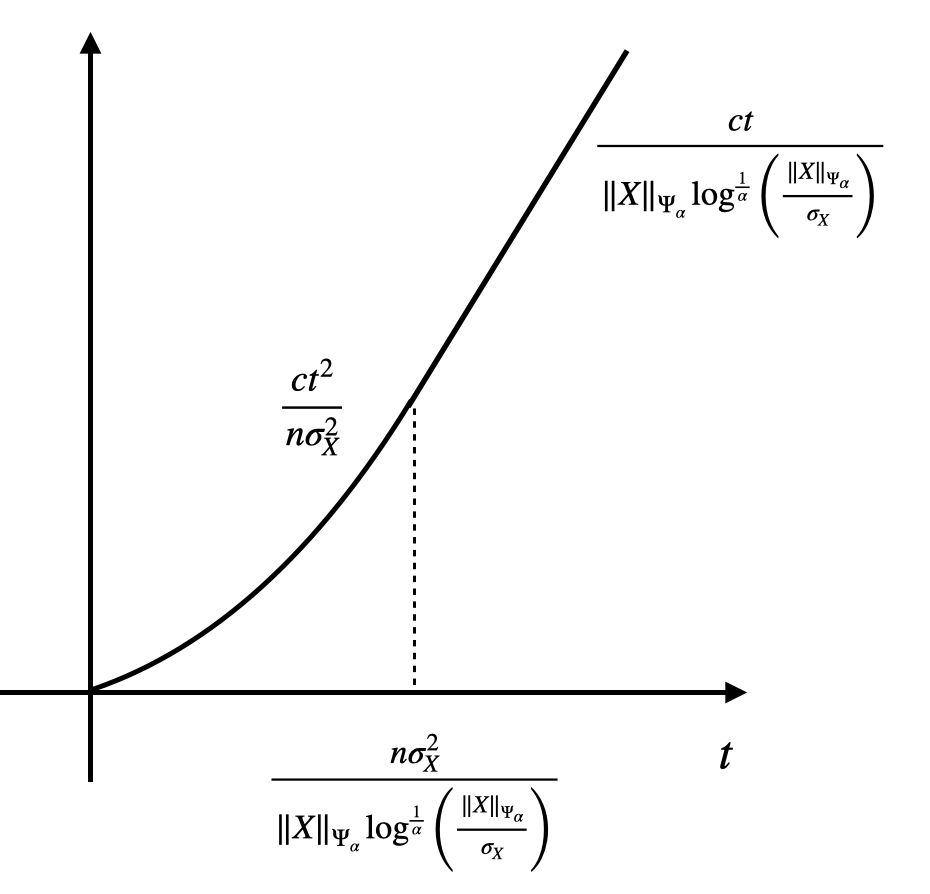}
        \caption{$\alpha\geq 1$}
        \label{fig:koltchinskii}
    \end{subfigure}
\hfill
    \begin{subfigure}[b]{0.3\textwidth}
    \centering
        \includegraphics[width=\textwidth]{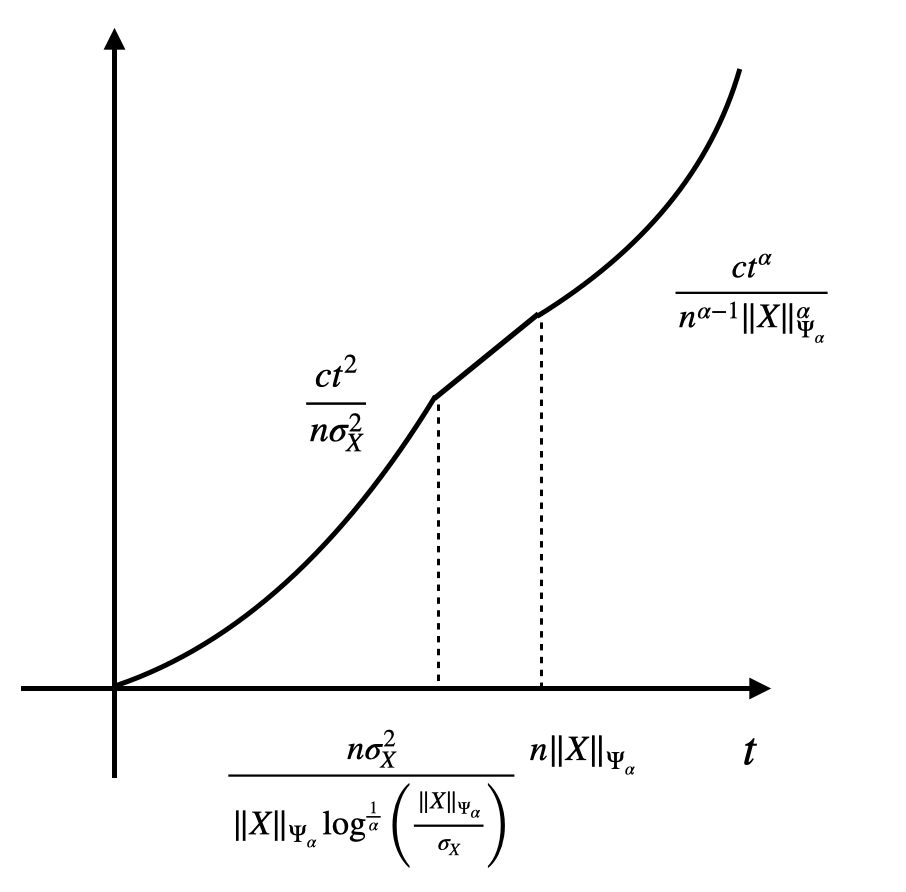}
        \caption{$\alpha\in[1,2]$}
        \label{fig:sigmaL_alpha<2}
    \end{subfigure}
\hfill
    \begin{subfigure}[b]{0.3\textwidth}
    \centering
        \includegraphics[width=\textwidth]{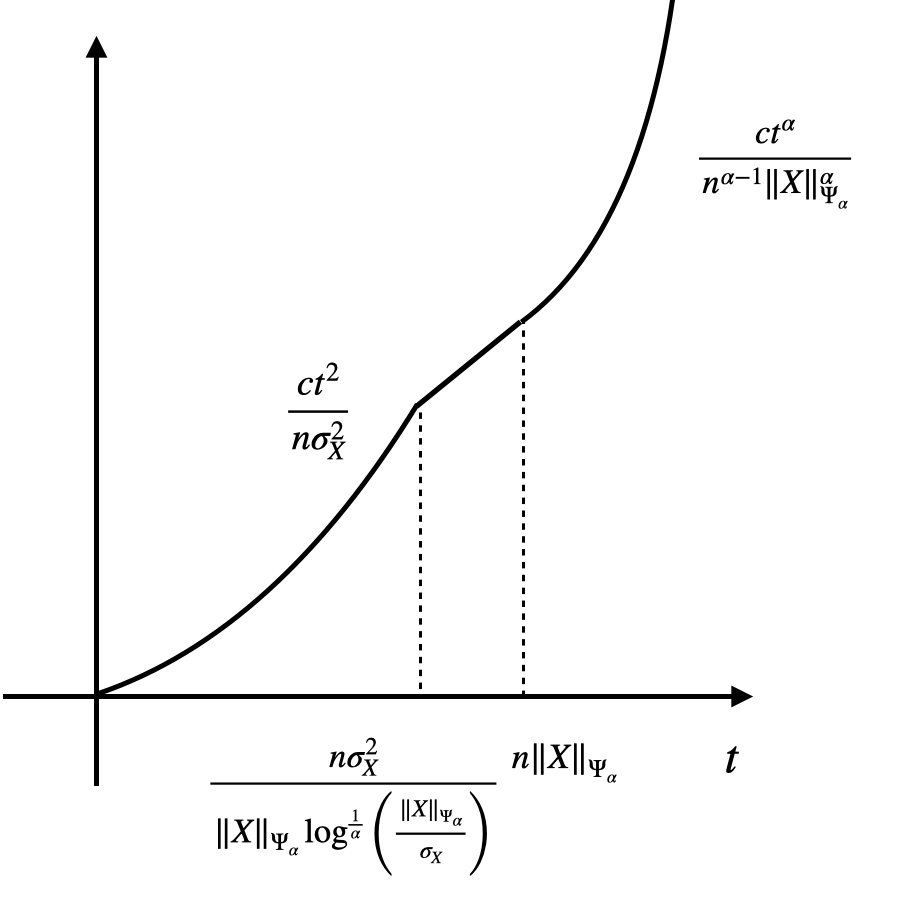}
        \caption{$\alpha\geq 2$}
        \label{fig:sigmaL_alpha>2}
    \end{subfigure}
    \caption{The $y$-axis represents the bound on $-\log(\frac{1}{2}\PP(|\sum_{i=1}^n X_i|\geq t))$. Figure~\ref{fig:koltchinskii} plots the bound given by \cite{koltchinskii2011neumann}, whose tail probability is sharp when $t\leq \frac{n\sigma_X^2}{\|X\|_{\Psi_\alpha}\log^{\frac{1}{\alpha}}\l(\frac{\|X\|_{\Psi_\alpha}}{\sigma_X}\r)}$. Figures~\ref{fig:sigmaL_alpha<2} and \ref{fig:sigmaL_alpha>2} correspond to Theorem~\ref{thm:conc_univariate_SigmaL} and Corollary~\ref{cor:norm_sigmaL}. The bounds in Figures~\ref{fig:sigmaL_alpha<2} and \ref{fig:sigmaL_alpha>2} \textit{improve} those in Figures~\ref{fig:kuchi_alpha<2}, \ref{fig:kuchi_alpha>2}, and \ref{fig:koltchinskii}. }
    \label{fig:OrliczNorm_sigmanorm}
\end{figure}

Third, Section~\ref{sec:app} extends the framework to martingales for dependent data, random vectors, random matrices, and covariance matrix estimation. These extensions show that our new theoretical framework is not limited to an isolated scalar setting: it is flexible enough to improve existing vector and matrix norm bounds, including those of \cite{vershynin2018high} and \cite{jeong2022sub}, even in some i.i.d. sub-Gaussian settings when $\sigma_X\ll\|X\|_{\Psi_\alpha}$. The resulting sharper tails are relevant to statistical and machine learning applications such as low-rank matrix recovery \citep{koltchinskii2011neumann}, adaptively collected data \citep{lin2025semiparametric,khamaru2025near}, and tensor learning \citep{zhang2018tensor,zhou2025deflated,abdalla2026dimension}. Section \ref{Sec.disc} discusses further implications and possible extensions of the theory. All proofs of the main results and additional technical details are included in the Supplementary Material.


\textit{Notation}. Throughout the paper, for any random variable $X$ and number $k\geq 0$, we define $\|X\|_k:=\l(\EE|X|^k\r)^{\frac{1}{k}}$, and $\operatorname{var}(X)$ denotes the variance of $X$. For a vector $X\in\RR^d$, denote by $\|X\|$ its Euclidean norm, and for a matrix $X$, denote by $\|X\|$ its operator norm. Universal constants are written as $C,C_1,C_2,c,c_1,\ldots$. In addition, $[a]$ denotes the largest integer not exceeding $a$, and for a nonnegative integer $k$, we define $0!=1$ and $k!=k\times (k-1)\times \cdots\times 1$.

\section{Concentration inequalities when Orlicz norm is proportional to standard deviation} 
\label{sec:univariate}

To illustrate our main ideas, we start with concentration inequalities for independent univariate sub-Weibull random variables, when it is \textit{not necessary} to emphasize the difference between the Orlicz norm $\|X\|_{\Psi_\alpha}$ and the standard deviation $\sigma_X:=\sqrt{\operatorname{var}(X)}$. The lemma below upper bounds the moment generating function for general sub-Weibull random variables with $\alpha> 1$.

\begin{lemma}[Moment generating function]
    Let $X$ be a mean-zero random variable with $\|X\|_{\Psi_{\alpha}}<\infty$ for some $\alpha>1$, and $\beta$ the conjugate of $\alpha$ with $\frac{1}{\alpha}+\frac{1}{\beta}=1$. Then there exist some positive constants $C,C_1,C_2,C_3,C_4,C_5$ that do not depend on $\alpha$, $\lambda$, and $X$ such that for any $\lambda\geq 0$,
    \begin{enumerate}[(1)]
        \item for $\alpha\geq 2$, it holds that 
    \begin{align*}
        \EE\l\{\exp(\lambda X)\r\}\leq \exp\l(C_1\min\l\{\lambda^2\|X\|_{\Psi_{\alpha}}^2,\lambda^{\beta}\|X\|_{\Psi_{\alpha}}^{\beta}\r\}\r);
    \end{align*}
    \item for $\alpha\in(1,2]$, it holds that 
    \begin{align*}
        &\EE\l\{\exp(\lambda X)\r\}\leq \exp(C_2\beta\lambda^2\|X\|_{\Psi_\alpha}^2) \ \ \text{ when } \lambda\leq 1/(C\|X\|_{\Psi_\alpha}),\\
        &\EE\l\{\exp(\lambda X)\r\}\leq \exp(C_3^\beta\beta \lambda^\beta \|X\|_{\Psi_\alpha}^{\beta}) \ \ \text{ when } \lambda\geq 1/(C\|X\|_{\Psi_\alpha});
    \end{align*}
    and 
    further it holds for any $\tau\in(0,1)$ that 
    \begin{align*}
        &\EE\l\{\exp(\lambda X)\r\}\leq \exp\l(C_4\frac{1}{1-\tau}\lambda^2\|X\|_{\Psi_\alpha}^2\r) \ \ \text{ when } \lambda\leq \tau/(C\|X\|_{\Psi_\alpha}),\\
        &\EE\l\{\exp(\lambda X)\r\}\leq \exp\l(C_5^\beta\frac{\tau^{-[\beta]-1}}{1-\tau} \lambda^\beta \|X\|_{\Psi_\alpha}^{\beta}\r) \ \ \text{ when } \lambda\geq \tau/(C\|X\|_{\Psi_\alpha}).
    \end{align*}
    \end{enumerate}
    \label{lem:momGeneratingFunct}
\end{lemma}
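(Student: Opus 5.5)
By homogeneity, I will normalize $\|X\|_{\Psi_\alpha}=1$, so that $\EE\exp(|X|^\alpha)\le 2$. Write $u=\lambda$. Two ingredients do all the work. The first is the moment bound
\[
\EE|X|^k\le 2\,\Gamma(k/\alpha+1)\le 2\,(C k/\alpha)^{k/\alpha}+2,
\]
obtained by integrating the tail $\PP(|X|\ge s)\le 2e^{-s^\alpha}$. The second is the pointwise Young-type inequality $\lambda x\le |x|^\alpha/\alpha+\lambda^\beta/\beta$.

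\textbf{Small $\lambda$ (all $\alpha>1$).} I will use $e^y\le 1+y+\tfrac{y^2}{2}e^{|y|}$ together with $\EE X=0$. This gives
\[
\EE e^{\lambda X}\le 1+\tfrac{\lambda^2}{2}\,\EE\bigl[X^2e^{\lambda|X|}\bigr].
\]
For $\alpha\ge 2$ and $\lambda\le 1$, I would bound $\lambda|X|\le |X|^2/4+\lambda^2\le |X|^\alpha/4+C$. Then $X^2e^{|X|^\alpha/4}\le Ce^{|X|^\alpha/2}$, whose expectation is at most $\sqrt2\,C$ by Jensen. The result is $\EE e^{\lambda X}\le 1+C\lambda^2\le e^{C\lambda^2}$.

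For $\alpha\in(1,2]$ I would instead expand the series $\sum_k \lambda^k\EE|X|^k/k!$. Using $\EE|X|^k\le 2\Gamma(k/\alpha+1)$ and Stirling, the $k$-th term is bounded by $(C\lambda)^k k^{k/\alpha-k}$. Summing from $k=2$ for $\lambda\le\tau/C$, the series is dominated by a geometric-like series in $\tau$, and this produces the factor $1/(1-\tau)$. The general-$\tau$ statement is exactly this; the first pair of bounds is the case $\tau=1/2$ up to constants. The factor $\beta$ in $C_2\beta$ comes from the crossover point, and I would track it via $\Gamma(k/\alpha+1)/k!$.

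\textbf{Large $\lambda$.} Young's inequality gives
\[
\EE e^{\lambda X}\le e^{\lambda^\beta/\beta}\,\EE e^{|X|^\alpha/\alpha}\le 2\,e^{\lambda^\beta/\beta}.
\]
For $\alpha\ge2$ and $\lambda\ge1$, the factor $2$ is absorbed: $2e^{\lambda^\beta/\beta}\le e^{C\lambda^\beta}$ since $\lambda^\beta\ge1$. Moreover, $\lambda^\beta\le\lambda^2$ for $\lambda\ge1$ and the reverse holds for $\lambda\le1$ (because $\beta\le2$). Combining with the small-$\lambda$ regime therefore yields the min in part (1).

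For $\alpha\in(1,2]$ the threshold is $\tau/C$ rather than $1$. Here absorbing the $\log 2$ requires $\lambda^\beta\ge(\tau/C)^\beta$, which costs a factor $\tau^{-\beta}$. To obtain exactly $\tau^{-[\beta]-1}/(1-\tau)$ and $C_5^\beta$, I would instead rescale: apply Young with a split parameter, $\lambda x\le \theta|x|^\alpha+c_\theta\lambda^\beta$ with $\theta<1$. Alternatively, I would interpolate by using the series bound on the first $[\beta]+1$ terms, each contributing a power of $\lambda/\tau$.

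\textbf{Main obstacle.} The delicate step is the bookkeeping for $\alpha\in(1,2]$: making the constants uniform in $\alpha$ as $\alpha\downarrow1$ (so $\beta\to\infty$), with precisely the $\beta$ and $\tau^{-[\beta]-1}$ dependence. I would handle this by using $\Gamma(k/\alpha+1)\le (k/\alpha)^{k/\alpha}$-type bounds and splitting the series at $k\approx\beta$.
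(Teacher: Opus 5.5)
Your proposal is correct, but it takes a genuinely different route from the paper. The paper uses neither a Taylor remainder nor Young's inequality. It plugs the $\alpha$-uniform moment bound $\|X\|_k\le Ck^{1/\alpha}\|X\|_{\Psi_\alpha}$ (Lemma~\ref{lem:moment_bound}) into the full exponential series. Stirling then dominates $k^{k/\alpha}/k!$ by $C^k/[k/\beta]!$, and the series is regrouped by the value of $[k/\beta]$:
\begin{itemize}
\item for $\beta\le 2$, each value occurs at most twice, which yields both $\exp(C\lambda^2)$ and $\exp(C\lambda^\beta)$ for every $\lambda$;
\item for $\beta>2$, the terms come in blocks of roughly $[\beta]+1$ sharing a factorial, and summing each block geometrically produces the factors $\beta$, $1/(1-\tau)$ and $\tau^{-[\beta]-1}$.
\end{itemize}

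You replace this bookkeeping with three simple devices:
\begin{itemize}
\item for small $\lambda$ and $\alpha\ge 2$, the remainder bound $e^y\le 1+y+\frac{y^2}{2}e^{|y|}$ together with Jensen;
\item for small $\lambda$ and $\alpha\le 2$, the crude geometric bound $\sum_{k\ge 2}(C\lambda)^k$, using $\EE|X|^k\le 2\Gamma(k/\alpha+1)$ and $k^{-k/\beta}\le 1$;
\item for large $\lambda$, Young's inequality, which gives $\EE e^{\lambda X}\le 2^{1/\alpha}e^{\lambda^\beta/\beta}$ for all $\lambda$.
\end{itemize}
Your argument is self-contained and sharper: the leading coefficient is $1/\beta$, with only an additive $\log 2$. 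What the paper's route buys is that it is a purely deterministic estimate of $\sum_k k^{k/\alpha}x^k/k!$ driven only by moment growth. That is exactly what gets reused in the proof of Lemma~\ref{lem:momGeneratingFunct_SigmaL}, where the factor $\sigma^2/L^2$ must multiply the whole series and no Orlicz integrability is available for a Young argument.

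The obstacle you flag for $\alpha\in(1,2]$ is not real. Absorbing $\log 2$ for $\lambda\ge\tau/C$ costs $C^\beta\tau^{-\beta}$. Since $\tau\in(0,1)$ and $\beta<[\beta]+1$, you have $\tau^{-\beta}\le\tau^{-[\beta]-1}\le\tau^{-[\beta]-1}/(1-\tau)$, so the stated bound follows directly. No split-parameter Young or interpolation is needed. Likewise $\beta\ge 2$ in this range, so $\exp(C\lambda^2)$ already implies $\exp(C\beta\lambda^2)$.

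The only point needing care is that the statement uses one threshold constant $C$ in both pairs. Take it to be twice your series constant, and assume $C\ge 1$. Then $\lambda\le 1/C$ and $\lambda\le\tau/C$ fall under your geometric bound with parameters $1/2$ and $\tau/2$, and $1/(1-\tau/2)\le 1/(1-\tau)$. On $\lambda\ge 1/C$ and $\lambda\ge\tau/C$, the Young bound gives exponents at most $2C^\beta\lambda^\beta$ and $2C^\beta\tau^{-\beta}\lambda^\beta$. These fit the forms $C_3^\beta\beta\lambda^\beta$ and $C_5^\beta\tau^{-[\beta]-1}(1-\tau)^{-1}\lambda^\beta$.
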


Lemma~\ref{lem:momGeneratingFunct} above shows that for $\alpha>1$, regardless of $\alpha\geq 2$ or $\alpha\in (1,2]$, when $\lambda$ is sufficiently small, the moment generating function can be upper bounded with $\exp(O(\lambda^2\|X\|_{\Psi_\alpha}^2))$; when $\lambda$ is sufficiently large, the bound for the moment generating function becomes $\exp(O(\lambda^\beta \|X\|_{\Psi_{\alpha}}^{\beta}))$. Lemma~\ref{lem:momGeneratingFunct} \textit{improves and completes} the moment generating function bound in \cite{talagrand1994supremum} or 
\eqref{eq:talagrand:momG}. The following lemma provides the lower bound on the moment generating function for $\lambda$ with sufficiently small values. 

\begin{lemma}[Lower bound on moment generating function]
    Let $X$ be a mean-zero random variable with $\|X\|_{\Psi_{\alpha}}<\infty$ for some $\alpha\geq 1$, and $\sigma_X^2:=\operatorname{var}(X)$. Then we have that for all $\lambda\leq \frac{1}{\|X\|_{\Psi_\alpha}}\l(\log\l(\frac{2\|X\|_{\Psi_\alpha}}{\sigma_X}\r)\r)^{-\frac{1}{\alpha}}$, 
    \begin{align*}
        \EE\l\{\exp(\lambda X)\r\}\geq \exp\l(\lambda^2\sigma_X^2/8\r).
    \end{align*}
    \label{lem:momGeneratingFunct_lower}
\end{lemma}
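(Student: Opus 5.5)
The plan is to compare $\exp(\lambda X)$ with its second-order Taylor expansion. Mean-zero-ness kills the linear term, so we only need a lower bound on the quadratic contribution that survives the heavy negative tail. The elementary inequality I would use is that $\varphi(x):=(e^x-1-x)/x^2$ is increasing on $\RR$, with $\varphi(0)=1/2$ and $\varphi(x)\geq 1/(2+|x|)$ for $x<0$. This gives, for every real $x$, $e^x\geq 1+x+x^2/(2+|x|)$. Applying it with $x=\lambda X$, taking expectations and using $\EE X=0$,
\begin{align*}
\EE\l\{\exp(\lambda X)\r\}\geq 1+\lambda^2\EE\l\{\frac{X^2}{2+\lambda|X|}\r\}\geq 1+\frac{\lambda^2}{2+\delta}\l(\sigma_X^2-\EE\l\{X^2\mathbbm{1}\{|X|>\delta/\lambda\}\r\}\r)
\end{align*}
for any truncation level $\delta>0$, where the last step discards the event $\{|X|>\delta/\lambda\}$.

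The second step is to control the truncated second moment using only $K:=\|X\|_{\Psi_\alpha}$. Markov's inequality applied to $\exp(|X|^\alpha/K^\alpha)$ gives $\PP(|X|>s)\leq 2\exp(-(s/K)^\alpha)$. Integrating by parts,
\begin{align*}
\EE\l\{X^2\mathbbm{1}\{|X|>M\}\r\}\leq C\,(M^2+K^2)\exp\l(-(M/K)^\alpha\r).
\end{align*}
Here $M=\delta/\lambda$, and the constraint on $\lambda$ in the statement gives $M\geq \delta K\, L^{1/\alpha}$ with $L:=\log(2K/\sigma_X)$. We also note $\sigma_X\lesssim K$, since $\EE X^2\leq C K^2$ for $\alpha\geq 1$, so $L$ is bounded below by a positive constant.

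Taking $\delta$ of order one, but large enough that $\delta^\alpha\geq 3$, the exponential factor is at most $(\sigma_X/2K)^3$. The prefactor is $O(\delta^2K^2L^{2/\alpha})$ once $\lambda$ is at the boundary value. The whole tail term is then $\lesssim \sigma_X^2\cdot (\sigma_X/K)L^{2/\alpha}$, and $(\sigma_X/K)L^{2/\alpha}$ is bounded because $x\log^{2}(2/x)$ is bounded on $(0,C]$. For smaller $\lambda$ the truncation level is larger and the tail term is even smaller. I would aim to make this tail term at most a fixed fraction $\eta\sigma_X^2$.

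Finally, convert $1+c\lambda^2\sigma_X^2$, with $c=(1-\eta)/(2+\delta)$, into $\exp(\lambda^2\sigma_X^2/8)$. Under the hypothesis, $a:=\lambda^2\sigma_X^2\leq (\sigma_X/K)^2L^{-2/\alpha}$ is bounded by a constant. On a bounded range, $1+ca\geq e^{a/8}$ holds as soon as $c$ exceeds $1/8$ by a margin depending on that range. The main obstacle is exactly this constant bookkeeping. A large $\delta$ is needed to kill the tail, but it makes $c=(1-\eta)/(2+\delta)$ drop below $1/8$.

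To resolve it, I would first exploit that the positive part is harmless: $e^x\geq 1+x+x^2/2$ for $x\geq0$, so only $\EE\{X^2\mathbbm{1}\{X<0\}\}$ is discounted. If that is still insufficient, I would split the negative part into $|X|\leq \sigma_X$-scale versus the rest, rather than a single cutoff $\delta/\lambda$. For the $|X|\leq \sigma_X$-scale piece, $\lambda|X|$ is tiny, because $\lambda\sigma_X\lesssim (\sigma_X/K)L^{-1/\alpha}$. So that piece has weight essentially $1/2$, and the remaining mass is handled by the Orlicz tail estimate above.
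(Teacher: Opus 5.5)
Your route differs from the paper's. The paper lower bounds $\EE e^{\lambda X}$ by $1+\frac12\lambda^2\sigma_X^2-\sum_{k\ge3}\frac{\lambda^k}{k!}\EE|X|^k$. It then bounds the remainder by $C\lambda^3\sigma_X^2KL^{1/\alpha}$, using a split at $\tau\asymp KL^{1/\alpha}$ and the cube bound of Lemma~\ref{teclem:cube}. Your pointwise inequality $e^x\ge 1+x+x^2/(2+|x|)$ and the truncated second moment are correct and arguably cleaner. However, the step you flag is a genuine gap, and neither repair closes it.

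Your estimate only shows that the discarded term is $\lesssim\sigma_X^2(\sigma_X/K)L^{2/\alpha}$. That is bounded, but it is of order $\sigma_X^2$ rather than $\eta\sigma_X^2$ with $\eta$ small when $\sigma_X/K$ is of constant order. Enlarging $\delta$ to compensate pushes $c=(1-\eta)/(2+\delta)$ below $1/8$, as you note. This is not mere bookkeeping, as a concrete case shows:
\begin{itemize}
\item Take $\alpha=1$ and $X=-M$ with probability $0.01$, $X=M/99$ otherwise. Then $M\approx 4.57K$, $\sigma_X\approx 0.1M\approx 0.46K$, and $L\approx 1.47$.
\item At the endpoint $\lambda=1/(KL)$ of the stated range, $\lambda M\approx 3.10$. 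So $99\%$ of $\sigma_X^2$ sits at $\lambda|X|\approx 3.1$.
\item Take $\delta=3$, the smallest value your condition $\delta^\alpha\ge 3$ allows. Then $\delta/\lambda\approx 4.42K<M$, so the atom is discarded and your bound degenerates to $1+\lambda^2\sigma_X^2/500$.
\end{itemize}
Keeping $x^2/2$ on the positive part does not help, because the heavy atom is negative. Splitting the negative part at the $\sigma_X$-scale does not help either. The atom sits at $|X|\approx 10\sigma_X$, so the ``remaining mass'' is essentially all of $\sigma_X^2$; it is really there, and no tail estimate can make it small.

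The lemma does hold for this $X$. Your untruncated bound $1+\lambda^2\EE\{X^2/(2+\lambda|X|)\}$ gives about $1.019$, against $e^{\lambda^2\sigma_X^2/8}\approx 1.012$. But this works only because the atom is kept with weight $1/(2+\lambda M)\approx 0.2$. Reaching the stated range with constant $1$ therefore needs a sharper input that the proposal lacks. One example would be an extremal bound on how much of $\sigma_X^2$ can sit where $1/(2+\lambda|X|)$ drops below roughly $1/8$, given $\EE X^2=\sigma_X^2$ and $\EE e^{(|X|/K)^\alpha}\le 2$.

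What does close your argument is to accept a range $\lambda\le c_0K^{-1}L^{-1/\alpha}$ with a small universal $c_0$. This is also all the paper's argument delivers as written. Its split needs $\lambda\le 1/\tau$ with $\tau=CKL^{1/\alpha}$. Its last step, $1+\frac12\lambda^2\sigma_X^2-C\lambda^3\sigma_X^2KL^{1/\alpha}\ge 1+\frac14\lambda^2\sigma_X^2$, needs $C\lambda KL^{1/\alpha}\le\frac14$.

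In your framework, the repair runs as follows:
\begin{itemize}
\item Fix $\delta=1$, so $M=1/\lambda\ge KL^{1/\alpha}/c_0$.
\item Your tail bound $C(M^2+K^2)e^{-(M/K)^\alpha}$ is decreasing for $M\ge K$. So the discarded term is at most $CK^2(c_0^{-2}L^{2/\alpha}+1)(\sigma_X/(2K))^{c_0^{-\alpha}}$.
\item Since $\sigma_X\le\sqrt{2}K$, we have $\sigma_X/(2K)\le 1/\sqrt{2}$. Hence this is at most $\eta\sigma_X^2$ uniformly in $X$ once $c_0$ is small.
\item Then $c=(1-\eta)/3>1/8$, while $a=\lambda^2\sigma_X^2\lesssim c_0^2$ is tiny, so $1+ca\ge e^{a/8}$.
\end{itemize}
You should either state the result with such a $c_0$ or supply the sharper extremal estimate.
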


Consequently, combining Lemma~\ref{lem:momGeneratingFunct} and Lemma~\ref{lem:momGeneratingFunct_lower} verifies that for sufficiently small $\lambda$, it holds that 
$$\log(\EE(\lambda X))\asymp \lambda^2,$$ 
which is quadratic in $\lambda$. In fact, we will demonstrate in Example~\ref{exp:1} and Theorem~\ref{thm:conc_univariate_SigmaL} in Section \ref{sec:univariate_var} later that $\log(\EE(\lambda X))\asymp \lambda^2\sigma_X^2$ holds in this range. It implies that regardless of values of $\alpha$, for small enough $\lambda$, the moment generating function of $X$ behaves as if the random variable were sub-Gaussian with the Orlicz norm 
proportional to the standard deviation. An application of Lemma~\ref{lem:momGeneratingFunct} yields the following concentration inequalities.

\begin{theorem}[Concentration inequalities]
    Let $X_1,\ldots,X_n$ be independent mean-zero random variables with $\|X_i\|_{\Psi_{\alpha}}<\infty$, $a_1,\ldots,a_n$ any $n$ scalars, and $\beta$ satisfy $\frac{1}{\alpha}+\frac{1}{\beta}=1$. Then there exist some constants $C_1,C_2>0$ that do not depend on $\alpha$, $t$, and $X$ such that for $\alpha\geq 2$, 
    \begin{align*}
        \PP\l(\l|\sum_{i=1}^n a_iX_i \r| \geq t\r)\leq2\exp\l(-C_1\max\l\{\frac{t^2}{\sum_{i=1}^n a_i^2\|X_i\|_{\Psi_{\alpha}}^2},\frac{t^{\alpha}}{(\sum_{i=1}^n |a_i|^{\beta} \|X_i\|_{\Psi_{\alpha}}^{\beta})^{\frac{\alpha}{\beta}}}\r\}\r),
    \end{align*}
    and for $1\leq\alpha\leq2$, 
    \begin{align*}
        \PP\l(\l|\sum_{i=1}^n a_iX_i \r| \geq t\r)\leq 2\exp\l(-C_2\min\l\{\frac{t^2}{\sum_{i=1}^n a_i^2\|X_i\|_{\Psi_{\alpha}}^2},\frac{t^{\alpha}}{(\sum_{i=1}^n |a_i|^{\beta} \|X_i\|_{\Psi_{\alpha}}^{\beta})^{\frac{\alpha}{\beta}}}\r\}\r),
    \end{align*}
    for all $t\geq 0$. 
    \label{thm:conc_univariates}
\end{theorem}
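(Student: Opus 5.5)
The plan is the standard Chernoff argument, fed by the MGF bounds of Lemma~\ref{lem:momGeneratingFunct}. By symmetry ($X_i\mapsto -X_i$, $a_i\mapsto -a_i$) it suffices to bound $\PP(\sum_i a_iX_i\geq t)$ and then double. Write $Y_i:=a_iX_i$, so that $\|Y_i\|_{\Psi_\alpha}=|a_i|\|X_i\|_{\Psi_\alpha}=:K_i$. Independence and Markov's inequality give
\begin{align*}
\PP\l(\sum_i Y_i\geq t\r)\leq \exp(-\lambda t)\prod_i \EE\l\{\exp(\lambda Y_i)\r\} \quad \text{ for all } \lambda\geq 0.
\end{align*}
Set $A:=\sum_i K_i^2$ and $B:=\sum_i K_i^\beta$.

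\textbf{Case $\alpha\geq 2$.} Part (1) of Lemma~\ref{lem:momGeneratingFunct} bounds each factor by $\exp(C_1\min\{\lambda^2K_i^2,\lambda^\beta K_i^\beta\})$. Since a sum of minima is at most the minimum of the sums, the product is at most $\exp(C_1\min\{\lambda^2A,\lambda^\beta B\})$. This yields two separate Chernoff bounds, and we may take the better of them, which is where the $\max$ comes from.
\begin{itemize}[leftmargin=*]
\item Using only the quadratic branch and optimizing $\exp(-\lambda t+C_1\lambda^2A)$ at $\lambda=t/(2C_1A)$ gives $\exp(-t^2/(4C_1A))$.
\item Using only the $\beta$ branch and optimizing $\exp(-\lambda t+C_1\lambda^\beta B)$ at $\lambda\asymp (t/B)^{\alpha-1}$ gives $\exp(-c\,t^\alpha/B^{\alpha-1})$. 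This uses the Legendre transform of $\lambda^\beta$, whose exponent is $\beta-1=1/(\alpha-1)$.
\end{itemize}
Note that $B^{\alpha-1}=B^{\alpha/\beta}$. We need $c$ uniform in $\alpha\geq 2$: the conjugate constant $(\beta-1)\beta^{-\alpha/(\alpha-1)}(C_1)^{-1/(\beta-1)}$ stays bounded away from zero because $\beta\in(1,2]$. Both bounds hold for every $t\geq 0$, so the minimum of the two, i.e.\ the $\max$ in the exponent, also holds.

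\textbf{Case $1<\alpha\leq 2$.} Part (2) of Lemma~\ref{lem:momGeneratingFunct} is a piecewise bound with threshold $1/(C K_i)$ that depends on $i$, so a single $\lambda$ can fall in different regimes for different summands. I would first merge the two pieces into one global bound valid for all $\lambda\geq 0$:
\begin{align*}
\log\EE\l\{\exp(\lambda Y_i)\r\}\leq c_\beta\max\{\lambda^2K_i^2,\lambda^\beta K_i^\beta\}\leq c_\beta\l(\lambda^2K_i^2+\lambda^\beta K_i^\beta\r).
\end{align*}
Summing over $i$ gives the exponent $-\lambda t+c_\beta(\lambda^2A+\lambda^\beta B)$. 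If $\lambda^2A\geq\lambda^\beta B$, take $\lambda\asymp t/A$ to get $\exp(-ct^2/A)$. Otherwise take $\lambda\asymp(t/B)^{\alpha-1}$ to get $\exp(-ct^\alpha/B^{\alpha-1})$. Concretely, I would choose $\lambda=\min\{t/(4c_\beta A),\,c(t/B)^{\alpha-1}\}$. Each nonlinear term is then at most $\lambda t/4$, so the exponent is at most $-\lambda t/2$. This gives the $\min$ form: for either choice of $\lambda$, $\lambda t\gtrsim\min\{t^2/A,\,t^\alpha/B^{\alpha-1}\}$.

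\textbf{Main obstacle: uniformity in $\alpha$.} The factors $\beta$ and $C_3^\beta\beta$ in part (2) blow up as $\alpha\downarrow1$ (i.e.\ $\beta\to\infty$), and $\alpha=1$ itself is not covered by the lemma. I would handle this in two ranges.
\begin{itemize}[leftmargin=*]
\item For $\alpha\in[3/2,2]$, $\beta\leq 3$, so all constants are uniform.
\item For $\alpha\in[1,3/2)$, I would compare with the sub-exponential case. Use $\|Y\|_{\Psi_1}\lesssim\|Y\|_{\Psi_\alpha}$ and the classical Bernstein bound for $\Psi_1$ (which follows from the $\alpha=1$ moment bound $\EE|Y|^k\leq k!\,C^kK^k$ and the Taylor expansion of the MGF). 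This gives $\exp(-c\min\{t^2/A,\,t/\max_i K_i\})$. Then check that $t^\alpha/B^{\alpha-1}\lesssim$ this minimum in the relevant range, using $B^{1/\beta}\geq\max_i K_i$ and interpolation between $t^2/A$ and $t/\max_iK_i$.
\end{itemize}
I expect the second range to need the most care. Alternatively, the $\tau$-version of part (2), with $\tau=1/2$, gives constants $C_5^\beta 2^{[\beta]+2}$. These are exponential in $\beta$, but after the Legendre transform they enter only as a $(\cdot)^{-1/(\beta-1)}$ power, which stays bounded. I would try this route first, since it may make the $\alpha\to1$ issue disappear directly.
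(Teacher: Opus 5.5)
Your route is the paper's. In both, the argument is Chernoff with Lemma~\ref{lem:momGeneratingFunct}. For $\alpha\ge2$ you use $\sum_i\min\le\min\sum_i$ and keep the better of the two Chernoff bounds, which is the correct reading of the paper's ``insert $\lambda=\max\{\cdot,\cdot\}$''. For $1<\alpha\le2$ you use the $\tau=1/2$ version merged into a $\lambda^2A$ plus $\lambda^\beta B$ bound (the source of the paper's $2^\beta$), with $\lambda$ the smaller candidate. At $\alpha=1$ you use Bernstein, as the paper does via Vershynin's Theorem~2.9.1. One step, however, fails as you justify it.

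\textbf{Uniformity for $\alpha\ge2$.} The exact optimum is $\inf_{\lambda\ge0}\bigl(-\lambda t+C_1\lambda^\beta B\bigr)=-c_\alpha t^\alpha/B^{\alpha-1}$, with $c_\alpha=\frac{1}{\alpha}(C_1\beta)^{-(\alpha-1)}=(\beta-1)\beta^{-\alpha}C_1^{-(\alpha-1)}$. (Your factor $\beta^{-\alpha/(\alpha-1)}$ should read $\beta^{-\alpha}$.) This is \emph{not} bounded away from zero on $\alpha\ge2$. The condition $\beta\in(1,2]$ bounds $1/(\beta-1)=\alpha-1$ from below, not from above. As $\alpha\to\infty$ one has $\beta-1=1/(\alpha-1)\to0$ and $\beta^{-\alpha}\to e^{-1}$, so $c_\alpha\sim C_1^{-(\alpha-1)}/(e\alpha)$. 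This tends to $0$ whenever $C_1\ge1$ (exponentially if $C_1>1$), and Lemma~\ref{lem:momGeneratingFunct} offers no $C_1<1$. Because this is the exact infimum, no other choice of $\lambda$ rescues it.

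What the argument does give uniformly is a scale form:
$c_\alpha t^\alpha/B^{\alpha-1}=\frac{C_1\beta}{\alpha}\bigl(t/(C_1\beta B^{1/\beta})\bigr)^\alpha\ge C_1\bigl(t/(C'B^{1/\beta})\bigr)^\alpha$ with $C'=2e^{1/e}C_1$, using $\alpha\le e^{\alpha/e}$ and $\beta\le2$. In other words, the universal constant sits inside the $\alpha$-th power. Alternatively, you get an $\alpha_0$-dependent constant in front on $2\le\alpha\le\alpha_0$. An $\alpha$-free constant in front of $t^\alpha/B^{\alpha/\beta}$, as the statement literally claims, needs more than this Chernoff step. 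The paper's proof makes the identical substitution $\lambda=(t/(C_1\beta B))^{1/(\beta-1)}$ and simply asserts an $\alpha$-free $C_2$. So this is a gap you share with the paper, not an idea it supplies.

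\textbf{Smaller points for $1\le\alpha\le2$.}
\begin{itemize}
\item In the $\tau=1/2$ route, keep two separate constants: $2C_4$ on $\lambda^2A$ and $C_5^\beta2^{[\beta]+2}$ on $\lambda^\beta B$. A single $c_\beta$ would push the exponential-in-$\beta$ factor onto the $t^2/A$ term.
\item With separate constants everything is uniform. The large constant enters only through $(C_5^\beta2^{[\beta]+2})^{-(\alpha-1)}$, which is bounded below because $\beta(\alpha-1)=\alpha\le2$ and $([\beta]+2)(\alpha-1)\le3\alpha-2\le4$.
\item Drop the Bernstein fallback for $\alpha\in(1,3/2)$. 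For large $t$ the target exponent $t^\alpha/B^{\alpha-1}$ grows superlinearly, while Bernstein only yields $t/\max_iK_i$, so no interpolation closes the gap. Bernstein is usable only at $\alpha=1$ itself, where it is exactly the claim with $B^{1/\beta}$ read as $\max_iK_i$.
\end{itemize}
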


Theorem~\ref{thm:conc_univariates} above establishes the concentration inequalities for sub-Weibull random variables. It unveils an interesting phase transition at $\alpha=2$. Regardless of $\alpha\in[2,\infty)$ or $\alpha\in[1,2]$, for sufficiently small $t$, the tail probability of $\sum_{i=1}^n a_iX_i$ behaves as if $\{X_i\}_{i=1}^n$ were sub-Gaussian.  When $t$ is large enough, the tail probability presented in Theorem~\ref{thm:conc_univariates} enjoys a $\Psi_\alpha$ decaying tail. Indeed, the triangle inequality with respect to the $\Psi_\alpha$-norm leads to 
$$\l\|\sum_{i=1}^n a_iX_i \r\|_{\Psi_{\alpha}}\leq \sum_{i=1}^n |a_i|\l\|X_i \r\|_{\Psi_{\alpha}},$$ 
which entails that $\sum_{i=1}^n a_iX_i $ has a finite $\Psi_\alpha$-norm. This is consistent with Theorem~\ref{thm:conc_univariates}, while Theorem~\ref{thm:conc_univariates} \textit{sharpens} the triangle inequality. In addition, when $\alpha=1$, we have $\beta=\infty$ and $(\sum_{i=1}^n |a_i|^{\beta} \|X_i\|_{\Psi_{1}}^{\beta})^{\frac{1}{\beta}}=\max_{i=1,\ldots,n}|a_i|\|X_i\|_{\Psi_1}$.
  
The following theorem provides the moment inequalities for $\sum_{i = 1}^n a_iX_i$.

\begin{theorem}[Moment inequalities]
     Let $X_1,\ldots,X_n$ be independent mean-zero random variables with $\|X_i\|_{\Psi_{\alpha}}<\infty$ for some $\alpha\geq 1$,  $a_1,\ldots,a_n$ any $n$ scalars, and $p\geq 1$. 
     Then we have that for all $\alpha\geq 1$, 
     \begin{multline*}
        \EE\l\{ \l|\sum_{i=1}^n a_iX_i \r|^p\r\}\leq  C_1^pp^{\frac{p}{2}}\l(\sum_{i=1}^n a_i^2\|X_i\|_{\Psi_\alpha}^2 \r)^{\frac{p}{2}}\\
        +C_1^{p}p^{\frac{p}{\alpha}}\l(\sum_{i=1}^n a_i^\beta \|X_i\|_{\Psi_{\alpha}}^\beta\r)^{\frac{p}{\beta}}\exp\l(-C\l(\frac{(\sum_{i=1}^{n} |a_i|^{\beta} \|X\|_{\Psi_{\alpha}}^{\beta})^{\frac{1}{\beta}}}{(\sum_{i=1}^n a_i^2\|X_{i}\|_{\Psi_{\alpha}}^2)^{\frac{1}{2}}}\r)^{\frac{2\alpha}{\alpha-2}}\r),
    \end{multline*}
    and additionally for all $\alpha\geq 2$, 
         \begin{align*}
             \EE\l\{ \l|\sum_{i=1}^n a_iX_i \r|^p\r\}\leq  C_2^p\min\l\{p^{\frac{p}{2}}\l(\sum_{i=1}^n a_i^2\|X_i\|_{\Psi_{\alpha}}^2 \r)^{\frac{p}{2}}, p^{\frac{p}{\alpha}}\l(\sum_{i=1}^n |a_i|^\beta \|X_i\|_{\Psi_{\alpha}}^\beta\r)^{\frac{p}{\beta}} \r\},
         \end{align*}
         where constants $C,C_1,C_2>0$ do not depend on $\alpha$, $p$, and $\{X_i\}_{i=1}^n$.
     \label{cor:moment_univariate}
\end{theorem}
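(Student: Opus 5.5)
We would derive Theorem~\ref{cor:moment_univariate} by integrating the tail bounds of Theorem~\ref{thm:conc_univariates} through
\[
\EE\l\{|S|^p\r\}=\int_0^\infty p t^{p-1}\PP(|S|\geq t)\,dt,\qquad S:=\sum_{i=1}^n a_iX_i .
\]
Write $A:=(\sum_i a_i^2\|X_i\|_{\Psi_\alpha}^2)^{1/2}$ and $B:=(\sum_i |a_i|^\beta\|X_i\|_{\Psi_\alpha}^\beta)^{1/\beta}$. The two exponents in Theorem~\ref{thm:conc_univariates} cross at $t_0$, where $t_0^2/A^2=t_0^\alpha/B^\alpha$, that is, $t_0=(B^\alpha/A^2)^{1/(\alpha-2)}$ for $\alpha\neq2$.

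\textbf{Case $\alpha\geq 2$.} The max-type bound gives $\PP(|S|\geq t)\leq 2\exp(-C_1t^2/A^2)$ and also $\PP(|S|\geq t)\leq 2\exp(-C_1t^\alpha/B^\alpha)$, each for all $t$. Integrating each separately yields two Gamma-function bounds,
\[
\EE|S|^p\lesssim C^p\Gamma(p/2+1)A^p\leq C^pp^{p/2}A^p,\qquad \EE|S|^p\lesssim C^p\Gamma(p/\alpha+1)B^p\leq C^pp^{p/\alpha}B^p .
\]
Taking the smaller of the two proves the min statement. The constants must not depend on $\alpha$; this is fine because $C_1^{-p/\alpha}\leq \max(1,C_1^{-1})^{p}$ and $\Gamma(p/\alpha+1)\leq (p/\alpha+1)^{p/\alpha}\lesssim C^p p^{p/\alpha}$.

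\textbf{General bound, $\alpha\geq1$.} Split the integral at $t_0$.

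On $[0,t_0]$ we use the sub-Gaussian piece. For $1\leq\alpha\leq2$ the min is the Gaussian term exactly when $t\leq t_0$; for $\alpha\geq2$ the Gaussian term is always available. Either way this part contributes at most $C^pp^{p/2}A^p$.

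On $[t_0,\infty)$ we use $2\exp(-C_2t^\alpha/B^\alpha)$. For $1\leq\alpha\leq2$ the min is the $\Psi_\alpha$ term there; for $\alpha\geq2$ we simply use the max. The tail integral is
\[
\int_{t_0}^\infty pt^{p-1}e^{-ct^\alpha/B^\alpha}\,dt .
\]
Substituting $u=ct^\alpha/B^\alpha$ turns it into $B^pc^{-p/\alpha}(p/\alpha)\,\Gamma(p/\alpha,u_0)$, an upper incomplete Gamma function with $u_0=c(t_0/B)^\alpha$. A short computation gives
\[
t_0/B=(B/A)^{2/(\alpha-2)},\qquad\text{hence}\qquad u_0=c(B/A)^{2\alpha/(\alpha-2)},
\]
which is the quantity appearing in the stated exponent.

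Next we bound $\Gamma(s,u_0)$ for $s=p/\alpha$. The plan is to use $\Gamma(s,u_0)\leq e^{-u_0/2}\,2^s\Gamma(s)$, which follows from $e^{-u}\leq e^{-u_0/2}e^{-u/2}$ for $u\geq u_0$. This produces $C^pp^{p/\alpha}B^p\exp(-cu_0)$, which is the second term.

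\textbf{Main obstacle.} The delicate step is the bookkeeping of the exponent $2\alpha/(\alpha-2)$ and its sign regime. For $\alpha<2$ this exponent is negative, and $B\geq A$ only up to $n$-dependent factors, so we must check that the formula still matches. Since $\beta\geq 2$ gives $B\leq A$ there, $(B/A)^{2\alpha/(\alpha-2)}\geq 1$, which is consistent.

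The degenerate values also need separate treatment:
\begin{itemize}
\item At $\alpha=2$, where $t_0$ is undefined, the exponential term is read as its limit and the bound follows directly from the $\alpha\geq2$ case.
\item At $\alpha=1$, where $\beta=\infty$, the same computation goes through with $B=\max_i|a_i|\|X_i\|_{\Psi_1}$.
\end{itemize}
Finally, we must keep every constant uniform in $\alpha$. Here we use $\alpha\geq1$ to bound $c^{-p/\alpha}$ and $2^{p/\alpha}$ by $C^p$.
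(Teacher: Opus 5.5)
Your proposal is correct and follows the paper's own proof: you integrate the tail bounds of Theorem~\ref{thm:conc_univariates} and split at the crossing point $t_0$. The Gaussian piece is bounded by the full Gamma integral, the $\Psi_\alpha$ piece by an incomplete Gamma integral whose lower limit yields $\exp\bigl(-C(B/A)^{2\alpha/(\alpha-2)}\bigr)$, and for $\alpha\geq 2$ you take the smaller of the two full integrals. The only difference is cosmetic: the paper bounds the incomplete Gamma function via Lemma~\ref{teclem:integral}, with separate cases $p\leq\alpha$ and $p\geq\alpha$, whereas your estimate $e^{-u}\leq e^{-u_0/2}e^{-u/2}$ handles all $p$ at once.
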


Theorem~\ref{cor:moment_univariate} above provides upper bounds for the $p$th moment of $\sum_{i=1}^n a_iX_i$. Specifically, in the context of i.i.d. $X_i$'s with equal weights, Theorem~\ref{cor:moment_univariate} implies that for all $\alpha\geq 1$ and all $p\geq 1$, 
\begin{align*}
    \EE\l\{\l|\frac{1}{\sqrt{n}}\sum_{i=1}^n X_i\r|^p\r\}\leq C_1^p p^{\frac{p}{2}}\|X\|_{\Psi_\alpha}^{p}
    +C_1^{p}p^{\frac{p}{\alpha}}n^{\frac{p}{2}-\frac{p}{\alpha}}\|X\|_{\Psi_{\alpha}}^p\cdot\exp(-cn).
\end{align*}
Hence, for each fixed $p$, when $\sigma_X\asymp\|X\|_{\Psi_\alpha}$, the right-hand side of the expression above is dominated by $C_1^pp^{\frac{p}{2}}\|X\|_{\Psi_\alpha}^p\asymp C_1^pp^{\frac{p}{2}}\sigma_X^p$ as $n\to\infty$, which is consistent with the central limit theorem. Meanwhile, for $p$ varying with $n$ and the scenario of $\alpha\leq 2$, when $p\leq n$ the upper bound is dominated by $p^{\frac{p}{2}}$, whereas when $p\geq n$ the upper bound is dominated by $p^{\frac{p}{\alpha}}$. Moreover, for the scenario of $\alpha\geq 2$, Theorem~\ref{cor:moment_univariate} entails the following upper bound
\begin{align*}
    \EE\l\{ \l|\frac{1}{\sqrt{n}}\sum_{i=1}^n X_i \r|^p\r\}\leq C^p\min\l\{p^{\frac{p}{2}},p^{\frac{p}{\alpha}}n^{\frac{p}{2}-\frac{p}{\alpha}}\r\}\cdot\|X\|_{\Psi_{\alpha}}^p,
\end{align*}
where $X_i$'s are i.i.d. The above inequality shows that for $\alpha\geq 2$, when $p\leq n$ it is bounded by $C_1^\frac{p}{2}p^{\frac{p}{2}} $, while when $n\leq p$ it is bounded by $C_1^{\frac{p}{\alpha}}p^{\frac{p}{\alpha}}n^{\frac{p}{2}-\frac{p}{\alpha}} $.   

A combination of Theorem~\ref{thm:conc_univariates} and Theorem~\ref{cor:moment_univariate} further yields the following bound for the Orlicz norm.

\begin{corollary}[Bound on Orlicz norm]
    Let $X_1,\ldots,X_n$ be independent mean-zero random variables with $\|X_i\|_{\Psi_{\alpha}}<\infty$, and $a=(a_1,\ldots,a_n)$ any $n$-dimensional vector. Then we have that for $\alpha\geq 2$, 
    \begin{align*}
        \l\|\sum_{i=1}^n a_i X_i\r\|_{\Psi_{\alpha}}\leq C_1\l(\sum_{i=1}^n |a_i|^{\beta}\|X_i\|_{\Psi_{\alpha}}^{\beta}\r)^{\frac{1}{\beta}},\quad \l\|\sum_{i=1}^n a_i X_i\r\|_{\Psi_{2}}\leq C_1\l(\sum_{i=1}^n a_i^2\|X_i\|_{\Psi_{\alpha}}^{2}\r)^{\frac{1}{2}},
    \end{align*}
    and for $\alpha\in[1,2]$, 
    \begin{align*}
    \l\| \sum_{i=1}^n a_iX_i\r\|_{\Psi_\alpha}\leq C_2 \l(\sum_{i=1}^n a_i^{2}\|X_i\|_{\Psi_{\alpha}}^{2}\r)^{\frac{1}{2}},
\end{align*}
where $C_1,C_2>0$ are constants that do not depend on $\alpha$, $a$, and $X$.
    \label{cor:norm_univariate}
\end{corollary}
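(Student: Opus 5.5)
The plan is to read off the Orlicz norm bounds from the tail bounds of Theorem~\ref{thm:conc_univariates}, using the standard fact that a tail bound $\PP(|Y|\geq t)\leq 2\exp(-(t/K)^\gamma)$ for all $t\geq 0$ gives $\|Y\|_{\Psi_\gamma}\leq c_0 K$ for a universal $c_0$. To keep $c_0$ uniform in $\gamma$, I would not use the usual tail integral. Instead I would compute directly: $\EE\exp(|Y|^\gamma/u^\gamma)=1+\int_0^\infty \PP(|Y|^\gamma\geq s u^\gamma) e^s\,ds \leq 1+2\int_0^\infty e^{-s(u/K)^\gamma+s}ds$, and take $u=3^{1/\gamma}K\leq 3K$. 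The last integral is then $1/2$, so the expectation is at most $2$. This argument is uniform in $\gamma\geq 1$. Throughout, set $Y=\sum_i a_iX_i$, $A_2=(\sum_i a_i^2\|X_i\|_{\Psi_\alpha}^2)^{1/2}$ and $A_\beta=(\sum_i|a_i|^\beta\|X_i\|_{\Psi_\alpha}^\beta)^{1/\beta}$.

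\emph{Case $\alpha\geq 2$.} The max-type bound of Theorem~\ref{thm:conc_univariates} dominates each term of the max separately. Keeping only the second term gives $\PP(|Y|\geq t)\leq 2\exp(-(t/(C_1^{-1/\alpha}A_\beta))^\alpha)$. Since $C_1^{-1/\alpha}\leq\max\{1,C_1^{-1/2}\}$ for $\alpha\geq 2$, the fact above yields $\|Y\|_{\Psi_\alpha}\lesssim A_\beta$. Keeping only the first term gives a sub-Gaussian tail with parameter $C_1^{-1/2}A_2$, and hence $\|Y\|_{\Psi_2}\lesssim A_2$. Both constants are independent of $\alpha$. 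As an alternative route, the moment bound of Theorem~\ref{cor:moment_univariate} (the min form) can be used with the series expansion of $\exp$, which is what the paper hints at. I expect the tail route to suffice, however.

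\emph{Case $1\leq\alpha\leq 2$.} Here the bound is min-type, and the key step is to compare the two terms. Since $\beta\geq 2$, the $\ell_\beta$ norm is at most the $\ell_2$ norm, so $A_\beta\leq A_2$. Therefore $t^\alpha/A_\beta^\alpha\geq t^\alpha/A_2^\alpha$. For $t\geq A_2$ we also have $t^2/A_2^2\geq t^\alpha/A_2^\alpha$. Consequently the min is at least $(t/A_2)^\alpha$ when $t\geq A_2$, and so $\PP(|Y|\geq t)\leq 2\exp(-C_2(t/A_2)^\alpha)$ in that range. For $t<A_2$ the right-hand side $2\exp(-C_2(t/A_2)^\alpha)$ is at least $2e^{-C_2}$. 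I would handle this range by replacing the bound with $2\exp(-c(t/A_2)^\alpha)$ and a smaller constant: use the trivial bound $\PP\leq 1$ whenever $2e^{-c}\geq 1$, i.e.\ $c\leq\log 2$. Taking $c=\min\{C_2,\log 2\}$, we get $\PP(|Y|\geq t)\leq 2\exp(-(t/(c^{-1/\alpha}A_2))^\alpha)$ for all $t$, with $c^{-1/\alpha}\leq c^{-1}$. The fact above then gives $\|Y\|_{\Psi_\alpha}\leq C_2'A_2$.

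The only delicate point is uniformity of the constants in $\alpha$. This is handled by the explicit choice $u=3^{1/\gamma}K$ together with the bounds $C^{-1/\alpha}\leq\max\{1,C^{-1}\}$.
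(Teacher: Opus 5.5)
Your argument is correct, but it takes a more direct route than the paper's. The paper never converts tails to Orlicz norms directly. It goes tail $\to$ moments $\to$ Orlicz norm:
\begin{itemize}
\item It first invokes Theorem~\ref{cor:moment_univariate}, which is itself obtained by integrating the tail bounds of Theorem~\ref{thm:conc_univariates}.
\item For $\alpha\ge 2$ it uses the min form $\|\sum_i a_iX_i\|_p\le C p^{1/\alpha}A_\beta$ (respectively $Cp^{1/2}A_2$).
\item For $\alpha\in[1,2]$ it uses the general form together with $\sqrt p\le p^{1/\alpha}$ and $A_\beta\le A_2$.
\item It then expands $\EE\exp(|Y/K|^\alpha)=1+\sum_{p\ge1}\|Y\|_{p\alpha}^{p\alpha}/(p!K^{\alpha p})$ and applies Stirling's formula. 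It chooses $K$ large enough to absorb the factor $\alpha^p$, using $\sup_{\alpha\ge 1}\alpha^{1/\alpha}<\infty$.
\end{itemize}

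You go straight from tail to Orlicz norm, using only Theorem~\ref{thm:conc_univariates} and the layer-cake identity $\EE e^Z=1+\int_0^\infty e^s\PP(Z\ge s)\,ds$. This removes an intermediate step. Your single choice $u=3^{1/\gamma}K$ also makes the $\alpha$-uniformity of the constants explicit ($3C_1^{-1/\alpha}$, $3C_1^{-1/2}$, $3c^{-1}$). In the paper, that uniformity has to be tracked through both Stirling's formula and the moment theorem.

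Your comparison $(t/A_2)^2\ge (t/A_2)^\alpha$ for $t\ge A_2$, together with the trivial bound for $t<A_2$, is exactly the tail-side counterpart of the paper's $\sqrt p\le p^{1/\alpha}$. It also avoids carrying the exponentially small correction term from Theorem~\ref{cor:moment_univariate}, which the paper has to handle in the $\alpha\in[1,2]$ case. In return, the paper's moment route would apply unchanged whenever only moment growth $\|Y\|_p\lesssim p^{1/\alpha}A$ is available, without a tail bound.
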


Corollary~\ref{cor:norm_univariate} above gives the upper bound for the $\Psi_\alpha$- and $\Psi_2$-norms of $\sum_{i=1}^n a_i X_i$, where the bound is expressed explicitly by coefficients $a_i$ and $\|X_i\|_{\Psi_\alpha}$. Similarly, a phase transition at $\alpha=2$ is observed. Generally, Corollary~\ref{cor:norm_univariate} is \textit{not} improvable, and when it is necessary to distinguish $\|X\|_{\Psi_\alpha}$ and $\sigma_X$, a \textit{sharper and more delicate} bound is given in Corollary~\ref{cor:norm_sigmaL} in Section~\ref{sec:univariate_var} later. Taken together, Theorem~\ref{thm:conc_univariates}, Theorem~\ref{cor:moment_univariate}, and Corollary~\ref{cor:norm_univariate} \textit{improve and complete} the univariate concentration inequalities in the works including \cite{ledoux2013probability}, \cite{boucheron2003concentration}, and \cite{kuchibhotla2022moving}, among others.

\begin{remark}[Characterization of concentration]
    A natural question is what the appropriate characterization of $\sum_{i=1}^n a_iX_i$ is. Indeed, we provide the tail probability in Theorem~\ref{thm:conc_univariates}, the bound of moments in Theorem~\ref{cor:moment_univariate}, and its $\Psi_\alpha$-norm in Corollary~\ref{cor:norm_univariate}. The tail probability in Theorem~\ref{thm:conc_univariates} presents a delicate \textit{interplay} between $\Psi_2$- and $\Psi_\alpha$-tails.  We emphasize that it is \textit{not} possible to obtain the delicate tail probability in Theorem~\ref{thm:conc_univariates} \textit{simply} based on the bounds of the $\Psi_\alpha$-norm or moments.
\end{remark}

\section{Concentration inequalities when Orlicz norm can exceed standard deviation greatly} 
\label{sec:univariate_var}

In this section, we further investigate the concentration inequalities of sub-Weibull random variables when the Orlicz norm may \textit{not} be proportional to the standard deviation, in which the tail probability bounds established in Section \ref{sec:univariate} earlier may \textit{no longer} be tight. 
Indeed, it holds that $\sigma_X:=\sqrt{\operatorname{var}(X)}\leq \sqrt 2\|X\|_{\Psi_\alpha}$ for any random variable $X$. However, these two quantities (i.e., the Orlicz norm and standard deviation) may \textit{not} have the same scale in 
general, e.g., for Bernoulli distributions; that is, the Orlicz norm can exceed the standard deviation greatly. This requires us to distinguish the standard deviation $\sigma_X$ from the Orlicz norm $\|X\|_{\Psi_\alpha}$. Specifically, we now focus on random variables that are characterized by moments, where the moments are determined jointly by two positive quantities $\sigma$ and $L$ as specified in the definition below.

\begin{definition}
    There exist two positive constants $\sigma$ and $L$ such that for all integers $k\geq 2$ and some $\alpha\geq 1$, it holds that 
    \begin{align*}
        \EE|X|^k\leq k^{\frac{k}{\alpha}}\sigma^2L^{k-2}.
    \end{align*}\label{assm:sigma_L}
\end{definition}

Definition~\ref{assm:sigma_L} above gives a delicate characterization of the distribution and has been prevalent in the literature; see, e.g., \cite{van2013bernstein} and \cite{alquier2013sparse} for the condition under $\alpha=2$. Indeed, Definition~\ref{assm:sigma_L} is related to the Orlicz $\Psi_\alpha$-norm; see the remark below.

\begin{remark}
    If a random variable $X$ satisfies Definition~\ref{assm:sigma_L} with some $(\sigma,L)$, its Orlicz $\Psi_\alpha$-norm can be bounded as $\|X\|_{\Psi_\alpha}\leq C\max\{\sigma,L\}$. On the other hand, if a random variable $X$ admits $\|X\|_{\Psi_\alpha}<\infty$, it satisfies Definition~\ref{assm:sigma_L} with the choice of $(\sigma,L) = (\|X\|_{\Psi_\alpha},C\|X\|_{\Psi_\alpha})$. 
    \label{rmk:def_SigmaL}
\end{remark}

We emphasize that $\sigma\ll L$ can indeed occur in many scenarios. As a concrete example, consider a Bernoulli random variable $X\sim\operatorname{Ber}(p)$. Its centered moments scale with its variance, although its Orlicz norm remains bounded by an absolute constant, i.e., $$\EE|X-\EE X|^k=p(1-p)\l\{p^{k-1}+(1-p)^{k-1}\r\}.$$
It shows that $\operatorname{Ber}(p)$ satisfies Definition~\ref{assm:sigma_L} with $\sigma=\sqrt{p(1-p)}$ and $L=1$. More generally, for a random variable $X$ with a finite $\Psi_\alpha$-norm, we provide two examples of admissible pairs $(\sigma,L)$ defined through $\sigma_X:=(\EE X^2)^{\frac{1}{2}}$ and $\|X\|_{\Psi_\alpha}$.

\begin{example}
    Assume that $X$ is a mean-zero random variable with $\|X\|_{\Psi_\alpha}<\infty$. Denote by $\sigma_X:=(\EE X^2)^{\frac{1}{2}}$. Then $X$ satisfies Definition~\ref{assm:sigma_L} with
    \begin{align*}
        \sigma:=\sigma_X,\quad L:=\|X\|_{\Psi_\alpha}\log^{\frac{1}{\alpha}}\l(\frac{2\|X\|_{\Psi_\alpha}}{\sigma_X}\r).
    \end{align*}
    This characterization is \textit{sharp} with the selection of $\sigma$ in light of Lemma~\ref{lem:momGeneratingFunct_lower} and the fact that $\sigma_X^2=\EE X^2\leq 2^{2/\alpha}\sigma^2$. The proof is nontrivial and presented in Section~\ref{sec:proof:univariate_var} of the Supplementary Material. Essentially, the proof exploits the truncation technique used in \cite{ahlswede2002strong}, \cite{recht2011simpler}, \cite{gross2010quantum}, \cite{gross2011recovering}, and \cite{koltchinskii2011neumann}.
    \label{exp:1}
\end{example}

\begin{example}
    Assume that $X$ is a mean-zero random variable with $\|X\|_{\Psi_\alpha}<\infty$. Denote by $\sigma_X:=(\EE X^2)^{\frac{1}{2}}$. Then $X$ satisfies Definition~\ref{assm:sigma_L} with
    \begin{align*}
        \sigma:=\sqrt{\sigma_X \|X\|_{\Psi_\alpha}},\quad L:=C\|X\|_{\Psi_\alpha}.
    \end{align*}
    This characterization is \textit{sharp} with the value of $L$. Indeed, the definition of the sub-Weilbull random variable guarantees that $\|X\|_k\leq Ck^{\frac{1}{\alpha}}\|X\|_{\Psi_\alpha}$, which entails that $L\geq C\|X\|_{\Psi_\alpha}$ for some constant $C>0$. The proof is also included in Section~\ref{sec:proof:univariate_var}.
    \label{exp:2}
\end{example}

The lemma below provides the bound on the moment generating function for random variables satisfying Definition~\ref{assm:sigma_L}, where a \textit{delicate interplay} between $\sigma$ and $L$ is observed. Due to the bounds given in Examples~\ref{exp:1} and \ref{exp:2} above, where generally each random variable $X$ with $\|X\|_{\Psi_\alpha}<\infty$ satisfies Definition~\ref{assm:sigma_L} for nontrivial $\sigma\leq L$, in what follows we assume that $\sigma\leq L$.

\begin{lemma}[Moment generating function]
Assume that random variable $X$ has mean zero and satisfies Definition~\ref{assm:sigma_L} with some $\alpha\geq 1$ and $(\sigma,L)$. Then we have the following bounds for the moment generating function of $X$, where $c,C_1,C_2,C_3>0$ are constants that do not depend on $\alpha,X,\sigma,L$.
    \begin{enumerate}
        \item When $\alpha\geq 2$, it holds that 
        \begin{align*}
            &\EE\exp(\lambda X)\leq \exp\l(C_1\lambda^2\sigma^2\r) \ \ \text{ for all } \lambda\leq\frac{c}{L},\\
            & \EE\exp(\lambda X)\leq \exp \l(C_2\min\l\{\lambda^\beta L^\beta, \lambda^2 L^2 \r\}\r) \ \ \text{ for all } \lambda\geq 0.
        \end{align*}
         \item When $1<\alpha\leq 2$, it holds that 
         \begin{align*}
            &\EE\exp(\lambda X)\leq \exp\l(C_1\lambda^2\sigma^2\r) \ \ \text{ for all } \lambda\leq\frac{c}{L},\\
            & \EE\exp(\lambda X)\leq \exp\l(C_3^\beta \frac{\tau^{-[\beta]-1}}{1-\tau}\lambda^\beta L^\beta\r) \ \ \text{ for all } \lambda\geq \frac{c\tau}{L},
        \end{align*}
        where $\tau>0$ is any constant in $(0,1)$.
        \item When $\alpha=1$, it holds that  $\EE\exp(\lambda X)\leq \exp\l(C_1\lambda^2\sigma^2\r) $ for all $\lambda\leq\frac{c}{L} $. 
    \end{enumerate}
    \label{lem:momGeneratingFunct_SigmaL}
\end{lemma}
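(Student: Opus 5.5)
The plan is to expand the moment generating function in its Taylor series and feed Definition~\ref{assm:sigma_L} into each term. Since $\EE X=0$,
\begin{align*}
\EE\exp(\lambda X)\leq 1+\sum_{k\geq 2}\frac{\lambda^k\EE|X|^k}{k!}\leq 1+\lambda^2\sigma^2\sum_{k\geq 2}\frac{k^{k/\alpha}(\lambda L)^{k-2}}{k!}.
\end{align*}
Using $k!\geq (k/e)^k$, the $k$th summand is at most $e^k k^{-k(1-1/\alpha)}(\lambda L)^{k-2}$.

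\emph{Small-$\lambda$ bound, all $\alpha\geq 1$.} Since $k^{-k(1-1/\alpha)}\leq 1$, for $\lambda\leq c/L$ with $c=1/(2e)$ the series is bounded by $e^2\sum_{k\geq2}(e\lambda L)^{k-2}\leq 2e^2$. Hence
\[
\EE\exp(\lambda X)\leq 1+2e^2\lambda^2\sigma^2\leq\exp(C_1\lambda^2\sigma^2).
\]
This proves the first display of items 1 and 2 and all of item 3 simultaneously. For $\alpha=1$ the factor $k^{k}/k!\leq e^k$ is exactly what forces the restriction $\lambda\lesssim 1/L$.

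\emph{Large-$\lambda$ bounds, $\alpha>1$.} Here the key observation is that, because $\sigma\leq L$, the bound $\EE|X|^k\leq k^{k/\alpha}\sigma^2L^{k-2}\leq k^{k/\alpha}L^k$ gives $\|X\|_k\leq k^{1/\alpha}L$. The standard moment characterization of Orlicz norms then yields $\|X\|_{\Psi_\alpha}\leq C L$, which is Remark~\ref{rmk:def_SigmaL}. For the integer moments one can sum $\EE|X/u|^{\alpha j}$ via interpolation to check $\EE\exp(|X/(CL)|^\alpha)\leq 2$; I take this as routine.
\begin{itemize}[leftmargin=*]
\item For $\alpha\geq 2$, Lemma~\ref{lem:momGeneratingFunct}(1) applied with $\|X\|_{\Psi_\alpha}\leq CL$ gives $\EE\exp(\lambda X)\leq\exp(C_2\min\{\lambda^2L^2,\lambda^\beta L^\beta\})$ for all $\lambda\geq 0$. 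This is the second display of item 1.
\item For $1<\alpha\leq 2$, the $\tau$-version of Lemma~\ref{lem:momGeneratingFunct}(2), again with $\|X\|_{\Psi_\alpha}\leq CL$, gives $\exp(C_5^\beta\frac{\tau^{-[\beta]-1}}{1-\tau}\lambda^\beta L^\beta)$ for $\lambda\geq c\tau/L$. The constant $c$ is matched by absorbing the factor $C$ from the norm comparison, so this is the second display of item 2.
\end{itemize}

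The main obstacle is keeping every constant free of $\alpha$. In the small-$\lambda$ step this is automatic. In the reduction to Lemma~\ref{lem:momGeneratingFunct}, however, the bound $\|X\|_{\Psi_\alpha}\leq CL$ must hold with $C$ uniform in $\alpha$. I would verify it directly:
\[
\EE\exp(|X|^\alpha/u^\alpha)=1+\sum_{j\geq1}\frac{\EE|X|^{\alpha j}}{u^{\alpha j}j!}.
\]
For non-integer $\alpha j$, bound $\EE|X|^{\alpha j}$ by $\EE|X|^{\lceil \alpha j\rceil}$ via Lyapunov, which gives roughly $(\alpha j+1)^{j+1/\alpha}L^{\alpha j}$. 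Combined with $j!\geq (j/e)^j$, this yields a geometric series once $u=CL$ with $C$ a large absolute constant, since $(\alpha j)^{j}/(C^{\alpha j}j^j)=(\alpha/C^\alpha)^j$ and $\alpha\leq C^\alpha/4$ for all $\alpha\geq1$.

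If that route proves delicate, the fallback is to redo the large-$\lambda$ argument directly from the series. The idea is to split the sum at $k\approx(\lambda L)^\beta$ and bound the summands $e^k k^{-k/\beta}(\lambda L)^k$. They are maximized near $k\asymp(\lambda L)^\beta$, and there they equal $\exp(O((\lambda L)^\beta))$. This mirrors the proof of Lemma~\ref{lem:momGeneratingFunct}.
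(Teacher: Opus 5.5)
The step that fails as written is the large-$\lambda$ transfer for $1<\alpha\le 2$. Lemma~\ref{lem:momGeneratingFunct}(2) gives the $\lambda^\beta$ bound only for $\lambda\ge \tau/(C\|X\|_{\Psi_\alpha})$. All you know is the upper bound $\|X\|_{\Psi_\alpha}\le C'L$, so the lemma's threshold is \emph{at least} $\tau/(CC'L)$. It can be much larger than $c\tau/L$ whenever $\|X\|_{\Psi_\alpha}\ll L$. Nothing in Definition~\ref{assm:sigma_L} stops $L$ from being a gross overestimate, e.g.\ $|X|\le 1$ with $\sigma=L=100$. So ``absorbing the factor $C$'' moves the threshold in the wrong direction. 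On the window $c\tau/L\le\lambda<\tau/(C\|X\|_{\Psi_\alpha})$ the lemma gives you no $\lambda^\beta L^\beta$ bound at all. For $\alpha\ge 2$ there is no such problem, since Lemma~\ref{lem:momGeneratingFunct}(1) holds for every $\lambda\ge 0$ and its right-hand side is monotone in the norm.

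The repair is short. On that window, use the lemma's small-$\lambda$ bound $\exp\bigl(C_4\lambda^2\|X\|_{\Psi_\alpha}^2/(1-\tau)\bigr)$. Since $\lambda L\ge c\tau$ and $\beta\ge 2$, you get $\lambda^2\|X\|_{\Psi_\alpha}^2\le C'^2(\lambda L)^\beta(c\tau)^{2-\beta}\le C'^2c^{-\beta}\tau^{-[\beta]-1}(\lambda L)^\beta$, which is absorbed into $C_3^\beta\tau^{-[\beta]-1}/(1-\tau)$.

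The paper avoids this issue, and also avoids needing $\|X\|_{\Psi_\alpha}\le CL$ uniformly in $\alpha$. It writes $\EE\exp(\lambda X)\le 1+\frac{\sigma^2}{L^2}\bigl(S(\lambda L)-1\bigr)$ with $S(x)=1+\sum_{k\ge2}C^kx^k/[k/\beta]!$. It then applies the series estimates from the proof of Lemma~\ref{lem:momGeneratingFunct} with $L$ in place of $\|X\|_{\Psi_\alpha}$, so every threshold is in terms of $L$ from the start. This is essentially your stated fallback.

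Your small-$\lambda$ argument, via $k^{k/\alpha}/k!\le e^k$ and a geometric series, is correct. It is more elementary than the paper's, which also routes that step through $S$, and it handles $\alpha=1$ directly.

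One small slip in your norm comparison: Lyapunov gives $\EE|X|^{\alpha j}\le(\EE|X|^m)^{\alpha j/m}\le m^jL^{\alpha j}$ with $m=\lceil\alpha j\rceil$, not $\EE|X|^{\alpha j}\le\EE|X|^m$. The conclusion is unaffected.
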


Lemma~\ref{lem:momGeneratingFunct_SigmaL} above bounds the moment generating function of $X$ under different ranges of $\alpha$. When $\lambda$ is sufficiently small, the upper bound depends \text{only} on $\lambda^2\sigma^2$, which is interestingly \textit{independent of} $L$. Specifically, combining Example~\ref{exp:1}, Lemma~\ref{lem:momGeneratingFunct_lower}, and  Lemma~\ref{lem:momGeneratingFunct_SigmaL} together shows that for $\lambda\leq \frac{1}{\|X\|_{\Psi_\alpha}}\l(\log\l(\frac{2\|X\|_{\Psi_\alpha}}{\sigma_X}\r)\r)^{-\frac{1}{\alpha}}$, $$\log\l(\EE(\lambda X) \r)\asymp\lambda^2\sigma_X^2.$$ When $\lambda$ is large enough, the upper bound behaves \textit{differently}, which coincides with the fact that $X$ has a finite $\Psi_\alpha$-norm.

The following concentration inequality follows from Lemma~\ref{lem:momGeneratingFunct_SigmaL}.
\begin{theorem}
    Assume that $X_1,\ldots,X_n$ are independent with mean zero, and $X_i$ satisfies Definition~\ref{assm:sigma_L} with $(\sigma_i,L_i)$. Then we have that for $\alpha\geq 2$, 
    \begin{multline*}
        \PP\l(\l|\sum_{i=1}^n a_iX_i \r|\geq t\r)
        \leq 2\exp\l(-C\max\l\{\frac{t^\alpha}{\l(\sum_{i=1}^n |a_i|^\beta L_i^\beta\r)^{\frac{\alpha}{\beta}} } , \frac{t^2}{\sum_{i=1}^n a_i^2 L_i^2 },\r.\r.\\\l.\l. \min\l\{ \frac{t^2}{\sum_{i=1}^n a_i^2\sigma_i^2}, \frac{t}{\max_{i=1,\ldots,n}|a_i|L_i}\r\} \r\}\r),
    \end{multline*}
    and for $\alpha\in[1,2]$, 
    \begin{multline*}
        \PP\l(\l|\sum_{i=1}^n a_iX_i\r|\geq t\r)
        \leq 2\exp\l(-C\max\l\{\min\l\{\frac{t^2}{\sum_{i=1}^n a_i^2\sigma_i^2},\frac{t}{\max_{i=1,\ldots,n}|a_i|L_i}\r\},\r.\r.\\\l.\l.\min\l\{\frac{t^2}{\sum_{i=1}^n a_i^2\sigma_i^2},\frac{t^\alpha}{\l(\sum_{i=1}^n |a_i|^{\beta}L_i^{\beta}\r)^{\frac{\alpha}{\beta}}}\r\}\r\}\r).
    \end{multline*}
    \label{thm:conc_univariate_SigmaL}
\end{theorem}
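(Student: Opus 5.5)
We prove the bound by the Chernoff method, using the moment generating function bounds of Lemma~\ref{lem:momGeneratingFunct_SigmaL}. We treat the upper tail $\PP(\sum_i a_iX_i\geq t)$; the lower tail follows by replacing $X_i$ with $-X_i$, which satisfies Definition~\ref{assm:sigma_L} with the same $(\sigma_i,L_i)$. By independence, for every $\lambda\geq 0$,
\begin{align*}
\PP\l(\sum_{i=1}^n a_iX_i\geq t\r)\leq \exp(-\lambda t)\prod_{i=1}^n \EE\exp(\lambda a_iX_i).
\end{align*}
Each $a_iX_i$ satisfies Definition~\ref{assm:sigma_L} with $(|a_i|\sigma_i,|a_i|L_i)$, and it is mean zero. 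So each factor can be bounded by any of the regimes of Lemma~\ref{lem:momGeneratingFunct_SigmaL}. The resulting tail bound is a maximum of several exponents, and we obtain each one by a separate choice of $\lambda$. This is legitimate because a probability bounded by each of several quantities is bounded by their minimum, which gives the $\max$ in the exponent.

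\textbf{Exponent $\min\{t^2/\sum a_i^2\sigma_i^2,\ t/\max|a_i|L_i\}$, all $\alpha\geq1$.} For $\lambda\leq c/\max_i|a_i|L_i$, the first bound of Lemma~\ref{lem:momGeneratingFunct_SigmaL} gives $\prod_i\EE\exp(\lambda a_iX_i)\leq \exp(C_1\lambda^2 S_\sigma)$, where $S_\sigma:=\sum_i a_i^2\sigma_i^2$. We take $\lambda=\min\{t/(2C_1S_\sigma),\ c/\max_i|a_i|L_i\}$. If the first term is the minimum, the exponent is $-t^2/(4C_1S_\sigma)$. Otherwise $\lambda\leq t/(2C_1S_\sigma)$ gives $C_1\lambda^2S_\sigma\leq\lambda t/2$, so the exponent is at most $-\lambda t/2=-ct/(2\max_i|a_i|L_i)$. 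This is the Bernstein-type argument, and it covers the first inner minimum in both displays.

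\textbf{$\alpha\geq2$, exponents $t^2/\sum a_i^2L_i^2$ and $t^\alpha/(\sum|a_i|^\beta L_i^\beta)^{\alpha/\beta}$.} The global bound $\EE\exp(\lambda a_iX_i)\leq\exp(C_2\min\{\lambda^\beta|a_i|^\beta L_i^\beta,\lambda^2a_i^2L_i^2\})$ holds for all $\lambda\geq0$. Summing over $i$ and using $\sum\min\leq\min\sum$, we bound the sum of the exponents by either $C_2\lambda^2\sum_i a_i^2L_i^2$ or $C_2\lambda^\beta\sum_i|a_i|^\beta L_i^\beta$, with no restriction on $\lambda$. Optimizing $-\lambda t+C_2\lambda^2A$ gives $-t^2/(4C_2A)$. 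Optimizing $-\lambda t+C_2\lambda^\beta B$ at $\lambda\asymp (t/B)^{1/(\beta-1)}$ gives exponent $-c\,t^{\beta/(\beta-1)}B^{-1/(\beta-1)}=-c\,t^\alpha/B^{\alpha/\beta}$, using $\beta/(\beta-1)=\alpha$. The constant must stay uniform in $\alpha$. The optimizing value is $(\beta-1)\beta^{-\beta/(\beta-1)}C_2^{-1/(\beta-1)}t^\alpha B^{1-\alpha}$, and $\beta\in(1,2]$ keeps this bounded below by a universal constant.

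\textbf{$\alpha\in(1,2]$, exponent $\min\{t^2/S_\sigma,\ t^\alpha/B^{\alpha/\beta}\}$, with $B=\sum_i|a_i|^\beta L_i^\beta$.} This is the delicate step, because the two MGF regimes now have thresholds depending on $i$. For a given $\lambda$, we split the indices into $\lambda|a_i|L_i\leq c$, where we use $C_1\lambda^2a_i^2\sigma_i^2$, and $\lambda|a_i|L_i\geq c\tau$ with $\tau=1/2$, where we use $C\lambda^\beta|a_i|^\beta L_i^\beta$. These sets cover all indices, so $\log\prod_i\EE\exp(\lambda a_iX_i)\leq C(\lambda^2S_\sigma+\lambda^\beta B)$ for every $\lambda\geq0$. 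Take $\lambda=\min\{t/(4CS_\sigma),\ c'(t/B)^{1/(\beta-1)}\}$. Each penalty term is then at most $\lambda t/4$, which leaves exponent $-\lambda t/2$. Evaluating the two cases gives $\min\{t^2/S_\sigma,\ t^\alpha/B^{\alpha/\beta}\}$ up to constants.

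The main obstacle is uniformity in $\alpha$ near $1$, where $\beta\to\infty$. There the constant $C_3^\beta\tau^{-[\beta]-1}$ grows like $c^\beta$, and this is absorbed by choosing $c'$ small relative to $1/C_3$. When $\alpha=1$ only the first exponent is claimed, but the second minimum is then $\min\{t^2/S_\sigma,\ t/\max_i|a_i|L_i\}$, which the Bernstein step already covers. Finally, for $\alpha\in[1,2]$ and $\alpha\geq 2$, combining the separate bounds and adding the two tails gives the factor $2$ and the stated maxima.
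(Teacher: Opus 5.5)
Your proposal is correct and follows essentially the paper's own proof. The paper also uses Chernoff's bound with Lemma~\ref{lem:momGeneratingFunct_SigmaL}: a Bernstein choice of $\lambda$ under $\lambda\max_i|a_i|L_i\le c$ for all $\alpha\ge1$, and separate optimizations against the global $\lambda^2L_i^2$ and $\lambda^\beta L_i^\beta$ bounds for $\alpha\ge2$. As in the paper, those global bounds use the standing assumption $\sigma_i\le L_i$ of Section~\ref{sec:univariate_var}. For $\alpha\in(1,2]$ the paper uses the combined bound $C_1\lambda^2\sum_i a_i^2\sigma_i^2+C_\beta\lambda^\beta\sum_i|a_i|^\beta L_i^\beta$, which is exactly what your index split yields.

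\textbf{The uniformity claim for $\alpha\ge2$ is false.} Your coefficient $(\beta-1)\beta^{-\beta/(\beta-1)}C_2^{-1/(\beta-1)}=\frac{1}{\alpha}\beta^{-(\alpha-1)}C_2^{-(\alpha-1)}$ is \emph{not} bounded below by a universal constant: it tends to $0$ as $\beta\downarrow1$, i.e.\ as $\alpha\to\infty$. No better argument can fix this. For $u\ge1$, let $X=\pm uL$ with probability $q/2$ each and $X=0$ otherwise, where $q=\exp(-u^\alpha/(e\alpha))$. Since $k^{k/\alpha}u^{-k}\ge\exp(-u^\alpha/(e\alpha))$ for every real $k>0$, $X$ satisfies Definition~\ref{assm:sigma_L} with $\sigma=L$. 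Taking $n=1$, $t=uL$ and letting $u\to\infty$ then forces $C\le 1/(e\alpha)$ in front of $t^\alpha/L^\alpha$. The theorem does not claim that $C$ is free of $\alpha$, so drop that remark and let the constant of the $t^\alpha$ term depend on $\alpha$.

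\textbf{For $\alpha\in(1,2]$, keep the two constants separate.} Your small universal $c'$ does give universal constants here, provided you keep $C_1$ and $C_\beta=C_3^\beta\tau^{-[\beta]-1}/(1-\tau)$ apart when choosing $\lambda=\min\{t/(4C_1\sum_ia_i^2\sigma_i^2),\,c'(t/B)^{1/(\beta-1)}\}$. With a single lumped $C$, as you wrote it, the coefficient of $t^2/\sum_ia_i^2\sigma_i^2$ would inherit the exponential growth of $C_\beta$ in $\beta$.
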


Theorem~\ref{thm:conc_univariate_SigmaL} above establishes the concentration inequality under Definition~\ref{assm:sigma_L}. Interestingly, the tail probability presents a \textit{mixing} of Orlicz $\Psi_2$-, $\Psi_1$-, and $\Psi_\alpha$-tails. For sufficiently small $t$, the tail probability is the exponential of $-C\frac{t^2}{\sum_{i=1}^n a_i^2\sigma_i^2}$, which depends only on $\{\sigma_i\}$. For $t$ with intermediate values, it admits a sub-exponential tail. For large enough $t$, the tail presents a $\Psi_\alpha$ decay. The corollary below proves the $\Psi_\alpha$-norm and moment bounds under the framework of Definition~\ref{assm:sigma_L}.
\begin{corollary}[Bounds on $\Psi_\alpha$-norm and moments]
    Assume that $X_1,\ldots,X_n$ are independent with mean zero, and $X_i$ satisfies Lemma~\ref{lem:momGeneratingFunct} with $(\sigma_i,L_i)$. Then we have that for $\alpha\in[1,2]$, 
    \begin{align*}
        \l\|\sum_{i=1}^n a_iX_i \r\|_{\Psi_\alpha}&\leq C\l(\sum_{i=1}^n a_i^2\sigma_{i}^2\r)^{\frac{1}{2}}+C\l(\sum_{i=1}^n |a_i|^\beta L_i^{\beta}\r)^ {\frac{1}{\beta}},
    \end{align*}
    and for $\alpha\geq 2$, 
    \begin{align*}
        \l\|\sum_{i=1}^n a_i X_i\r\|_{\Psi_{\alpha}}\leq C_1\l(\sum_{i=1}^n |a_i|^{\beta}L_i^{\beta}\r)^{\frac{1}{\beta}},\quad \l\|\sum_{i=1}^n a_i X_i\r\|_{\Psi_{2}}\leq C_1\l(\sum_{i=1}^n a_i^2L_i^{2}\r)^{\frac{1}{2}}.
    \end{align*}
    Moreover, for $\alpha\geq 1$, it holds for the $p$th moment with $p\geq 1$ that 
    \begin{multline*}
        \EE \l|\sum_{i=1}^n a_iX_i \r|^p\leq  C^p p^{\frac{p}{2}} \l(\sum_{i=1}^n a_i^2\sigma_i^2\r)^{\frac{p}{2}}\\
        +C^p p^{\frac{p}{\alpha}}\l(\sum_{i=1}^n |a_i|^\beta L_i^\beta\r)^{\frac{p}{\beta}}\exp\l(-C\l(\frac{(\sum_{i=1}^n |a_i|^\beta L_i^\beta )^{\frac{1}{\beta}} }{( \sum_{i=1}^n a_i^2\sigma_i^2)^{\frac{1}{2}}}\r)^{\frac{2\alpha}{\alpha-2}}\r).
    \end{multline*}
    \label{cor:norm_sigmaL}
\end{corollary}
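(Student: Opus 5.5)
We derive Corollary~\ref{cor:norm_sigmaL} from the tail bound in Theorem~\ref{thm:conc_univariate_SigmaL}, together with the standard conversions between tails, moments and Orlicz norms. Throughout we write
\[
S:=\sum_i a_iX_i,\qquad A:=\Bigl(\sum_i a_i^2\sigma_i^2\Bigr)^{1/2},\qquad B:=\Bigl(\sum_i |a_i|^\beta L_i^\beta\Bigr)^{1/\beta},\qquad B_2:=\Bigl(\sum_i a_i^2L_i^2\Bigr)^{1/2}.
\]
We also use $M:=\max_i|a_i|L_i\le B$. We read the hypothesis as Definition~\ref{assm:sigma_L} holding for each $X_i$.

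\textbf{Norm bounds.} For $\alpha\ge2$, Theorem~\ref{thm:conc_univariate_SigmaL} gives
\[
\PP(|S|\ge t)\le 2\exp\bigl(-Ct^\alpha/B^\alpha\bigr)\quad\text{and}\quad \PP(|S|\ge t)\le 2\exp\bigl(-Ct^2/B_2^2\bigr),
\]
by keeping only one term of the max in each case. A $\Psi_\alpha$-tail of the form $2\exp(-(t/K)^\alpha)$ gives $\|S\|_{\Psi_\alpha}\le C'K$ with $C'$ uniform in $\alpha$. To see this, integrate $\EE\exp(|S|^\alpha/u^\alpha)=1+\int_0^\infty e^{s}\PP(|S|^\alpha\ge u^\alpha s)\,ds$ and choose $u$ a large constant multiple of $K$. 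This yields $\|S\|_{\Psi_\alpha}\le C_1B$ and $\|S\|_{\Psi_2}\le C_1B_2$.

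For $\alpha\in[1,2]$, we use the second min in the theorem, $\min\{t^2/A^2,\,t^\alpha/B^\alpha\}$. Since $\alpha\le2$, we have $t^2/A^2\ge (t/A)^\alpha$ once $t\ge A$. Hence
\[
\min\{t^2/A^2,\,t^\alpha/B^\alpha\}\ge \bigl(t/(A+B)\bigr)^\alpha \quad\text{for } t\ge A,
\]
and for $t\le A$ the bound is trivial after adjusting constants. So $\PP(|S|\ge t)\le 2\exp(-c(t/(A+B))^\alpha)$ for all $t$, and the same integration gives $\|S\|_{\Psi_\alpha}\le C(A+B)$.

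\textbf{Moment bound.} Write $\EE|S|^p=p\int_0^\infty t^{p-1}\PP(|S|\ge t)\,dt$. In both regimes the theorem implies, for all $t$,
\[
\PP(|S|\ge t)\le 2\exp\bigl(-c\min\{t^2/A^2,\,t^\alpha/B^\alpha\}\bigr).
\]
For $\alpha\ge2$, use $\max\ge$ the sub-exponential min, together with $t^\alpha/B^\alpha\ge t/M$ when $t\ge B$; for $\alpha\le 2$ it is immediate. Split the integral at the crossover point $t_0$ where $t_0^2/A^2=t_0^\alpha/B^\alpha$. For $\alpha\neq2$ this is $t_0=(B^\alpha/A^2)^{1/(\alpha-2)}$. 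On $[0,t_0]$ bound by the Gaussian tail, giving $C^pp^{p/2}A^p$. On $[t_0,\infty)$ use the $\Psi_\alpha$ tail. This piece is at most $C^pp^{p/\alpha}B^p\cdot\exp(-c(t_0/B)^\alpha/2)$ up to constants, obtained by splitting $e^{-x}=e^{-x/2}e^{-x/2}$ and evaluating one factor at $t_0$. Since $(t_0/B)^\alpha=(B/A)^{2\alpha/(\alpha-2)}$, this produces exactly the stated exponential factor.

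\textbf{Main obstacle.} The delicate step is this split near the crossover. The factor $\exp(-C(B/A)^{2\alpha/(\alpha-2)})$ is only meaningful when it is small, that is for $\alpha>2$ with $B\ge A$. When $\alpha<2$ the exponent has the opposite sign and the factor is at most $1$ after adjusting constants. In that case I would simply bound the second piece by the full $\Psi_\alpha$ moment $C^pp^{p/\alpha}B^p$, using that $\exp(-C(\cdot))\le1$ is allowed after enlarging $C$. Checking this sign and case bookkeeping uniformly in $\alpha$ (including the degenerate case $\alpha=2$, where the factor reads $\exp(-\infty)$ or a constant) is where care is needed.
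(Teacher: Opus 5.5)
Your two norm bounds are fine. For $\alpha\ge2$ it suffices to keep one term of the max, and for $\alpha\in[1,2]$ your reduction to $2\exp(-c(t/(A+B))^\alpha)$ is correct. Going straight from tails to the Orlicz norm is a cleaner, $\alpha$-uniform substitute for the moments-plus-Taylor-series route the paper uses in Corollary~\ref{cor:norm_univariate}; the paper gives no separate proof of this corollary.

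The gap is the moment bound for $\alpha>2$. There $t\mapsto (t^\alpha/B^\alpha)/(t^2/A^2)=t^{\alpha-2}A^2/B^\alpha$ is increasing, so on $[0,t_0]$ the minimum in your reduced tail is $t^\alpha/B^\alpha$. You therefore have no right to the bound $2e^{-ct^2/A^2}$ on that interval. The theorem gives a Gaussian tail at the variance scale $A$ only through the Bernstein term, i.e.\ for $t\lesssim A^2/M$. That range can lie far below $t_0$: for $n=1$, $a_1=1$ one has $A^2/M=\sigma^2/L\le L\le t_0$. In Theorem~\ref{cor:moment_univariate} the same split is harmless, because for $\alpha\ge2$ both tails hold for every $t$ at the common scale $\|X_i\|_{\Psi_\alpha}$. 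Here the Gaussian scale is $\sigma_i$ instead.

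No argument can close this gap, because for $\alpha>2$ the displayed moment bound is false when $\sigma_i\ll L_i$. Take $n=1$, $a_1=1$, $\alpha=4$, and $X=\pm1$ with probability $q/2$ each and $0$ otherwise. This $X$ satisfies Definition~\ref{assm:sigma_L} with $\sigma=\sqrt q\le L=1$, so $A=\sqrt q$, $B=1$, $(B/A)^{2\alpha/(\alpha-2)}=q^{-2}$, and the claim becomes
\[
q=\EE|X|^p\le C^pp^{p/2}q^{p/2}+C^pp^{p/4}e^{-cq^{-2}}.
\]
With $p=\lfloor 1/(4C^2q)\rfloor$ the first term is at most $2^{-p}$, and the second is $\exp\bigl(O(q^{-1}\log(1/q))-cq^{-2}\bigr)$. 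Both are $o(q)$ as $q\to0$, for any fixed constants. What Theorem~\ref{thm:conc_univariate_SigmaL} does yield for $\alpha\ge2$ is, up to a factor $C^p$, the minimum of $p^{p/\alpha}B^p$, $p^{p/2}B_2^p$ and $p^{p/2}A^p+p^pM^p$.

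Your ``main obstacle'' discussion has the regimes reversed. For $\alpha<2$ the ratio above is decreasing, so the Gaussian term is the minimum on $[0,t_0]$ and the $\Psi_\alpha$ term is the minimum on $[t_0,\infty)$. Your split, with $e^{-x}=e^{-x/2}e^{-x/2}$ and $(t_0/B)^\alpha=(B/A)^{2\alpha/(\alpha-2)}$, is exactly right there and does prove the moment bound for $\alpha\in[1,2)$.

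The factor is not vacuous in that range: $(B/A)^{2\alpha/(\alpha-2)}=(A/B)^{2\alpha/(2-\alpha)}$ is large when $A\gg B$, which is where the $e^{-cn}$ of the i.i.d.\ case comes from. So your fallback of bounding the second piece by $C^pp^{p/\alpha}B^p$ and invoking $\exp(-C(\cdot))\le1$ would establish a weaker inequality than the stated one, not the statement.

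Also, the auxiliary claim $t^\alpha/B^\alpha\ge t/M$ for $t\ge B$ is false in general, since $M\le B$. You do not need it: the max for $\alpha\ge2$ already contains $t^\alpha/B^\alpha$.
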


Indeed, when $\alpha\in[1,2]$, we have $\beta\geq 2$ and $\l(\sum_{i=1}^n |a_i|^\beta L_i^{\beta}\r)^ {\frac{1}{\beta}}\leq \l(\sum_{i=1}^n |a_i|^2L_i^{2}\r)^ {\frac{1}{2}}$, which show that Corollary~\ref{cor:norm_sigmaL} above \textit{improves} over Theorem~\ref{cor:moment_univariate} and Corollary~\ref{cor:norm_univariate} in Section \ref{sec:univariate} earlier when the Orlicz norm exceeds the standard deviation greatly. We remark that $(\sigma_i,L_i)$ can be expressed as functions of $\sigma_X$ and $\|X\|_{\Psi_\alpha}$, and the forms are generally \textit{not} unique, as illustrated in the following three examples.

\begin{example}[Connections between Theorems~\ref{thm:conc_univariates} and \ref{thm:conc_univariate_SigmaL}]
    Theorem~\ref{thm:conc_univariate_SigmaL} is consistent with Theorem~\ref{thm:conc_univariates}. Indeed, substituting the choice of $(\sigma_i,L_i)=(\|X\|_{\Psi_\alpha},\|X\|_{\Psi_\alpha})$ (presented in Remark~\ref{rmk:def_SigmaL}) into Theorem~\ref{thm:conc_univariate_SigmaL} yields Theorem~\ref{thm:conc_univariates}, where we employ
\begin{align*}
    \l(\sum_{i=1}^n a_i^2\|X_i\|_{\Psi_\alpha}^2\r)^{\frac{1}{2}}\geq \l(\sum_{i=1}^n |a_i|^\gamma\|X_i\|_{\Psi_\alpha}^\gamma\r)^{\frac{1}{\gamma}}\geq \max_{i=1,\ldots,n}|a_i|\|X_{i}\|_{\Psi_\alpha} \ \  \text{ for all } \gamma\geq 2.
\end{align*}
Thus, Theorem~\ref{thm:conc_univariate_SigmaL} generalizes 
Theorem~\ref{thm:conc_univariates}.
\end{example}

\begin{example}
    Let us continue with Example~\ref{exp:1}. Substituting the choice of $(\sigma_i,L_i)$ given by 
$$\sigma_{i}:=\sigma_{X_i},\quad L_i:=\|X_i\|_{\Psi_\alpha}\log^{\frac{1}{\alpha}}\l(\frac{2\|X_i\|_{\Psi_\alpha}}{\sigma_{X_i}}\r)$$ 
into Theorem~\ref{thm:conc_univariate_SigmaL} leads to the following concentration inequalities. For ease of presentation, let us assume that $X_i$'s are i.i.d. Then we have that for $\alpha\geq 2$, 
\begin{multline}
      \PP\l(\l|\sum_{i=1}^n X_i \r|\geq t\r)
    \leq 2\exp\l(-C\max\l\{\frac{t^\alpha}{n^{\alpha-1}  \|X\|_{\Psi_\alpha}^\alpha \log\l(\frac{2\|X\|_{\Psi_\alpha}}{\sigma_{X}}\r) } ,\r.\r.\\\l.\l.\min\l\{ \frac{t^2}{n\sigma_{X}^2}, \frac{t}{\|X\|_{\Psi_\alpha}\log^{\frac{1}{\alpha}}\l(\frac{2\|X\|_{\Psi_\alpha}}{\sigma_{X}}\r) }\r\} \r\}\r),
        \label{eq1:univariate_var}
    \end{multline}
    and for $1\leq\alpha\leq2$, 
\begin{multline}
    \PP\l(\l|\sum_{i=1}^n X_i \r|\geq t\r)
        \leq 2\exp\l(-C\min\l\{\frac{t^2}{n\sigma_{X}^2} ,\r.\r.\\\l.\l.\max\l\{ \frac{t^\alpha}{n^{\alpha-1} \|X\|_{\Psi_\alpha}^\alpha \log\l(\frac{2\|X\|_{\Psi_\alpha}}{\sigma_{X}}\r)}, \frac{t}{\|X\|_{\Psi_\alpha}\log^{\frac{1}{\alpha}}\l(\frac{2\|X\|_{\Psi_\alpha}}{\sigma_{X}}\r) }\r\} \r\}\r).
        \label{eq2:univariate_var}
    \end{multline}
    By \eqref{eq1:univariate_var} and \eqref{eq2:univariate_var}, the tail probability is $\exp(-Ct^2/\sum_{i=1}^n a_i^2\sigma_{X_i}^2)$ for $t\leq \frac{n\sigma_{X}^2}{\|X\|_{\Psi_\alpha}\log^{\frac{1}{\alpha}}\l(\frac{2\|X\|_{\Psi_\alpha}}{\sigma_{X}}\r) }$. The tail probability bound for this range is \textit{not} improvable, which arises from the \textit{sharp} value of $\sigma_i$ and has also been proved in the foundational work of \cite{koltchinskii2011neumann}. On the other hand, an additional $\log(\|X\|_{\Psi_\alpha}/\sigma_X)$ appears for sufficiently large $t$ compared to Theorem~\ref{thm:conc_univariates}.  
    \label{exp:4}
\end{example}

\begin{example}
     Let us continue with Example~\ref{exp:2}. Assume that $X_i$'s are i.i.d. for simplicity. Then by the choice of $(\sigma_i,L_i)$ in Example~\ref{exp:2}, we have that for $\alpha\geq2$,
    \begin{align}
        \PP\l(\l|\sum_{i=1}^n X_i \r|\geq t\r)
        \leq 2\exp\l(-C\max\l\{\frac{t^\alpha}{n^{\alpha-1}  \|X\|_{\Psi_\alpha}^\alpha  } ,\min\l\{ \frac{t^2}{n\sigma_{X}\|X\|_{\Psi_\alpha}}, \frac{t}{\|X\|_{\Psi_\alpha} }\r\} \r\}\r),
        \label{eq3:univariate_var}
    \end{align}
    and for $1\leq\alpha\leq2$, 
    \begin{align}
        \PP\l(\l|\sum_{i=1}^n X_i \r|\geq t\r)
        \leq 2\exp\l(-C\min\l\{\frac{t^2}{n \sigma_{X}\|X\|_{\Psi_\alpha}} ,\max\l\{ \frac{t^\alpha}{n^{\alpha-1} \|X\|_{\Psi_\alpha}^\alpha }, \frac{t}{\|X\|_{\Psi_\alpha} }\r\} \r\}\r).
        \label{eq4:univariate_var}
    \end{align}
    The tail probability in 
    \eqref{eq3:univariate_var} and \eqref{eq4:univariate_var} above is \textit{sharp} when $t$ is large enough, which coincides with Theorem~\ref{thm:conc_univariates}. Indeed, Example~\ref{exp:2} gives the \textit{sharp} value for $L=C\|X\|_{\Psi_\alpha}$, which yields the \textit{sharp} tail probability when $t\geq n\|X\|_{\Psi_\alpha}$; for this range, it improves over Example~\ref{exp:4}. However, for sufficiently small $t$, the tail probability is $\exp\l(-\frac{Ct^2}{\sum_{i=1}^n a_i^2\sigma_{X_i}\|X_i\|_{\Psi_\alpha}}\r) $, which is \textit{weaker} than the corresponding 
    one in Example~\ref{exp:4}. 
\end{example}

     In order to obtain the \textit{sharpest} tail probability represented as a function of variance and $\Psi_\alpha$-norm, one strategy is to consider the minimum tail probability over all admissible pairs of $(\sigma,L)$. To this end, we have the following corollary.
     
\begin{corollary}
    Assume that $X_1,\ldots,X_n$ are independent with mean zero, and $X_i$ satisfies  $\|X_i\|_{\Psi_\alpha}<\infty$. Denote by $\mathcal{D}_i:=\{(\sigma_i,L_i):\, \EE|X_i|^k\leq k^{\frac{k}{\alpha}}\sigma_i^2L_i^2 \ \text{ for all } k\geq 2\}$. Then we have that for $\alpha\geq 2$, 
    \begin{multline*}
        \PP\l(\l|\sum_{i=1}^n a_iX_i \r|\geq t\r)
        \leq 2\inf_{(\sigma_i,L_i)\in\mathcal{D}_i} \exp\l(-C\max\l\{\frac{t^\alpha}{\l(\sum_{i=1}^n |a_i|^\beta L_i^\beta\r)^{\frac{\alpha}{\beta}} } , \frac{t^2}{\sum_{i=1}^n a_i^2 L_i^2 },\r.\r.\\\l.\l. \min\l\{ \frac{t^2}{\sum_{i=1}^n a_i^2\sigma_i^2}, \frac{t}{\max_{i=1,\ldots,n}|a_i|L_i}\r\} \r\}\r),
    \end{multline*}
    and for $\alpha\in[1,2]$, 
    \begin{multline*}
        \PP\l(\l|\sum_{i=1}^n a_iX_i\r|\geq t\r)
        \leq 2\inf_{(\sigma_i,L_i)\in\mathcal{D}_i}\exp\l(-C\min\l\{\frac{t^2}{\displaystyle\sum_{i=1}^n a_i^2\sigma_i^2},\r.\r.\\\l.\l.\max\l\{\frac{t}{\displaystyle \max_{i=1,\ldots,n} |a_i|L_i},\frac{t^\alpha}{\displaystyle\l(\sum_{i=1}^n |a_i|^{\beta}L_i^{\beta}\r)^{\frac{\alpha}{\beta}}}\r\}\r\}\r).
    \end{multline*}
    \label{cor:sigmaL_inf}
\end{corollary}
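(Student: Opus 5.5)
This corollary follows almost directly from Theorem~\ref{thm:conc_univariate_SigmaL}. The plan has three parts: check that every admissible pair is eligible for the theorem, rewrite the $\alpha\in[1,2]$ exponent in the stated form, and then take the infimum over pairs.

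\textbf{Eligibility of each admissible pair.} Fix any collection $(\sigma_i,L_i)\in\mathcal{D}_i$, $i=1,\ldots,n$. The set $\mathcal{D}_i$ should be read as the set of pairs for which $X_i$ satisfies Definition~\ref{assm:sigma_L}, i.e.\ $\EE|X_i|^k\leq k^{k/\alpha}\sigma_i^2L_i^{k-2}$; the exponent $2$ on $L_i$ in the display is presumably a typo for $k-2$. This set is nonempty: by Remark~\ref{rmk:def_SigmaL}, or by Examples~\ref{exp:1} and \ref{exp:2}, finiteness of $\|X_i\|_{\Psi_\alpha}$ yields admissible pairs. Whenever a pair has $\sigma_i>L_i$, I replace $\sigma_i$ by $L_i$. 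This is harmless, since the moment condition is unaffected in the direction we need: I will only use pairs with $\sigma_i\le L_i$, as stipulated before Lemma~\ref{lem:momGeneratingFunct_SigmaL}. Even without this reduction, the infimum is over a larger set, so the bound only weakens. Theorem~\ref{thm:conc_univariate_SigmaL} then applies verbatim with these $(\sigma_i,L_i)$. For $\alpha\ge2$ it already gives exactly the displayed exponent.

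\textbf{Rewriting the exponent for $\alpha\in[1,2]$.} The theorem gives $\max\{\min\{A,B\},\min\{A,D\}\}$, with
\[
A=\frac{t^2}{\sum_i a_i^2\sigma_i^2},\qquad B=\frac{t}{\max_i|a_i|L_i},\qquad D=\frac{t^\alpha}{\bigl(\sum_i|a_i|^\beta L_i^\beta\bigr)^{\alpha/\beta}}.
\]
By the elementary lattice identity $\max\{\min\{A,B\},\min\{A,D\}\}=\min\{A,\max\{B,D\}\}$, this is precisely the claimed form.

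\textbf{Passing to the infimum.} For every admissible choice, $\PP(|\sum_i a_iX_i|\ge t)$ is bounded by $2\exp(-C\,\Phi(\{\sigma_i,L_i\}))$, and the left-hand side does not depend on the choice. Taking the infimum of the right-hand side over the product set $\mathcal{D}_1\times\cdots\times\mathcal{D}_n$ gives the corollary. The key point is that the constant $C$ in Theorem~\ref{thm:conc_univariate_SigmaL} is universal: it does not depend on $\sigma_i$, $L_i$, $\alpha$ or $X$. That uniformity is what makes the infimum legitimate.

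\textbf{Main obstacle.} The only genuine point to check is this uniformity of $C$ across pairs, together with the $\sigma_i\le L_i$ normalization assumed by Lemma~\ref{lem:momGeneratingFunct_SigmaL}. If a pair with $\sigma_i>L_i$ could not simply be discarded, I would instead note that $(\max\{\sigma_i,L_i\},\max\{\sigma_i,L_i\})$ is also admissible. That pair gives a bound of the Theorem~\ref{thm:conc_univariates} type, which dominates the bound from the original pair, so the infimum is unchanged.
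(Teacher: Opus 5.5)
Your main line is the paper's: apply Theorem~\ref{thm:conc_univariate_SigmaL} to each admissible tuple, using its universal constant; rewrite the $\alpha\in[1,2]$ exponent via $\max\{\min\{A,B\},\min\{A,D\}\}=\min\{A,\max\{B,D\}\}$; then take the infimum over $\mathcal{D}_1\times\cdots\times\mathcal{D}_n$. The paper gives no separate proof beyond this, and reading $L_i^2$ as $L_i^{k-2}$ is right.

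The flaw is how you handle admissible pairs with $\sigma_i>L_i$. None of your three arguments for discarding them is valid.
\begin{itemize}
\item Lowering $\sigma_i$ to $L_i$ makes the moment condition \emph{harder} to satisfy. Already at $k=2$ it demands $\EE X_i^2\le 2^{2/\alpha}L_i^2$, so the new pair need not lie in $\mathcal{D}_i$.
\item Enlarging the index set makes the infimum smaller. That makes the claim stronger, not weaker.
\item The pair $(\sigma_i,\sigma_i)$ is admissible, but it gives a \emph{weaker} bound than the one claimed for $(\sigma_i,L_i)$, because the exponent is nonincreasing in each $L_i$. It says nothing about the original pair, so ``the infimum is unchanged'' does not follow.
\end{itemize}

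For $\alpha\ge2$ no argument can work, because the bound is false for such pairs. Take $n=1$, $a_1=1$, $X_1=\pm1$ with probability $1/2$ each, and any $L\in(0,1/2)$. Set $\sigma^2:=\sup_{k\ge2}L^{2-k}k^{-k/\alpha}$, which is finite. Then $(\sigma,L)$ is admissible, and $\sigma^2\ge 2^{-2/\alpha}\ge 1/2>L^2$. Yet the $\sigma$-free term $t^2/L^2$ in the $\alpha\ge2$ bound would give $1=\PP(|X_1|\ge1)\le 2\exp(-C/L^2)$, which fails for small $L$.

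This is why Lemma~\ref{lem:momGeneratingFunct_SigmaL} and Theorem~\ref{thm:conc_univariate_SigmaL} carry the standing assumption $\sigma\le L$. The proof uses $\sigma^2/L^2\le1$ to pass from $1+\frac{\sigma^2}{L^2}(e^{x}-1)$ to $e^{x}$. So for $\alpha\ge2$ you must build $\sigma_i\le L_i$ into $\mathcal{D}_i$ as part of the statement, rather than claim it is without loss of generality. With that reading your argument is complete.

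For $\alpha\in[1,2]$ the restriction is in fact unnecessary, but for a reason you did not give.
\begin{itemize}
\item The Bernstein part $\min\{A,B\}$ never uses $\sigma\le L$.
\item In the range $\lambda L\ge c\tau$, write $y:=\sigma^2/L^2$. Then $y\le\lambda^2\sigma^2/(c\tau)^2$, and $1+y(e^{x}-1)\le e^{x+y}$.
\item Hence the moment generating function bound of the form $\exp(C\lambda^2\sigma^2+x)$, which drives the $\min\{A,D\}$ term, survives when $\sigma>L$.
\end{itemize}
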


An application of Corollary~\ref{cor:sigmaL_inf} above yields the following concentrations of sub-Gaussian and sub-exponential random variables that are \textit{new} to the literature.

\begin{example}[sub-exponential random variables]
    For i.i.d. sub-exponential $X_1,\ldots,X_n$, combining Corollary~\ref{cor:sigmaL_inf} and 
    \eqref{eq1:univariate_var}--\eqref{eq4:univariate_var} gives that 
\begin{align*}
    &\PP\l(\l|\sum_{i=1}^n X_i\r|\geq t\r)\leq 2\exp\l(-\frac{t^2}{n\sigma_X^2}\r) \ \ \text{ for all } t\leq\frac{n\sigma_X^2}{\|X\|_{\Psi_1}\log\l(\frac{2\|X\|_{\Psi_1}}{\sigma_X}\r)},\\
    &\PP\l(\l|\sum_{i=1}^n X_i\r|\geq t\r)\leq 2\exp\l(-\frac{t}{\|X\|_{\Psi_1}}\r) \ \ \text{ for all } t\geq n\sigma_X.
\end{align*}
For $t$ between the above two end points, the tail probability involves a function with a \textit{mixing} of $\exp(-t/( \|X\|_{\Psi_{1}}\log(\|X\|_{\Psi_1}/\sigma_X)))$ and $\exp(-ct^2/(n\sigma_X\|X\|_{\Psi_2}))$. More specifically, we have that 
\begin{multline*}
    \PP\l(\l|\sum_{i=1}^n X_i\r|\geq t\r) \leq 2\exp\l(-C\max\l\{\min\l\{\frac{t^2}{n\sigma_X^2},\frac{t}{\|X\|_{\Psi_1}\log\l(\frac{2\|X\|_{\Psi_2}}{\sigma_X}\r)} \r\},\r.\r.\\ \l.\l. \min\l\{\frac{t}{\|X\|_{\Psi_1}}, \frac{t^2}{n\sigma_X\|X\|_{\Psi_1}}\r\}\r\}\r).
\end{multline*}
Notably, the above concentration inequality \textit{cannot} be obtained by combining the works of \cite{koltchinskii2011neumann} and \cite{vershynin2018high}.
\end{example}

\begin{example}[sub-Gaussian random variables]
    For i.i.d. sub-Gaussian $X_1,\ldots,X_n$, an application of Corollary~\ref{cor:sigmaL_inf} and 
    \eqref{eq1:univariate_var}--\eqref{eq4:univariate_var} yields that 
\begin{align*}
    &\PP\l(\l|\sum_{i=1}^n X_i\r|\geq t\r)\leq 2\exp\l(-\frac{t^2}{n\sigma_X^2}\r) \ \ \text{ for all } t\leq\frac{n\sigma_X^2}{\|X\|_{\Psi_2}\log^{\frac{1}{2}}\l(\frac{2\|X\|_{\Psi_2}}{\sigma_X}\r)},\\
    &\PP\l(\l|\sum_{i=1}^n X_i\r|\geq t\r)\leq 2\exp\l(-\frac{t^2}{n\|X\|_{\Psi_2}^2}\r) \ \ \text{ for all } t\geq n\|X\|_{\Psi_2}.
\end{align*}
For $t$ between the above two end points, the tail probability involves a function with a \textit{mixing} of $\exp(-t/\|X\|_{\Psi_2})$, $ \exp(-t/( \|X\|_{\Psi_{2}}\log(2\|X\|_{\Psi_2}/\sigma_X)))$, and $\exp(-ct^2/(n\sigma_X\|X\|_{\Psi_2}))$. Specifically, we have that 
\begin{multline*}
    \PP\l(\l|\sum_{i=1}^n X_i\r|\geq t\r)\leq2\exp\l(-C\max\l\{\frac{t^2}{n\|X\|_{\Psi_2}^2},\min\l\{\frac{t^2}{n\sigma_X^2},\frac{t}{\|X\|_{\Psi_2}\log\l(\frac{2\|X\|_{\Psi_2}}{\sigma_X}\r)}\r\},\r.\r. \\ \l.\l. \min\l\{\frac{t^2}{n\sigma_X\|X\|_{\Psi_2}},\frac{t}{\|X\|_{\Psi_2}}\r\}\r\} \r).
\end{multline*}
Again, the above concentration inequality \textit{cannot} be derived by combining the previous works.
\end{example}

\section{Applications} \label{sec:app}

In this section, we present four applications of our new concentration theory. Section~\ref{sec:dependent} explores the convergence of martingales, where we \textit{no longer} assume that $X_i$'s are independent, and Section~\ref{sec:vector} considers the norm of a random vector. Section~\ref{new.Sec.randmat} studies the operator norm of a random matrix, and Section~\ref{new.Sec.covmatest} focuses on covariance matrix estimation.

\subsection{Martingales} \label{sec:dependent}

Here, we study the concentration behavior for \textit{dependent} data. Let $\{\mathcal{F}_n\}_{n\in\mathbb{N}}$ be an increasing filtration and $\{X_n\}_{n\in\mathbb{N}}$ a sequence of real-valued random variables adapted to $\{\mathcal{F}_n\}_{n\in\mathbb{N}}$. Specifically, we will focus on dependent random variables that satisfy the conditional moment condition below.

\begin{assumption}
    There exist two sequences of constants $\{\sigma_n\}$ and $\{L_n\}$ such that for all integers $k\geq 2$ and some $\alpha\geq 1$, it holds that 
    \begin{align*}
        \EE\l\{|X_n|^k|\mathcal{F}_{n-1} \r\}\leq k^{\frac{k}{\alpha}}\sigma_n^2L_n^{k-2} \quad \text{a.s.}
    \end{align*}
    \label{assm:dependent}
\end{assumption}

 Assumption~\ref{assm:dependent} above generalizes the independent-scenario condition presented in Definition~\ref{assm:sigma_L} earlier. In particular, when $\alpha=2$, Assumption~\ref{assm:dependent} reduces to the characterization used in \cite{alquier2013sparse}. Additionally, Assumption~\ref{assm:dependent} is related to the conditional Orlicz norm \citep{shen2026sgd}; see the following remark.

 \begin{remark}[Conditional $\Psi_\alpha$-norm]
     We inherit the notation and definition of the conditional Orlicz norm $\|\cdot|\mathcal{F}_n\|_{\Psi_\alpha}$ from Appendix A of \cite{shen2026sgd}. If $\|X_n|\mathcal{F}_{n-1}\|_{\Psi_\alpha}\leq K_n$ for some constant $K_n$, $X_n$ satisfies Assumption~\ref{assm:dependent} with $$\sigma_n=K_n,\quad L_n=CK_n.$$ On the other hand, if $X_n$ satisfies Assumption~\ref{assm:dependent} with $(\sigma_n,L_n)$, it holds that  $\|X_n|\mathcal{F}_n\|_{\Psi_\alpha}\leq C\max\{\sigma_n,L_n\}$. However, in some scenarios, it is necessary to express $\sigma_n$ as a function of the conditional variance and $\Psi_\alpha$-norm, and distinguish the variance from the squared norm. Assume that $X_n$ satisfies $\EE\{X_n|\mathcal{F}_{n-1}\}=0$ and denote by $\sigma_{X_n}^2:=\EE\{X_n^2|\mathcal{F}_{n-1}\}$. Then in parallel to Examples~\ref{exp:1} and \ref{exp:2}, we have two pairs of admissible quantities for $(\sigma_n,L_n)$ given by 
     \begin{align*}
         \sigma_n^2:=\sigma_{X_n}^2,\quad L_n:=K_n\log^{\frac{1}{\alpha}}\l(\frac{2K_n}{\sigma_{X_n}}\r)
     \end{align*}
     and
     \begin{align*}
         \sigma_n^2:=\sigma_{X_n}K_n,\quad L_n:=CK_n.
     \end{align*}
     The above two choices of $(\sigma_n,L_n)$ remain valid for all martingale differences that admit finite conditional $\Psi_\alpha$-norms.
 \end{remark}
 
 Based on the above remark, without loss of generality, let us assume that $\sigma_n\leq L_n$. In particular, we are interested in the limit of the following martingale
 \begin{align*}
     M_n:=\sum_{k=1}^n a_k \bigg\{X_k-\EE\bigg\{X_k|\mathcal{F}_k\bigg\}\bigg\}.
 \end{align*}
Specifically, we will study the convergence of $M_n$ under the conditions
  \begin{align}
      \sum_{k=1}^{\infty}a_k^2\sigma_k^2<\infty,\quad \sum_{k=1}^{\infty} |a_k|^\beta L_k^{\beta}<\infty,
      \label{eq1:dependent}
  \end{align}
  which are necessary in general. Indeed, by Rosenthal's inequality and the martingale convergence theorem, $M_n$ converges almost surely and in $L^\beta$ to $M_\infty$ given by 
\begin{align*}
    M_{\infty}:=\sum_{k=1}^{\infty} a_k \bigg\{X_k-\EE\bigg\{X_k|\mathcal{F}_k\bigg\}\bigg\}.
\end{align*} 
On the other hand, \cite{rio2013extensions} studied the convergence from a \textit{different} perspective. Specifically, \cite{rio2013extensions} made direct assumptions on the moment generating function; in contrast, we assume \textit{only} the finite conditional $\Psi_\alpha$-norm or conditional moments. We emphasize that for any $\gamma>\beta$, it holds that 
  $$ \l(\sum_{k=1}^{\infty} |a_k|^\gamma L_k^{\gamma}\r)^{\frac{1}{\gamma}}\leq \l(\sum_{k=1}^{\infty} |a_k|^\beta L_k^{\beta}\r)^{\frac{1}{\beta}}<\infty. $$ Hence, under 
  \eqref{eq1:dependent}, we have $\max_{k=1,\ldots}|a_k|L_k<\infty$. However, for $\theta\in(0,\alpha)$, when \eqref{eq1:dependent} is satisfied, $\sum_{k=1}^{\infty} |a_k|^\theta L_k^{\theta} $ can go to $\infty$. We are now ready to present the convergence of $M_n$.

 \begin{theorem}
Assume that $\{X_k\}$ is a martingale difference adapted to $\{\mathcal{F}_n\}_{n\in\mathbb{N}}$ and satisfies Assumption~\ref{assm:dependent}. Let $\{a_k\}$ be any sequence of coefficients  satisfying 
     \eqref{eq1:dependent}. Then we have that for $\alpha\geq 2$, 
        \begin{multline*}
        \PP\l(|M_\infty|\geq t\r)\leq 2\exp\l(-C\max\l\{\frac{t^\alpha}{\l(\sum_{k=1}^{\infty} |a_k|^\beta L_k^{\beta} \r)^{\frac{\alpha}{\beta}} }, \frac{t^2}{\sum_{k=1}^{\infty} a_k^2L_k^2},\r.\r.\\ \l.\l.\min\l\{\frac{t^2}{\sum_{k=1}^{\infty} a_k^2\sigma_k^2}, \frac{t}{\max_{k=1,\ldots} |a_k|L_k}\r\}\r\}\r),
    \end{multline*}
     and for $\alpha\in[1,2]$, 
     \begin{multline*}
        \PP\l(\l|M_\infty \r|\geq t\r)\\
        \leq  2\exp\l(-C\min\l\{\frac{t^2}{\sum_{k=1}^{\infty} a_k^2\sigma_k^2}, \max\l\{\frac{t}{\max_{k=1,\ldots}|a_k|L_k},\frac{t^\alpha}{\l( \sum_{k=1}^\infty |a_k|^\beta L_k^\beta\r)^{\frac{\alpha}{\beta}} }\r\}\r\} \r).
    \end{multline*}
     \label{thm:dependent}
 \end{theorem}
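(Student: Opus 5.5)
The plan is to run the conditional Chernoff argument on the finite martingale $M_n$ and then pass to the limit $n\to\infty$.

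\emph{Step 1: conditional MGF bounds.} Lemma~\ref{lem:momGeneratingFunct_SigmaL} uses only the moment bounds of Definition~\ref{assm:sigma_L} and the mean-zero property. Under Assumption~\ref{assm:dependent} and $\EE\{X_k|\mathcal{F}_{k-1}\}=0$, the same computation holds verbatim with expectations replaced by conditional expectations given $\mathcal{F}_{k-1}$. This yields, almost surely,
\begin{align*}
\EE\{\exp(\lambda a_kX_k)|\mathcal{F}_{k-1}\}\le \exp\bigl(\phi_k(\lambda)\bigr),
\end{align*}
where $\phi_k$ is the deterministic bound of Lemma~\ref{lem:momGeneratingFunct_SigmaL} with $(\sigma,L)$ replaced by $(|a_k|\sigma_k,|a_k|L_k)$. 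Concretely, $\phi_k(\lambda)=C_1\lambda^2a_k^2\sigma_k^2$ for $\lambda\le c/(|a_k|L_k)$, and in addition $\phi_k(\lambda)\le C_2\min\{\lambda^\beta|a_k|^\beta L_k^\beta,\lambda^2a_k^2L_k^2\}$ for $\alpha\ge2$, with the $\tau$-version of the bound for $1<\alpha\le2$. Because these bounds are deterministic, iterating the tower property gives
\begin{align*}
\EE\exp(\lambda M_n)\le\exp\Bigl(\sum_{k\le n}\phi_k(\lambda)\Bigr)\le \exp\Bigl(\sum_{k\ge1}\phi_k(\lambda)\Bigr).
\end{align*}
This is exactly the product structure used in the independent case. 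The right-hand side is finite under \eqref{eq1:dependent}.

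\emph{Step 2: tail bounds for $M_n$.} Apply Markov's inequality to $\exp(\lambda M_n)$ and to $\exp(-\lambda M_n)$, and optimize over $\lambda$ separately in each regime, exactly as in the proof of Theorem~\ref{thm:conc_univariate_SigmaL}.
\begin{itemize}
\item For $\lambda\le c/\max_k|a_k|L_k$, the quadratic bound gives $\min\{t^2/\sum a_k^2\sigma_k^2,\ t/\max|a_k|L_k\}$.
\item For $\alpha\ge2$, the global bounds $\lambda^2\sum a_k^2L_k^2$ and $\lambda^\beta\sum|a_k|^\beta L_k^\beta$ give the remaining two terms inside the $\max$. Since each is a valid bound on the tail, the tail is at most the minimum of the corresponding exponentials, which is where the $\max$ comes from.
\item For $1\le\alpha\le2$, the large-$\lambda$ bound with a fixed $\tau$ gives the $t^\alpha/(\sum|a_k|^\beta L_k^\beta)^{\alpha/\beta}$ term.
\end{itemize}
All resulting bounds hold uniformly in $n$ and involve only the infinite sums.

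\emph{Step 3: passing to the limit.} As noted before the statement, $M_n\to M_\infty$ almost surely. By Fatou's lemma, $\PP(|M_\infty|>t)\le\liminf_n\PP(|M_n|>t)$. Since the bound is continuous in $t$, this extends to $\ge t$.

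\emph{Main obstacle.} I expect the delicate point to be the form stated for $1\le\alpha\le2$. The outer $\min$ with $t^2/\sum a_k^2\sigma_k^2$ combined with the inner $\max$ must be reconciled with what the optimization in Step 2 actually produces. The key observation is that the expression $\max\{\min(A,B),\min(A,D)\}$ produced by Theorem~\ref{thm:conc_univariate_SigmaL} equals $\min\{A,\max(B,D)\}$, so the two forms agree.

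A further subtlety is that the large-$\lambda$ regime must use $\lambda\ge c\tau/\max_k|a_k|L_k$. For terms with small $|a_k|L_k$ this $\lambda$ lies in the small-$\lambda$ regime for those $k$, so the bound there has to be dominated by the $\beta$-power bound. I would handle this as the independent proof does: take $\tau=1/2$ and use $\lambda^2x^2\lesssim\lambda^\beta x^\beta$ whenever $\lambda x\gtrsim1$. For terms with $\lambda x\lesssim1$, I would instead use $\lambda^2a_k^2\sigma_k^2\le\lambda^2a_k^2L_k^2$ and bound their sum by $\lambda^\beta\sum|a_k|^\beta L_k^\beta$ plus the quadratic $\sigma$-part. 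This sum is the one to check carefully.
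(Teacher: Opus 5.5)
Your proposal is correct and is essentially the paper's proof. The paper also bounds $\EE\{\exp(\lambda a_nX_n)|\mathcal{F}_{n-1}\}$ deterministically through the Taylor series and Assumption~\ref{assm:dependent}, iterates the tower property, reruns the Chernoff optimization from the proof of Theorem~\ref{thm:conc_univariate_SigmaL}, and transfers the $n$-uniform bound to $M_\infty$ via almost sure convergence; your Fatou-plus-continuity step just makes that last passage explicit.

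The mixed-regime issue you flag needs no domination, and the detour through $\sigma_k\le L_k$ is unnecessary; in fact, absorbing $\lambda^2a_k^2L_k^2$ into the $\beta$-power would fail when $\lambda|a_k|L_k<1$ and $\beta>2$. For $1<\alpha\le2$, take $\tau=1/2$ and use whichever regime applies to each $k$. This gives $\EE\{\exp(\lambda a_kX_k)|\mathcal{F}_{k-1}\}\le\exp(C\lambda^2a_k^2\sigma_k^2+C^\beta\lambda^\beta|a_k|^\beta L_k^\beta)$ for every $\lambda\ge0$ and every $k$, which is the bound the paper's independent proof uses. Summing over $k$ then yields the $\min\{t^2/\sum_k a_k^2\sigma_k^2,\,t^\alpha/(\sum_k|a_k|^\beta L_k^\beta)^{\alpha/\beta}\}$ term directly. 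At $\alpha=1$ this target term coincides with the Bernstein term, so no large-$\lambda$ bound is needed there.
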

  It is noteworthy that when $\alpha\leq 2$, 
 \eqref{eq1:dependent} does guarantee that $\sum_{k=1}^{\infty}a_k^2L_k^2<\infty$. 
 Theorem~\ref{thm:dependent} above implies the following bound for the $\Psi_\alpha$-norm of $M_\infty$.

 \begin{corollary}[$\Psi_\alpha$-norm]
     Assume that the same conditions as in Theorem~\ref{thm:dependent} are satisfied. Then we have that for $\alpha\geq 2$, 
    \begin{align*}
        \l\|M_\infty\r\|_{\Psi_{\alpha}}\leq C_1\l(\sum_{i=1}^\infty |a_i|^{\beta}L_i^{\beta}\r)^{\frac{1}{\beta}},\quad \l\|M_\infty\r\|_{\Psi_{2}}\leq C_1\l(\sum_{i=1}^\infty a_i^2L_i^{2}\r)^{\frac{1}{2}},
    \end{align*}
    and for $\alpha\in[1,2]$, 
    \begin{align*}
    \l\| M_\infty\r\|_{\Psi_\alpha}\leq C_2 \l(\sum_{i=1}^\infty a_i^{2}\sigma_i^{2}\r)^{\frac{1}{2}}+C_2\l(\sum_{i=1}^\infty |a_i|^\beta L_i^{\beta}\r)^{\frac{1}{\beta}},
\end{align*}
where $C_1,C_2>0$ are constants that do not depend on $\alpha$ and $\{\sigma_n,L_n\}$.
 \end{corollary}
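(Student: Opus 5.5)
The plan is to turn the tail bounds of Theorem~\ref{thm:dependent} into Orlicz-norm bounds with the standard tail-to-norm conversion. For a random variable $Y$ and $u>0$, Fubini gives
\begin{align*}
\EE\exp\l(|Y|^\alpha/u^\alpha\r)=1+\int_0^\infty \frac{\alpha s^{\alpha-1}}{u^\alpha}\exp\l(s^\alpha/u^\alpha\r)\PP(|Y|\geq s)\,ds .
\end{align*}
So whenever $\PP(|Y|\geq s)\leq 2\exp(-c s^\alpha/K^\alpha)$ for all $s\geq 0$, taking $u=C'K$ with $C'$ a large absolute constant makes the integral at most $1$. Hence $\|Y\|_{\Psi_\alpha}\leq C'K$.

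We must check that $C'$ does not depend on $\alpha$. Substituting $v=s^\alpha/u^\alpha$, the integral becomes $\int_0^\infty 2\exp\l(v-cv(C')^\alpha\r)dv=2/(c(C')^\alpha-1)$. This is at most $1$ once $(C')^\alpha\geq 3/c$. Because $\alpha\geq 1$, the choice $C'=\max\{1,3/c\}$ works uniformly in $\alpha$.

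For $\alpha\geq 2$, I would keep only one term inside the $\max$ of Theorem~\ref{thm:dependent}, since a maximum is at least each of its entries. Keeping the first term gives
\begin{align*}
\PP(|M_\infty|\geq t)\leq 2\exp\l(-Ct^\alpha/K_\beta^\alpha\r), \qquad K_\beta:=\l(\sum_{k}|a_k|^\beta L_k^\beta\r)^{1/\beta}.
\end{align*}
The conversion with exponent $\alpha$ then yields $\|M_\infty\|_{\Psi_\alpha}\leq C_1K_\beta$. Keeping the second term gives a sub-Gaussian tail with $K_2:=(\sum_k a_k^2L_k^2)^{1/2}$, and the conversion with exponent $2$ yields $\|M_\infty\|_{\Psi_2}\leq C_1K_2$. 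If $K_2=\infty$, this last bound is trivially true.

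For $\alpha\in[1,2]$, set $S:=(\sum_k a_k^2\sigma_k^2)^{1/2}$ and $K:=K_\beta$. From the $\max$ inside the $\min$ in Theorem~\ref{thm:dependent}, I would keep only the $t^\alpha$ term, which gives
\begin{align*}
\PP(|M_\infty|\geq t)\leq 2\exp\l(-C\min\l\{t^2/S^2,\;t^\alpha/K^\alpha\r\}\r).
\end{align*}
Now let $R:=S+K$. If $t^2/S^2$ is the smaller term, then $t^2/S^2\geq t^2/R^2$. If $t^\alpha/K^\alpha$ is the smaller term, then $t^\alpha/K^\alpha\geq t^\alpha/R^\alpha$. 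Therefore the exponent is at least $C\min\{t^2/R^2,t^\alpha/R^\alpha\}$.

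This minimum is not yet a pure $\Psi_\alpha$ tail, so two regimes are needed. For $t\geq R$ we have $t^2/R^2\geq t^\alpha/R^\alpha$ because $\alpha\leq 2$, so the minimum equals $t^\alpha/R^\alpha$. For $t<R$ we simply bound $\PP$ by $1$. Inserting both regimes into the integral formula: on $[0,R]$ the integrand is at most $\alpha s^{\alpha-1}u^{-\alpha}e^{(R/u)^\alpha}$, whose integral is $(R/u)^\alpha e^{(R/u)^\alpha}$ and is small once $u=C'R$. On $[R,\infty)$ the previous computation applies. Together these give $\|M_\infty\|_{\Psi_\alpha}\leq C_2(S+K)$, which is the claimed bound.

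I expect the main obstacle to be bookkeeping rather than any real difficulty. One point is keeping all constants free of $\alpha$, which the substitution above handles since $\alpha\geq1$. The other is the $\alpha\leq 2$ case, where the tail switches between sub-Gaussian and $\Psi_\alpha$ behaviour; this is handled by the comparison with $R=S+K$ and the split at $t=R$.
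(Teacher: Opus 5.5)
Your proof is correct, but it uses a different conversion mechanism from the one the paper relies on. The paper writes out no proof of this corollary; it only asserts that it follows from Theorem~\ref{thm:dependent}. Its proof of the independent analogue, Corollary~\ref{cor:norm_univariate}, has two steps.
\begin{itemize}
\item First, integrate the tail bound into $p$-th moment bounds of the form $\|Y\|_p\lesssim \sqrt{p}\,S+p^{1/\alpha}K$, as in Theorem~\ref{cor:moment_univariate}.
\item Second, expand $\EE\exp(|Y/K|^\alpha)=1+\sum_{p\ge1}\|Y\|_{p\alpha}^{p\alpha}/(p!\,K^{p\alpha})$ and sum the series using Stirling's formula.
\end{itemize}

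You skip the moment layer and feed the tail directly into the layer-cake identity. This has two advantages. The substitution $v=(s/u)^\alpha$ makes it transparent that the conversion constant does not depend on $\alpha$. The split at $t=R=S+K$ also handles the mixed sub-Gaussian/$\Psi_\alpha$ tail for $\alpha\in[1,2]$ cleanly; the moment route instead has to absorb the $\sqrt{p}$ term using $\sqrt p\le p^{1/\alpha}$. In exchange, the moment route produces explicit $L^p$ bounds for $M_\infty$ as a by-product, which the paper states separately in the independent case.

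Three minor remarks:
\begin{itemize}
\item On $[0,R]$, your quantity $(R/u)^\alpha e^{(R/u)^\alpha}$ is an upper bound for the exact integral $e^{(R/u)^\alpha}-1$, not its value. That is all you need.
\item For $\alpha\ge 2$ one has $\beta\le 2$, so $K_2\le K_\beta<\infty$, and the case $K_2=\infty$ never arises.
\item In both routes, the $\alpha$-independence of $C_1$ and $C_2$ ultimately depends on the constant in Theorem~\ref{thm:dependent} being free of $\alpha$. Your conversion step adds no $\alpha$-dependence of its own.
\end{itemize}
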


\subsection{Random vectors} \label{sec:vector}
  
In this section, we provide an application to the norm of random vectors of independent components. The bound of random vector norm can be traced back to the classical book of \cite{vershynin2018high}, where Theorem 3.1.1 therein gives the norm for random vectors with sub-Gaussian components. \cite{jeong2022sub} later improved it with a sharper tail probability. Here, we aim to improve and generalize the corresponding results in \cite{vershynin2018high} and \cite{jeong2022sub}. Theorem~\ref{thm:vec_an} below gives the concentration inequality for the Euclidean norm of a random vector, which is \textit{sharp} when $\sigma_X\asymp K$. Then Theorem~\ref{thm:vec:Sigma_L} later distinguishes $\sigma_X$ from the $\Psi_\alpha$-norm, which further generalizes Theorem~\ref{thm:vec_an}.

\begin{theorem}[Heterogeneous components]
    Assume that $X=(X_1,\ldots,X_d)\in\mathbb{R}^d$ is a mean-zero random vector with independent components, namely $\mathbb{E}X=0$, and $\|X_i\|_{\Psi_\alpha}\leq K_i<\infty$ for some $\alpha\geq 2$. Denote by $\sigma_{X_i}:=\|X_i\|_2$. Then we have that for $\alpha\geq 4$, 
    \begin{multline*}
        \PP\l(\l|\|X\| -\sqrt{\EE\|X\|^2}\r| \geq s\r)
        \leq 2\exp \l(-C\max\l\{\frac{s^4}{\sum_{i=1}^d K_i^4}, \frac{\sum_{i=1}^d \sigma_{X_i}^2}{\sum_{i=1}^d K_i^4}s^2,\r.\r.\\ \l.\l.\frac{s^\alpha}{\l(\sum_{i=1}^d K_i^{\frac{2\alpha}{\alpha-2}}\r)^{\frac{\alpha-2}{2}}}, \frac{s^{\frac{\alpha}{2}}\l(\sum_{i=1}^d \sigma_{X_i}^2\r)^{\frac{\alpha}{4}} }{\l(\sum_{i=1}^d K_i^{\frac{2\alpha}{\alpha-2}}\r)^{\frac{\alpha-2}{2}}}\r\}\r),
    \end{multline*}
    and for $\alpha\in[2,4]$, 
    \begin{multline*}
        \PP\l(\l|\|X\| -\sqrt{\EE\|X\|^2}\r| \geq s\r)
        \leq 2\exp \l(-C\min\l\{\max\l\{\frac{s^4}{\sum_{i=1}^d K_i^4}, \frac{\sum_{i=1}^d \sigma_{X_i}^2}{\sum_{i=1}^d K_i^4}s^2\r\}, \r.\r.\\ \l.\l.\max\l\{\frac{s^\alpha}{\l(\sum_{i=1}^d K_i^{\frac{2\alpha}{\alpha-2}}\r)^{\frac{\alpha-2}{2}}}, \frac{s^{\frac{\alpha}{2}}\l(\sum_{i=1}^d \sigma_{X_i}^2\r)^{\frac{\alpha}{4}} }{\l(\sum_{i=1}^d K_i^{\frac{2\alpha}{\alpha-2}}\r)^{\frac{\alpha-2}{2}}}\r\}\r\}\r).
    \end{multline*}
    \label{thm:vec_an}
\end{theorem}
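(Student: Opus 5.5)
The plan is to reduce the norm deviation to a deviation of the sum of squares, and then apply Theorem~\ref{thm:conc_univariates} to the centered squares. Put $Y_i:=X_i^2-\sigma_{X_i}^2$ and $S:=\sum_{i=1}^d \sigma_{X_i}^2=\EE\|X\|^2$, so that $\|X\|^2-S=\sum_{i=1}^d Y_i$. The $Y_i$ are independent and mean zero. Since $\|X_i^2\|_{\Psi_{\alpha/2}}=\|X_i\|_{\Psi_\alpha}^2\le K_i^2$ and centering costs only a constant, we get $\|Y_i\|_{\Psi_{\alpha/2}}\le CK_i^2$. Here $\alpha\ge 2$ gives $\alpha/2\ge 1$, and the phase transition of Theorem~\ref{thm:conc_univariates} at index $2$ for $Y_i$ is exactly the transition at $\alpha=4$ in the statement. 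The conjugate of $\alpha/2$ is $\beta'=\frac{\alpha}{\alpha-2}$. Hence $\sum_i \|Y_i\|^{\beta'}_{\Psi_{\alpha/2}}\lesssim\sum_i K_i^{2\alpha/(\alpha-2)}$, and raising this to the power $(\alpha/2)/\beta'=\frac{\alpha-2}{2}$ produces the denominator $\big(\sum_i K_i^{\frac{2\alpha}{\alpha-2}}\big)^{\frac{\alpha-2}{2}}$ in the statement. The quadratic denominator is $\sum_i\|Y_i\|^2_{\Psi_{\alpha/2}}\lesssim \sum_i K_i^4$. So for $u\ge0$, Theorem~\ref{thm:conc_univariates} gives for $\alpha\ge4$
\begin{align*}
\PP\Big(\Big|\sum_i Y_i\Big|\ge u\Big)\le 2\exp\Big(-C\max\Big\{\frac{u^2}{\sum_i K_i^4},\frac{u^{\alpha/2}}{(\sum_i K_i^{2\alpha/(\alpha-2)})^{(\alpha-2)/2}}\Big\}\Big),
\end{align*}
and the same bound with $\min$ in place of $\max$ for $\alpha\in[2,4]$.

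Next comes the standard algebraic reduction. For $z=\|X\|\ge0$ and $\sqrt S$, the event $|z-\sqrt S|\ge s$ implies $|z^2-S|\ge \max\{s^2,\,s\sqrt S\}$. This follows from $|z^2-S|=|z-\sqrt S|(z+\sqrt S)$, using $z+\sqrt S\ge\sqrt S$ and $z+\sqrt S\ge|z-\sqrt S|$. We would then plug $u=\max\{s^2,s\sqrt S\}$ into the bound above. Because each exponent is increasing in $u$, taking the max over the two choices of $u$ is allowed. With $u=s^2$ we obtain $s^4/\sum K_i^4$ and $s^{\alpha}/(\cdots)^{(\alpha-2)/2}$. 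With $u=s\sqrt S$ we obtain $S s^2/\sum K_i^4$ and $s^{\alpha/2}S^{\alpha/4}/(\cdots)^{(\alpha-2)/2}$.

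For $\alpha\ge4$ everything is an outer max of the four terms, which is exactly the first display. For $\alpha\in[2,4]$ we need $\max_u \min\{A(u),B(u)\}$, where $A$ is the quadratic term and $B$ the $\Psi_{\alpha/2}$ term, and the statement instead claims $\min\{\max_u A,\max_u B\}$. Both $A$ and $B$ are increasing in $u$, so the same $u^\ast=\max\{s^2,s\sqrt S\}$ maximizes both simultaneously. Therefore $\min\{A(u^\ast),B(u^\ast)\}=\min\{\max\{A(s^2),A(s\sqrt S)\},\max\{B(s^2),B(s\sqrt S)\}\}$, which is the second display.

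The main obstacle I expect is the claim $\|Y_i\|_{\Psi_{\alpha/2}}\le CK_i^2$ with a constant uniform in $\alpha$. The squaring identity is exact, but centering needs $\sigma_{X_i}^2\lesssim \|X_i^2\|_{\Psi_{\alpha/2}}$ together with the triangle inequality for the $\Psi_{\alpha/2}$-norm, which is valid because $\alpha/2\ge1$. Both constants are absolute. A secondary issue is the edge case $\alpha=2$, where $\beta'=\infty$. There the $\Psi_{\alpha/2}$ term must be read as $s^{\alpha/2}\cdots/\max_i K_i^2$, consistent with the $\alpha=1$ convention after Theorem~\ref{thm:conc_univariates}. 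We would handle this by continuity of the expressions or by citing that convention directly.
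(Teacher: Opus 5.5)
Your proposal is correct and is essentially the paper's own proof. Both apply Theorem~\ref{thm:conc_univariates} with index $\alpha/2$ to $\sum_i(X_i^2-\sigma_{X_i}^2)$, then use the implication $\bigl|\|X\|-\sqrt{S}\bigr|\ge s\Rightarrow\bigl|\|X\|^2-S\bigr|\ge\max\{s^2,s\sqrt{S}\}$ (the paper's Lemma~\ref{teclem:minus}) and the monotonicity of both exponents in $u$.

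You are more careful than the paper, which uses $\|X_i^2-\sigma_{X_i}^2\|_{\Psi_{\alpha/2}}\lesssim K_i^2$ without comment. One small caveat: an absolute constant $C$ in that bound becomes a factor $C^{-\alpha/2}$ in the two $\Psi_{\alpha/2}$ terms. If you want the final constant free of $\alpha$, use instead $\|X_i^2-\sigma_{X_i}^2\|_{\Psi_{\alpha/2}}\le c^{2/\alpha}K_i^2$ with $c$ absolute. This follows from $|X_i^2-\sigma_{X_i}^2|\le\max\{X_i^2,\sigma_{X_i}^2\}$, $\sigma_{X_i}^2\le K_i^2$, and Jensen's inequality.
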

  For sufficiently large $s$, the tail probability established in Theorem \ref{thm:vec_an} above is dominated by the $\Psi_\alpha$ one under both scenarios. When applying Theorem~\ref{thm:vec_an} to the identical distributions, the concentration inequality can be simplified as follows.
\begin{corollary}[Isotropic]
    Assume that $X=(X_1,\ldots,X_d)\in\mathbb{R}^d$ is a mean-zero random vector with independent and identically distributed (i.i.d.) components, namely $\mathbb{E}X=0$, and $\|X_i\|_{\Psi_\alpha}\leq K<\infty$ for some $\alpha\geq 2$. Denote by $\sigma_X:=\|X_i\|_2$ with $\sigma_X\leq K$. Then we have that for $\alpha\geq 4$, 
    \begin{align*}
        \PP\l( \l|\frac{1}{\sqrt{d}}\|X\|-\sigma_X \r|\geq s\r)\leq 2\exp\l(-Cd\max\l\{\frac{s^4}{K^4},\frac{\sigma_X^2s^2}{K^4},\frac{s^\alpha}{K^\alpha} \r\}\r),
    \end{align*}
    and for $\alpha\in[2,4]$, 
    \begin{align*}
        \PP\l(\l|\frac{1}{\sqrt{d}}\|X\|-\sigma_X \r|\geq s \r)\leq 2\exp\l(-Cd\min\l\{ \max\l\{\frac{s^4}{K^4},\frac{\sigma_X^2 s^2}{K^4}\r\},\frac{s^\alpha}{K^\alpha} \r\}\r).
    \end{align*}
    \label{thm:vec}
\end{corollary}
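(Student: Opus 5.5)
The plan is to obtain Corollary~\ref{thm:vec} by specializing Theorem~\ref{thm:vec_an} to $K_i\equiv K$ and $\sigma_{X_i}\equiv\sigma_X$, and then rescaling the deviation. With i.i.d. components we have $\EE\|X\|^2=d\sigma_X^2$, so $\sqrt{\EE\|X\|^2}=\sqrt d\,\sigma_X$. Moreover,
\begin{align*}
\PP\Bigl(\Bigl|\tfrac{1}{\sqrt d}\|X\|-\sigma_X\Bigr|\geq s\Bigr)=\PP\Bigl(\bigl|\|X\|-\sqrt{\EE\|X\|^2}\bigr|\geq \sqrt d\, s\Bigr),
\end{align*}
so it suffices to apply Theorem~\ref{thm:vec_an} with deviation $\sqrt d\,s$.

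Next I evaluate each of the four exponents in Theorem~\ref{thm:vec_an} under homogeneity. Since $\sum_i K_i^4=dK^4$ and $\sum_i\sigma_{X_i}^2=d\sigma_X^2$, the first two terms become
\begin{align*}
\frac{(\sqrt d s)^4}{dK^4}=\frac{ds^4}{K^4},\qquad \frac{d\sigma_X^2}{dK^4}\,d s^2=\frac{d\sigma_X^2s^2}{K^4}.
\end{align*}
For the third term, $\bigl(\sum_i K_i^{2\alpha/(\alpha-2)}\bigr)^{(\alpha-2)/2}=d^{(\alpha-2)/2}K^\alpha$, so
\begin{align*}
\frac{(\sqrt d s)^\alpha}{d^{(\alpha-2)/2}K^\alpha}=\frac{ds^\alpha}{K^\alpha}.
\end{align*}
The fourth term becomes
\begin{align*}
\frac{d^{\alpha/4}s^{\alpha/2}\,d^{\alpha/4}\sigma_X^{\alpha/2}}{d^{(\alpha-2)/2}K^\alpha}=\frac{d\,s^{\alpha/2}\sigma_X^{\alpha/2}}{K^\alpha}.
\end{align*}
Every term thus carries exactly one factor of $d$, which factors out of both the max and the min.

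The only real point is to drop the fourth term, $d(s\sigma_X)^{\alpha/2}/K^\alpha$, which does not appear in the corollary. It is the geometric mean of the second and third terms. Writing $A=\sigma_X^2s^2/K^4$ and $B=s^\alpha/K^\alpha$, we have $A^{\alpha/4}=(s\sigma_X)^{\alpha/2}/K^\alpha$, which is the fourth term divided by $d$. For the quantity $\max\{s^4/K^4,A\}$ we always have the following comparison, using $\sigma_X\le K$:
\begin{itemize}
\item If $s\le\sigma_X$, then $(s\sigma_X/K^2)^{\alpha/2}\le(s\sigma_X/K^2)^{2}=A$ when $s\sigma_X\le K^2$ and $\alpha\ge4$. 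Otherwise it is at most $B$, since $s\sigma_X\le s^2$.
\item If $s\ge\sigma_X$, then $(s\sigma_X)^{\alpha/2}\le s^\alpha$, so the fourth term is at most $B$.
\end{itemize}

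For $\alpha\ge4$, the first bullet needs a cleaner argument. I would use $(s\sigma_X)^{\alpha/2}\le\max\{s^\alpha,\sigma_X^2s^2K^{\alpha-4}\}$. Dividing by $K^\alpha$ shows the fourth term is at most $\max\{B,A\}$. This holds because if $s\le\sigma_X$, then $(s\sigma_X)^{\alpha/2}=(s\sigma_X)^2(s\sigma_X)^{(\alpha-4)/2}\le s^2\sigma_X^2K^{\alpha-4}$, using $s\sigma_X\le\sigma_X^2\le K^2$. Hence, in the $\alpha\ge4$ case, the max over four terms equals the max over three, giving the stated bound.

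For $\alpha\in[2,4]$, the second maximum $\max\{B,\text{fourth}\}$ is at least $B$, so replacing it by $B$ only decreases the exponent. The min is then at least $\min\{\max\{s^4/K^4,A\},B\}$, which yields the stated weaker bound directly. I expect no real obstacle beyond this elementary comparison of the mixed term.
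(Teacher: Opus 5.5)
Your proposal is correct and is essentially the paper's argument. The paper applies Theorem~\ref{thm:conc_univariates} to $X_i^2-\EE X_i^2$ and then Lemma~\ref{teclem:minus} directly in the homogeneous case, which re-derives Theorem~\ref{thm:vec_an} with $K_i\equiv K$. It then rescales $s\mapsto\sqrt{d}\,s$ and drops the mixed term $d(s\sigma_X)^{\alpha/2}/K^\alpha$, just as you do by specializing Theorem~\ref{thm:vec_an}.

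One remark: for $\alpha\ge4$ no domination argument is needed to drop that term, because deleting an entry of the maximum only shrinks the exponent and so weakens the bound. Your case analysis can therefore be omitted, which also removes its ``otherwise'' clause in the first bullet; that clause is wrong, since $s\le\sigma_X$ gives $s\sigma_X\ge s^2$, not $\le$.
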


Corollary~\ref{thm:vec} above bounds the norm of random vector $X$ with i.i.d. components. In general, regardless of $\alpha\geq 4$ or $2\leq \alpha\leq 4$, for $s\leq \sigma_X$, the tail probability is $2\exp\l(-Cd\sigma_X^2 s^2/K^4\r)$; for $s\in[\sigma_X,K]$, the tail probability is $2\exp\l(-Cd s^4/K^4\r)$; and for $s\geq K$, the tail probability is $2\exp\l(-Cd s^\alpha/K^\alpha\r)$. When $\sigma_X\asymp K$, the bound is \textit{sharp} and the intermediate phase can be eliminated.

The theorem below provides a delicate bound of random vector norm under the characterization of moments.

\begin{theorem}
    Assume that $X=(X_1,\ldots,X_d)\in\mathbb{R}^d$ is a mean-zero random vector with independent components, namely $\mathbb{E}X=0$, and $X_i^2-\EE X_i^2$ satisfies Definition~\ref{assm:sigma_L} for some $\alpha\geq 1$ and $(\sigma,L)$. Denote by $\sigma_X^2\geq \operatorname{var}(X_i)$. Then we have that for $\alpha\geq 2$, 
    \begin{multline*}
        \PP\l(\l|\|X\|-\sqrt{\EE\|X\|^2} \r|\geq s \r)\leq 2\exp\l(-C\max\l\{\frac{s^{2\alpha}}{d^{\alpha-1}L^\alpha}, \frac{s^4}{dL^2}, \frac{s^2\sigma_{X}^2}{L^2},\r.\r. \\ \l.\l.\min\l\{ \max\l\{\frac{s^4}{d \sigma^2},\frac{\sigma_X^2}{\sigma^2}s^2\r\},\max\l\{\frac{s^2}{ L},\frac{s\sqrt{d} \sigma_X}{L}\r\}\r\}\r\}\r),
    \end{multline*}
    and for $1\leq \alpha\leq 2$, 
    \begin{multline*}
        \PP\l(\l|\|X\|-\sqrt{d}\sigma_X \r|\geq s \r)\leq 2\exp\l(-C\min\l\{\max\l\{\frac{s^4}{d \sigma^2}, \frac{s^2\sigma_{X}^2}{ \sigma^2}\r\}
        ,\r.\r.\\\l.\l. \min\l\{ \max\l\{\frac{s^{2\alpha}}{d^{\alpha-1} L^\alpha},\frac{d^{1-\frac{\alpha}{2}} \sigma_{X}^\alpha}{ L^\alpha}s^\alpha\r\},\max\l\{\frac{s^2}{L},\frac{s\sqrt{d }\sigma_X}{ L}\r\}\r\}\r\}\r).
    \end{multline*}
    \label{thm:vec:Sigma_L}
\end{theorem}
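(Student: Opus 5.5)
The plan is to reduce the norm deviation to a sum of independent centered variables $Y_i:=X_i^2-\EE X_i^2$, which by assumption satisfy Definition~\ref{assm:sigma_L} with $(\sigma,L)$, and then apply Theorem~\ref{thm:conc_univariate_SigmaL} with $a_i\equiv 1$. Write $\mu:=\sqrt{\EE\|X\|^2}$, so $\mu^2=\sum_i\EE X_i^2$. In the statement $\mu$ plays the role of $\sqrt d\,\sigma_X$; I will use $\mu\gtrsim\sqrt d\,\sigma_X$, or take $\sigma_X^2$ as the common second moment in the i.i.d.-type reading. The key algebraic device is the standard one from \cite{vershynin2018high}. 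For $z,\mu\ge 0$ with $|z-\mu|\ge s$ we have $|z^2-\mu^2|=|z-\mu|(z+\mu)\ge \max\{s^2, s\mu\}$. Therefore
\begin{align*}
\PP\l(\l|\|X\|-\mu\r|\geq s\r)\leq \PP\l(\l|\sum_{i=1}^d Y_i\r|\geq \max\{s^2,s\mu\}\r)\leq \min_{t\in\{s^2,\, s\mu\}} \PP\l(\l|\sum_{i=1}^d Y_i\r|\geq t\r).
\end{align*}
So the exponent we obtain is the maximum of the exponents from Theorem~\ref{thm:conc_univariate_SigmaL} evaluated at $t=s^2$ and at $t=s\sqrt d\,\sigma_X$.

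Next I substitute. For $\alpha\ge2$, Theorem~\ref{thm:conc_univariate_SigmaL} with $a_i=1$ gives the exponent
\begin{align*}
C\max\l\{\frac{t^\alpha}{d^{\alpha-1}L^\alpha},\ \frac{t^2}{dL^2},\ \min\l\{\frac{t^2}{d\sigma^2},\frac{t}{L}\r\}\r\},
\end{align*}
using $(\sum_i L^\beta)^{\alpha/\beta}=d^{\alpha-1}L^\alpha$.

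At $t=s^2$ this gives the terms $\frac{s^{2\alpha}}{d^{\alpha-1}L^\alpha}$, $\frac{s^4}{dL^2}$ and $\min\{\frac{s^4}{d\sigma^2},\frac{s^2}{L}\}$. At $t=s\sqrt d\sigma_X$ it gives $\frac{s^2\sigma_X^2}{L^2}$, the $\Psi_\alpha$ term $\frac{d^{1-\alpha/2}\sigma_X^\alpha s^\alpha}{L^\alpha}$, and $\min\{\frac{s^2\sigma_X^2}{\sigma^2},\frac{s\sqrt d\sigma_X}{L}\}$. The latter $\Psi_\alpha$ term is simply dropped for $\alpha\ge 2$; dropping terms inside a max is legal. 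Taking the max over both choices, and using $\max\{\min\{a,b\},\min\{c,e\}\}\le\min\{\max\{a,c\},\max\{b,e\}\}$ in the right direction, yields the display. One caveat: the claimed bound has $\min\{\max,\max\}$, which is larger than $\max\{\min,\min\}$, so this inequality goes the wrong way. To get the stated form I must therefore argue directly.

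The direct argument splits into two cases according to which of $s^2$ and $s\mu$ is larger, i.e.\ $s\gtrless\mu$. Within each case, the monotone relations between the paired terms let $\max\{\min\{a,b\},\min\{c,e\}\}$ be rewritten as $\min\{\max\{a,c\},\max\{b,e\}\}$ up to constants. For instance, when $s\ge \mu$ we have $a=\frac{s^4}{d\sigma^2}\ge c=\frac{s^2\sigma_X^2}{\sigma^2}$, which uses $\mu^2\asymp d\sigma_X^2$, and likewise $b\ge e$. So both sides equal $\min\{a,b\}$. The case $s\le\mu$ is symmetric.

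The case $1\le\alpha\le2$ is handled identically. There Theorem~\ref{thm:conc_univariate_SigmaL} gives the exponent
\begin{align*}
\min\l\{\frac{t^2}{d\sigma^2},\ \max\l\{\frac{t}{L},\frac{t^\alpha}{d^{\alpha-1}L^\alpha}\r\}\r\},
\end{align*}
written in the equivalent min–max form used in Corollary~\ref{cor:sigmaL_inf}. The same substitution at $t=s^2$ and $t=s\sqrt d\sigma_X$, together with the same case split on $s$ versus $\mu$, produces the nested $\min$/$\max$ expression in the statement.

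I expect the main obstacle to be exactly this bookkeeping of exchanging max and min. The inequality is only valid up to constants because the two choices of $t$ are ordered consistently in each regime. A second point needing care is the discrepancy between $\mu$ and $\sqrt d\,\sigma_X$ in the second display. When $\sigma_X^2$ is only an upper bound on the variances, I would first replace $\mu$ by $\sqrt d\,\sigma_X$ with the triangle inequality. This costs a shift of $|\mu-\sqrt d\sigma_X|$ in $s$, so I would instead interpret $\sigma_X^2$ as the common second moment, $\mu=\sqrt d\sigma_X$, which is the intended reading.
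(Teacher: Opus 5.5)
Your proposal is correct and follows essentially the paper's route: apply Theorem~\ref{thm:conc_univariate_SigmaL} to $\sum_i (X_i^2-\EE X_i^2)$, then transfer the bound via Lemma~\ref{teclem:minus} at $t=\max\{s^2, s\sqrt{d}\sigma_X\}$; the paper also silently adopts the common-variance reading $\EE\|X\|^2=d\sigma_X^2$. Your case split is unnecessary, because every piece of the exponent is increasing in $t$, so substituting $t=\max\{s^2,s\sqrt{d}\sigma_X\}$ directly pushes the max into each piece and gives the $\min\{\max,\max\}$ form exactly, with no constants lost, which is what the paper does (and for $1\le\alpha\le 2$ the inner $\min$ in the statement is just a weakening of the $\max$ this substitution actually yields).
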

The proof of Theorem~\ref{thm:vec:Sigma_L} is in fact established for \textit{heterogeneous} components. For the clarity of presentation, in the main text here, we present only the case when the components are identically distributed. The example below provides the admissible values of $(\sigma,L)$ for $X_i^2-\EE X_i^2$ when $\|X_i\|_{\Psi_\alpha}<\infty$.
\begin{example}[Examples of $(\sigma,L)$]
    Denote by $\|X_{i}\|_{2,2}^2:=\operatorname{var}(X_i^2-\EE X_i^2)=\EE X_i^4-(\EE X_i^2)^2\leq \|X_i\|_4^4$. Then if $X_i$ is mean zero with $\sigma_{X_i}=\sqrt{\operatorname{var}(X_i)}$ and $\|X_i\|_{\Psi_\alpha}<\infty$, quantity $X_i^2-\EE X_i^2$ satisfies Definition~\ref{assm:sigma_L} with $\frac{\alpha}{2}$, i.e., $\|X_i^2-\EE X_i^2\|_{\Psi_{\frac{\alpha}{2}}}\leq  2\|X_i\|_{\Psi_\alpha}<\infty$. In view of Example~\ref{exp:1}, it holds that 
    \begin{align*}
        \EE |X_i^2-\EE X_i^2|^2&=\|X_i\|_{2,2}^2\leq \|X_i\|_4^4\leq C\sigma_{X_i}^2\|X_i\|_{\Psi_\alpha}^2 \log\l(\frac{\|X_i\|_{\Psi_\alpha}}{\sigma_{X_i}}\r),\\
        \EE |X_i^2-\EE X_i^2|^k&\leq C^k\EE|X_i|^{2k}\leq C^k k^{\frac{2k}{\alpha}} \sigma_{X_i}^2\|X_i\|_{\Psi_\alpha}^{2k-2} \log^{k-1}\l(\frac{\|X_i\|_{\Psi_\alpha}}{\sigma_{X_i}}\r),
    \end{align*}
    which indicate that $X^2-\EE X^2$ satisfies Definition~\ref{assm:sigma_L} with $\frac{\alpha}{2}$ for 
    \begin{align}
        \sigma:= C\sigma_{X_i}\|X_i\|_{\Psi_\alpha}\log^{\frac{1}{2}}\l(\frac{\|X_i\|_{\Psi_\alpha}}{\sigma_{X_i}}\r),\quad L:=C\|X_i\|_{\Psi_{ \alpha }}^2\log\l(\frac{2\|X_i\|_{\Psi_{\alpha}}}{\sigma_{X_i}}\r).
        \label{eq1:vec}
    \end{align}
    On the other hand, it is noteworthy that the values of $(\sigma,L)$ are \textit{not} unique. An application of Example~\ref{exp:2} leads to 
    \begin{align*}
        \EE |X_i^2-\EE X_i^2|^2&=\|X\|_{2,2}^2\leq \|X_i\|_4^4\leq C\sigma_{X_i}\|X_i\|_{\Psi_\alpha}^3,\\
        \EE |X_i^2-\EE X_i^2|^k&\leq C^k\EE|X_i|^{2k}\leq C^k k^{\frac{2k}{\alpha}} \sigma_{X_i}\|X_i\|_{\Psi_\alpha}^{2k-1},
    \end{align*}
    which entail that $X_i^2-\EE X_i^2$ satisfies Definition~\ref{assm:sigma_L} with $\frac{\alpha}{2}$ for
    \begin{align}
        \sigma:=C\sqrt{\sigma_{X_i} \|X_i\|_{\Psi_\alpha}}\|X\|_{\Psi_\alpha},\quad L:= C\|X_i\|_{\Psi_\alpha}^2.
        \label{eq2:vec}
    \end{align}
    \label{exp:8}
\end{example}

\begin{remark}[Comparisons to existing works under $\alpha=2$]
    Here, we consider the scenario when random vector $X$ has i.i.d. mean-zero components, with $\sigma_X^2=1$ and $\|X\|_{\Psi_2}\leq K$. Theorem 3.1.1 in 
    \cite{vershynin2018high} gives the upper bound for $\|X\|$
\begin{align*}
    \PP\l(\l|\|X\| -\sqrt{d \sigma_{X}^2}\r| \geq s\r)
        \leq 2\exp \l(-C\frac{s^2}{K^4}\r),
\end{align*}
which was \textit{conjectured not} sharp in \cite{vershynin2018high}. \cite{jeong2022sub} improved it to 
\begin{align*}
    \PP\l(\l|\|X\| -\sqrt{d \sigma_{X}^2}\r| \geq s\r)
        \leq 2\exp \l(-C\frac{s^2}{K^2\log (K)}\r).
\end{align*}
On the other hand, when substituting 
\eqref{eq1:vec} into Theorem~\ref{thm:vec:Sigma_L} above, we can obtain that 
\begin{multline*}
     \PP\l(\l|\|X\| -\sqrt{d}\sigma_X\r| \geq s\r)
        \\
        \leq 2\exp\l(-C\min\l\{\max\l\{\frac{s^4}{d \sigma_X^2\|X\|_{\Psi_2}^2\log\l(\frac{2\|X\|_{\Psi_2}}{\sigma_X}\r)}, \frac{s^2}{ \|X\|_{\Psi_2}^2\log\l(\frac{2\|X\|_{\Psi_2}}{\sigma_X}\r)}\r\}
        ,\r.\r.\\\l.\l. \max\l\{\frac{s^2}{\|X\|_{\Psi_2}^2\log\l(\frac{2\|X\|_{\Psi_2}}{\sigma_X}\r)},\frac{s\sqrt{d }\sigma_X}{ \|X\|_{\Psi_2}^2\log\l(\frac{2\|X\|_{\Psi_2}}{\sigma_X}\r)}\r\}\r\}\r)\\
        = 2\exp\l(-C\frac{s^2}{\|X\|_{\Psi_2}^2\log\l(\frac{2\|X\|_{\Psi_2}}{\sigma_X}\r)}\r).
\end{multline*}
When applying 
\eqref{eq2:vec} to Theorem~\ref{thm:vec:Sigma_L}, we can deduce that 
\begin{multline*}
     \PP\l(\l|\|X\| -\sqrt{d}\sigma_X\r| \geq s\r)
        \leq 2\exp\l(-C\min\l\{\max\l\{\frac{s^4}{d \sigma_X\|X\|_{\Psi_2}^3}, \frac{s^2\sigma_X}{ \|X\|_{\Psi_2}^3}\r\}
        ,\r.\r.\\\l.\l. \max\l\{\frac{s^2}{\|X\|_{\Psi_2}^2},\frac{s\sqrt{d }\sigma_X}{ \|X\|_{\Psi_2}^2}\r\}\r\}\r).
\end{multline*}
Consequently, the two inequalities above together entail that 
\begin{multline*}
    \PP\l(\l|\|X\| -\sqrt{d}\sigma_X\r| \geq s \r)\\
    \leq \begin{cases}
        2\exp\l(-\frac{cs^2}{\|X\|_{\Psi_2}^2\log\l(\frac{2\|X\|_{\Psi_2}}{\sigma_X}\r)}\r)& \text{ if } s\leq \sqrt{d}\sqrt{\frac{\sigma_X\|X\|_{\Psi_2}}{\log \l(\frac{\|X\|_{\Psi_2}}{\sigma_X}\r)}},\\
        2\exp\l(-\frac{cs^4}{d\sigma_X\|X\|_{\Psi_\alpha}^3}\r)& \text{ if } \sqrt{d}\sqrt{\frac{\sigma_X\|X\|_{\Psi_2}}{\log \l(\frac{\|X\|_{\Psi_2}}{\sigma_X}\r)}}\leq s\leq \sqrt{d}\sqrt{\sigma_X\|X\|_{\Psi_2}},\\
        2\exp\l(-\frac{cs^2}{\|X\|_{\Psi_2}^2}\r)&\text{ otherwise},
    \end{cases} 
\end{multline*}
which improves Theorem~3.1.1 of 
\cite{vershynin2018high} and the result in \cite{jeong2022sub}. Additionally, our tail probability bound can be written as 
\begin{multline*}
    \PP\l(\l|\|X\| -\sqrt{d}\sigma_X\r| \geq s \r)\\
    \leq2\exp\l(-C\max\l\{\frac{s^2}{\|X\|_{\Psi_2}^2\log\l(\frac{2\|X\|_{\Psi_2}}{\sigma_X}\r)},\min\l\{\frac{s^4}{d\sigma_X\|X\|_{\Psi_\alpha}^3}, \frac{s^2}{\|X\|_{\Psi_2}^2} \r\} \r\}\r).
\end{multline*}
\end{remark}

\subsection{Random matrices} \label{new.Sec.randmat}

In this section, we investigate the largest and smallest nonzero \textit{singular values} of a random matrix. The study of the operator norm for random matrices can be traced back to the elegant work of \cite{bai1993limit}, where asymptotic convergence was established for the eigenvalues of sample covariance matrices in the scenario of i.i.d. Gaussian entries. See, e.g., the 
works of \cite{latala2005some}, \cite{davidson2001local}, \cite{vershynin2010introduction}, \cite{fanfanlvyangyu2025}, and references therein. Specifically, \cite{vershynin2010introduction} focused on the nonasymptotic behaviors of singular values with independent rows or columns where the rows or columns are isotropic sub-Gaussian. Here, we will extend the results in \cite{vershynin2010introduction} to general sub-Weibull distributions, and improve the results when standard deviation $\sigma_X$ needs to be distinguished from the Orlicz norm.

Let $X$ be a $d_1\times d_2$ random matrix, with rows $X=\l(X_1 ,\ldots,X_{d_1} \r)^\top $. Here, $X_1,\ldots,X_{d_1}\in\mathbb{R}^{d_2}$ are independently distributed. We emphasize that the components of $X_i$ may be dependent. The operator norm of $X$ is defined as 
$$\|X\|:=\sup_{u\in\mathbb{S}^{d-1}}\|Xu\|.$$ 
The lemma below upper bounds the difference between $\frac{1}{d_1}X^\top X$ and its expectation $\Sigma$.
\begin{lemma}
    Assume that random matrix $X= (X_1 ,\ldots,X_{d_1} )^\top\in\RR^{d_1\times d_2}$ contains i.i.d. mean-zero rows, and there exist some $\alpha\geq 1$, $\sigma,L>0$ such that for any $u\in\SS^{d_2-1}$ and all $k\geq 2$, 
    \begin{align}
        \EE \l|\l(X_i^\top u\r)^2-\EE\l(X_i^\top u\r)^2 \r|^k\leq k^{\frac{k}{\alpha}}\sigma^2L^{k-2}.
        \label{eq1:new.Sec.randmat}
    \end{align} 
    Denote by $\Sigma:=\EE X_iX_i^{\top}$ the population covariance matrix. Then when $\alpha\geq 2$, it holds for all $t>0$ that with probability over $1- \exp(-t)$, 
    \begin{align*}
    \l\|\frac{1}{d_1}X^{\top}X -\Sigma\r\|\leq C\min\l\{L\l(\frac{t+d_2}{d_1}\r)^{\frac{1}{\alpha}},\sigma\l(\frac{t+d_2}{d_1}\r)^{\frac{1}{2}}+L\cdot\frac{t+d_2}{d_1}\r\}.
\end{align*}
When $\alpha\in[1,2]$, it holds for all $t>0$ that with probability over $1-\exp(-t)$, 
\begin{align*}
    \l\|\frac{1}{d_1}X^{\top}X -\Sigma\r\|\leq C\sigma\l(\frac{t+d_2}{d_1}\r)^{\frac{1}{2}}+CL\min\l\{\l(\frac{t+d_2}{d_1}\r)^{\frac{1}{\alpha}},\frac{t+d_2}{d_1}\r\}.
\end{align*}
    \label{lem:matrix}
\end{lemma}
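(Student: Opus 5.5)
The plan is the standard $\varepsilon$-net argument, with Theorem~\ref{thm:conc_univariate_SigmaL} supplying the tail bound for each fixed direction.

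\textbf{Step 1 (fixed direction).} Fix $u\in\SS^{d_2-1}$ and set $Y_i:=(X_i^\top u)^2-\EE(X_i^\top u)^2$ for $i=1,\ldots,d_1$. These are i.i.d., mean zero, and by \eqref{eq1:new.Sec.randmat} each satisfies Definition~\ref{assm:sigma_L} with $(\sigma,L)$. Note that
$$u^\top\Big(\frac{1}{d_1}X^\top X-\Sigma\Big)u=\frac{1}{d_1}\sum_{i=1}^{d_1} Y_i .$$
Apply Theorem~\ref{thm:conc_univariate_SigmaL} with $a_i=1/d_1$. Then $\sum a_i^2\sigma^2=\sigma^2/d_1$, $\sum a_i^2L^2=L^2/d_1$, $\max_i|a_i|L=L/d_1$, and $(\sum|a_i|^\beta L^\beta)^{\alpha/\beta}=L^\alpha d_1^{1-\alpha}$.

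Inverting the tail bound at level $\exp(-s)$ gives the following.
\begin{itemize}
\item[(a)] For $\alpha\ge2$, the max structure lets us pick whichever term is most favorable. The $\Psi_\alpha$ term gives deviation $\lesssim L(s/d_1)^{1/\alpha}$. The $\min\{t^2/\cdot,\,t/\cdot\}$ term gives deviation $\lesssim \sigma(s/d_1)^{1/2}+L s/d_1$. Hence, with probability at least $1-2e^{-s}$, the deviation is at most $C\min\{L(s/d_1)^{1/\alpha},\ \sigma(s/d_1)^{1/2}+Ls/d_1\}$.
\item[(b)] For $\alpha\in[1,2]$, the outer max of the two mins yields deviation $\lesssim \sigma(s/d_1)^{1/2}+L\min\{(s/d_1)^{1/\alpha},\,s/d_1\}$. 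To see this, solve each min separately: each is bounded by the sum of the inverse rates, and then take the better of the two.
\end{itemize}

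\textbf{Step 2 (net).} Let $\mathcal N$ be a $1/4$-net of $\SS^{d_2-1}$ with $|\mathcal N|\le 9^{d_2}$. For the symmetric matrix $A=\frac1{d_1}X^\top X-\Sigma$, the standard argument gives $\|A\|\le 2\max_{u\in\mathcal N}|u^\top A u|$. Take $s=t+C'd_2$ with $C'$ large enough that $2\cdot 9^{d_2}e^{-s}\le e^{-t}$; this may need $t\gtrsim1$, and otherwise we absorb constants. A union bound over $\mathcal N$ then gives, with probability at least $1-e^{-t}$, the Step 1 bounds with $s$ replaced by $C(t+d_2)$. Since the bound functions are monotone with polynomial dependence on $s$, the factor $C$ moves into the outer constant. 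The resulting factor is at most $C^{1/\alpha}\le C$ in the $\alpha$-term and at most $C$ in the others, so the constants stay independent of $\alpha$.

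\textbf{Main obstacle.} The delicate step is the inversion in Step 1, especially for $\alpha\in[1,2]$, where the bound is a max of two mins. We need the deviation level $\tau$ such that $\max\{\min(\tau^2 d_1/\sigma^2,\tau d_1/L),\ \min(\tau^2d_1/\sigma^2,\tau^\alpha d_1^{\alpha-1}/L^\alpha)\}\ge s$. Since $\min(A,B)\ge s$ holds whenever both $A\ge s$ and $B\ge s$, it suffices to take $\tau=\sigma(s/d_1)^{1/2}+L\min\{s/d_1,(s/d_1)^{1/\alpha}\}$: then the quadratic term holds, and at least one of the linear or $\alpha$-power terms holds, matching whichever choice realizes the min. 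For $\alpha\ge2$ the argument is the same, except the choice is made in favor of the single $\Psi_\alpha$ term versus the Bernstein-type min. I would also verify that the constants produced by Theorem~\ref{thm:conc_univariate_SigmaL} are uniform in $\alpha$, so that $C$ is universal.
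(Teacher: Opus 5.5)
Your proposal is correct and follows the paper's proof. Like the paper, you apply Theorem~\ref{thm:conc_univariate_SigmaL} with $a_i=1/d_1$ in each fixed direction $u$, take an $\varepsilon$-net and union bound over the sphere, and invert the resulting tail at level $e^{-(t+Cd_2)}$.

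One arithmetic fix is needed. With $a_i=1/d_1$ you get $(\sum_i |a_i|^\beta L^\beta)^{\alpha/\beta}=d_1^{\alpha(1-\beta)/\beta}L^\alpha=L^\alpha/d_1$, not $L^\alpha d_1^{1-\alpha}$, so the $\Psi_\alpha$ exponent is $d_1\tau^\alpha/L^\alpha$. That is exactly what produces your stated deviation $L(s/d_1)^{1/\alpha}$. By contrast, the expression $\tau^\alpha d_1^{\alpha-1}/L^\alpha$ in your last paragraph evaluates to $s\,d_1^{\alpha-2}<s$ at that $\tau$ when $\alpha<2$, which would break your check in that regime.
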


Lemma~\ref{lem:matrix} above unveils an interesting phase transition at $\alpha=2$. Let $s_{\max}(X)$ be the maximum singular value of $X$, and $s_{\min}(X)$ its smallest nonzero singular value. Denote by $M_{\max} :=\sqrt{\lambda_{\max}(\Sigma)}$ and $M_{\min} :=\sqrt{\lambda_{\min}(\Sigma)}$.

\begin{theorem}
 Assume that the same conditions as in Lemma~\ref{lem:matrix} are satisfied. Then for any $\alpha\geq 2$ and $t>0$, we have that with probability over $1-\exp(-t)$, 
    \begin{multline*}
        d_1 M_{\min}^2-C\min\l\{L(d_2+t)^{\frac{1}{\alpha}}d_1^{\frac{1}{\beta}},\sigma(d_2+t)^{\frac{1}{2}}d_1^{\frac{1}{2}}+L(d_2+t)\r\}\leq s_{\min}^2\\
        \leq s_{\max}^2\leq d_1 M_{\max}^2+C\min\l\{L(d_2+t)^{\frac{1}{\alpha}}d_1^{\frac{1}{\beta}},\sigma(d_2+t)^{\frac{1}{2}}d_1^{\frac{1}{2}}+L(d_2+t)\r\}.
    \end{multline*}
    For any $1\leq \alpha\leq 2$ and $t>0$, we have that with probability over $1-\exp(-t)$,
    \begin{multline*}
        d_1 M_{\min}^2-\sigma(d_2+t)^{\frac{1}{2}}d_1^{\frac{1}{2}}-CL\min\l\{(d_2+t)^{\frac{1}{\alpha}}d_1^{\frac{1}{\beta}},d_2+t\r\}\leq s_{\min}^2\\
        \leq s_{\max}^2\leq d_1 M_{\max}^2+C\sigma(d_2+t)^{\frac{1}{2}}d_1^{\frac{1}{2}}+CL\min\l\{(d_2+t)^{\frac{1}{\alpha}}d_1^{\frac{1}{\beta}},d_2+t\r\}.
    \end{multline*}
    \label{thm:singular_value}
\end{theorem}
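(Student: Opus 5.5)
Theorem~\ref{thm:singular_value} follows directly from Lemma~\ref{lem:matrix}; no new probabilistic estimate is needed. The link between singular values and the sample covariance is
\[
s_{\max}^2(X)=\lambda_{\max}(X^\top X), \qquad s_{\min}^2(X)=\lambda_{\min}(X^\top X).
\]
Here we read $s_{\min}$ as the $d_2$-th singular value, which is the relevant one in the regime $d_1\ge d_2$ where the lower bound is nontrivial. Write $X^\top X = d_1\Sigma + d_1\Delta$ with $\Delta:=\frac{1}{d_1}X^\top X-\Sigma$. Weyl's inequality for symmetric matrices then gives
\[
\lambda_{\max}(X^\top X)\le d_1\lambda_{\max}(\Sigma)+d_1\|\Delta\|, \qquad \lambda_{\min}(X^\top X)\ge d_1\lambda_{\min}(\Sigma)-d_1\|\Delta\|.
\]
Since $\lambda_{\max}(\Sigma)=M_{\max}^2$ and $\lambda_{\min}(\Sigma)=M_{\min}^2$, both sides of the claimed sandwich reduce to bounding $d_1\|\Delta\|$ on an event of probability at least $1-\exp(-t)$. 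The middle inequality $s_{\min}^2\le s_{\max}^2$ is trivial.

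Next we multiply the high-probability bounds of Lemma~\ref{lem:matrix} by $d_1$ and simplify the powers.

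\textbf{Case $\alpha\ge 2$.} The first branch becomes
\[
d_1 L\Bigl(\tfrac{t+d_2}{d_1}\Bigr)^{1/\alpha}=L(d_2+t)^{1/\alpha}d_1^{1-1/\alpha}=L(d_2+t)^{1/\alpha}d_1^{1/\beta},
\]
using $1-\frac1\alpha=\frac1\beta$. The second branch becomes
\[
d_1\sigma\Bigl(\tfrac{t+d_2}{d_1}\Bigr)^{1/2}+d_1L\,\tfrac{t+d_2}{d_1}=\sigma(d_2+t)^{1/2}d_1^{1/2}+L(d_2+t).
\]
The minimum is preserved under multiplication by $d_1>0$, which gives exactly the stated bound.

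\textbf{Case $1\le\alpha\le2$.} The same scaling applied to $C\sigma\bigl(\frac{t+d_2}{d_1}\bigr)^{1/2}+CL\min\bigl\{\bigl(\frac{t+d_2}{d_1}\bigr)^{1/\alpha},\frac{t+d_2}{d_1}\bigr\}$ yields
\[
C\sigma(d_2+t)^{1/2}d_1^{1/2}+CL\min\bigl\{(d_2+t)^{1/\alpha}d_1^{1/\beta},\,d_2+t\bigr\}.
\]
The lower bound in the statement writes the $\sigma$ term without a constant; this is absorbed into the generic $C$, or equivalently read as $C\sigma(\cdot)$.

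Everything holds on the single event supplied by Lemma~\ref{lem:matrix}, so the upper and lower bounds hold simultaneously with probability over $1-\exp(-t)$. There is no real obstacle. The only care needed is applying Weyl's inequality to the smallest eigenvalue: this requires $\lambda_{\min}$ of the full $d_2\times d_2$ matrix, which equals the smallest nonzero squared singular value when $X$ has full column rank. When the lower bound is negative the statement is vacuous.
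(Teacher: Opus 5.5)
Your proposal is correct and follows the paper's argument exactly: Weyl's inequality applied to $\frac{1}{d_1}X^\top X$ against $\Sigma$, followed by multiplying the high-probability bounds of Lemma~\ref{lem:matrix} by $d_1$ and using $1-\frac{1}{\alpha}=\frac{1}{\beta}$. The only correction concerns the missing constant on the $\sigma$ term in the $1\le\alpha\le 2$ lower bound. It should be read as $C\sigma(d_2+t)^{1/2}d_1^{1/2}$, which is what the paper's proof derives, and it cannot be ``absorbed into $C$'': dropping a constant $C>1$ from a subtracted term would make the lower bound stronger, not weaker.
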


Theorem~\ref{thm:singular_value} above provides high probability bounds for the singular values of $X$, with explicit dependence on $M_{\max}, M_{\min}, \sigma,L$, and $\alpha,d_1,d_2$. Theorem~\ref{thm:singular_value} is consistent with the classical asymptotic results for the case of i.i.d. Gaussian entries \citep{bai1993limit}.

\begin{example}[Random matrix with i.i.d. sub-Gaussian entries]
    Here, we provide the example values of $(\sigma,L)$ when the $d_1\times d_2$ random matrix $X$ consists of i.i.d. mean-zero sub-Gaussian entries. Assume that each entry of $X$ has variance $\sigma_{X}^2$ and Orlicz norm $\|X_{ij}\|_{\Psi_2}$. Then we have $M_{\max}=\sigma_{X}$ and for any $u\in\SS^{d-1}$, 
    $$\operatorname{var}(X_i^{\top}u)=\sigma_{X}^2<\infty,\quad\| X_i^{\top} u\|_{\Psi_2}\leq C\|X_{ij}\|_{\Psi_2}=:K<\infty.$$
    In particular, Example~\ref{exp:8} implies that the conditions of Lemma~\ref{lem:matrix} hold with $\alpha=1$ and three valid pairs of $(\sigma,L)$:
    $(K^2,K^2 )$, $ (\sigma_{X}^{1/2}K^{3/2},CK^2 )$, and 
    $(\sigma_{X}K\log^{1/2}\l(\frac{2K}{\sigma_{X}}\r),CK^2\log\l(\frac{2K}{\sigma_{X}}\r) ).$
    Hence, an application of Theorem~\ref{thm:singular_value} and the above values of $(\sigma,L)$ yields that for any $t>0$, we have that with probability over $1-\exp(-t)$,
    \begin{multline*}
        s_{\max}^2\leq d_1 \sigma_{X}^2+C\sqrt{d_2+t}\min\l\{K^{3/2} {\sigma_{X}^{1/2}}\sqrt{d_1}+K^2\sqrt{d_2+t},\r.\\ \l.\sigma_{X}K\sqrt{d_1} \log^{\frac{1}{2}}\l(\frac{2K}{\sigma_{X}}\r)+K^2\sqrt{d_2+t} \log\l(\frac{2K}{\sigma_{X}}\r)\r\}.
    \end{multline*}
    Moreover, when $t\geq \frac{\sigma_{X}}{K}d_1-d_2$, it can be simplified as $\PP(s_{\max}\leq \sqrt{d_1} \sigma_{X}+CK\sqrt{d_2+t})\geq 1-\exp(-t)$.
\end{example}

\subsection{Covariance matrix estimation} \label{new.Sec.covmatest}

As another application, we examine in this section the problem of mean and covariance matrix estimation based on a sample of $n$ observed random vectors. See, e.g., the works of 
\cite{depersin2022robust}, \cite{minsker2018sub}, \cite{koltchinskii2017concentration}, and references therein. We emphasize that our focus here is \textit{different}. In particular, \cite{koltchinskii2017concentration} investigated the delicate dependence of estimation error on the effective rank, under the assumption that the standard deviation of $X_i^\top u$ has the \textit{same} scale as the Orlicz norm. In contrast, we aim to \textit{distinguish} these two quantities and examine their effects on covariance matrix estimation.

Assume that $X_1,\ldots,X_{n}\in\mathbb{R}^{d}$ are i.i.d. $d$-dimensional random vectors with \textit{unknown} mean $\mu$ and covariance matrix $\Sigma$, i.e., 
\begin{align*}
    \EE X_i=\mu,\quad \EE(X_i-\mu)(X_i-\mu)^{\top}=\Sigma.
\end{align*}
We estimate $\mu$ and $\Sigma$ with the sample mean and sample variance matrix
\begin{align*}
    \wh{\mu}:=\frac{1}{n}\sum_{i=1}^n X_i,\quad \wh{\Sigma}:=\frac{1}{n}\sum_{i=1}^n (X_i-\wh{\mu})(X_i-\wh{\mu})^{\top},
\end{align*}
respectively. The lemma below bounds the estimation error of sample mean under the characterization of moments.

\begin{lemma}[Mean estimation]
Assume that $X_1,\ldots,X_n$ are i.i.d. $d$-dimensional random vectors, and for any $u\in\SS^{d-1}$, $ \l(X-\mu\r)^{\top}u$ satisfies Definition~\ref{assm:sigma_L} with some $\alpha\geq 1$ and $(\sigma,L)$. 
Then we have that for $\alpha\geq 2$ and all $t > 0$, 
    \begin{align*}
        \PP\l(\|\wh{\mu}-\mu\|\geq C \min\l\{L\l(\frac{t+d}{n}\r)^{\frac{1}{\alpha}},\sigma\l(\frac{t+d}{n}\r)^{\frac{1}{2}}+L\frac{t+d}{n}\r\} \r)\leq \exp(-t),
    \end{align*}
    and for $1\leq \alpha\leq 2$ and all $t > 0$, 
    \begin{align*}
        \PP\l(\|\wh{\mu}-\mu\|\geq C\sigma\l(\frac{t+d}{n}\r)^{\frac{1}{2}}+C \min\l\{L\l(\frac{t+d}{n}\r)^{\frac{1}{\alpha}},L\frac{t+d}{n}\r\} \r)\leq \exp(-t).
    \end{align*}
    \label{lem:mean_est}
\end{lemma}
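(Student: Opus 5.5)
Our plan is to reduce the vector deviation to scalar deviations along a finite net, and then invoke Theorem~\ref{thm:conc_univariate_SigmaL} for each direction. First we write $\|\wh\mu-\mu\|=\sup_{u\in\SS^{d-1}}u^\top(\wh\mu-\mu)$. We fix a $1/2$-net $\mathcal{N}$ of $\SS^{d-1}$ with $|\mathcal{N}|\leq 5^d$. The standard argument (for the maximizing $u$, pick $v\in\mathcal N$ with $\|u-v\|\le 1/2$) gives $\|\wh\mu-\mu\|\leq 2\max_{v\in\mathcal{N}}v^\top(\wh\mu-\mu)$.

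For each fixed $v$, we have $v^\top(\wh\mu-\mu)=\frac1n\sum_{i=1}^n (X_i-\mu)^\top v$. This is an average of i.i.d. mean-zero variables, each satisfying Definition~\ref{assm:sigma_L} with $(\sigma,L)$. We apply Theorem~\ref{thm:conc_univariate_SigmaL} with $a_i=1/n$. The relevant quantities are $\sum a_i^2\sigma^2=\sigma^2/n$, $\sum a_i^2L^2=L^2/n$, $\max|a_i|L=L/n$, and $(\sum|a_i|^\beta L^\beta)^{1/\beta}=Ln^{1/\beta-1}=Ln^{-1/\alpha}$. The exponent then becomes, in terms of the deviation $r$:
\begin{itemize}
\item for $\alpha\ge2$: $C\max\{nr^\alpha/L^\alpha,\, nr^2/L^2,\, \min\{nr^2/\sigma^2, nr/L\}\}$;
\item for $\alpha\in[1,2]$: $C\max\{\min\{nr^2/\sigma^2,nr/L\},\,\min\{nr^2/\sigma^2,nr^\alpha/L^\alpha\}\}$.
\end{itemize}

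Next we invert these bounds. We set $s:=(t+d\log 5+\log 2)$, which is $\lesssim t+d$ after adjusting constants. The goal is to choose $r$ so that the exponent is at least $s$.
\begin{itemize}
\item For $\alpha\ge 2$, the max-structure means it suffices that one term reaches $s$. The choice $r=L(s/n)^{1/\alpha}$ works via the first term. The choice $r=\sigma(s/n)^{1/2}+L s/n$ works via the min-term, since then $nr^2/\sigma^2\ge s$ and $nr/L\ge s$. Taking the smaller of the two gives the $\min$ in the claim.
\item For $\alpha\in[1,2]$, we take $r=\sigma(s/n)^{1/2}+L\min\{(s/n)^{1/\alpha},s/n\}$. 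Then $nr^2/\sigma^2\ge s$. Whichever of the two $L$-terms is the smaller, the corresponding inner min is at least $s$: if it is $Ls/n$ then $nr/L\ge s$; if it is $L(s/n)^{1/\alpha}$ then $nr^\alpha/L^\alpha\ge s$. Hence the max of the two mins is at least $s$.
\end{itemize}

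Finally we take a union bound over $\mathcal N$. This gives probability at most $2\cdot 5^d e^{-Cs}$, which is at most $e^{-t}$ after absorbing constants into $C$ (replace $s$ by $C'(t+d)$). Multiplying $r$ by 2 from the net step yields the stated bounds.

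The only delicate points are bookkeeping. First, the constant $C$ inside the exponent must be moved outside, into the deviation, using $r\mapsto c^{-1/\alpha}r$ or $c^{-1/2}r$ or $c^{-1}r$. All of these are bounded by a universal multiple since $\alpha\ge1$. Second, we must check that $n^{1/\beta-1}=n^{-1/\alpha}$, so that the $\Psi_\alpha$-term reads $L((t+d)/n)^{1/\alpha}$.
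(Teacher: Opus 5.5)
Your proposal is correct and follows the paper's proof. Both apply Theorem~\ref{thm:conc_univariate_SigmaL} with $a_i=1/n$ in each fixed direction, take a union bound over a $1/2$-net of size $5^d$ using $\|\wh\mu-\mu\|\le 2\max_{v\in\mathcal N}v^\top(\wh\mu-\mu)$, and invert the resulting tail; your bookkeeping (the computation $n^{1/\beta-1}=n^{-1/\alpha}$, the case check for the inversion, and moving the exponent constant into the deviation) is more explicit than the paper's, which simply asserts that the inversion is equivalent.
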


Lemma~\ref{lem:mean_est} above again reveals an interesting phase transition at $\alpha=2$. However, in both regimes, for sufficiently small $t$, the concentration presents itself as sub-Gaussian with deviation $\sigma\l(\frac{t+d}{n}\r)^{\frac{1}{2}}$, whereas when $t$ is large enough, the deviation becomes $L\l(\frac{t+d}{n}\r)^{\frac{1}{\alpha}}$.

\begin{theorem}[Covariance matrix estimation]
Assume that $X_1,\ldots,X_n$ are i.i.d. $d$-dimensional random vectors, and for any $u\in\SS^{d-1}$, $ \l(X-\mu\r)^{\top}u$ satisfy Definition~\ref{assm:sigma_L} with some $\alpha\geq 1$ and $(\sigma,L)$. 
Then we have that for $\alpha\geq 4$ and all $t > 0$, 
    \begin{align*}
        \PP\l(\l\|\wh\Sigma-\Sigma \r\|\geq C \min\l\{L^2\l(\frac{t+d}{n}\r)^{\frac{2}{\alpha}},\sigma L\l(\frac{t+d}{n}\r)^{\frac{1}{2}}+L^2 \frac{t+d}{n} \r\}\r)\leq 2\exp(-t),
    \end{align*}
    and for $2\leq \alpha\leq 4$ and all $t > 0$, 
    \begin{align*}
    \PP\l(\l\|\wh\Sigma-\Sigma \r\|\geq C \sigma L\l(\frac{t+d}{n}\r)^{\frac{1}{2}}+C\min\l\{L^2\l(\frac{t+d}{n}\r)^{\frac{2}{\alpha}},L^2 \frac{t+d}{n} \r\}\r)\leq 2\exp(-t).
\end{align*}
    \label{thm:cov_est}
\end{theorem}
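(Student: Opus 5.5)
The plan is to write $\wh\Sigma-\Sigma$ as a centered quadratic part plus a mean-correction term, and to bound each part on a net. Put $Y_i:=X_i-\mu$. Then
\begin{align*}
\wh\Sigma-\Sigma=\Big(\frac1n\sum_{i=1}^n Y_iY_i^\top-\Sigma\Big)-(\wh\mu-\mu)(\wh\mu-\mu)^\top .
\end{align*}
The second term has operator norm $\|\wh\mu-\mu\|^2$. Lemma~\ref{lem:mean_est} controls it with probability at least $1-\exp(-t)$. Squaring that lemma's bound gives $L^2((t+d)/n)^{2/\alpha}$ in the $\min$ branch. In the other branch we get $\sigma^2(t+d)/n+L^2((t+d)/n)^2$.

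When $(t+d)/n\le 1$, this mean-correction term is dominated by the target bound, since $\sigma\le L$. When $(t+d)/n>1$, the $\min$ in the target is attained by the $L^2((t+d)/n)^{2/\alpha}$ branch, and the mean term is bounded by that same branch. So the mean-correction term is always absorbed, and the main work is the first term.

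For the first term, let $\mathcal N$ be a $1/4$-net of $\SS^{d-1}$ with $|\mathcal N|\le 9^d$. The standard argument gives $\|A\|\le 2\max_{u\in\mathcal N}|u^\top Au|$ for symmetric $A$. For a fixed $u$,
\begin{align*}
u^\top\Big(\frac1n\sum_{i=1}^n Y_iY_i^\top-\Sigma\Big)u=\frac1n\sum_{i=1}^n Z_i,\qquad Z_i:=(Y_i^\top u)^2-\EE(Y_i^\top u)^2,
\end{align*}
which is a sum of i.i.d. mean-zero variables. This reduces the problem to scalar concentration for $Z_i$, which I will get from Theorem~\ref{thm:conc_univariate_SigmaL}. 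Alternatively, I could invoke Lemma~\ref{lem:matrix} directly, whose proof is exactly this argument.

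The key step is to verify that $Z_i$ satisfies Definition~\ref{assm:sigma_L} with exponent $\alpha/2$ and parameters of order $(C\sigma L, CL^2)$. This is the step I expect to be the main obstacle. The moment assumption on $Y^\top u$ gives, for $k\ge2$,
\begin{align*}
\EE|Z|^k\le 2^k\EE|Y^\top u|^{2k}\le 2^k(2k)^{2k/\alpha}\sigma^2L^{2k-2}=C^k k^{\frac{k}{\alpha/2}}(\sigma L)^2(L^2)^{k-2}.
\end{align*}
The factor $C^k$ is absorbed by rescaling $L^2$ to $CL^2$ and $\sigma L$ to $C\sigma L$. The one point to watch is that $\sigma^2L^{2k-2}$ has to be regrouped as $(\sigma L)^2(L^2)^{k-2}$, which is exactly the shape Definition~\ref{assm:sigma_L} requires. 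This regrouping is what produces the $\sigma L$ variance proxy in the statement. The new exponent is $\alpha':=\alpha/2$, so $\alpha\ge4$ corresponds to $\alpha'\ge2$ and $2\le\alpha\le4$ corresponds to $1\le\alpha'\le2$. This is why the phase transition now sits at $\alpha=4$.

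Next, apply Theorem~\ref{thm:conc_univariate_SigmaL} with $a_i=1/n$, using $(\sum_i a_i^{\beta'}L'^{\beta'})^{1/\beta'}=L^2n^{1/\beta'-1}=L^2n^{-1/\alpha'}$, where $\beta'$ is the conjugate of $\alpha'$. Then invert the tail at level $\exp(-(t+Cd))$. For $\alpha'\ge2$, the $\max$ in that theorem lets us choose the better of two deviations: $L^2((t+d)/n)^{1/\alpha'}=L^2((t+d)/n)^{2/\alpha}$, or $\sigma L((t+d)/n)^{1/2}+L^2(t+d)/n$ from the Bernstein branch. This yields the stated $\min$. For $1\le\alpha'\le2$, the structure $\max\{\min\{\cdot,\cdot\},\min\{\cdot,\cdot\}\}$ gives a deviation equal to $\sigma L\sqrt{(t+d)/n}$ plus $L^2$ times the smaller of $((t+d)/n)^{2/\alpha}$ and $(t+d)/n$.

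Finally, take a union bound over the $9^d$ net points, absorbing $d\log 9$ into $C(t+d)$, and then add the mean-correction term handled above. The failure probabilities sum to at most $2\exp(-t)$, which gives both displayed bounds.
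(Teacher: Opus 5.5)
Your proposal is correct and takes the same route as the paper. It uses the same split into the centered quadratic part plus $\|\wh\mu-\mu\|^2$, and the same regrouping showing that $(Y^\top u)^2-\EE(Y^\top u)^2$ satisfies Definition~\ref{assm:sigma_L} with exponent $\alpha/2$ and parameters of order $(\sigma L, L^2)$. It then invokes Lemma~\ref{lem:matrix} (equivalently Theorem~\ref{thm:conc_univariate_SigmaL} plus a net) together with Lemma~\ref{lem:mean_est}. Your explicit check that the squared mean error is absorbed into the target bound in both cases $(t+d)/n\le 1$ and $(t+d)/n>1$ fills in a step the paper leaves as ``putting the above results together.''
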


Theorem~\ref{thm:cov_est} above upper bounds the estimation error of sample covariance matrix. Here, the estimation error also has a sub-Gaussian tail for sufficiently small $t$, whereas it admits a $\frac{\alpha}{2}$ decay tail when $t$ is large enough. Indeed, the sample covariance matrix is quadratic in $X$ so that the power on dimensionality $d$ is $\frac{2}{\alpha}$. For a random vector $X_i$, let $\|X_i\|_{\Psi_\alpha}:=\sup_{u\in\SS^{d-1}} \|X_i^{\top} u\|_{\Psi_\alpha}$. If $\|X_i\|_{\Psi_\alpha}<\infty$, $X_i$ satisfies the conditions of Theorem~\ref{thm:cov_est} with $\alpha$ and infinite pairs of $(\sigma,L)$. Denote by $\sigma_{X_i}^2\geq \sup_{u\in\SS^{d-1}}\operatorname{var}(X_i^\top u)$. We provide three specific examples of $(\sigma,L)$ below
\begin{align*}
    &\sigma_1:=\|X_i\|_{\Psi_\alpha},\quad L_1:=\|X_i\|_{\Psi_\alpha};\\
    &\sigma_2:=\sigma_{X_i},\quad L_2:=C\|X_i\|_{\Psi_\alpha}\log^{\frac{1}{\alpha}}\l(\frac{2\|X_i\|_{\Psi_\alpha}}{\sigma_{X_i}}\r);\\
&\sigma_3:=\sqrt{\sigma_{X_i}\|X_i\|_{\Psi_\alpha}},\quad L_3:=\|X_i\|_{\Psi_\alpha},\\
\end{align*}
which follow naturally from Examples~\ref{exp:1} and \ref{exp:2}.

\begin{example}[Covariance matrix estimation for sub-Gaussian entries]
    We illustrate Theorem~\ref{thm:cov_est}  when $X_i$ consists of i.i.d. components. Assume that each component of the random vector has variance $\sigma_{X}^2$ and Orlicz norm $\|X_{ij}\|_{\Psi_2}$. Then for any $u\in\SS^{d-1}$, it holds that 
    $$\operatorname{var}(X_i^{\top}u)=\sigma_{X}^2,\quad \| X_i^{\top} u\|_{\Psi_2}\leq C\|X_{ij}\|_{\Psi_2}=:K,$$
    which along with Examples~\ref{exp:1} and \ref{exp:2} entail that $X$ satisfies the conditions of Theorem~\ref{thm:cov_est} with $\alpha$ and the following three pairs of $(\sigma,L)$
    $$(K,K),\;(\sqrt{\sigma_{X}K},K),\;(\sigma_{X},K\log^{1/2}\l(\frac{2K}{\sigma_{X}}\r)).$$
    Consequently, an application of Theorem~\ref{thm:cov_est} and the above three pairs of $(\sigma,L)$ gives that for all $t >0$, with probability over $1-2\exp(-t)$ 
    \begin{multline*}
        \l\|\wh\Sigma-\Sigma \r\|\leq C \min \l\{\sigma_{X} K\log^{1/2}\l(\frac{2K}{\sigma_{X}}\r)\l(\frac{t+d}{n}\r)^{\frac{1}{2}}\r.\\ \l. +K^2\log\l(\frac{2K}{\sigma_{X}}\r) \frac{t+d}{n}, \sqrt{\sigma_{X}}K^{\frac{3}{2}}\l(\frac{t+d}{n}\r)^{\frac{1}{2}}+K^2\frac{t+d}{n}\r\}.
    \end{multline*}
    The covariance matrix estimation error above unveils a \textit{delicate interplay} between the standard deviation $\sigma_{X}$ and the Orlicz norm. For sufficiently small $t$, it is dominated by $\sigma_{X} K\log^{1/2}\l(\frac{2K}{\sigma_{X}}\r)\l(\frac{t+d}{n}\r)^{\frac{1}{2}} $, whereas when $t$ is large enough, it is dominated by $K^2\frac{t+d}{n} $.
\end{example}

\section{Discussions} \label{Sec.disc}

We have investigated in this paper the problem of how to develop sharp concentration inequalities of sub-Weibull random variables with general rate parameter $\alpha \geq 1$, including the commonly used sub-Gaussian and sub-exponential distributions with $\alpha = 2$ and $1$, respectively. Such new theoretical results will enable us to conduct more precise non-asymptotic analyses across different statistical and machine learning applications. Our unified concentration bounds involving the Orlicz norm have revealed an interesting phase transition at $\alpha = 2$, with the minimum of two quantities switching to the maximum once $\alpha$ is above $2$. Further, when the Orlicz norm can exceed the standard deviation greatly, we have established sharp, flexible concentration bounds that involve the variance and a mixing of Orlicz $\Psi_\alpha$-tails through the min and max functions. These sharp concentration inequalities are new even for the cases of sub-Gaussian and sub-exponential distributions with $\alpha = 2$ and $1$. We have showcased the utilities of our new theory with applications to martingales, random vectors, random matrices, and covariance matrix estimation. It would be interesting to extend our theory to more general settings of Banach space-valued random variables, reproducing kernel Hilbert spaces (RKHSs), and time series or online adaptive data. These problems are beyond the scope of the current paper and will be interesting topics for future research.


\bibliographystyle{plainnat}
\bibliography{reference}

@article{fanfanlvyangyu2025,
  title={Asymptotic theory of eigenvectors for latent embeddings with generalized {L}aplacian matrices},
  author={Fan, J. and Fan, Y. and Lv, J. and Yang, F. and Yu, D.},
  journal={arXiv preprint arXiv:2503.00640},
  year={2025}
}

@article{talagrand1994supremum,
  title={The supremum of some canonical processes},
  author={Talagrand, Michel},
  journal={American Journal of Mathematics},
  volume={116},
  number={2},
  pages={283--325},
  year={1994},
  publisher={JSTOR}
}

@article{zajkowski2020norms,
  title={On norms in some class of exponential type {O}rlicz spaces of random variables},
  author={Zajkowski, Krzysztof},
  journal={Positivity},
  volume={24},
  number={5},
  pages={1231--1240},
  year={2020},
  publisher={Springer}
}

@article{zhang2022sharper,
  title={Sharper sub-{W}eibull concentrations},
  author={Zhang, Huiming and Wei, Haoyu},
  journal={Mathematics},
  volume={10},
  number={13},
  pages={2252},
  year={2022},
  publisher={MDPI}
}

@article{kuchibhotla2022moving,
  title={Moving beyond sub-{G}aussianity in high-dimensional statistics: applications in covariance estimation and linear regression},
  author={Kuchibhotla, Arun Kumar and Chakrabortty, Abhishek},
  journal={Information and Inference},
  volume={11},
  number={4},
  pages={1389--1456},
  year={2022},
  publisher={Oxford University Press}
}

@article{van2013bernstein,
  title={The {B}ernstein-{O}rlicz norm and deviation inequalities},
  author={van de Geer, Sara and Lederer, Johannes},
  journal={Probability Theory and Related Fields},
  volume={157},
  number={1},
  pages={225--250},
  year={2013},
  publisher={Springer Berlin Heidelberg}
}

@article{hao2019bootstrapping,
  title={Bootstrapping upper confidence bound},
  author={Hao, Botao and Abbasi Yadkori, Yasin and Wen, Zheng and Cheng, Guang},
  journal={Advances in Neural Information Processing Systems},
  volume={32},
  year={2019}
}

@article{adamczak2011restricted,
  title={Restricted isometry property of matrices with independent columns and neighborly polytopes by random sampling},
  author={Adamczak, Radoslaw and Litvak, Alexander E. and Pajor, Alain and Tomczak-Jaegermann, Nicole},
  journal={Constructive Approximation},
  volume={34},
  number={1},
  pages={61--88},
  year={2011},
  publisher={Springer}
}

@book{boucheron2003concentration,
  title={Concentration Inequalities},
  author={Boucheron, St{\'e}phane and Lugosi, G{\'a}bor and Bousquet, Olivier},
  year={2003},
  publisher={Oxford University Press}
}

@book{ledoux2013probability,
  title={Probability in Banach Spaces: Isoperimetry and Processes},
  author={Ledoux, Michel and Talagrand, Michel},
  year={2013},
  publisher={Springer Science \& Business Media}
}

@article{talagrand1989isoperimetry,
  title={Isoperimetry and integrability of the sum of independent {B}anach-space valued random variables},
  author={Talagrand, Michel},
  journal={The Annals of Probability},
  pages={1546--1570},
  year={1989},
  publisher={JSTOR}
}

@book{vershynin2018high,
  title={High-Dimensional Probability: An Introduction with Applications in Data Science},
  author={Vershynin, Roman},
  volume={47},
  year={2018},
  publisher={Cambridge University Press}
}

@article{latala1997estimation,
  title={Estimation of moments of sums of independent real random variables},
  author={Latala, Rafal},
  journal={The Annals of Probability},
  volume={25},
  number={3},
  pages={1502--1513},
  year={1997},
  publisher={Institute of Mathematical Statistics}
}

@article{koltchinskii2011neumann,
  title={Von {N}eumann Entropy Penalization and Low-Rank Matrix Estimation},
  author={Koltchinskii, Vladimir},
  journal={The Annals of Statistics},
  pages={2936--2973},
  year={2011},
  publisher={JSTOR}
}

@article{recht2011simpler,
  title={A simpler approach to matrix completion.},
  author={Recht, Benjamin},
  journal={Journal of Machine Learning Research},
  volume={12},
  number={12},
  year={2011}
}

@article{gross2010quantum,
  title={Quantum state tomography via compressed sensing},
  author={Gross, David and Liu, Yi-Kai and Flammia, Steven T. and Becker, Stephen and Eisert, Jens},
  journal={Physical Review Letters},
  volume={105},
  number={15},
  pages={150401},
  year={2010},
  publisher={APS}
}

@article{gross2011recovering,
  title={Recovering low-rank matrices from few coefficients in any basis},
  author={Gross, David},
  journal={IEEE Transactions on Information Theory},
  volume={57},
  number={3},
  pages={1548--1566},
  year={2011},
  publisher={IEEE}
}

@article{alquier2013sparse,
  title={Sparse Single-Index Model},
  author={Alquier, Pierre and Biau, G{\'e}rard},
  journal={Journal of Machine Learning Research},
  volume={14},
  pages={243--280},
  year={2013}
}

@article{ahlswede2002strong,
  title={Strong converse for identification via quantum channels},
  author={Ahlswede, Rudolf and Winter, Andreas},
  journal={IEEE Transactions on Information Theory},
  volume={48},
  number={3},
  pages={569--579},
  year={2002},
  publisher={IEEE}
}

@article{shen2026sgd,
  title={{SGD} with Dependent Data: optimal Estimation, Regret, and Inference},
  author={Shen, Yinan and Zhang, Yichen and Zhou, Wen-Xin},
  journal={arXiv preprint arXiv:2601.01371},
  year={2026}
}

@article{rio2013extensions,
  title={Extensions of the {H}oeffding-{A}zuma inequalities},
  author={Rio, Emmanuel},
  journal={Electronic Communications in Probability},
  volume={18},
  number={54},
  pages={6p},
  year={2013}
}

@article{bennett1962probability,
  title={Probability inequalities for the sum of independent random variables},
  author={Bennett, George},
  journal={Journal of the American Statistical Association},
  volume={57},
  number={297},
  pages={33--45},
  year={1962},
  publisher={Taylor \& Francis}
}

@article{minsker2017some,
  title={On some extensions of {B}ernstein’s inequality for self-adjoint operators},
  author={Minsker, Stanislav},
  journal={Statistics \& Probability Letters},
  volume={127},
  pages={111--119},
  year={2017},
  publisher={Elsevier}
}

@incollection{koltchinskii2016perturbation,
  title={Perturbation of linear forms of singular vectors under {G}aussian noise},
  author={Koltchinskii, Vladimir and Xia, Dong},
  booktitle={High Dimensional Probability VII: The Carg{\`e}se Volume},
  pages={397--423},
  year={2016},
  publisher={Springer}
}

@article{jeong2022sub,
  title={Sub-{G}aussian Matrices on Sets: optimal Tail Dependence and Applications},
  author={Jeong, Halyun and Li, Xiaowei and Plan, Yaniv and Yilmaz, Ozgur},
  journal={Communications on Pure and Applied Mathematics},
  volume={75},
  number={8},
  pages={1713--1754},
  year={2022},
  publisher={Wiley Online Library}
}

@article{vershynin2010introduction,
  title={Introduction to the non-asymptotic analysis of random matrices},
  author={Vershynin, Roman},
  journal={arXiv preprint arXiv:1011.3027},
  year={2010}
}

@article{lin2025semiparametric,
  title={Semiparametric inference based on adaptively collected data},
  author={Lin, Licong and Khamaru, Koulik and Wainwright, Martin J.},
  journal={The Annals of Statistics},
  volume={53},
  number={3},
  pages={989--1014},
  year={2025},
  publisher={Institute of Mathematical Statistics}
}

@article{khamaru2025near,
  title={Near-optimal inference in adaptive linear regression},
  author={Khamaru, Koulik and Deshpande, Yash and Lattimore, Tor and Mackey, Lester and Wainwright, Martin J.},
  journal={The Annals of Statistics},
  volume={53},
  number={6},
  pages={2329--2355},
  year={2025},
  publisher={Institute of Mathematical Statistics}
}

@article{zhang2018tensor,
  title={Tensor {SVD}: statistical and computational limits},
  author={Zhang, Anru and Xia, Dong},
  journal={IEEE Transactions on Information Theory},
  volume={64},
  number={11},
  pages={7311--7338},
  year={2018},
  publisher={IEEE}
}

@article{abdalla2026dimension,
  title={On the dimension-free concentration of simple tensors via matrix deviation},
  author={Abdalla, Pedro and Vershynin, Roman},
  journal={Journal of Theoretical Probability},
  volume={39},
  number={1},
  pages={3},
  year={2026},
  publisher={Springer}
}

@article{ma2024high,
  title={High-probability minimax lower bounds},
  author={Ma, Tianyi and Verchand, Kabir A. and Samworth, Richard J.},
  journal={arXiv preprint arXiv:2406.13447},
  year={2024}
}

@article{minsker2018sub,
  title={Sub-{G}aussian estimators of the mean of a random matrix with heavy-tailed entries},
  author={Minsker, Stanislav},
  journal={The Annals of Statistics},
  volume={46},
  number={6A},
  pages={2871--2903},
  year={2018},
  publisher={JSTOR}
}

@article{depersin2022robust,
  title={Robust sub-{G}aussian Estimation of a Mean Vector in Nearly Linear Time},
  author={Depersin, Jules and Lecu{\'e}, Guillaume},
  journal={The Annals of Statistics},
  volume={50},
  number={1},
  pages={511--536},
  year={2022}
}

@article{zhou2025deflated,
  title={Deflated {HeteroPCA}: overcoming the curse of ill-conditioning in heteroskedastic {PCA}},
  author={Zhou, Yuchen and Chen, Yuxin},
  journal={The Annals of Statistics},
  volume={53},
  number={1},
  pages={91--116},
  year={2025},
  publisher={Institute of Mathematical Statistics}
}

@article{adamczak2008tail,
  title={A tail inequality for suprema of unbounded empirical processes with applications to {M}arkov chains.},
  author={Adamczak, Radoslaw},
  journal={Electronic Journal of Probability},
  volume={13},
  pages={1000--1034},
  year={2008},
  publisher={University of Washington, Department of Mathematics, Seattle, WA; Duke~…}
}

@book{tao2023topics,
  title={Topics in Random Matrix Theory},
  author={Tao, Terence},
  volume={132},
  year={2012},
  publisher={American Mathematical Society}
}

@article{bai1993limit,
  title={Limit of the smallest eigenvalue of a large dimensional sample covariance matrix},
  author={Bai, Z. D. and Yin, Y. Q.},
  journal={Annals of Probability},
  volume={21},
  number={3},
  pages={1275--1294},
  year={1993},
  publisher={World Scientific}
}

@article{latala2005some,
  title={Some estimates of norms of random matrices},
  author={Latala, Rafal},
  journal={Proceedings of the American Mathematical Society},
  volume={133},
  number={5},
  pages={1273--1282},
  year={2005}
}

@incollection{davidson2001local,
  title={Local operator theory, random matrices and {B}anach spaces},
  author={Davidson, Kenneth R. and Szarek, Stanislaw J.},
  booktitle={Handbook of the Geometry of Banach Spaces},
  volume={1},
  pages={317--366},
  year={2001},
  publisher={Elsevier}
}

@article{koltchinskii2017concentration,
  title={Concentration inequalities and moment bounds for sample covariance operators},
  author={Koltchinskii, Vladimir and Lounici, Karim},
  journal={Bernoulli},
  pages={110--133},
  year={2017},
  publisher={JSTOR}
}

@article{weyl1912asymptotische,
  title={Das asymptotische Verteilungsgesetz der Eigenwerte linearer partieller Differentialgleichungen (mit einer Anwendung auf die Theorie der Hohlraumstrahlung)},
  author={Weyl, Hermann},
  journal={Mathematische Annalen},
  volume={71},
  number={4},
  pages={441--479},
  year={1912},
  publisher={Springer}
}


\newpage
\appendix
\setcounter{page}{1}
\setcounter{section}{0}
\renewcommand{\theequation}{A.\arabic{equation}}
\setcounter{equation}{0}

\begin{center}
    \textbf{\Large 
    Supplementary Material to ``Sharp Concentration Inequalities: Phase Transition and Mixing of Orlicz Tails with Variance''}

\medskip

Yinan Shen and Jinchi Lv
\end{center}

\smallskip

This Supplementary Material contains all the proofs of the main results and additional technical details.

\section{Proofs for Section~\ref{sec:univariate}}
\label{sec:proof:univariate}

This section presents the proofs of Lemmas~\ref{lem:momGeneratingFunct}--\ref{lem:momGeneratingFunct_lower}, Theorems~\ref{thm:conc_univariates}--\ref{cor:moment_univariate}, and Corollary~\ref{cor:norm_univariate} in Section~\ref{sec:univariate}.

\subsection{Proof of Lemma~\ref{lem:momGeneratingFunct}}

To prove this lemma, we will employ the Taylor expansion of function $\exp(\cdot)$ that involves a sum of series with $\lambda^k/\l[\frac{k}{\beta}\r]!$. Intuitively, for small enough $\lambda$, the higher-order terms are much smaller than the second-order term $\lambda^2$, whereas for large enough $\lambda$, the higher-order terms dominate the summation. To figure out the sum of the series, we will consider the power of $\lambda$ and the factorial that appears in the denominator $\l(k,\l[\frac{k}{\beta}\r]!\r)$, which can be roughly approximated by $\l( \beta\cdot\l[\frac{k}{\beta}\r],\l[\frac{k}{\beta}\r]! \r)$, but rigorous and delicate analyses are needed to derive the desired bounds. We also emphasize that the bound of the moment generating function varies across scenarios of $\alpha\geq 2, 1 < \beta \leq 2$ and $1 <  \alpha < 2,\beta > 2$. As such, we will analyze the moment generating function over these two scenarios separately.


In view of Lemma~\ref{lem:moment_bound} in Section \ref{new.SecD}, we have that 
$$\|X\|_k\leq C\|X\|_{\Psi_\alpha}k^{\frac{1}{\alpha}}$$ for some constant $C>0$. Then it holds that 
    \begin{align}
        \EE\l\{\exp(\lambda X)\r\}=\EE\l\{1+\sum_{k=1}^{\infty}\frac{1}{k!}\lambda^k X^k\r\}\leq 1+\sum_{k=2}^{\infty}\frac{k^{\frac{k}{\alpha}}}{k!}\lambda^k(C\|X\|_{\Psi_\alpha})^{k}.
        \label{eq1:lem:momGeneratingFunct}
    \end{align}
Recall that given $\alpha>1$, its conjugate $\beta>1$ satisfies $\frac{1}{\alpha}+\frac{1}{\beta}=1$. An application of Stirling's formula gives that for all $k=1,2,\ldots$,
\begin{align*}
    \frac{k^{\frac{k}{\alpha}}}{k!}\leq C\frac{c^{k}}{k^{\frac{k}{\beta}+\frac{1}{2}}},
\end{align*}
where $c,C>0$ are some universal constants. We can deduce that 
\begin{align*}
    \frac{k^{\frac{k}{\alpha}}}{k!}\lambda^k(C\|X\|_{\Psi_\alpha})^{k}&\leq C\frac{1}{ k^{\frac{k}{\beta}+\frac{1}{2}}}\lambda^k(C\|X\|_{\Psi_\alpha})^{k}\leq C\frac{1}{ \l(\frac{k}{\beta}\r)^{\frac{k}{\beta}+\frac{1}{2}}}\frac{1}{\beta^{\frac{k}{\beta}+\frac{1}{2}}}\lambda^k(C\|X\|_{\Psi_\alpha})^{k}\\
    &\leq C\frac{1}{ \l[\frac{k}{\beta}\r]^{\l[\frac{k}{\beta}\r]+\frac{1}{2}}}\lambda^k(C\|X\|_{\Psi_\alpha})^{k}\leq C\frac{1}{ \l[\frac{k}{\beta}\r]!}\lambda^k(C\|X\|_{\Psi_\alpha})^{k},
\end{align*}
    where $[x]$ represents the largest integer that is no bigger than $x$. Consequently, 
    \eqref{eq1:lem:momGeneratingFunct} above can be further bounded as 
    \begin{align}
        \EE\l\{\exp(\lambda X)\r\}\leq 1+\sum_{k=2}^{\infty} \frac{1}{\l[\frac{k}{\beta}\r]!}\lambda^kC^k\|X\|_{\Psi_{\alpha}}^{k}.
        \label{eq2:lem:momGeneratingFunct}
    \end{align}
    We will consider the two scenarios of $\alpha\geq 2, 1 < \beta \leq 2$ and $1 <  \alpha < 2,\beta > 2$ seperately. 

    \textit{Case 1: $\alpha\geq 2, 1 < \beta \leq 2$}. 
    In this case, the sequence $\l\{\l[\frac{2}{\beta}\r], \l[\frac{3}{\beta}\r],\ldots\r\}$ contains all positive integers and each integer appears in the sequence at most $2$ times, which is due to $\l[\frac{k}{\beta}\r]<\l[\frac{k+2}{\beta}\r]$. For even integers, it holds that $\l[\frac{2k}{\beta}\r]\geq k $, and for odd integers, it holds that $\l[\frac{2k+1}{\beta}\r]\geq k$. As a result, 
    \eqref{eq2:lem:momGeneratingFunct} above can be further bounded as 
    \begin{align}
        \EE\l\{\exp(\lambda X)\r\}\leq 1+\frac{1}{1!}C^2\lambda^2\|X\|_{\Psi_{\alpha}}^2+\frac{1}{1!}C^3\lambda^3\|X\|_{\Psi_{\alpha}}^3+\frac{1}{2!}C^4\lambda^4\|X\|_{\Psi_{\alpha}}^4+\cdots.
        \label{eq3:lem:momGeneratingFunct}
    \end{align}
    We will bound 
    \eqref{eq3:lem:momGeneratingFunct} above from two perspectives. First, note that 
    \begin{align}
        \EE\l\{\exp(\lambda X)\r\}&\leq 1+\sum_{k=1}^{\infty}\frac{1}{k!} (C\lambda\|X\|_{\Psi_{\alpha}})^{2k}(1+C\lambda\|X\|_{\Psi_{\alpha}}).
        \label{eq4:lem:momGeneratingFunct}
    \end{align}
    We then extract the common term $1+C\lambda\|X\|_{\Psi_{\alpha}}$ and can deduce that 
    \begin{align*}
        \EE\l\{\exp(\lambda X)\r\}&\leq 1+(1+C\lambda\|X\|_{\Psi_{\alpha}}) \l(\exp(C^2\lambda^2\|X\|_{\Psi_{\alpha}}^2)-1 \r)\\
        &\leq (1+C\lambda\|X\|_{\Psi_{\alpha}})\exp(C^2\lambda^2\|X\|_{\Psi_{\alpha}}^2 ),
    \end{align*}
    where the last step above is due to $1+C\lambda\|X\|_{\Psi_{\alpha}}\geq 1$. Hence, it follows from $1+x\leq \exp(x)$ that  $$\EE\l\{\exp(\lambda X)\r\}\leq \exp(C^2\lambda^2\|X\|_{\Psi_{\alpha}}^2 + C\lambda\|X\|_{\Psi_{\alpha}}). $$
    
    On the other hand, in light of  $1+C\lambda\|X\|_{\Psi_{\alpha}}\geq 1$, 
    \eqref{eq4:lem:momGeneratingFunct} above can be further bounded as 
    \begin{align*}
        \EE\l\{\exp(\lambda X)\r\}&\leq 1+\sum_{k=1}^{\infty}\frac{1}{k!} (C\lambda\|X\|_{\Psi_{\alpha}})^{2k}(1+C\lambda\|X\|_{\Psi_{\alpha}})^{k}\\
        &\leq \exp(C^2\lambda^2\|X\|_{\Psi_{\alpha}}^2+C^3\lambda^3\|X\|_{\Psi_{\alpha}}^3 ).
    \end{align*}
    Combining the above two results yields that 
    \begin{align*}
        & \EE\l\{\exp(\lambda X)\r\}\\
        &\leq \exp\bigg(\min\bigg\{C^2\lambda^2\|X\|_{\Psi_{\alpha}}^2+C^3\lambda^3\|X\|_{\Psi_{\alpha}}^3, C^2\lambda^2\|X\|_{\Psi_{\alpha}}^2 + C\lambda\|X\|_{\Psi_{\alpha}}\bigg\}\bigg),
    \end{align*}
    which is in fact equivalent to 
    \begin{align*}
        \EE\l\{\exp(\lambda X)\r\}\leq\exp(C\lambda^2\|X\|_{\Psi_{\alpha}}^2).
    \end{align*}
    To see why, when $C^2\lambda^2\|X\|_{\Psi_{\alpha}}^2+C^3\lambda^3\|X\|_{\Psi_{\alpha}}^3\leq  C^2\lambda^2\|X\|_{\Psi_{\alpha}}^2 + C\lambda\|X\|_{\Psi_{\alpha}}$, it holds that $$C\lambda\|X\|_{\Psi_\alpha}\leq 1,$$ under which we have that $$C^2\lambda^2\|X\|_{\Psi_{\alpha}}^2+C^3\lambda^3\|X\|_{\Psi_{\alpha}}^3 \leq 2C^2\lambda^2\|X\|_{\Psi_\alpha}^2.$$ Similar arguments apply when $C^2\lambda^2\|X\|_{\Psi_{\alpha}}^2+C^3\lambda^3\|X\|_{\Psi_{\alpha}}^3\geq  C^2\lambda^2\|X\|_{\Psi_{\alpha}}^2 + C\lambda\|X\|_{\Psi_{\alpha}}$.

    We then proceed to provide another bound of the moment generating function. Observe that 
    \eqref{eq2:lem:momGeneratingFunct} can be rewritten as 
    \begin{align}
        \EE\l\{\exp(\lambda X)\r\}\leq 1+\sum_{k=2}^{\infty} \frac{1}{\l[\frac{k}{\beta}\r]!}\l(\lambda C\|X\|_{\Psi_{\alpha}}\r)^{\beta\cdot \l[\frac{k}{\beta}\r]+\l(k-\beta\cdot \l[\frac{k}{\beta}\r]\r)},
        \label{eq5:lem:momGeneratingFunct}
    \end{align}
where $k-\beta\cdot\l[\frac{k}{\beta}\r]\in[0,\beta)$. When $\lambda C\|X\|_{\Psi_{\alpha}}\leq 1$, we have that $$\l(\lambda C\|X\|_{\Psi_{\alpha}}\r)^{\beta\cdot \l[\frac{k}{\beta}\r]+\l(k-\beta\cdot \l[\frac{k}{\beta}\r]\r)}\leq \l(\lambda C\|X\|_{\Psi_{\alpha}}\r)^{\beta\cdot \l[\frac{k}{\beta}\r]} ,$$ 
which shows that 
\eqref{eq5:lem:momGeneratingFunct} can be bounded as 
\begin{align*}
    \EE\l\{\exp(\lambda X)\r\}&\leq 1+\sum_{k=2}^{\infty} \frac{1}{\l[\frac{k}{\beta}\r]!}\l(\lambda C\|X\|_{\Psi_{\alpha}}\r)^{\beta\cdot \l[\frac{k}{\beta}\r]}\leq 1+\sum_{k=1}^{\infty} \frac{2}{k!}\l(\lambda C\|X\|_{\Psi_{\alpha}}\r)^{\beta\cdot k}\\
    &\leq\exp(C\lambda^\beta \|X\|_{\Psi_{\alpha}}^{\beta}). 
\end{align*}
When $\lambda C\|X\|_{\Psi_{\alpha}}\geq 1$, we have that $$\l(\lambda C\|X\|_{\Psi_{\alpha}}\r)^{\beta\cdot \l[\frac{k}{\beta}\r]+\l(k-\beta\cdot \l[\frac{k}{\beta}\r]\r)}\leq \l(\lambda C\|X\|_{\Psi_{\alpha}}\r)^{\beta\cdot \l[\frac{k}{\beta}\r]+\beta} ,$$ 
which shows that 
\eqref{eq5:lem:momGeneratingFunct} can be bounded as 
\begin{align*}
    \EE\l\{\exp(\lambda X)\r\}&\leq 1+\sum_{k=2}^{\infty} \frac{1}{\l[\frac{k}{\beta}\r]!}\l(\lambda C\|X\|_{\Psi_{\alpha}}\r)^{\beta\cdot \l[\frac{k}{\beta}\r]+\beta}\leq 1+\sum_{k=1}^{\infty} \frac{2}{k!}\l(\lambda C\|X\|_{\Psi_{\alpha}}\r)^{\beta\cdot (k+1)}\\
    &\leq 1+C\lambda^\beta\|X\|_{\Psi_{\alpha}}^\beta\l(\exp(C\lambda^\beta \|X\|_{\Psi_{\alpha}}^{\beta})-1\r)\leq C\lambda^\beta\|X\|_{\Psi_{\alpha}}^\beta\exp(C\lambda^\beta \|X\|_{\Psi_{\alpha}}^{\beta})\\
    &\leq \exp(C\lambda^\beta\|X\|_{\Psi_{\alpha}}^\beta)\cdot\exp(C\lambda^\beta \|X\|_{\Psi_{\alpha}}^{\beta})\\
    &= \exp(C_1\lambda^\beta \|X\|_{\Psi_{\alpha}}^{\beta}),
\end{align*}
where $C_1$ is another positive constant. 
Thus, combining the above results, we can obtain that when $\alpha\geq 2$, it holds that 
    \begin{align*}
        \EE\l\{\exp(\lambda X)\r\}\leq \exp(C\lambda^\beta \|X\|_{\Psi_{\alpha}}^{\beta}),\quad \EE\l\{\exp(\lambda X)\r\}\leq \exp(C\lambda^2 \|X\|_{\Psi_{\alpha}}^{2})
    \end{align*}
    for all $\lambda\geq 0$.

    \textit{Case 2: $1 <  \alpha < 2,\beta > 2$}. 
    In this case, 
    \eqref{eq2:lem:momGeneratingFunct} can be further written as
    \begin{equation}
        \begin{split}
             &\EE\l\{\exp(\lambda X)\r\}\leq 1+\frac{1}{0!}\lambda^2C^2\|X\|_{\Psi_{\alpha}}^2+\cdots+\frac{1}{0!}\lambda^{[\beta]}C^{[\beta]}\|X\|_{\Psi_{\alpha}}^{[\beta]}\\
        &\quad+\frac{1}{1!}\lambda^{[\beta]+1}C^{[\beta]+1}\|X\|_{\Psi_{\alpha}}^{[\beta]+1}+\cdots+\frac{1}{1!}\lambda^{2[\beta]+1}C^{2[\beta]+1}\|X\|_{\Psi_{\alpha}}^{2[\beta]+1}\\
        &\quad+\frac{1}{2!}\lambda^{2[\beta]+2}C^{2[\beta]+2}\|X\|_{\Psi_{\alpha}}^{2[\beta]+2}+\cdots+\frac{1}{2!}\lambda^{3[\beta]+2}C^{3[\beta]+2}\|X\|_{\Psi_{\alpha}}^{3[\beta]+2}\\
        &\quad+\cdots,
        \end{split}
        \label{eq6:lem:momGeneratingFunct}
    \end{equation}
where $0!=1!=1$ by convention. Let us consider different ranges of $\lambda C\|X\|_{\Psi_{\alpha}}$. When $\lambda C\|X\|_{\Psi_{\alpha}}<\tau$ with $\tau>0$ some given number, we can incorporate terms of 
\eqref{eq6:lem:momGeneratingFunct} by the common denominator and deduce that 
\begin{align*}
    &\EE\l\{\exp(\lambda X)\r\}\\
    &\leq 1+\frac{1}{0!}\lambda^2C^2\|X\|_{\Psi_{\alpha}}^2\cdot\l(1+\cdots+\tau^{[\beta]-2}\r)+\frac{1}{1!}\lambda^{[\beta]+1}C^{[\beta]+1}\|X\|_{\Psi_{\alpha}}^{[\beta]+1}\cdot\l(1+\cdots+\tau^{[\beta]}\r)\\
    &\quad+\frac{1}{2!}\lambda^{2[\beta]+2}C^{2[\beta]+2}\|X\|_{\Psi_{\alpha}}^{2[\beta]+2}\cdot\l(1+\cdots+\tau^{[\beta]} \r)+\cdots.
\end{align*}
Specifically, if $\lambda C\|X\|_{\Psi_{\alpha}}\leq\tau<1$, 
\eqref{eq6:lem:momGeneratingFunct} above can be bounded as
\begin{align*}
    \EE\l\{\exp(\lambda X)\r\}&\leq 1+\frac{1}{1-\tau}\lambda^2C^2\|X\|_{\Psi_{\alpha}}^2+\frac{1}{1-\tau}\frac{1}{2!}\lambda^{2[\beta]+2}C^{2[\beta]+2}\|X\|_{\Psi_{\alpha}}^{2[\beta]+2}+\cdots\\
    &\leq\exp(C\lambda^2\|X\|_{\Psi_{\alpha}}^2/(1-\tau)).
\end{align*}
If more generally $\lambda C\|X\|_{\Psi_{\alpha}}\leq 1 $, 
\eqref{eq6:lem:momGeneratingFunct} can be bounded as 
\begin{align*}
    \EE\l\{\exp(\lambda X)\r\}&\leq 1+(2[\beta]-1)\lambda^2C^2\|X\|_{\Psi_{\alpha}}^2+([\beta]+1)\frac{1}{2!}\lambda^{2[\beta]+2}C^{2[\beta]+2}\|X\|_{\Psi_{\alpha}}^{2[\beta]+2}+\cdots\\
    &\leq\exp(C\beta\lambda^2\|X\|_{\Psi_{\alpha}}^2)
\end{align*}
accordingly.

When $\lambda C\|X\|_{\Psi_{\alpha}}\geq\tau$ with $\tau\in(0,1)$ some given number, 
\eqref{eq6:lem:momGeneratingFunct} can be upper bounded as 
\begin{align*}
    \EE\l\{\exp(\lambda X)\r\}&\leq 1+\frac{1}{\tau^{[\beta]+1}}\frac{1}{1-\tau}\frac{1}{0!}\lambda^\beta C^{\beta}\|X\|_{\Psi_{\alpha}}^{\beta}+\frac{1}{\tau^{[\beta]+1}}\frac{1}{1-\tau}\frac{1}{1!}\lambda^{2\beta}C^{2\beta}\|X\|_{\Psi_{\alpha}}^{2\beta}+\cdots\\
    &\leq 1+\frac{1}{\tau^{[\beta]+1}}\frac{1}{1-\tau}\lambda^\beta C^{\beta}\|X\|_{\Psi_{\alpha}}^{\beta}\cdot\exp(C^{\beta}\lambda^{\beta}\|X\|_{\Psi_{\alpha}}^{\beta})\\
    &\leq 1+\l(\exp\l( \frac{1}{\tau^{[\beta]+1}}\frac{1}{1-\tau}\lambda^\beta C^{\beta}\|X\|_{\Psi_{\alpha}}^{\beta}\r) -1\r)\cdot \exp(C^{\beta}\lambda^{\beta}\|X\|_{\Psi_{\alpha}}^{\beta}).
\end{align*}
Then by resorting to $\exp(C^{\beta}\lambda^{\beta}\|X\|_{\Psi_{\alpha}}^{\beta})\geq 1$ and $1+x\leq \exp(x)$, it holds that $$\EE\l\{\exp(\lambda X)\r\}\leq \exp\l(C\frac{1}{\tau^{[\beta]+1}}\frac{1}{1-\tau}\lambda^{\beta}\|X\|_{\Psi_{\alpha}}^{\beta}\r).$$
If further $\lambda C\|X\|_{\Psi_{\alpha}}\geq 1$, 
it follows from \eqref{eq6:lem:momGeneratingFunct} that 
\begin{align*}
    \EE\l\{\exp(\lambda X)\r\}&\leq 1+\l([\beta]-1\r)\frac{1}{0!} \lambda^\beta C^{\beta}\|X\|_{\Psi_{\alpha}}^{\beta}+([\beta]+1)\frac{1}{1!}\lambda^{2\beta}C^{2\beta}\|X\|_{\Psi_{\alpha}}^{2\beta}+\cdots.
\end{align*}
We then extract the common term $\lambda^\beta C^{\beta}\|X\|_{\Psi_{\alpha}}^{\beta} $ and can show that 
\begin{align*}
    \EE\l\{\exp(\lambda X)\r\}&\leq 1+([\beta]+1)\lambda^\beta C^{\beta}\|X\|_{\Psi_{\alpha}}^{\beta}\cdot\exp\l(\lambda^\beta C^{\beta}\|X\|_{\Psi_{\alpha}}^\beta \r)\\
    &\leq \l(1+([\beta]+1)\lambda^\beta C^{\beta}\|X\|_{\Psi_{\alpha}}^{\beta}\r)\cdot\exp\l(\lambda^\beta C^{\beta}\|X\|_{\Psi_{\alpha}}^\beta \r)\\
    &\leq \exp\l(([\beta]+1)\lambda^\beta C^{\beta}\|X\|_{\Psi_{\alpha}}^{\beta} \r)\cdot\exp\l(\lambda^\beta C^{\beta}\|X\|_{\Psi_{\alpha}}^\beta \r)\\
    &\leq \exp\l(([\beta]+1)\lambda^\beta C_1^{\beta}\|X\|_{\Psi_{\alpha}}^{\beta} \r),
\end{align*}
    where the last two steps above have used the facts that $1+x\leq \exp(x)$ and $ \exp\l(x \r)\geq 1$ for $x\geq 0$. 
    
    Therefore, in view of $[\beta]+1\leq 2\beta$, we can obtain that when $\alpha\in(1,2)$, it holds that 
    \begin{align*}
        &\EE\l\{\exp(\lambda X)\r\}\leq\exp(C_1\beta\lambda^2\|X\|_{\Psi_{\alpha}}^2) \ \ \text{ if } \lambda\leq\frac{1}{C\|X\|_{\Psi_{\alpha}}},\\
        &\EE\l\{\exp(\lambda X)\r\}\leq\exp(C_2^\beta\beta\lambda^\beta\|X\|_{\Psi_{\alpha}}^\beta) \ \ \text{ if } \lambda\geq\frac{1}{C\|X\|_{\Psi_{\alpha}}},
    \end{align*}
    where $C, C_1, C_2 > 0$ are some constants. Moreover, we have that for any $\tau\in(0,1)$, 
    \begin{align*}
        &\EE\l\{\exp(\lambda X)\r\}\leq \exp\l(C_3\frac{1}{1-\tau}\lambda^2\|X\|_{\Psi_\alpha}^2\r) \ \ \text{ if } \lambda\leq \tau/(C\|X\|_{\Psi_\alpha}),\\
        &\EE\l\{\exp(\lambda X)\r\}\leq \exp\l(C_4^\beta\frac{\tau^{-[\beta]-1}}{1-\tau} \lambda^\beta \|X\|_{\Psi_\alpha}^{\beta}\r) \ \ \text{ if } \lambda\geq \tau/(C\|X\|_{\Psi_\alpha}),
    \end{align*}
where $C, C_3, C_4 > 0$ are some constants. This completes the proof of Lemma~\ref{lem:momGeneratingFunct}.

\subsection{Proof of Lemma~\ref{lem:momGeneratingFunct_lower}}

It is well-known that sub-Weibull random variables satisfy that $\EE|X|^k\leq C^kk^{\frac{k}{\alpha}}\|X\|_{\Psi_\alpha}^k$. However, a more delicate bound can be derived. Our technical lemma establishes that $\EE|X|^3\leq C\sigma_X^2\|X\|_{\Psi_\alpha}\log^{\frac{1}{\alpha}}(\frac{2\|X\|_{\Psi_\alpha}}{\sigma_X})$ (see Lemma~\ref{teclem:cube} in Section \ref{new.SecD}), and see Examples~\ref{exp:1} and \ref{exp:2} for the bound with general $k\geq 2$, which is key to the proof of the current lemma.


Let us first expand the exponential function
    \begin{align*}
        \EE\l\{\exp(\lambda X)\r\}&=\EE\l\{1+\sum_{k=1}^{\infty}\frac{1}{k!}\lambda^k X^k\r\}= 1+\frac{1}{2}\lambda^2\EE\l\{X^2\r\}+\sum_{k=3}^{\infty}\frac{1}{k!}\lambda^k\EE\l\{X^k\r\}\\
        &\geq 1+\frac{1}{2}\lambda^2\EE\l\{X^2\r\}-\sum_{k=3}^{\infty}\frac{1}{k!}\lambda^k\EE\l|X\r|^k.
    \end{align*}
    We then examine the high-order term in the expression above and can write it as 
    \begin{align*}
        \sum_{k=3}^{\infty}\frac{1}{k!}\lambda^k\EE\l|X\r|^k&=\EE \lambda^3|X|^3\frac{\exp(\lambda|X|)-1-\lambda|X|-\frac{1}{2}\lambda^2X^2}{\lambda^3|X|^3}\cdot\II\l\{|X|\leq \tau\r\}\\
        &~~~~+\EE \lambda^3|X|^3\frac{\exp(\lambda|X|)-1-\lambda|X|-\frac{1}{2}\lambda^2X^2}{\lambda^3|X|^3}\cdot\II\l\{|X|>\tau\r\}.
    \end{align*}
    For $\lambda\leq 1/\tau$, it holds that 
    \begin{align*}
        & \EE \lambda^3|X|^3\frac{\exp(\lambda|X|)-1-\lambda|X|-\frac{1}{2}\lambda^2X^2}{\lambda^3|X|^3}\cdot\II\l\{|X| \leq \tau\r\} \\
        & \leq \EE \lambda^3|X|^3\frac{e-1}{1}\leq c\EE\lambda^3|X|^3.
    \end{align*}
    Hence, it follows from Lemma~\ref{teclem:cube} that 
   \begin{align*}
       &\EE \lambda^3|X|^3\frac{\exp(\lambda|X|)-1-\lambda|X|-\frac{1}{2}\lambda^2X^2}{\lambda^3|X|^3}\cdot\II\l\{|X|\leq \tau\r\}\\
       &\leq C\lambda^3\sigma_X^2\|X\|_{\Psi_\alpha}\l(\log\l(\frac{2\|X\|_{\Psi_\alpha}}{\sigma_X}\r)\r)^{\frac{1}{\alpha}}.
   \end{align*}
   
   On the other hand, the term with $|X|\geq \tau$ can be bounded as 
   \begin{align*}
       & \EE \lambda^3|X|^3\frac{\exp(\lambda|X|)-1-\lambda|X|-\frac{1}{2}\lambda^2X^2}{\lambda^3|X|^3}\cdot\II\l\{|X|\geq \tau\r\}\\
       & \leq \lambda^3 \EE |X|^3 \frac{\exp(\frac{|X|}{C\|X\|_{\Psi_\alpha}})-1-(\frac{|X|}{C\|X\|_{\Psi_\alpha}})-\frac{1}{2}(\frac{|X|}{C\|X\|_{\Psi_\alpha}})^2}{(\frac{|X|}{C\|X\|_{\Psi_\alpha}})^3}\II\{|X|\geq \tau\}\\
       & \leq C\lambda^3\|X\|_{\Psi_\alpha}^3\EE\exp\l(\frac{|X|}{C\|X\|_{\Psi_\alpha}}\r)\cdot \II\{|X|\geq \tau\},
   \end{align*}
   where the first step above has utilized the fact that the fraction is monotone increasing with respect to $\lambda$, and the routine is partially borrowed from \cite{koltchinskii2011neumann}. Then with the choice of $\tau=C\|X\|_{\Psi_\alpha}\l(\log\l(\frac{2\|X\|_{\Psi_\alpha}}{\sigma_X}\r)\r)^{\frac{1}{\alpha}}$ as in \cite{koltchinskii2011neumann}, we can deduce that 
   \begin{align*}
       \EE\exp\l(\frac{|X|}{C\|X\|_{\Psi_\alpha}}\r)\cdot \II\{|X|\geq \tau\}&\leq \sqrt{\EE\exp\l(\frac{2|X|}{C\|X\|_{\Psi_\alpha}}\r)}\cdot \sqrt{\PP\{|X|\geq \tau\}}\\
       &\leq C\exp\l(-c\l(\frac{\tau}{\|X\|_{\Psi_\alpha}}\r)^\alpha\r)\leq \l(\frac{\sigma_X}{\|X\|_{\Psi_\alpha}}\r)^2.
   \end{align*}
   
   Combining the above results leads to 
   \begin{align*}
       \sum_{k=3}^{\infty}\frac{1}{k!}\lambda^k\EE\l|X\r|^k\leq C\lambda^3\sigma_X^2\|X\|_{\Psi_\alpha}\l(\log\l(\frac{2\|X\|_{\Psi_\alpha}}{\sigma_X}\r)\r)^{\frac{1}{\alpha}}.
   \end{align*}
   Consequently, we can obtain the following lower bound for the moment generating function
    \begin{align*}
        \EE\l\{\exp(\lambda X)\r\}\geq 1+\frac{1}{2}\lambda^2\sigma_X^2-C\lambda^3\sigma_X^2\|X\|_{\Psi_\alpha}\l(\log\l(\frac{2\|X\|_{\Psi_\alpha}}{\sigma_X}\r)\r)^{\frac{1}{\alpha}}.
    \end{align*}
    For $\lambda\leq \frac{1}{\|X\|_{\Psi_\alpha}}\l(\log\l(\frac{2\|X\|_{\Psi_\alpha}}{\sigma_X}\r)\r)^{-\frac{1}{\alpha}}$, we can show that 
    \begin{align*}
        \EE\l\{\exp(\lambda X)\r\}\geq 1+\frac{1}{4}\lambda^2\sigma_X^2\geq \exp\l(\lambda^2\sigma_X^2/8\r),
    \end{align*}
    where the last step above is due to $1+x\geq \exp(x/2)$ for $x\in[0,1]$.
This concludes the proof of Lemma~\ref{lem:momGeneratingFunct_lower}.

\subsection{Proof of Theorem~\ref{thm:conc_univariates}}
The proof of Theorem~\ref{thm:conc_univariates} is rooted on Lemma~\ref{lem:momGeneratingFunct}. 
By the Markov inequality, for any $\lambda>0$ we have that 
\begin{align*}
    \PP\l(\sum_{i=1}^n a_iX_i\geq t\r)\leq \frac{1}{\exp(\lambda t)}\EE\l\{ \exp\l(\lambda\sum_{i=1}^n a_iX_i \r)\r\}=\frac{1}{\exp(\lambda t)}\prod_{i=1}^n \EE\l\{\lambda a_iX_i\r\}.
\end{align*}
    For $\alpha\geq 2$, an application of Lemma~\ref{lem:momGeneratingFunct} gives that
    \begin{align*}
        \EE\l\{\lambda a_iX_i\r\}\leq \exp\l(C_1\min\l\{\lambda^2 a_i^2\|X_i\|_{\Psi_{\alpha}}^{2},\, \lambda^{\beta} |a_i|^{\beta}\|X_i\|_{\Psi_{\alpha}}^{\beta}\r\}\r),
    \end{align*}
    which yields that 
    \begin{align*}
        \PP\l(\sum_{i=1}^n a_iX_i\geq t\r)\leq \exp\l(C_1\min\l\{\lambda^2 \sum_{i=1}^n a_i^2\|X_i\|_{\Psi_{\alpha}}^{2},\, \lambda^{\beta} \sum_{i=1}^n|a_i|^{\beta}\|X_i\|_{\Psi_{\alpha}}^{\beta}\r\}-\lambda t\r).
    \end{align*}
    Inserting $\lambda=\max\l\{\frac{t}{2C_1\sum_{i=1}^n a_i^2\|X_i\|_{\Psi_{\alpha}}^2}, \l(\frac{t}{C_1\beta\sum_{i=1}^n |a_i|^{\beta} \|X\|_{\Psi_{\alpha}}^{\beta}}\r)^{^{\frac{1}{\beta-1}}}\r\}$ into the above expression, it holds that 
    \begin{align*}
        \PP\l(\sum_{i=1}^n a_iX_i\geq t\r)\leq \exp\l(-C_2\max\l\{\frac{t^2}{\sum_{i=1}^n a_i^2\|X_i\|_{\Psi_{\alpha}}^2 },\frac{t^{\alpha}}{\l(\sum_{i=1}^n |a_i|^{\beta} \|X_i\|_{\Psi_{\alpha}}^{\beta} \r)^{\frac{1}{\beta-1}}} \r\}\r).
    \end{align*}
    
    For $\alpha\in(1,2)$, by invoking Lemma~\ref{lem:momGeneratingFunct} we have that 
    \begin{align*}
        \EE\l\{\lambda a_i X_i\r\}\leq \exp\l(C_3 \lambda^2 a_i^2\|X_i\|_{\Psi_{\alpha}}^{2}+C_3^\beta \lambda^{\beta} 2^{\beta}|a_i|^{\beta}\|X_i\|_{\Psi_{\alpha}}^{\beta}\r).
    \end{align*}
    Then it follows that 
    \begin{align*}
         \PP\l(\sum_{i=1}^n a_iX_i\geq t\r)\leq \exp\l(C_3 \lambda^2 \sum_{i=1}^n a_i^2\|X_i\|_{\Psi_{\alpha}}^{2}+C_3^\beta2^{\beta} \lambda^{\beta} \sum_{i=1}^n|a_i|^{\beta}\|X_i\|_{\Psi_{\alpha}}^{\beta}-\lambda t\r).
    \end{align*}
    Inserting $\lambda=\min\l\{\frac{t}{4C_3\sum_{i=1}^{n} a_i^2\|X\|_{\Psi_{\alpha}}^2},\l(\frac{t}{4C_3^{\beta
    }2^{\beta}\sum_{i=1}^n |a_i|^{\beta}\|X\|_{\Psi_{\alpha}}^{\beta} }\r)^{\frac{1}{\beta-1}} \r\}$ into the above expression yields that 
    \begin{align}
        \PP\l(\sum_{i=1}^n a_iX_i\geq t\r)\label{eq1:proof:thm:conc_univariates}\leq \exp\l(-C_5\min\l\{\frac{t^2}{\sum_{i=1}^n a_i\|X_i\|_{\Psi_{\alpha}}^2 }, \frac{t^{\alpha}}{(\sum_{i=1}^n |a_i|^{\beta} \|X_i\|_{\Psi_{\alpha}}^{\beta})^{\frac{1}{\beta-1}}}\r\}\r),
    \end{align}
    where we have used $\beta\geq 2$ and $C_3^{\frac{\beta}{\beta-1}}\leq C_3^2$. 
    
    
    Finally, for $\alpha=1$, an application of  Lemma~\ref{lem:momGeneratingFunct} and setting $$\lambda=\min\l\{\frac{t}{C\sum_{i=1}^n a_i^2\|X\|_{\Psi_1}^2},\frac{1}{\max_{i=1}^n|a_i|\|X_i\|_{\Psi_1}}\r\}$$ give that  
    $$\PP\l(\sum_{i=1}^n a_iX_i\geq t\r)\leq \exp\l(-C_5\min\l\{\frac{t^2}{\sum_{i=1}^n a_i\|X_i\|_{\Psi_{1}}^2 }, \frac{t}{\max_{i=1}^n |a_i|\|X_i\|_{\Psi_1}}\r\}\r), $$ 
    which has also been proved by Theorem~2.9.1 of \cite{vershynin2018high}. Due to $$\lim_{\beta\to\infty}(\sum_{i=1}^n |a_i|^{\beta} \|X\|_{\Psi_{\alpha}}^{\beta})^{\frac{1}{\beta-1}}=\max_{i=1,\ldots,n} |a_i|\|X_i\|_{\Psi_\alpha}, $$ the concentration inequality for $\alpha=1$ is contained by 
    (\ref{eq1:proof:thm:conc_univariates}). This completes the proof of Theorem~\ref{thm:conc_univariates}. 

\subsection{Proof of Theorem~\ref{cor:moment_univariate}} 

Due to the phase transition over the regimes of $\alpha\geq 2$ and $\alpha\in[1,2]$, the following proof first establishes the bound for the case of $\alpha\geq1$ and then provides a bound that holds only for the case of $\alpha\geq 2$.

Let us first consider the case of $\alpha\geq1$. For any $p>0$, by the definition of integral it holds that 
    \begin{align}
        \EE\l\{ \l|\sum_{i=1}^n a_iX_i \r|^p\r\}= \int_{0}^{\infty} pt^{p-1}\cdot\PP\l(\l|\sum_{i=1}^n a_iX_i \r|\geq t \r)\; dt. \label{eq4:cor:moment_univariate}
    \end{align}
    An application of Theorem~\ref{thm:conc_univariates} leads to 
    \begin{align}\label{eq2:cor:moment_univariate}
        \EE\l\{ \l|\sum_{i=1}^n a_iX_i \r|^p\r\} & \leq 
        \int_{0}^{\tau} pt^{p-1}\cdot \exp\l(-\frac{Ct^2}{\sum_{i=1}^n a_i^2\|X_i\|_{\Psi_{\alpha}}^2}\r)\, dt\\
        &\quad+\int_{\tau}^{\infty} pt^{p-1}\cdot \exp\l(-\frac{Ct^\alpha}{(\sum_{i=1}^n |a_i|^\beta\|X_i\|_{\Psi_{\alpha}}^\beta)^{\frac{\alpha}{\beta}}}\r)\, dt, \nonumber
    \end{align}
    where quantity $\tau > 0$ satisfies the following equation
    \begin{align}
        \frac{\tau^2}{\sum_{i=1}^n a_i^2\|X_i\|_{\Psi_{\alpha}}^2}=\frac{\tau^\alpha}{(\sum_{i=1}^n |a_i|^\beta\|X_i\|_{\Psi_{\alpha}}^\beta)^{\frac{\alpha}{\beta}}}.
        \label{eq1:cor:moment_univariate}
    \end{align}
    It suffices to bound each term on the right-hand side of 
    \eqref{eq2:cor:moment_univariate} above. Specifically, the first term admits that 
    \begin{align*}
        \int_{0}^{\tau} pt^{p-1}\cdot \exp\l(-\frac{Ct^2}{\sum_{i=1}^n a_i^2\|X_i\|_{\Psi_\alpha}^2}\r)\, dt&\leq \int_{0}^{\infty} pt^{p-1}\cdot \exp\l(-\frac{Ct^2}{\sum_{i=1}^n a_i^2\|X_i\|_{\Psi_\alpha}^2}\r)\, dt\\
        &\leq C_1^{p/2}p^{p/2}\l(\sum_{i=1}^n a_i^2\|X_i\|_{\Psi_\alpha}^2\r)^{p/2},
    \end{align*}
    where the last step above follows from standard integrals and Gamma function properties. 

    For the second term on the right-hand side of 
    \eqref{eq2:cor:moment_univariate}, it holds that 
    \begin{align*}
        & \int_{\tau}^{\infty} pt^{p-1}\cdot \exp\l(-\frac{Ct^\alpha}{(\sum_{i=1}^n |a_i|^\beta\|X_i\|_{\Psi_{\alpha}}^\beta)^{\frac{\alpha}{\beta}}}\r)\, dt\\
        & =\frac{p}{\alpha}\l(\sum_{i=1}^n |a_i|^{\beta}\|X_i\|_{\Psi_{\alpha}}^{\beta}\r)^{\frac{p}{\beta}} \int_{\frac{\tau^\alpha}{(\sum_{i=1}^n |a_i|^\beta\|X_i\|_{\Psi_{\alpha}}^\beta)^{\frac{\alpha}{\beta}}} }^{\infty} \;s^{\frac{p}{\alpha}-1}\exp(-Cs)\, ds.
    \end{align*}
    Then when $p\leq \alpha$, we have $\frac{p}{\alpha}-1\leq 0$ and thus it follows from Lemma~\ref{teclem:integral} in Section \ref{new.SecD} that 
    \begin{align*}
        & \int_{\tau_2}^{\infty} pt^{p-1}\cdot \exp\l(-\frac{Ct^\alpha}{(\sum_{i=1}^n a_i^\beta\|X_i\|_{\Psi_{\alpha}}^\beta)^{\frac{\alpha}{\beta}}}\r)\, dt
        \leq \frac{p}{\alpha}\l(\sum_{i=1}^n |a_i|^{\beta}\|X_i\|_{\Psi_{\alpha}}^{\beta}\r)^{\frac{p}{\beta}}\\
        & \quad\times
        \l(\frac{(\sum_{i=1}^{n} |a_i|^{\beta} \|X\|_{\Psi_{\alpha}}^{\beta})^{\frac{1}{\beta}}}{(\sum_{i=1}^n a_i^2\|X_{i}\|_{\Psi_{\alpha}}^2)^{\frac{1}{2}}}\r)^{\frac{2(p-\alpha)}{\alpha-2}}\exp\l(-C\l(\frac{(\sum_{i=1}^{n} |a_i|^{\beta} \|X\|_{\Psi_{\alpha}}^{\beta})^{\frac{1}{\beta}}}{(\sum_{i=1}^n a_i^2\|X_{i}\|_{\Psi_{\alpha}}^2)^{\frac{1}{2}}}\r)^{\frac{2\alpha}{\alpha-2}}\r)\\
        & \leq p\l(\sum_{i=1}^n |a_i|^{\beta}\|X_i\|_{\Psi_{\alpha}}^{\beta}\r)^{\frac{p}{\beta}}\exp\l(-C\l(\frac{(\sum_{i=1}^{n} |a_i|^{\beta} \|X\|_{\Psi_{\alpha}}^{\beta})^{\frac{1}{\beta}}}{(\sum_{i=1}^n a_i^2\|X_{i}\|_{\Psi_{\alpha}}^2)^{\frac{1}{2}}}\r)^{\frac{2\alpha}{\alpha-2}}\r),
    \end{align*}
    where the last step above has exploited the fact that $\|r\|_{q_1}\leq \|r\|_{q_2}$ if $q_1\geq q_2$ for any vector $r$. 
    
    On the other hand, when $p\geq \alpha$, we have $\frac{p}{\alpha}-1\geq 0$ and thus it follows from Lemma~\ref{teclem:integral} that
\begin{align*}
        & \int_{\tau}^{\infty} pt^{p-1}\cdot \exp\l(-\frac{Ct^\alpha}{(\sum_{i=1}^n a_i^\beta\|X_i\|_{\Psi_{\alpha}}^\beta)^{\frac{1}{\beta-1}}}\r)\, dt\\
        & \leq C_1^{p}p^{\frac{p}{\alpha}}\l(\sum_{i=1}^n a_i^\beta \|X_i\|_{\Psi_{\alpha}}^\beta\r)^{\frac{p}{\beta}}\exp\l(-C\l(\frac{(\sum_{i=1}^{n} |a_i|^{\beta} \|X\|_{\Psi_{\alpha}}^{\beta})^{\frac{1}{\beta}}}{(\sum_{i=1}^n a_i^2\|X_{i}\|_{\Psi_{\alpha}}^2)^{\frac{1}{2}}}\r)^{\frac{2\alpha}{\alpha-2}}\r).
    \end{align*}
    Hence, for all $p\geq 1$, we have the following bound for the second term on the right-hand side of 
    \eqref{eq2:cor:moment_univariate}
    \begin{align*}
        & \int_{\tau}^{\infty} pt^{p-1}\cdot \exp\l(-\frac{Ct^\alpha}{(\sum_{i=1}^n |a_i|^\beta\|X_i\|_{\Psi_{\alpha}}^\beta)^{\frac{\alpha}{\beta}}}\r)\, dt\\
        & \leq C_1^{p}p^{\frac{p}{\alpha}}\l(\sum_{i=1}^n a_i^\beta \|X_i\|_{\Psi_{\alpha}}^\beta\r)^{\frac{p}{\beta}}\exp\l(-C\l(\frac{(\sum_{i=1}^{n} |a_i|^{\beta} \|X\|_{\Psi_{\alpha}}^{\beta})^{\frac{1}{\beta}}}{(\sum_{i=1}^n a_i^2\|X_{i}\|_{\Psi_{\alpha}}^2)^{\frac{1}{2}}}\r)^{\frac{2\alpha}{\alpha-2}}\r).
    \end{align*}
    Combining the above results yields that 
    \begin{align*}
        & \EE\l\{ \l|\sum_{i=1}^n a_iX_i \r|^p\r\}\leq  C_1^p p^{\frac{p}{2}} \l(\sum_{i=1}^n a_i^2\|X_i\|_{\Psi_\alpha}^2 \r)^{\frac{p}{2}}\\
        &\quad +C_1^{p}p^{\frac{p}{\alpha}}\l(\sum_{i=1}^n a_i^\beta \|X_i\|_{\Psi_{\alpha}}^\beta\r)^{\frac{p}{\beta}}\exp\l(-C\l(\frac{(\sum_{i=1}^{n} |a_i|^{\beta} \|X\|_{\Psi_{\alpha}}^{\beta})^{\frac{1}{\beta}}}{(\sum_{i=1}^n a_i^2\|X_{i}\|_{\Psi_{\alpha}}^2)^{\frac{1}{2}}}\r)^{\frac{2\alpha}{\alpha-2}}\r).
    \end{align*}
    
    Further, for the case of $\alpha\geq 2$, by resorting to Theorem~\ref{thm:conc_univariates} we have that
    \begin{align*}
        & \PP\l(\l|\sum_{i=1}^n a_iX_i \r|\geq t\r)\\
        & \leq 2\min\l\{\exp\l(-C\frac{t^2}{\sum_{i=1}^n a_i\|X_i\|_{\Psi_\alpha}^2}\r),\exp\l(-C\frac{t^{\alpha}}{\l(\sum_{i=1}^n |a_i|^\beta \|X_i\|_{\Psi_\alpha}^{\beta}\r)} \r)\r\}.
    \end{align*}
    Therefore, by inserting the above expression into 
    \eqref{eq4:cor:moment_univariate} and standard integrals, we can obtain that 
    \begin{align*}
        \EE \l\{ \l|\sum_{i=1}^n a_iX_i \r|^p\r\}\leq C_1^p\min\l\{ p^{\frac{p}{2}}\l(\sum_{i=1}^n a_i^2\|X_i\|_{\Psi_\alpha}^2 \r)^{\frac{p}{2}},p^{\frac{p}{\alpha}}\l(\sum_{i=1}^n |a_i|^\beta\|X_i\|_{\Psi_\alpha}^\beta\r)^{\frac{p}{\beta}} \r\},
    \end{align*}
    which concludes the proof of Theorem~\ref{cor:moment_univariate}.

\subsection{Proof of Corollary~\ref{cor:norm_univariate}}

In what follows, we will derive the bound of the Orlicz norm based on Theorems~\ref{thm:conc_univariates} and \ref{cor:moment_univariate}. Again, we will analyze the cases of $\alpha\geq 2$ and $1\leq \alpha\leq 2$ separately.

\textit{Case 1: $\alpha\geq 2$}. 
In this case, according to Theorem~\ref{cor:moment_univariate}, it holds that for all $p\geq 1$, 
    \begin{align*}
        \l\| \sum_{i=1}^n a_iX_i\r\|_p\leq C_1 p^{\frac{1}{\alpha}}\cdot \l(\sum_{i=1}^n |a_i|^{\beta}\|X_i\|_{\Psi_{\alpha}}^{\beta}\r)^{\frac{1}{\beta}}.
    \end{align*}
    Denote by $K:= C_2\l(\sum_{i=1}^n |a_i|^{\beta}\|X_i\|_{\Psi_{\alpha}}^{\beta}\r)^{\frac{1}{\beta}}$ with $C_2>0$ some sufficiently large constant. It follows that 
    \begin{align*}
        \EE\l\{\exp\l( \l|\frac{\sum_{i=1}^n a_iX_i }{K}\r|^\alpha\r)\r\}&=1+\sum_{p=1}^{\infty}\frac{1}{p!}\frac{1}{K^{p}}\l\| \sum_{i=1}^n a_iX_i\r\|_{p\alpha}^{p\alpha}\\
        &\leq 1+\sum_{p=1}^{\infty}\frac{1}{p!}\frac{1}{K^{\alpha p}} C_1^{p\alpha}p^p \alpha^p \cdot \l(\sum_{i=1}^n |a_i|^{\beta}\|X_i\|_{\Psi_{\alpha}}^{\beta}\r)^{\frac{p\alpha}{\beta}}.
    \end{align*}
    Then by Stirling's formula and inserting the value of $K$, we can deduce that 
    \begin{align*}
        \EE\l\{\exp\l( \l|\frac{\sum_{i=1}^n a_iX_i }{K}\r|^\alpha\r)\r\}\leq 1+\sum_{p=1}^{\infty}\frac{1}{\sqrt{p}}\frac{C_3^p}{C_2^{\alpha p}} C_1^{p\alpha} \alpha^p .
    \end{align*}
    Hence, with $C_2\geq C_1+C_3$ and $C_2\geq \max_{\alpha\geq 2}\alpha^{\frac{1}{\alpha}}$, it holds that $$ \EE\l\{\exp\l( \l|\frac{\sum_{i=1}^n a_iX_i }{K}\r|^\alpha\r)\r\}\leq2.$$ This establishes that
    \begin{align*}
        \l\|\sum_{i=1}^n a_iX_i \r\|_{\Psi_{\alpha}}\leq C_2\l(\sum_{i=1}^n |a_i|^{\beta}\|X_i\|_{\Psi_{\alpha}}^{\beta}\r)^{\frac{1}{\beta}}.
    \end{align*}
    Moreover, an application of Theorem~\ref{cor:moment_univariate} gives that for all $p\geq 1$, 
    \begin{align*}
        \l\| \sum_{i=1}^n a_iX_i\r\|_p\leq C_1 p^{\frac{1}{2}}\cdot \l(\sum_{i=1}^n a_i^{2}\|X_i\|_{\Psi_{\alpha}}^{2}\r)^{\frac{1}{2}},
    \end{align*}
    which entails that $\l\|\sum_{i=1}^n a_iX_i \r\|_{\Psi_{2}}\leq \l(\sum_{i=1}^n a_i^{2}\|X_i\|_{\Psi_{\alpha}}^{2}\r)^{\frac{1}{2}}$ using the standard arguments as in \cite{vershynin2018high}.

    \textit{Case 2: $1\leq \alpha\leq 2$}. In this scenario, we have that $\beta>2$, $\sqrt{p}\leq p^{\frac{1}{\alpha}}$, and 
    $$\l(\sum_{i=1}^n |a_i|^{\beta}\|X_i\|_{\Psi_{\alpha}}^{\beta}\r)^{\frac{1}{\beta}}\leq \l(\sum_{i=1}^n a_i^{2}\|X_i\|_{\Psi_{\alpha}}^{2}\r)^{\frac{1}{2}}.$$ 
    By invoking Theorem~\ref{cor:moment_univariate}, it holds that 
    \begin{align*}
        \l\| \sum_{i=1}^n a_iX_i\r\|_p&\leq C_1^{\frac{1}{2}} p^{\frac{1}{2}}\cdot \l(\sum_{i=1}^n a_i^{2}\|X_i\|_{\Psi_{\alpha}}^{2}\r)^{\frac{1}{2}}+C_1^{\frac{1}{\alpha}} p^{\frac{1}{\alpha}}\cdot \l(\sum_{i=1}^n |a_i|^{\beta}\|X_i\|_{\Psi_{\alpha}}^{\beta}\r)^{\frac{1}{\beta}}\cdot\exp(-cn/p).
    \end{align*}
    Thus, by setting $K:=C_2 \l(\sum_{i=1}^n a_i^{2}\|X_i\|_{\Psi_{\alpha}}^{2}\r)^{\frac{1}{2}}$ with $C_2>0$ some sufficiently large constant, we can obtain that 
    \begin{align*}
        & \EE\l\{\exp\l( \l|\frac{\sum_{i=1}^n a_iX_i }{K}\r|^\alpha\r)\r\}=1+\sum_{p=1}^{\infty}\frac{1}{p!}\frac{1}{K^{\alpha p}}\l\| \sum_{i=1}^n a_iX_i\r\|_{p\alpha}^{p\alpha}
        \leq 1+\sum_{p=1}^{\infty}\frac{C_1^{p\alpha}}{p!}\frac{1}{K^{\alpha p}}  \\
        &\quad\times\l\{p^p \alpha^p \cdot \l(\sum_{i=1}^n |a_i|^{\beta}\|X_i\|_{\Psi_{\alpha}}^{\beta}\r)^{\frac{p\alpha}{\beta}}\cdot\exp(-n)+ \alpha^{\frac{\alpha p}{2}}p^{\frac{\alpha p}{2}}\cdot\l(\sum_{i=1}^n a_i^{2}\|X_i\|_{\Psi_{\alpha}}^{2}\r)^{\frac{\alpha p}{2}}\r\}\\
        & \leq 2.
    \end{align*}
This completes the proof of Corollary~\ref{cor:norm_univariate}.

\section{Proofs for Section~\ref{sec:univariate_var}}
\label{sec:proof:univariate_var}

This section contains the proofs of Examples~\ref{exp:1}--\ref{exp:2}, Lemma~\ref{lem:momGeneratingFunct_SigmaL}, and Theorem~\ref{thm:conc_univariate_SigmaL} in Section~\ref{sec:univariate_var}.

\subsection{Proof of Example~\ref{exp:1}}

The lemma below proves Example~\ref{exp:1}. We emphasize that the truncation technique used in the proof 
exploits the intuition from \cite{ahlswede2002strong}, \cite{recht2011simpler}, \cite{gross2010quantum}, \cite{gross2011recovering}, and \cite{koltchinskii2011neumann}.

\begin{lemma} \label{new.lem4}
    Assume that $X$ is a mean zero random variable and satisfies that $\|X\|_{\Psi_\alpha}<\infty$ 
    for some $\alpha\geq1$. Denote by $\sigma_X^2:=\operatorname{var}(X)$. Then it holds that for some universal constant $C > 0$ and all $k\geq 1$,
    \begin{align*}
        \EE|X|^k\leq C^{k}k^{\frac{k}{\alpha}}\cdot\sigma_X^2\|X\|_{\Psi_\alpha}^{k-2}\log^{\frac{k-2}{\alpha}}\l(\frac{\|X\|_{\Psi_\alpha}}{\sigma_X}\r).
    \end{align*}
\end{lemma}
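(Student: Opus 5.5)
We prove the bound by truncating $|X|$ at a level $\tau$ and bounding the two pieces separately. Write $K:=\|X\|_{\Psi_\alpha}$ and $\ell:=\log(2K/\sigma_X)$. Since $\sigma_X\leq\sqrt2 K$, we have $\ell\geq \log\sqrt2>0$. Constants lost to this change of logarithm argument are absorbed into $C^k$, so it suffices to bound everything in terms of $\ell$. The choice is
\[
\tau:=c_0K\ell^{1/\alpha}
\]
with a large absolute constant $c_0$, as in the proof of Lemma~\ref{lem:momGeneratingFunct_lower}. We split
\[
\EE|X|^k=\EE|X|^k\II\{|X|\leq\tau\}+\EE|X|^k\II\{|X|>\tau\}.
\]

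For the bulk part and $k\geq2$ we use $|X|^k\leq \tau^{k-2}X^2$ on $\{|X|\leq\tau\}$. This gives
\[
\EE|X|^k\II\{|X|\leq\tau\}\leq \sigma_X^2\tau^{k-2}=c_0^{k-2}\sigma_X^2K^{k-2}\ell^{(k-2)/\alpha},
\]
which is already of the required form, even without the factor $k^{k/\alpha}$. For $k=1$ the target reads $\sigma_X^2K^{-1}\ell^{-1/\alpha}$. In that case we do not use truncation and instead argue as follows. Since $\EE|X|\leq \sigma_X$, it suffices to check $\sigma_X\lesssim \sigma_X^2/(K\ell^{1/\alpha})$, which is false when $\sigma_X\ll K$. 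So for $k=1$ the statement as written seems too strong, and I would treat it separately or restrict to $k\geq2$, which is what Definition~\ref{assm:sigma_L} and Example~\ref{exp:1} actually require.

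For the tail part we apply Cauchy--Schwarz:
\[
\EE|X|^k\II\{|X|>\tau\}\leq (\EE|X|^{2k})^{1/2}\,\PP(|X|>\tau)^{1/2}.
\]
By Lemma~\ref{lem:moment_bound}, $(\EE|X|^{2k})^{1/2}\leq (C(2k)^{1/\alpha}K)^{k}\leq C^kk^{k/\alpha}K^k$. By the $\Psi_\alpha$ tail bound, $\PP(|X|>\tau)\leq 2\exp(-(\tau/K)^\alpha)=2\exp(-c_0^\alpha\ell)=2(\sigma_X/2K)^{c_0^\alpha}$. Taking $c_0^\alpha\geq 4$ gives $\PP(|X|>\tau)^{1/2}\lesssim \sigma_X^2/K^2$. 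The tail part is therefore at most
\[
C^kk^{k/\alpha}\sigma_X^2K^{k-2}\leq C^kk^{k/\alpha}\sigma_X^2K^{k-2}\ell^{(k-2)/\alpha}(\log\sqrt2)^{-(k-2)/\alpha},
\]
and the last factor is absorbed into $C^k$ because $\ell\geq\log\sqrt2$.

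Adding the two parts yields $\EE|X|^k\leq C^kk^{k/\alpha}\sigma_X^2K^{k-2}\ell^{(k-2)/\alpha}$ for all $k\geq2$, with $C$ universal. This holds because $c_0$ depends on $\alpha$ only through $c_0^\alpha\geq4$, so $c_0=4$ works for every $\alpha\geq1$.

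The main obstacle is the tail term: the Cauchy--Schwarz exponent must produce exactly the factor $\sigma_X^2/K^2$, uniformly in $k$ and $\alpha$. If the generic $\Psi_\alpha$ tail constants lose a factor, I would instead use Hölder with exponents $(q,q')$ and pick $c_0$ larger accordingly. A second, cosmetic issue is reconciling $\log(K/\sigma_X)$ in the statement with $\ell=\log(2K/\sigma_X)$. The former can vanish when $\sigma_X=K$, so strictly the statement needs the $2$ inside the logarithm, or a $\max\{1,\cdot\}$.
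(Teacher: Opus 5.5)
Your proof is correct for $k\geq 2$ and is essentially the paper's argument: truncate at $\tau\asymp\|X\|_{\Psi_\alpha}\log^{1/\alpha}(2\|X\|_{\Psi_\alpha}/\sigma_X)$, bound the bulk by $\tau^{k-2}\sigma_X^2$, and bound the tail by Cauchy--Schwarz together with Lemma~\ref{lem:moment_bound} and the $\Psi_\alpha$ tail estimate. The paper's bulk step also silently needs $k\geq 2$, and its proof also works with $\log(2\|X\|_{\Psi_\alpha}/\sigma_X)$ rather than the stated $\log(\|X\|_{\Psi_\alpha}/\sigma_X)$.

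Your doubt about $k=1$ is justified, although "a sufficient condition fails" is not itself a disproof. A counterexample is $X=\pm\varepsilon$ with probability $1-\varepsilon^2$ and $X=\pm 1$ with probability $\varepsilon^2$: then $\EE|X|\asymp\varepsilon$, while the right-hand side is of order $\varepsilon^2$.

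The logarithm issue, on the other hand, is harmless, and $\sigma_X=\|X\|_{\Psi_\alpha}$ never occurs. One always has $\log(\|X\|_{\Psi_\alpha}/\sigma_X)\gtrsim 1/\alpha$; for instance, for $\alpha\geq 2$ Jensen gives $\EE|X|^\alpha\leq\|X\|_{\Psi_\alpha}^\alpha\log 2$. Because of the exponent $(k-2)/\alpha$, the two logarithm versions then agree up to a factor absorbed into $C^k$.
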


\textit{Proof}. 
    Note that for any $\tau>0$, we have that 
    \begin{align*}
        \EE |X|^k=\EE |X|^k\cdot\II\{|X|>\tau\}+\EE |X|^k\cdot\II\{|X|\leq \tau\}.
    \end{align*}
    We will bound each of the terms on the right-hand side of the expression above. The first term admits that 
    \begin{align*}
        \EE |X|^k\cdot\II\{|X|>\tau\}&\leq \l(\EE|X|^{2k}\r)^{\frac{1}{2}}\l(\PP\l(|X|\geq \tau\r)\r)^{\frac{1}{2}}\\
        &\leq C^k k^{\frac{k}{\alpha}}\|X\|_{\Psi_\alpha}^k\cdot\exp\l(-c^\alpha\frac{\tau^\alpha}{\|X\|_{\Psi_\alpha}^\alpha}\r),
    \end{align*}
    where the last step above has utilized Lemma~\ref{lem:moment_bound} and the properties of sub-Weilbull random variables \citep{vershynin2018high}, and $C > 0$ is some universal constant. For the second term, it holds that 
    \begin{align*}
        \EE |X|^k\cdot\II\{|X|\leq \tau\}\leq \tau^{k-2}\EE X^2=\tau^{k-2}\sigma_X^2.
    \end{align*}
Hence, combining the above results leads to 
    \begin{align*}
        \EE |X|^k\leq C^k k^{\frac{k}{\alpha}}\|X\|_{\Psi_\alpha}^k\cdot\exp\l(-c^\alpha\frac{\tau^\alpha}{\|X\|_{\Psi_\alpha}^\alpha}\r)+\tau^{k-2}\sigma_X^2.
    \end{align*}
    Therefore, with the choice of  $\tau=C\|X\|_{\Psi_\alpha}\log^{\frac{1}{\alpha}}\l(\frac{2\|X\|_{\Psi_\alpha}}{\sigma}\r)$, we can obtain that 
   \begin{align*}
       \EE |X|^k\leq C^kk^{\frac{k}{\alpha}}\|X\|_{\Psi_\alpha}^{k-2}\sigma_X^2\log^{\frac{k-2}{\alpha}}\l(\frac{2\|X\|_{\Psi_\alpha}}{\sigma}\r).
   \end{align*}
This concludes the proof of Lemma \ref{new.lem4}.

\subsection{Proof of Example~\ref{exp:2}}

The lemma below proves Example~\ref{exp:2}. To the best of our knowledge, we are \textit{not aware} of a similar analysis or result elsewhere. Even though $\EE|X|^k\leq C^kk^{\frac{k}{\alpha}}\|X\|_{\Psi_\alpha}^{k}$ was proved to be equivalent to the definition of the Orlicz norm \citep{vershynin2018high}, the following lemma suggests a \textit{sharper} bound $ \EE|X|^k\leq C^{k}k^{\frac{k}{\alpha}}\cdot\sigma_X\|X\|_{\Psi_\alpha}^{k-1}$. For each fixed $\alpha$ and $k$, our bound is \textit{far smaller} than $C^kk^{\frac{k}{\alpha}}\|X\|_{\Psi_\alpha}^{k} $ when $\sigma_X\ll \|X\|_{\Psi_\alpha}$.

\begin{lemma} \label{new.lem5}
    Assume that $X$ is a mean zero random variable and satisfies that $\|X\|_{\Psi_\alpha}<\infty$ 
    for some $\alpha\geq1$. Denote by $\sigma_X^2:=\operatorname{var}(X)$. Then it holds that for some universal constant $C > 0$ and all $k\geq 1$,
    \begin{align*}
        \EE|X|^k\leq C^{k}k^{\frac{k}{\alpha}}\cdot\sigma_X\|X\|_{\Psi_\alpha}^{k-1}.
    \end{align*}
\end{lemma}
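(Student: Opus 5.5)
The plan is to apply the Cauchy--Schwarz inequality to $|X|^k=|X|\cdot|X|^{k-1}$. This splits off a single factor of $\sigma_X$ and leaves a moment that the Orlicz norm controls. Concretely, for $k\geq 1$ I would write
\begin{align*}
    \EE|X|^k=\EE\l\{|X|\cdot|X|^{k-1}\r\}\leq \l(\EE X^2\r)^{\frac{1}{2}}\l(\EE|X|^{2k-2}\r)^{\frac{1}{2}}=\sigma_X\l(\EE|X|^{2k-2}\r)^{\frac{1}{2}},
\end{align*}
where $\EE X^2=\sigma_X^2$ because $X$ has mean zero. The remaining factor is then bounded by the standard sub-Weibull moment bound (Lemma~\ref{lem:moment_bound} in Section~\ref{new.SecD}). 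That lemma gives $\|X\|_q\leq C\|X\|_{\Psi_\alpha}q^{\frac{1}{\alpha}}$ for all $q\geq 1$.

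First, I handle the trivial cases. For $k=1$ the estimate $\EE|X|\leq \sigma_X$ follows from Jensen's inequality. For $k=2$ the claim is an identity up to constants, since $\EE X^2=\sigma_X^2\leq \sigma_X\cdot\sqrt{2}\|X\|_{\Psi_\alpha}$. So I may assume $k\geq 2$, in which case $q:=2k-2\geq 2\geq 1$ and the moment lemma applies. It gives
\begin{align*}
    \l(\EE|X|^{2k-2}\r)^{\frac{1}{2}}\leq \l(C\|X\|_{\Psi_\alpha}(2k-2)^{\frac{1}{\alpha}}\r)^{k-1}\leq C^{k-1}2^{\frac{k-1}{\alpha}}k^{\frac{k-1}{\alpha}}\|X\|_{\Psi_\alpha}^{k-1}.
\end{align*}

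Second, I absorb the constants. Since $\alpha\geq 1$, we have $2^{\frac{k-1}{\alpha}}\leq 2^{k}$, and $k^{\frac{k-1}{\alpha}}\leq k^{\frac{k}{\alpha}}$ because $k\geq 1$. Combining these gives $\EE|X|^k\leq (2C)^k k^{\frac{k}{\alpha}}\sigma_X\|X\|_{\Psi_\alpha}^{k-1}$, which is the claim with a new universal constant. Here "universal" means independent of $\alpha$. This independence holds because the constant in Lemma~\ref{lem:moment_bound} is stated uniformly in $\alpha\geq 1$, and the factor $2^{1/\alpha}\leq 2$.

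The only real point to watch is that the constant in Lemma~\ref{lem:moment_bound} does not depend on $\alpha$. The standard derivation gives $\EE|X|^q\leq 2\Gamma(q/\alpha+1)\|X\|_{\Psi_\alpha}^q$ from the tail bound $\PP(|X|\geq t)\leq 2\exp(-(t/\|X\|_{\Psi_\alpha})^\alpha)$. This yields $\|X\|_q\leq C q^{1/\alpha}\|X\|_{\Psi_\alpha}$, with $C$ uniform in $\alpha\geq 1$ by Stirling's formula, because $\Gamma(q/\alpha+1)^{1/q}\leq C(q/\alpha)^{1/\alpha}\leq Cq^{1/\alpha}$. If the cited lemma did not already provide this uniformity, I would re-derive it in that way. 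Otherwise the argument is immediate. It is also worth noting that no truncation is needed here, in contrast to Lemma~\ref{new.lem4}, which is why no logarithmic factor appears.
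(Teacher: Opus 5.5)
Your proposal is correct and follows the paper's proof essentially line for line. Cauchy--Schwarz (the paper calls it H\"older's inequality) gives $\EE|X|^k\leq\sigma_X\|X\|_{2k-2}^{k-1}$. Lemma~\ref{lem:moment_bound} at order $2k-2$, followed by absorbing $C^{k-1}2^{(k-1)/\alpha}(k-1)^{(k-1)/\alpha}$ into $C_1^k k^{k/\alpha}$, then finishes the argument.

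Your separate handling of $k=1$ is a detail the paper skips: there the order $2k-2=0$ lies outside the range of Lemma~\ref{lem:moment_bound}. If non-integer $k\in(1,2)$ is also intended, the bound $\|X\|_{2k-2}\leq\|X\|_2=\sigma_X\leq\sqrt{2}\|X\|_{\Psi_\alpha}$ covers it in the same way.
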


\textit{Proof}. 
By Hölder's inequality, we have that 
    \begin{align*}
        \EE|X|^k\leq \l(\EE X^2\r)^{\frac{1}{2}}\cdot \l(\EE|X|^{2k-2}\r)^{\frac{1}{2}}\leq \sigma_X\cdot \|X\|_{2k-2}^{k-1}.
    \end{align*}
    Note that in light of Lemma~\ref{lem:moment_bound}, it holds that for all $k\geq 1$, 
    $$\|X\|_k\leq Ck^{\frac{1}{\alpha}}\|X\|_{\Psi_\alpha}.$$ 
    Consequently, inserting this bound into the expression above yields that 
\begin{align*}
   \EE|X|^k & \leq \sigma_X\cdot C^{k-1}2^{\frac{k-1}{\alpha}}(k-1)^{\frac{k-1}{\alpha}}\|X\|_{\Psi_\alpha}^{k-1} \\
   & \leq C_1^{k}\cdot\sigma_X k^{\frac{k}{\alpha}}\|X\|_{\Psi_\alpha}^{k-1},
\end{align*}
where $C_1 > 0$ is some universal constant. This completes the proof of Lemma \ref{new.lem5}.

\subsection{Proof of Lemma~\ref{lem:momGeneratingFunct_SigmaL}}

Essentially, Lemma~\ref{lem:momGeneratingFunct_SigmaL} extends Lemma~\ref{lem:momGeneratingFunct}. We will consider the Taylor expansion of the moment generating function. Here, the term is $\lambda^k\sigma^2L^{k-2}$, whose $k$th root is $\lambda L$ as $k$ goes to infinity and this intuitively explains why there is a phase transition at $\lambda=\frac{1}{L}$ instead of $1/\sigma$ for a fixed $\alpha$. The following proof establishes the bound first for the case of $\alpha\geq 2$ and then for the case of $1\leq \alpha\leq 2$. 

    With the aid of the Taylor expansion and Definition~\ref{assm:sigma_L}, it holds that 
    \begin{align*}
        \EE\exp(\lambda X)\leq 1+\sum_{k=2}^{\infty}\frac{k^{\frac{k}{\alpha}}}{k!}\lambda^k\sigma^2L^{k-2}.
    \end{align*}
    In view of Stirling's inequality, the above expression can be further bounded as 
    \begin{align*}
        \EE\exp(\lambda X)\leq 1+\sum_{k=2}^{\infty}\frac{1}{\l[\frac{k}{\beta}\r]!}C^k\lambda^k\sigma^2L^{k-2};
    \end{align*}
    see the proof of Lemma~\ref{lem:momGeneratingFunct} in Section~\ref{sec:proof:univariate} for more details. We will bound the above expression for the cases of $\alpha\geq 2$ and $\alpha\in[1,2]$ separately.

    \textit{Case 1: $\alpha\geq 2$}. Note that 
    \begin{align*}
        \EE\exp(\lambda X)\leq 1+\frac{\sigma^2}{L^2} \sum_{k=2}^{\infty}\frac{1}{\l[\frac{k}{\beta}\r]!}C^k\lambda^kL^{k}=1+\frac{\sigma^2}{L^2}\l( \l(\sum_{k=2}^{\infty}\frac{1}{\l[\frac{k}{\beta}\r]!}C^k\lambda^kL^{k}+1 \r)-1\r).
    \end{align*}
    Then by the proof of Lemma~\ref{lem:momGeneratingFunct} in Section~\ref{sec:proof:univariate}, we have that when $\alpha\geq 2$,
    \begin{align*}
         \sum_{k=2}^{\infty}\frac{1}{\l[\frac{k}{\beta}\r]!}C^k\lambda^kL^{k}+1\leq \exp\l(C\min\l\{\lambda^2L^2,\lambda^\beta L^{\beta}\r\}\r).
    \end{align*}
    Hence, it follows that 
    \begin{align*}
        \EE\exp(\lambda X)\leq 1+\frac{\sigma^2}{L^2}\l(\exp\l(C\min\l\{\lambda^2L^2,\lambda^\beta L^{\beta}\r\}\r)-1 \r).
    \end{align*}
When $\lambda\leq\frac{1}{L}$, it holds that $\lambda L\leq 1$ and thus $\lambda^2L^2\leq \lambda^\beta L^\beta$. Consequently, when $\lambda\leq \frac{1}{L}$, we have that $$\exp(\lambda^2 L^2)-1\leq C_1\lambda^2 L^2,$$ which entails that 
    \begin{align*}
        \EE\exp(\lambda X)\leq 1+C_2\lambda^2\sigma^2\leq \exp(C_2\lambda^2 \sigma^2).
    \end{align*}
    
    On the other hand, it holds that for all $\lambda>0$,
    \begin{align*}
        \EE\exp(\lambda X)\leq 1+\frac{\sigma^2}{L^2} \l(\exp\l(C\lambda^\beta L^\beta\r)-1 \r).
    \end{align*}
    Then given $\sigma\leq L$, it follows from $\exp\l(C\lambda^\beta L^\beta\r)-1\geq0 $ that 
    $$\EE\exp(\lambda X)\leq 1+1\cdot  \l(\exp\l(C\lambda^\beta L^\beta\r)-1 \r)=\exp\l(C\lambda^\beta L^\beta\r).$$ Thus, when $\alpha\geq 2$, we can obtain that 
    \begin{align*}
        &\EE\exp(\lambda X)\leq\exp\l(C\lambda^2 \sigma^2\r) \ \ \text{ for } \lambda\leq\frac{1}{L},\\
        &\EE\exp(\lambda X)\leq\exp\l(C\lambda^\beta L^\beta\r) \ \ \text{ for all } \lambda\geq0.
    \end{align*}

    \textit{Case 2: $1\leq\alpha\leq 2$}. 
    By the proof of Lemma~\ref{lem:momGeneratingFunct}, when $\alpha\in[1,2)$, it holds that 
    \begin{align*}
        \sum_{k=2}^{\infty}\frac{1}{\l[\frac{k}{\beta}\r]!}C^k\lambda^kL^{k}+1\leq \exp\l(C\frac{1}{1-\tau}\lambda^2L^2\r)
    \end{align*}
for $\lambda\leq\frac{C\tau}{L}$, where $\tau$ is any constant in $(0,1)$. Setting $\tau=\frac{1}{2}$, we have that when $\lambda\leq\frac{C}{2L}$, $$\EE\exp(\lambda X)\leq 1+\frac{\sigma^2}{L^2}\l(\exp(C\lambda^2L^2)-1\r)\leq 1+C\lambda^2\sigma^2\leq\exp(C\lambda^2\sigma^2).$$
Further, for $\lambda\geq \frac{C\tau}{L}$ and $\alpha\in(1,2]$, it follows from Lemma~\ref{lem:momGeneratingFunct} that
    \begin{align*}
        \sum_{k=2}^{\infty}\frac{1}{\l[\frac{k}{\beta}\r]!}C^k\lambda^kL^{k}+1\leq \exp\l(C_1^\beta\frac{\tau^{-[\beta]-1}}{1-\tau}\lambda^\beta L^\beta\r).
    \end{align*}
    Hence, for $\lambda\geq \frac{C\tau}{L}$, we can obtain that 
    \begin{align*}
        \EE\exp(\lambda X)\leq 1+\frac{\sigma^2}{L^2}\l(\exp\l(C_1^\beta\frac{\tau^{-[\beta]-1}}{1-\tau}\lambda^\beta L^\beta\r)-1 \r).
    \end{align*}
    Therefore, given $\sigma\leq L$, we have that  $\frac{\sigma^2}{L^2}\leq1$ and thus  $$\EE\exp(\lambda X)\leq \exp\l(C_1^\beta\frac{\tau^{-[\beta]-1}}{1-\tau}\lambda^\beta L^\beta\r).$$ 
This concludes the proof of Lemma~\ref{lem:momGeneratingFunct_SigmaL}.

\subsection{Proof of Theorem~\ref{thm:conc_univariate_SigmaL}} 

The following proof is built upon Lemma~\ref{lem:momGeneratingFunct_SigmaL}. 
By the Markov inequality, it holds that for all $\lambda>0$, 
    \begin{align*}
        \PP\l(\sum_{i=1}^n a_iX_i \geq t\r)\leq \exp(-\lambda t)\prod_{i=1}^n\EE\exp\l(a_i\lambda X_i\r).
    \end{align*}
    Then for $\lambda\leq\frac{1}{\max_{i=1,\ldots,n} |a_i|L_i}$, an application of Lemma~\ref{lem:momGeneratingFunct} gives that
    \begin{align*}
        \PP\l(\l|\sum_{i=1}^n a_iX_i \r|\geq t\r)\leq 2\exp(-\lambda t)\exp\l(\lambda^2\sum_{i=1}^n a_i^2\sigma_i^2\r).
    \end{align*}
    Inserting $\lambda=\min\l\{\frac{ t}{\sum_{i=1}^n a_i^2\sigma_i^2},\frac{1}{\max_{i=1,\ldots,n}|a_i|L_i} \r\}$ into the above expression, we can deduce that for all $t\geq 0$ and $\alpha\geq 1$,
    \begin{align}
        \PP\l(\l|\sum_{i=1}^n a_iX_i \r|\geq t\r)\leq 2\exp\l( -C\min\l\{ \frac{t^2}{\sum_{i=1}^n a_i^2\sigma_i^2}, \frac{t}{\max_{i=1,\ldots,n}|a_i|L_i}\r\}\r).
        \label{eq1:proof:thm:conc_univariate_SigmaL}
    \end{align}
    
    Let us first investigate the case of $\alpha\geq 2$. 
    In this case, we have that $\EE\exp(\lambda X)\leq \exp(C\lambda^\beta L^\beta)$, which implies that 
    \begin{align*}
        \PP\l(\l|\sum_{i=1}^n a_iX_i \r|\geq t\r)\leq 2\exp(-\lambda t)\exp\l(C\lambda^\beta \sum_{i=1}^n |a_i|^\beta L_i^\beta\r).
    \end{align*}
    Inserting $\lambda=\l(\frac{t}{\beta\sum_{i=1}^n |a_i|^\beta L_i^{\beta}}\r)^{\frac{1}{\beta-1}}$ into the above expression leads to 
    \begin{align*}
        \PP\l(\l|\sum_{i=1}^n a_iX_i \r|\geq t\r)\leq 2\exp\l(-C\frac{t^\alpha}{\l(\sum_{i=1}^n |a_i|^\beta L_i^\beta\r)^{\frac{\alpha}{\beta}} }\r).
    \end{align*}
    Moreover, due to $\EE\exp(\lambda X)\leq \exp(C\lambda^2L^2)$, it holds that $$\PP\l(\l|\sum_{i=1}^n a_iX_i \r|\geq t\r)\leq 2\exp\l(-C\frac{t^2}{\sum_{i=1}^n a_i^2 L_i^2 }\r).$$ 
    Hence, we have that for $\alpha\geq 2$, 
    \begin{align*}
        &\PP\l(\l|\sum_{i=1}^n a_iX_i \r|\geq t\r)\leq 2\exp\l(-C\max\l\{\frac{t^\alpha}{\l(\sum_{i=1}^n |a_i|^\beta L_i^\beta\r)^{\frac{\alpha}{\beta}} } , \frac{t^2}{\sum_{i=1}^n a_i^2L_i^2},\r.\r.\\ 
        &\quad \l.\l.\min\l\{ \frac{t^2}{\sum_{i=1}^n a_i^2\sigma_i^2}, \frac{t}{\max_{i=1,\ldots,n}|a_i|L_i}\r\} \r\}\r).
    \end{align*}
    
    It remains to bound the tail probability for the case of $\alpha\in[1,2]$. Recall that Lemma~\ref{lem:momGeneratingFunct_SigmaL} establishes that for all $\lambda\geq 0$, 
    \begin{align*}
        \EE\exp(\lambda X)\leq \exp(C2^{\beta}\lambda^{\beta}L^\beta+C\lambda^2\sigma^2).
    \end{align*}
    Then it follows that 
    \begin{align*}
        \PP\l(\l|\sum_{i=1}^n a_iX_i \r|\geq t\r)\leq  2\exp\l(C\lambda^2\sum_{i=1}^n a_i^2 \sigma_i^2+C2^\beta \lambda^\beta \sum_{i=1}^n |a_i|^\beta L_i^{\beta}-\lambda t\r).
    \end{align*}
    By taking $\lambda$ to minimize the above tail probability, we can obtain that 
    \begin{align*}
        \PP\l(\l|\sum_{i=1}^n a_iX_i \r|\geq t\r)\leq  2\exp\l(-C\min\l\{\frac{t^2}{\sum_{i=1}^n a_i^2\sigma_i^2}, \frac{t^\alpha}{\l( \sum_{i=1}^n |a_i|^\beta L_i^\beta\r)^{\frac{\alpha}{\beta}} }\r\} \r).
    \end{align*}
    Thus, combining the above expression with 
    \eqref{eq1:proof:thm:conc_univariate_SigmaL} yields that 
    \begin{align*}
        & \PP\l(\l|\sum_{i=1}^n a_iX_i \r|\geq t\r)\\
        & \leq  2\exp\l(-C\min\l\{\frac{t^2}{\sum_{i=1}^n a_i^2\sigma_i^2}, \max\l\{\frac{t}{\max_{i=1,\ldots,n}|a_i|L_i},\frac{t^\alpha}{\l( \sum_{i=1}^n |a_i|^\beta L_i^\beta\r)^{\frac{\alpha}{\beta}} }\r\}\r\} \r).
    \end{align*}
This completes the proof of Theorem~\ref{thm:conc_univariate_SigmaL}.

\section{Proofs for Section~\ref{sec:app}}

This section presents the proofs of Theorems~\ref{thm:dependent}--\ref{thm:cov_est} in Section~\ref{sec:app}.

\subsection{Proof of Theorem~\ref{thm:dependent}}

Theorem~\ref{thm:dependent} remains valid for a finite sequence $\{a_k,X_k\}_{k=1}^n$ with $a_k=0$ for $k\geq n+1$. The following proof first bounds the moment generating function, which is \textit{different} from Lemma~\ref{lem:momGeneratingFunct_SigmaL} for the scenario of independent variables.

To prove this theorem, we need only to establish the results for $M_n$, and since $\lim_{n\to\infty}M_n=M_{\infty}$ almost surely, the concentration inequality also holds for $M_{\infty}$. Indeed, by the Markov inequality, it holds that 
    \begin{align*}
        \PP\l(\sum_{k=1}^n a_kX_k \geq t\r)&\leq\exp(-\lambda t)\EE\exp\bigg\{ \lambda\sum_{k=1}^n a_kX_k   \bigg\}.
    \end{align*}
    Observe that 
    \begin{align*}
        \EE\exp\bigg\{\lambda \sum_{k=1}^n a_kX_k   \bigg\}
        =\EE \l\{\exp\bigg\{ \lambda\sum_{k=1}^{n-1} a_kX_k   \bigg\}\cdot \EE\bigg\{\exp\bigg\{ \lambda a_nX_n   \bigg\}\bigg|\mathcal{F}_{n-1}\bigg\} \r\}.
    \end{align*}
    Additionally, we have that 
    \begin{align*}
        \EE\bigg\{\exp\bigg\{ \lambda a_nX_n   \bigg\}\bigg|\mathcal{F}_{n-1}\bigg\}&\leq \EE\bigg\{1+\sum_{p=2}^{\infty} \frac{1}{p!}\lambda^p |a_n|^p|X_n|^p \bigg|\mathcal{F}_{n-1}\bigg\}\\
        &\leq 1+\sum_{p=2}^{\infty} \frac{p^{\frac{p}{\alpha}}}{p!}\lambda^p |a_n|^p \sigma_n^2 L_n^{p-2},
    \end{align*}
    where the last step above follows from Assumption~\ref{assm:dependent}. Then by the proof of Lemma~\ref{lem:momGeneratingFunct_SigmaL} in Section \ref{sec:proof:univariate_var}, it holds that for $\lambda\leq\frac{c}{a_nL_n}$, 
    \begin{align*}
        \EE\exp(\lambda a_n X_n)\leq \exp(C\lambda^2 a_n^2\sigma_n^2).
    \end{align*}
    Hence, by induction, we can deduce that for all $\lambda\leq\frac{c}{\max_{k=1,\ldots.n} |a_k|L_k}$, 
    \begin{align*}
        \EE\exp\bigg\{ \lambda\sum_{k=1}^n a_kX_k   \bigg\}\leq\exp\l(C\lambda^2\sum_{k=1}^n a_n^2\sigma_n^2\r).
    \end{align*}
    Further, setting $\lambda:=\min\l\{ \frac{t}{\sum_{k=1}^n a_k^2\sigma_k^2},\frac{1}{\max_{k=1,\ldots,n} |a_k|L_k}\r\}$ yields that 
    \begin{align*}
        \PP\l(\sum_{k=1}^n a_kX_k \geq t\r)\leq \exp\l(-C\min\l\{\frac{t^2}{\sum_{k=1}^n a_k^2\sigma_k^2},\frac{t}{\max_{k=1,\ldots,n}|a_k|L_k}\r\}\r).
    \end{align*}
    The remaining arguments are similar to those in the proof of Theorem~\ref{thm:conc_univariate_SigmaL} in Section \ref{sec:proof:univariate_var} and thus omitted. 
    
    When $\alpha\geq 2$, it holds that for any fixed $n\geq 1$, 
    \begin{align*}
        & \PP\l(|M_n|\geq t\r)\leq 2\exp\l(-C\max\l\{\frac{t^\alpha}{\l(\sum_{k=1}^{\infty} |a_i|^\beta L_i^{\beta} \r)^{\frac{\alpha}{\beta}} }, \frac{t^2}{\sum_{k=1}^{\infty} a_i^2L_i^2},\r.\r.\\ 
        &\quad \l.\l.\min\l\{\frac{t^2}{\sum_{k=1}^{\infty} a_k^2\sigma_k^2}, \frac{t}{\max_{k=1,\ldots} |a_k|L_k}\r\}\r\}\r).
    \end{align*}
    Under 
    \eqref{eq1:dependent}, $M_n$ converges to $M_\infty$ almost surely which satisfies that 
    \begin{align*}
        & \PP\l(|M_\infty|\geq t\r)\leq 2\exp\l(-C\max\l\{\frac{t^\alpha}{\l(\sum_{k=1}^{\infty} |a_i|^\beta L_i^{\beta} \r)^{\frac{\alpha}{\beta}} }, \frac{t^2}{\sum_{k=1}^{\infty} a_i^2L_i^2},\r.\r.\\ 
        &\quad \l.\l.\min\l\{\frac{t^2}{\sum_{k=1}^{\infty} a_k^2\sigma_k^2}, \frac{t}{\max_{k=1,\ldots} |a_k|L_k}\r\}\r\}\r).
    \end{align*}
    
    When $\alpha\in[1,2]$, we have that for any fixed $n$, 
    \begin{align*}
        & \PP\l(\l|M_n \r|\geq t\r)\\
        & \leq  2\exp\l(-C\min\l\{\frac{t^2}{\sum_{k=1}^{\infty} a_k^2\sigma_k^2}, \max\l\{\frac{t}{\max_{k=1,\ldots}|a_k|L_k},\frac{t^\alpha}{\l( \sum_{k=1}^\infty |a_k|^\beta L_k^\beta\r)^{\frac{\alpha}{\beta}} }\r\}\r\} \r).
    \end{align*}
    Under 
    \eqref{eq1:dependent}, $M_n$ converges to $M_\infty$ almost surely and it also holds that 
    \begin{align*}
        & \PP\l(\l|M_\infty \r|\geq t\r)\\
        & \leq  2\exp\l(-C\min\l\{\frac{t^2}{\sum_{k=1}^{\infty} a_k^2\sigma_k^2}, \max\l\{\frac{t}{\max_{k=1,\ldots}|a_k|L_k},\frac{t^\alpha}{\l( \sum_{k=1}^\infty |a_k|^\beta L_k^\beta\r)^{\frac{\alpha}{\beta}} }\r\}\r\} \r).
    \end{align*}
This concludes the proof of Theorem~\ref{thm:dependent}.

\subsection{Proof of Theorem~\ref{thm:vec_an}}

The following proof exploits Theorem~\ref{thm:conc_univariates}. 
By invoking Theorem~\ref{thm:conc_univariates}, we have that when $\alpha\geq 4$, 
    \begin{align*}
        \PP\l(\l|\|X\|^2-\sum_{i=1}^d \sigma_{X_i}^2 \r|\geq t \r)\leq 2\exp\l(-C\max\l\{\frac{t^2}{\sum_{i=1}^d K_i^4},\frac{t^{\frac{\alpha}{2}}}{\l(\sum_{i=1}^d K_i^{\frac{2\alpha}{\alpha-2}}\r)^{\frac{\alpha-2}{2}}}\r\}\r).
    \end{align*}
    Additionally, an application of Lemma~\ref{teclem:minus} in Section \ref{new.SecD} shows that for any $s$ satisfying
    \begin{align*}
        \l|\|X\| -\sqrt{\sum_{i=1}^n \sigma_{X_i}^2}\r| \geq s,
    \end{align*}
    it holds that 
    \begin{align*}
        \l|\|X\|^2-\sum_{i=1}^d \sigma_{X_i}^2 \r|\geq \max\l\{s^2, s \sqrt{\sum_{i=1}^n \sigma_{X_i}^2}\r\}.
    \end{align*}
    Hence, we can deduce that 
    \begin{align*}
        &\PP\l(\l|\|X\| -\sqrt{\sum_{i=1}^n \sigma_{X_i}^2}\r| \geq s\r)\leq \PP\l(\l|\|X\|^2-\sum_{i=1}^d \sigma_{X_i}^2 \r|\geq \max\l\{s^2, s \sqrt{\sum_{i=1}^n \sigma_{X_i}^2}\r\} \r)\\
        &\leq 2\exp \l(-C\max\l\{\frac{s^4}{\sum_{i=1}^d K_i^4}, \frac{\sum_{i=1}^d \sigma_{X_i}^2}{\sum_{i=1}^d K_i^4}s^2,\frac{s^\alpha}{\l(\sum_{i=1}^d K_i^{\frac{2\alpha}{\alpha-2}}\r)^{\frac{\alpha-2}{2}}}, \frac{s^{\frac{\alpha}{2}}\l(\sum_{i=1}^d \sigma_{X_i}^2\r)^{\frac{\alpha}{4}} }{\l(\sum_{i=1}^d K_i^{\frac{2\alpha}{\alpha-2}}\r)^{\frac{\alpha-2}{2}}}\r\}\r).
    \end{align*}
    When $s\leq \sqrt{\sum_{i=1}^d \sigma_{X_i}^2}$, the tail probability above is dominated by $\frac{\sum_{i=1}^d \sigma_{X_i}^2}{\sum_{i=1}^d K_i^4}s^2$. On the other hand, when $s\geq \l(\frac{\l(\sum_{i=1}^d K_i^{\frac{2\alpha}{\alpha-2}}\r)^{\frac{\alpha-2}{2}} }{\sum_{i=1}^d K_i^4}\r)^{\frac{1}{\alpha-4}}$, the tail probability above is dominated by $\frac{s^\alpha}{\l(\sum_{i=1}^d K_i^{\frac{2\alpha}{\alpha-2}}\r)^{\frac{\alpha-2}{2}}}$.
    
    It remains to bound the tail probability for the case of $\alpha\in[2,4]$. In a similar fashion, we can obtain that 
    \begin{align*}
        \PP\l(\l|\|X\| -\sqrt{\sum_{i=1}^n \sigma_{X_i}^2}\r| \geq s\r) & \leq \PP\l(\l|\|X\|^2-\sum_{i=1}^d \sigma_{X_i}^2 \r|\geq \max\l\{s^2, s \sqrt{\sum_{i=1}^n \sigma_{X_i}^2}\r\} \r)\\
        & \leq 2\exp \l(-C\min\l\{\max\l\{\frac{s^4}{\sum_{i=1}^d K_i^4}, \frac{\sum_{i=1}^d \sigma_{X_i}^2}{\sum_{i=1}^d K_i^4}s^2\r\}, \r.\r.\\
        & \quad \l.\l.\max\l\{\frac{s^\alpha}{\l(\sum_{i=1}^d K_i^{\frac{2\alpha}{\alpha-2}}\r)^{\frac{\alpha-2}{2}}}, \frac{s^{\frac{\alpha}{2}}\l(\sum_{i=1}^d \sigma_{X_i}^2\r)^{\frac{\alpha}{4}} }{\l(\sum_{i=1}^d K_i^{\frac{2\alpha}{\alpha-2}}\r)^{\frac{\alpha-2}{2}}}\r\}\r\}\r).
    \end{align*}
    This completes the proof of Theorem~\ref{thm:vec_an}.

\subsection{Proof of Corollary~\ref{thm:vec}}
We emphasize that by definition, it holds that $\sigma_X\leq 2K$. Let us consider the squared Euclidean norm of $X$
    \begin{align*}
        \PP\l(\l|\|X\|^2-\EE\|X\|^2\r|\geq t \r)=\PP\l(\l|\sum_{i=1}^d \l(X_i^2-\EE X_i^2\r)\r|\geq t\r).
    \end{align*}
    Notice that $\EE\|X\|^2=d\sigma_X^2$. When $\alpha\geq 4$, an application of Theorem~\ref{thm:conc_univariates} gives that
    \begin{align}
        \PP\l(\l|\|X\|^2-d\sigma_X^2\r|\geq t \r)\leq 2\exp\l(-C\max\l\{\frac{t^2}{dK^4},\frac{t^{\frac{\alpha}{2}}}{d^{\frac{\alpha}{2}-1}K^{\alpha}}\r\}\r).
        \label{eq1:proof:vec}
    \end{align}
    Moreover, for any $s$ satisfying
    \begin{align*}
        \l|\|X\|- \sqrt{d}\sigma_X\r|\geq s,
    \end{align*}
    it follows from Lemma~\ref{teclem:minus} that 
    \begin{align*}
        \l|\|X\|^2-d\sigma_X^2\r|\geq \max\{s^2,\sqrt{d}\sigma_X s\}.
    \end{align*}
    Consequently, for any $s\geq 0$, it holds that 
    \begin{align*}
        \PP\l( \l|\|X\|-\sqrt{d}\sigma_X \r|\geq s\r)&\leq \PP\l( \l|\|X\|^2-d\sigma_X^2\r|\geq \max\{s^2,\sqrt{d}\sigma_X s\}\r)\\
        &\leq 2\exp\l(-C\max\l\{\frac{s^4}{dK^4},\frac{\sigma_X^2s^2}{K^4},\frac{s^\alpha}{d^{\frac{\alpha}{2}-1}K^\alpha}, \frac{s^{\frac{\alpha}{2}} \sigma_X^{\frac{\alpha}{2}}}{d^{\frac{\alpha}{4}-1}K^\alpha}\r\}\r).
    \end{align*}
    In other words, we have that 
    \begin{align*}
        \PP\l( \l|\frac{1}{\sqrt{d}}\|X\|-\sigma_X \r|\geq s\r)\leq 2\exp\l(-Cd\max\l\{\frac{s^4}{K^4},\frac{\sigma_X^2s^2}{K^4},\frac{s^\alpha}{K^\alpha}, \frac{s^{\frac{\alpha}{2}} \sigma_X^{\frac{\alpha}{2}}}{K^\alpha} \r\}\r).
    \end{align*}
    Obviously, when $s\leq \sigma_X$, the maximum term on the right-hand side of the expression above is $\frac{\sigma_X^2s^2}{K^4}$. When $s$ is between $\sigma_X$ and $K$, the maximum term is $\frac{s^4}{K^4}$. When $s\geq K$, the maximum term is $\frac{s^\alpha}{K^\alpha}$. Thus, we can simplify the above expression as 
    \begin{align*}
        \PP\l( \l|\frac{1}{\sqrt{d}}\|X\|-\sigma_X \r|\geq s\r)\leq 2\exp\l(-Cd\max\l\{\frac{s^4}{K^4},\frac{\sigma_X^2s^2}{K^4},\frac{s^\alpha}{K^\alpha} \r\}\r).
    \end{align*}
    
    It remains to analyze the norm for the scenario of $\alpha\in[2,4]$. In view of  Theorem~\ref{thm:conc_univariates}, it holds that 
    \begin{align*}
        \PP\l( \l|\|X\|^2-d\sigma_X^2\r|\geq t \r)\leq 2\exp\l(-C\min\l\{\frac{t^2}{dK^4},\frac{t^{\frac{\alpha}{2}}}{d^{\frac{\alpha}{2}-1} K^\alpha}\r\}\r).
    \end{align*}
    Further, an application of Lemma~\ref{teclem:minus} shows that for any $s\geq 0$, 
    \begin{align*}
        & \PP\l(\l|\|X\|-\sqrt{d}\sigma_X \r|\geq s \r)\leq \PP\l(\l|\|X\|^2-d\sigma_X^2 \r|\geq \max\l\{s^2,\sqrt{d}\sigma_X s \r\} \r)\\
        & \leq 2\exp\l(-C\min\l\{ \max\l\{\frac{s^4}{dK^4},\frac{\sigma_X^2 s^2}{K^4}\r\},\max\l\{\frac{s^\alpha}{d^{\frac{\alpha}{2}-1}K^\alpha},\frac{s^{\frac{\alpha}{2}}\sigma_X^{\frac{\alpha}{2}}}{d^{\frac{\alpha}{4}-1}K^\alpha}\r\} \r\}\r),
    \end{align*}
    which can be simplified as 
    \begin{align*}
         & \PP\l(\l|\frac{1}{\sqrt{d}}\|X\|-\sigma_X \r|\geq s \r)\\
         & \leq 2\exp\l(-Cd\min\l\{ \max\l\{\frac{s^4}{K^4},\frac{\sigma_X^2 s^2}{K^4}\r\},\max\l\{\frac{s^\alpha}{K^\alpha},\frac{s^{\frac{\alpha}{2}}\sigma_X^{\frac{\alpha}{2}}}{K^\alpha}\r\} \r\}\r).
    \end{align*}
    Indeed, when $s\leq \sigma_X$, the tail probability is dominated by term $\frac{\sigma_X^2 s^2}{K^4}$. When $s\in[\sigma_X,K]$, the tail probability is dominated by term $\exp\l(-Cd\frac{s^4}{K^4}\r)$. When $s\geq K$, the tail probability is dominated by term $\frac{s^\alpha}{K^\alpha}$. Therefore, we can further simplify the tail probability as 
    \begin{align*}
        \PP\l(\l|\frac{1}{\sqrt{d}}\|X\|-\sigma_X \r|\geq s \r)\leq 2\exp\l(-Cd\min\l\{ \max\l\{\frac{s^4}{K^4},\frac{\sigma_X^2 s^2}{K^4}\r\},\frac{s^\alpha}{K^\alpha} \r\}\r).
    \end{align*}
This concludes the proof of Corollary~\ref{thm:vec}.

\subsection{Proof of Theorem~\ref{thm:vec:Sigma_L}}

The following proof relies on Theorem~\ref{thm:conc_univariate_SigmaL}. 
From Theorem~\ref{thm:conc_univariate_SigmaL}, we have that when $\alpha\geq 2$, 
    \begin{multline}
        \PP\l(\l|\|X\|^2-\EE\|X\|^2 \r|\geq t \r)\\\leq 2\exp\l(-C\max\l\{\frac{t^\alpha}{\l(\sum_{i=1}^d L_i^\beta\r)^{\frac{\alpha}{\beta}}}, \frac{t^2}{\sum_{i=1}^d L_i^2},\min\l\{\frac{t^2}{\sum_{i=1}^d \sigma_i^2},\frac{t}{\max L_i}\r\}\r\}\r).
    \end{multline}
    Then an application of Lemma~\ref{teclem:minus} leads to 
    \begin{multline}
        \PP\l(\l|\|X\|-\sqrt{\EE\|X\|^2} \r|\geq s \r)\\
        \leq 2\exp\l(-C\max\l\{\frac{s^{2\alpha}}{\l(\sum_{i=1}^d L_i^\beta\r)^{\frac{\alpha}{\beta}}}, \frac{s^\alpha \l(\sum_{i=1}^{d}\sigma_{X_i}^2\r)^{\frac{\alpha}{2}}}{\l(\sum_{i=1}^d L_i^\beta\r)^{\frac{\alpha}{\beta}}},\frac{s^4}{\sum_{i=1}^d L_i^2},\frac{s^2\l(\sum_{i=1}^d \sigma_{X_i}^2\r)}{\sum_{i=1}^d L_i^2},\r.\r.\\\l.\l. \min\l\{ \max\l\{\frac{s^4}{\sum_{i=1}^d \sigma_i^2},\frac{\sum_{i=1}^d \sigma_{X_i}^2}{\sum_{i=1}^d \sigma_i^2}s^2\r\},\max\l\{\frac{s^2}{\max L_i},\frac{s\sqrt{\sum_{i=1}^d \sigma_{X_i}^2}}{\max L_i}\r\}\r\}\r\}\r).
    \end{multline}
    For the setting of common $(\sigma_i,L_i) = (\sigma,L)$, the above expression reduces to
    \begin{align*}
        \PP\l(\l|\|X\|-\sqrt{d}\sigma_X \r|\geq s \r) & \leq 2\exp\l(-C\max\l\{\frac{s^{2\alpha}}{d^{\alpha-1}L^\alpha},\frac{s^{\alpha}\sigma_X^{\alpha}}{d^{\frac{\alpha}{2}-1}L^{\alpha}}, \frac{s^4}{dL^2}, \frac{s^2\sigma_{X}^2}{L^2},\r.\r. \\
        &\quad \l.\l.\min\l\{ \max\l\{\frac{s^4}{d \sigma^2},\frac{\sigma_X^2}{\sigma^2}s^2\r\},\max\l\{\frac{s^2}{ L},\frac{s\sqrt{d} \sigma_X}{L}\r\}\r\}\r\}\r).
    \end{align*}
    With some calculations of the bound, we can further simplify the above expression as 
    \begin{align*}
        \PP\l(\l|\|X\|-\sqrt{\EE\|X\|^2} \r|\geq s \r) & \leq 2\exp\l(-C\max\l\{\frac{s^{2\alpha}}{d^{\alpha-1}L^\alpha}, \frac{s^4}{dL^2}, \frac{s^2\sigma_{X}^2}{L^2},\r.\r. \\ 
        & \quad \l.\l.\min\l\{ \max\l\{\frac{s^4}{d \sigma^2},\frac{\sigma_X^2}{\sigma^2}s^2\r\},\max\l\{\frac{s^2}{ L},\frac{s\sqrt{d} \sigma_X}{L}\r\}\r\}\r\}\r).
    \end{align*}

    It thus remains to examine the case of $\alpha\in[1,2]$. Similarly, in light of Theorem~\ref{thm:conc_univariate_SigmaL} it holds that 
    \begin{align*}
        & \PP\l(\l|\|X\|^2-\EE\|X\|^2 \r|\geq t \r)\\
        & \leq 2\exp\l(-C\min\l\{\frac{t^2}{\sum_{i=1}^d \sigma_i^2},\max\l\{\frac{t}{\max_i L_i},\frac{t^\alpha}{\l(\sum_{i=1}^n L_i^\beta\r)^{\frac{\alpha}{\beta}}}\r\}\r\}\r).
    \end{align*}
    Further, it follows from Lemma~\ref{teclem:minus} that 
    \begin{multline*}
        \PP\l(\l|\|X\|-\sqrt{\EE\|X\|^2} \r|\geq s \r)\leq 2\exp\l(-C\min\l\{\max\l\{\frac{s^4}{\sum_{i=1}^d \sigma_i^2}, \frac{s^2\l(\sum_{i=1}^d \sigma_{X_i}^2\r)}{\sum_{i=1}^d \sigma_i^2}\r\}
        ,\r.\r.\\\l.\l. \min\l\{ \max\l\{\frac{s^{2\alpha}}{\l(\sum_{i=1}^d L_i^\beta\r)^{\frac{\alpha}{\beta}} },\frac{\l(\sum_{i=1}^d \sigma_{X_i}^2\r)^{\frac{\alpha}{2}}}{\l(\sum_{i=1}^d L_i^\beta\r)^{\frac{\alpha}{\beta}}}s^\alpha\r\},\max\l\{\frac{s^2}{\max L_i},\frac{s\sqrt{\sum_{i=1}^d \sigma_{X_i}^2}}{\max L_i}\r\}\r\}\r\}\r).
    \end{multline*}
    Therefore, substituting the common $(\sigma_i,L_i) = (\sigma,L)$ into the above expression yields that 
    \begin{align*}
        & \PP\l(\l|\|X\|-\sqrt{d}\sigma_X \r|\geq s \r)\leq 2\exp\l(-C\min\l\{\max\l\{\frac{s^4}{d \sigma^2}, \frac{s^2\sigma_{X}^2}{ \sigma^2}\r\}
        ,\r.\r.\\
        &\quad \l.\l. \min\l\{ \max\l\{\frac{s^{2\alpha}}{d^{\alpha-1} L^\alpha},\frac{d^{1-\frac{\alpha}{2}} \sigma_{X}^\alpha}{ L^\alpha}s^\alpha\r\},\max\l\{\frac{s^2}{L},\frac{s\sqrt{d }\sigma_X}{ L}\r\}\r\}\r\}\r),
    \end{align*}
    which completes the proof of Theorem~\ref{thm:vec:Sigma_L}.

\subsection{Proof of Lemma~\ref{lem:matrix}} The following proof is based on the concentration inequality presented in Theorem~\ref{thm:conc_univariate_SigmaL} and a standard $\eps$-net argument. We refer interested readers to \cite{tao2023topics} and \cite{vershynin2018high} for the detailed discussion of $\eps$-net. From the definition of the operator norm, it holds that 
\begin{align*}
    \l\|\frac{1}{d_1}X^{\top}X -\Sigma\r\|=\sup_{u\in\SS^{d_2-1}}\l\{ \frac{1}{d_1}\l\|Xu \r\|^2-u^\top \Sigma u\r\}.
\end{align*}
In particular, vector $Xu$ consists of independent components
\begin{align*}
    Xu=\l(\begin{matrix}
        X_1^{\top} u\\
        X_2^{\top}u\\
        \vdots\\
        X_{d_1}^{\top}u
    \end{matrix}\r),
\end{align*}
which arises from the independent row vectors of $X$. Additionally, it should be noted that
\begin{align*}
    \EE\l(X_i^{\top}u\r)^2=u^{\top}\Sigma u.
\end{align*}
Hence, when $\alpha\geq 2$, an application of Theorem~\ref{thm:conc_univariate_SigmaL} yields for any $t>0$, 
\begin{align}
    \PP\l(\frac{1}{d_1}\l\|Xu \r\|^2-u^\top \Sigma u  \geq t \r)\leq \exp\l(-Cd_1\max\l\{\frac{t^\alpha}{L^\alpha},\min\l\{\frac{t^2}{\sigma^2},\frac{t}{L}\r\}\r\}\r).
    \label{eq1:proof:lem:matrix}
\end{align}

We emphasize that on the right-hand side of the above expression, the coefficient of $\exp$ is one instead of $2$, which is because the above expression bounds a mean-zero 
random variable rather than its absolute value. On the other hand, by the standard arguments of $\eps$-net (see details in the classical books \cite{tao2023topics} and \cite{vershynin2018high}), we have that 
\begin{align*}
    \l\|\frac{1}{d_1}X^{\top}X -\Sigma\r\|\leq 2\sup_{u\in\mathcal{N}}\l\{ \frac{1}{d_1}\l\|Xu \r\|^2-u^\top \Sigma u\r\},
\end{align*}
where $\mathcal{N}$ is a $\eps=\frac{1}{2}$-net of $\SS^{d_2-1}$ with cardinality $|\mathcal{N}|\leq 5^n$. Then after taking the union over $\mathcal{N}$ in \eqref{eq1:proof:lem:matrix}, we can obtain that 
\begin{multline*}
    \PP \l(\l\|\frac{1}{d_1}X^{\top}X -\Sigma\r\|\geq 2t \r)\leq \PP\l(\sup_{u\in\mathcal{N}}\l|\frac{1}{d_1}\l\|Xu \r\|^2-u^\top \Sigma u  \r|\geq t \r)\\
    \quad \leq  5^{d_2}\cdot\exp\l(-Cd_1\max\l\{\frac{t^\alpha}{L^\alpha},\min\l\{\frac{t^2}{\sigma^2},\frac{t}{L}\r\}\r\}\r),
\end{multline*}
which is in fact equivalent to
\begin{align*}
    \PP \l(\l\|\frac{1}{d_1}X^{\top}X -\Sigma\r\|\geq C\min\l\{L\l(\frac{t+d_2}{d_1}\r)^{\frac{1}{\alpha}},\sigma\l(\frac{t+d_2}{d_1}\r)^{\frac{1}{2}}+L\cdot\frac{t+d_2}{d_1}\r\}\r)
    \leq \exp(-t).
\end{align*}

It remains to consider the case when $\alpha\in[1,2]$. In light of Theorem~\ref{thm:conc_univariate_SigmaL}, we have that 
\begin{align}
    \PP\l(\frac{1}{d_1}\l\|Xu \r\|^2-u^\top \Sigma u  \geq t \r)\leq \exp\l(-Cd_1\min\l\{\frac{t^2}{\sigma^2},\max\l\{\frac{t^\alpha}{L^\alpha},\frac{t}{L}\r\}\r\}\r).
    \label{eq2:proof:lem:matrix}
\end{align}
Similar to the arguments for the case of $\alpha\geq 2$, \eqref{eq2:proof:lem:matrix} furthers implies that 
\begin{multline*}
    \PP \l(\l\|\frac{1}{d_1}X^{\top}X -\Sigma\r\|\geq 2t \r)\leq \PP\l(\sup_{u\in\mathcal{N}}\l|\frac{1}{d_1}\l\|Xu \r\|^2-u^\top \Sigma u  \r|\geq t \r)\\
    \quad \leq  5^{d_2}\cdot\exp\l(-Cd_1\min\l\{\frac{t^2}{\sigma^2},\max\l\{\frac{t^\alpha}{L^\alpha},\frac{t}{L}\r\}\r\}\r),
\end{multline*}
which is equivalent to
\begin{align*}
    \PP \l(\l\|\frac{1}{d_1}X^{\top}X -\Sigma\r\|\geq C\sigma\l(\frac{t+d_2}{d_1}\r)^{\frac{1}{2}}+CL\min\l\{\l(\frac{t+d_2}{d_1}\r)^{\frac{1}{\alpha}},\frac{t+d_2}{d_1}\r\}\r)
    \leq \exp(-t).
\end{align*}
This concludes the proof of Lemma~\ref{lem:matrix}.

\subsection{Proof of Theorem~\ref{thm:singular_value}} 
The proof of Theorem~\ref{thm:singular_value} relies on the results in Lemma~\ref{lem:matrix} and the well-known Weyl's Inequality \citep{weyl1912asymptotische}. Recall that Lemma~\ref{lem:matrix} proves that when $\alpha\geq2$, with probability over $1-\exp(-t)$,
\begin{align}
    \l\|\frac{1}{d_1}X^{\top}X -\Sigma\r\|\leq C\min\l\{L\l(\frac{t+d_2}{d_1}\r)^{\frac{1}{\alpha}},\sigma\l(\frac{t+d_2}{d_1}\r)^{\frac{1}{2}}+L\frac{t+d_2}{d_1}\r\}.
    \label{eq1:proof:thm:singular_value}
\end{align}
An application of Weyl's Inequality \citep{weyl1912asymptotische} gives that 
\begin{align*}
    \frac{1}{d_1}s_{\max}^2(X)\leq M_{\max}^2+\l\|\frac{1}{d_1}X^{\top}X-\Sigma \r\|.
\end{align*}
Consequently, the above result together with \eqref{eq1:proof:thm:singular_value} entails that for any $t>0$, 
\begin{align*}
    s_{\max}^2(X)\leq d_1M_{\max}^2+ C\min\l\{L(d_2+t)^{\frac{1}{\alpha}}d_1^{\frac{1}{\beta}},\,\sigma(d_2+t)^{\frac{1}{2}}d_1^{\frac{1}{2}}+L(d_2+t)\r\}
\end{align*}
holds with probability over $1-\exp(-t)$. 
Additionally, Weyl's inequality yields that
$$ \frac{1}{d_1}s_{\min}^2(X)\geq M_{\min}^2-\l\|\frac{1}{d_1}X^{\top}X-\Sigma \r\|,$$
and thus, we have the following lower bound on the smallest nonzero singular value under \eqref{eq1:proof:thm:singular_value}
\begin{align*}
    s_{\min}^2(X)\geq d_1M_{\min}^2- C\min\l\{L(d_2+t)^{\frac{1}{\alpha}}d_1^{\frac{1}{\beta}},\,\sigma(d_2+t)^{\frac{1}{2}}d_1^{\frac{1}{2}}+L(d_2+t)\r\}.
\end{align*}

It remains to investigate the case of $1\leq\alpha\leq 2$. In view of Lemma~\ref{lem:matrix}, we have that with probability over $1-\exp(-t)$, 
\begin{align}
    \l\|\frac{1}{d_1}X^{\top}X -\Sigma\r\|\leq C\sigma\l(\frac{t+d_2}{d_1}\r)^{\frac{1}{2}}+C\min\l\{L\l(\frac{t+d_2}{d_1}\r)^{\frac{1}{\alpha}},L\frac{t+d_2}{d_1}\r\}.
    \label{eq2:proof:thm:singular_value}
\end{align}
Similar to the arguments for the case of $\alpha\geq 2$, it holds with probability over $1-\exp(-t)$ that 
\begin{align*}
    s_{\max}^2(X)\leq d_1M_{\max}^2+ C\sigma(d_2+t)^{\frac{1}{2}}d_1^{\frac{1}{2}}+C\min\l\{L(d_2+t)^{\frac{1}{\alpha}}d_1^{\frac{1}{\beta}},\,L(d_2+t)\r\}
\end{align*}
and
\begin{align*}
    s_{\min}^2(X)\geq d_1M_{\min}^2- C\sigma(d_2+t)^{\frac{1}{2}}d_1^{\frac{1}{2}}-C\min\l\{L(d_2+t)^{\frac{1}{\alpha}}d_1^{\frac{1}{\beta}},\,L(d_2+t)\r\}.
\end{align*}
This completes the proof of Theorem~\ref{thm:singular_value}.

\subsection{Proof of Lemma~\ref{lem:mean_est}} 
The proof of Lemma~\ref{lem:mean_est} employs Theorem~\ref{thm:conc_univariate_SigmaL} and the standard $\eps$-net argument \citep{tao2023topics,vershynin2018high}. It follows from the definition of the Euclidean norm that 
\begin{align*}
    \l\| \wh{\mu}-\mu\r\|=\sup_{u\in\SS^{d-1}}\l(\wh\mu -\mu\r)^{\top}u.
\end{align*}
For any fixed $u\in\SS^{d-1}$, we see that 
\begin{align*}
    \l(\wh\mu -\mu\r)^{\top}u=\frac{1}{n}\sum_{i=1}^n (X_i-\mu)^{\top}u
\end{align*}
is the sum of $n$ i.i.d. random variables. Hence, for $\alpha\geq 2$, an application of Theorem~\ref{thm:conc_univariate_SigmaL} gives that for any $t>0$, 
\begin{align*}
    \PP\l(\frac{1}{n}\sum_{i=1}^n (X_i-\mu)^{\top}u\geq t \r)\leq \exp\l(-Cn\max\l\{\frac{t^\alpha}{L^\alpha},\min\l\{\frac{t^2}{\sigma^2},\frac{t}{L}\r\}\r\}\r).
\end{align*}
A standard $\eps$-net argument (more details can be found in \cite{tao2023topics} and \cite{vershynin2018high}) leads to
\begin{align*}
    \PP\l(\l\| \wh\mu-\mu\r\|\geq 2t \r)\leq 5^{d}\cdot \exp\l(-Cn\max\l\{\frac{t^\alpha}{L^\alpha},\min\l\{\frac{t^2}{\sigma^2},\frac{t}{L}\r\}\r\}\r),
\end{align*}
which is in fact equivalent to
\begin{align*}
    \PP\l(\l\| \wh\mu-\mu\r\|\geq C\min\l\{L\l(\frac{t+d}{n}\r)^{\frac{1}{\alpha}},\sigma\l(\frac{t+d}{n}\r)^{\frac{1}{2}}+L\cdot\frac{t+d}{n}\r\} \r)\leq \exp(-t).
\end{align*}

On the other hand, when $\alpha\in[1,2]$, by invoking Theorem~\ref{thm:conc_univariate_SigmaL}, we can obtain that 
\begin{align*}
    \PP\l(\frac{1}{n}\sum_{i=1}^n (X_i-\mu)^{\top}u\geq t \r)\leq \exp\l(-Cn\min\l\{\frac{t^2}{\sigma^2},\max\l\{\frac{t^\alpha}{L^\alpha},\frac{t}{L}\r\}\r\}\r).
\end{align*}
It then implies the following bound on the norm of $\wh\mu-\mu$
\begin{align*}
    \PP\l(\l\| \wh\mu-\mu\r\|\geq C\sigma\l(\frac{t+d}{n}\r)^{\frac{1}{2}}+ CL\min\l\{\l(\frac{t+d}{n}\r)^{\frac{1}{\alpha}},\frac{t+d}{n}\r\} \r)\leq \exp(-t),
\end{align*}
which concludes the proof of Lemma~\ref{lem:mean_est}.

\subsection{Proof of Theorem~\ref{thm:cov_est}} The estimation error of the sample covariance matrix can be decomposed into two terms. One term is the square of the mean estimation error, and another term is the quantity given in Lemma~\ref{lem:matrix}. With some calculations, $\l\|\wh\Sigma-\Sigma\r\|$ can be expressed as
\begin{align*}
    \l\|\wh\Sigma-\Sigma\r\|&=\l\|\frac{1}{n}\sum_{i=1}^n (X_i-\wh \mu)(X_i-\wh \mu)^\top -\Sigma \r\|\\
    &=\l\|\frac{1}{n}\sum_{i=1}^n (X_i- \mu)(X_i- \mu)^\top -\Sigma-(\wh \mu -\mu)(\wh \mu -\mu)^\top \r\|.
\end{align*}
Then by resorting to the triangle inequality, $\l\|\wh\Sigma-\Sigma\r\|$ can be upper bounded as
\begin{equation}
    \begin{split}
        \l\|\wh\Sigma-\Sigma\r\|&\leq \l\|\frac{1}{n}\sum_{i=1}^n (X_i- \mu)(X_i- \mu)^\top -\Sigma\r\|+\l\|(\wh \mu -\mu)(\wh \mu -\mu)^\top \r\|\\
    &=\l\|\frac{1}{n}\sum_{i=1}^n (X_i- \mu)(X_i- \mu)^\top -\Sigma\r\|+\big\|\wh \mu -\mu \big\|^2.
    \end{split}
    \label{eq2:proof:thm:cov_est}
\end{equation}
In particular, the first term on the right-hand side of \eqref{eq2:proof:thm:cov_est} above can be bounded by Lemma~\ref{lem:matrix}, while the second term above is the squared estimation error of the mean that is given by Lemma~\ref{lem:mean_est}. To this end, it remains to verify the conditions of Lemmas~\ref{lem:matrix} and \ref{lem:mean_est}.

An application of Jensen's inequality gives that 
\begin{multline}
    \EE\l|\l((X-\mu)^{\top}u\r)^2-\EE\l((X-\mu)^{\top}u\r)^2 \r|^k \\
    \leq 2^k\EE\l|\l((X-\mu)^{\top}u\r) \r|^{2k}+2^k\l( \EE\l((X-\mu)^{\top}u\r)^2\r)^k.
    \label{eq1:proof:thm:cov_est}
\end{multline}
Additionally, it follows from the conditions of Theorem~\ref{thm:cov_est} that 
\begin{align*}
    \EE\l|\l((X-\mu)^{\top}u\r) \r|^{2k}\leq 2^{\frac{k}{\alpha}}k^{\frac{k}{\alpha/2}}\cdot \sigma^2L^{2k-2},\quad \EE\l((X-\mu)^{\top}u\r)^2\leq 2^{\frac{2}{\alpha}}\sigma^2,
\end{align*}
which lead to the following upper bound for \eqref{eq1:proof:thm:cov_est} 
\begin{align*}
    \EE\l|\l((X-\mu)^{\top}u\r)^2-\EE\l((X-\mu)^{\top}u\r)^2 \r|^k\leq 2^{1+k/\alpha}k^{\frac{k}{\alpha/2}} \cdot (\sigma L)^2\cdot (L^2)^{k-2}.
\end{align*}
Hence, we have verified the condition of Lemma~\ref{lem:matrix}; that is, the condition of Lemma~\ref{lem:matrix} is satisfied with $(\frac{\alpha}{2},\sigma L,4L^2)$. Consequently, an application of Lemma~\ref{lem:matrix} yields that
\begin{multline*}
    \PP\l(\l\|\frac{1}{n}\sum_{i=1}^n (X_i- \mu)(X_i- \mu)^\top -\Sigma\r\|\geq C\min\l\{L^2\l(\frac{t+d}{n}\r)^{\frac{2}{\alpha}},\r.\r.\\ \l.\l.\sigma L\l(\frac{t+d}{n}\r)^{\frac{1}{2}}+L^2\cdot\frac{t+d}{n}\r\} \r)\leq \exp(-t) \ \  \text{ when } \alpha\geq 4,
\end{multline*}
and 
\begin{multline*}
    \PP\l(\l\|\frac{1}{n}\sum_{i=1}^n (X_i- \mu)(X_i- \mu)^\top -\Sigma\r\|\geq C\sigma L\l(\frac{t+d}{n}\r)^{\frac{1}{2}}\r.\\\l.+C\min\l\{L^2\l(\frac{t+d}{n}\r)^{\frac{2}{\alpha}},L^2\cdot\frac{t+d}{n}\r\} \r)\leq \exp(-t) \ \  \text{ when } 2\leq \alpha\leq 4.
\end{multline*}

So far, we have finished bounding the first term on the right-hand side of \eqref{eq2:proof:thm:cov_est} above. It remains to analyze the second term on the right-hand side of \eqref{eq2:proof:thm:cov_est}. Regarding the second term, Lemma~\ref{lem:mean_est} proves that
\begin{align*}
    \PP\l(\|\wh{\mu}-\mu\|\geq C\min\l\{L\l(\frac{t+d}{n}\r)^{\frac{1}{\alpha}},\sigma\l(\frac{t+d}{n}\r)^{\frac{1}{2}}+L\frac{t+d}{n}\r\} \r)\leq \exp(-t),
\end{align*}
which is equivalent to
\begin{align*}
    \PP\l(\|\wh{\mu}-\mu\|^2\geq C\min\l\{L^2\l(\frac{t+d}{n}\r)^{\frac{2}{\alpha}},\sigma^2\cdot\frac{t+d}{n}+L^2\l(\frac{t+d}{n}\r)^2\r\} \r)\leq \exp(-t).
\end{align*}
Therefore, putting the above results together, we can obtain that when $\alpha\geq 4$, 
\begin{align*}
    \PP\l(\l\|\wh\Sigma-\Sigma \r\|\geq C \min\l\{L^2\l(\frac{t+d}{n}\r)^{\frac{2}{\alpha}},\sigma L\l(\frac{t+d}{n}\r)^{\frac{1}{2}}+L^2 \frac{t+d}{n} \r\}\r)\leq 2\exp(-t),
\end{align*}
and when $2\leq \alpha\leq 4$, 
\begin{align*}
    \PP\l(\l\|\wh\Sigma-\Sigma \r\|\geq C \sigma L\l(\frac{t+d}{n}\r)^{\frac{1}{2}}+C\min\l\{L^2\l(\frac{t+d}{n}\r)^{\frac{2}{\alpha}},L^2 \frac{t+d}{n} \r\}\r)\leq 2\exp(-t).
\end{align*}
This completes the proof of Theorem~\ref{thm:cov_est}.

\section{Technical Lemmas} \label{new.SecD}

We provide in this section some technical lemmas and their proofs.

\begin{lemma} 
    Let $K,\tau>0$ be any given numbers. Then for any $q\leq 0$, it holds that 
    \begin{align*}
        \int_{\tau}^{\infty}t^{q} \exp\l(-\frac{t}{K}\r)\, dt\leq \tau^{q}K\exp\l(-\frac{\tau}{K}\r),
    \end{align*}
    and consequently, $\int_{\tau}^{\infty}t^{q} \exp\l(-\frac{t}{K}\r)\, dt\leq K^{q+1}\exp\l(-\frac{\tau}{K}\r) $ when $\tau/K\geq 1$. For any $q\geq 0$, it holds that 
    \begin{align*}
        \int_{\tau}^{\infty}t^{q} \exp\l(-\frac{t}{K}\r)\, dt\leq K^{q+1}\sqrt{\Gamma\l(2q+1\r)}\exp\l(-\frac{\tau}{2K}\r).
    \end{align*}
   \label{teclem:integral}
\end{lemma}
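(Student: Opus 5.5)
For the case $q\leq 0$ I would use monotonicity only. On $[\tau,\infty)$ the map $t\mapsto t^{q}$ is nonincreasing, so $t^{q}\leq \tau^{q}$ there. This gives
\begin{align*}
\int_{\tau}^{\infty}t^{q}\exp\left(-\frac{t}{K}\right)dt\leq \tau^{q}\int_{\tau}^{\infty}\exp\left(-\frac{t}{K}\right)dt=\tau^{q}K\exp\left(-\frac{\tau}{K}\right).
\end{align*}
The stated consequence then follows at once. When $\tau/K\geq 1$ and $q\leq 0$ we have $\tau^{q}\leq K^{q}$, so $\tau^{q}K\leq K^{q+1}$.

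For the case $q\geq 0$, $t^{q}$ is increasing and the previous trick fails. Instead I would split the exponential weight symmetrically and apply the Cauchy--Schwarz inequality:
\begin{align*}
\int_{\tau}^{\infty}t^{q}e^{-\frac{t}{2K}}\cdot e^{-\frac{t}{2K}}\,dt\leq \left(\int_{0}^{\infty}t^{2q}e^{-\frac{t}{K}}dt\right)^{\frac12}\left(\int_{\tau}^{\infty}e^{-\frac{t}{K}}dt\right)^{\frac12}.
\end{align*}
In the first factor I have enlarged the domain of integration from $[\tau,\infty)$ to $[0,\infty)$, which is allowed because the integrand is nonnegative.

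Next I evaluate both factors. The substitution $t=Ku$ turns the first factor into $\left(K^{2q+1}\Gamma(2q+1)\right)^{1/2}$. The second factor equals $\left(Ke^{-\tau/K}\right)^{1/2}$. Multiplying them gives exactly
\begin{align*}
K^{q+1}\sqrt{\Gamma(2q+1)}\exp\left(-\frac{\tau}{2K}\right).
\end{align*}

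There is no real obstacle in this argument. The only point needing care is choosing the split so that the $\tau$-dependence lands entirely in the pure-exponential factor; this is what produces the halved rate $\tau/(2K)$ in the bound.
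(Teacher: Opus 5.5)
Your proposal is correct and follows the paper's proof. For $q\leq 0$ you bound $t^{q}\leq\tau^{q}$ on $[\tau,\infty)$; for $q\geq 0$ you apply Cauchy--Schwarz and enlarge the domain of the $t^{2q}e^{-t/K}$ factor to $[0,\infty)$ to obtain $K^{2q+1}\Gamma(2q+1)$, exactly as the paper does.
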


\textit{Proof}. 
    For the case of $q\leq 0$, it holds that 
    \begin{align*}
         \int_{\tau}^{\infty}t^{q} \exp\l(-\frac{t}{K}\r)\, dt\leq  \tau^{q}K\int_{\tau}^{\infty}\frac{1}{K} \exp\l(-\frac{t}{K}\r)\, dt= \tau^{q}K\exp\l(-\frac{\tau}{K}\r),
    \end{align*}
which entails that 
$$\int_{\tau}^{\infty}t^{q} \exp\l(-\frac{t}{K}\r)\, dt\leq K^{q+1}\exp\l(-\frac{\tau}{K}\r) $$
when $\tau/K\geq 1$. It remains to consider the case when $q\geq 0$. By resorting to Hölder's inequality, we can deduce that 
    \begin{align*}
        \int_{\tau}^{\infty}t^{q} \exp\l(-\frac{t}{K}\r)\, dt\leq \l(\int_{\tau}^{\infty}t^{2q} \exp\l(-\frac{t}{K}\r)\, dt \r)^{\frac{1}{2}}\l(\int_{\tau}^{\infty} \exp\l(-\frac{t}{K}\r)\, dt \r)^{\frac{1}{2}}.
    \end{align*}
    Note that 
    \begin{align*}
        \int_{\tau}^{\infty} \exp\l(-\frac{t}{K}\r)\, dt = K\exp\l(-\frac{\tau}{K}\r)
    \end{align*}
    and
    \begin{align*}
        \int_{\tau}^{\infty}t^{2q} \exp\l(-\frac{t}{K}\r)\, dt&\leq \int_{0}^{\infty}t^{2q} \exp\l(-\frac{t}{K}\r)\, dt=K^{2q+1}\Gamma\l(2q+1\r).
    \end{align*}
    Thus, when $q\geq 0$ we can obtain that 
    \begin{align*}
        \int_{\tau}^{\infty}t^{q} \exp\l(-\frac{t}{K}\r)\, dt\leq K^{q+1}\sqrt{\Gamma\l(2q+1\r)}\exp\l(-\frac{\tau}{2K}\r).
    \end{align*}
This concludes the proof of Lemma \ref{teclem:integral}.

\begin{lemma} 
    Assume that $X$ is a mean zero random variable and satisfies that $\|X\|_{\Psi_\alpha}<\infty$ for some $\alpha\geq1$. Denote by $\sigma_X^2:=\operatorname{var}(X)$. Then it holds that for some universal constant $C>0$,
    \begin{align*}
        \EE|X|^3\leq C\sigma_X^2\|X\|_{\Psi_\alpha}\l(\log\l(\frac{\|X\|_{\Psi_\alpha}}{\sigma_X}\r)\r)^{\frac{1}{\alpha}}.
    \end{align*}
    \label{teclem:cube}
\end{lemma}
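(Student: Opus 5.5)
We bound $\EE|X|^3$ by truncation at a level $\tau$, in the same way as the proof of Lemma~\ref{new.lem4}, specialised to $k=3$. Write $K:=\|X\|_{\Psi_\alpha}$ and split
\begin{align*}
\EE|X|^3=\EE|X|^3\II\{|X|\leq\tau\}+\EE|X|^3\II\{|X|>\tau\}.
\end{align*}

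The bulk term is bounded by $\tau\,\EE X^2=\tau\sigma_X^2$. For the tail term we use Cauchy--Schwarz,
\begin{align*}
\EE|X|^3\II\{|X|>\tau\}\leq (\EE X^6)^{1/2}\,\PP(|X|>\tau)^{1/2}.
\end{align*}
Lemma~\ref{lem:moment_bound} gives $(\EE X^6)^{1/2}\leq C K^3$, with the factor $6^{3/\alpha}$ absorbed into $C$ since $\alpha\geq1$. The defining property of the Orlicz norm together with Markov's inequality gives $\PP(|X|>\tau)\leq 2\exp(-\tau^\alpha/K^\alpha)$. Combining,
\begin{align*}
\EE|X|^3\leq \tau\sigma_X^2+CK^3\exp\l(-\tfrac{\tau^\alpha}{2K^\alpha}\r).
\end{align*}

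Next we choose $\tau:=K\l(4\log(2K/\sigma_X)\r)^{1/\alpha}$. With this choice $\exp(-\tau^\alpha/(2K^\alpha))=(\sigma_X/(2K))^2$, so the tail term is at most $CK\sigma_X^2$. The bulk term becomes $4^{1/\alpha}\sigma_X^2K\log^{1/\alpha}(2K/\sigma_X)$. Since $\sigma_X\leq\sqrt2\,K$ (noted in Section~\ref{sec:univariate_var}), we have $\log(2K/\sigma_X)\geq\log\sqrt2>0$. Hence the tail contribution $CK\sigma_X^2$ is also $\leq C'\sigma_X^2K\log^{1/\alpha}(2K/\sigma_X)$, and the sum has the claimed form with $\log(2K/\sigma_X)$.

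The main obstacle is matching the logarithm exactly as written, $\log(K/\sigma_X)$ rather than $\log(2K/\sigma_X)$. The former can be nonpositive or tiny when $\sigma_X\asymp K$, in which case the displayed inequality would fail literally. We would therefore interpret (or state) the lemma with $\log(2K/\sigma_X)$, consistent with its use in Lemma~\ref{lem:momGeneratingFunct_lower}. Alternatively, we would handle the regime $K\leq e\sigma_X$ separately via $\EE|X|^3\leq CK^3\leq C'\sigma_X^2K$. In that regime the factor $\log^{1/\alpha}(K/\sigma_X)$ can still degenerate, so the $2K$ form is the safe statement. Everything else is routine.
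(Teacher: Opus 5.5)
Your argument is the paper's proof. It uses the same split at a level $\tau$, the bound $\tau\sigma_X^2$ on the bulk, Cauchy--Schwarz with $(\EE X^6)^{1/2}\lesssim\|X\|_{\Psi_\alpha}^3$ and the Orlicz tail on the rest, and $\tau\asymp K\log^{1/\alpha}(2K/\sigma_X)$. The paper's own proof also ends with $\log(2\|X\|_{\Psi_\alpha}/\sigma_X)$ rather than the displayed $\log(\|X\|_{\Psi_\alpha}/\sigma_X)$, and your handling of the constants is more explicit than the paper's.

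What needs correcting is your last paragraph: the literal form with $\log(K/\sigma_X)$ does not fail. Under the normalization $\EE\exp((|X|/K)^\alpha)\le 2$, the ratio $K/\sigma_X$ stays far enough from $1$ that $\log^{1/\alpha}(2K/\sigma_X)\le C\log^{1/\alpha}(K/\sigma_X)$ with $C$ universal. The bound $\sigma_X\le\sqrt2\,K$ you quoted is too weak to show this. Put $y=|X|/K$; Jensen gives $\EE y^\alpha\le\log 2$.

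\textbf{Case $\alpha\ge 2$.} Lyapunov gives $\sigma_X^2/K^2=\EE y^2\le(\log 2)^{2/\alpha}$, so $\log(K/\sigma_X)\ge c_0/\alpha$ with $c_0=\log(1/\log 2)>0$. Hence
\[
\bigl(\log(2K/\sigma_X)/\log(K/\sigma_X)\bigr)^{1/\alpha}\le(1+\alpha\log 2/c_0)^{1/\alpha}\le e^{\log 2/c_0}.
\]
So $\log(K/\sigma_X)$ can indeed be of order $1/\alpha$, but the exponent $1/\alpha$ neutralizes this.

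\textbf{Case $\alpha\in[1,2]$.} Use $y^2\le y^\alpha$ on $\{y\le 1\}$. On $\{y>1\}$ use $e^{y^\alpha}-1\ge e^{y}-1\ge\kappa y^2$, where $\kappa=\inf_{y\ge 1}(e^y-1)/y^2\approx 1.54>3/2$. Write $A=\EE y^\alpha\II\{y\le 1\}\le\log 2$ and $B=\EE y^2\II\{y>1\}$. Then $A+\kappa B\le\EE(e^{y^\alpha}-1)\le 1$, so $\EE y^2\le A+B\le\log 2+(1-\log 2)/\kappa<0.9$. This gives $\log(K/\sigma_X)\ge\frac12\log(10/9)$, and the ratio of the two logarithms is again bounded by a universal constant.

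Appending this comparison to your argument proves the lemma exactly as stated (for nondegenerate $X$), with no need to reinterpret it.
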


\textit{Proof}. Observe that for any $\tau>0$, it holds that 
    \begin{align*}
    \EE|X|^3=\EE X^2\cdot|X|=\EE X^2\cdot|X|\cdot\II\{|X|\geq \tau\}+\EE X^2\cdot|X|\cdot\II\{|X|\leq \tau\}.
\end{align*}
The second term on the right-hand side of the expression above can be bounded as 
\begin{align*}
    \EE X^2\cdot|X|\cdot\II\{|X|\leq \tau\}\leq \tau\EE X^2=\tau\sigma_X^2.
\end{align*}
It remains to consider the first term above, which can be bounded as 
\begin{align*}
    \EE X^2\cdot|X|\cdot\II\{|X|\geq \tau\} & \leq \l(\EE X^6\r)^{\frac{1}{2}} \l(\PP\l(|X|\geq \tau \r)\r)^{\frac{1}{2}}\\
    & \leq C\|X\|_{\Psi_\alpha}^3\exp\l(-c\l(\frac{\tau}{\|X\|_{\Psi_{\alpha}}}\r)^{\alpha}\r).
\end{align*}
Hence, by taking $\tau=C\|X\|_{\Psi_\alpha} \l(\log\l(\frac{2\|X\|_{\Psi_\alpha}}{\sigma_X}\r)\r)^{\frac{1}{\alpha}}$, we can obtain that 
\begin{align*}
    \EE |X|^3\leq C\sigma_X^2\|X\|_{\Psi_\alpha} \l(\log\l(\frac{2\|X\|_{\Psi_\alpha}}{\sigma_X}\r)\r)^{\frac{1}{\alpha}}.
\end{align*}
This completes the proof of Lemma \ref{teclem:cube}.

\begin{lemma}
    For any nonnegative values $a,b\geq0$ and any $t>0$, if $|a-b|\geq t$, we have that $|a^2-b^2|\geq \max\{bt,t^2\}$.
    \label{teclem:minus}
\end{lemma}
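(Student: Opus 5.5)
The plan is to factor the difference of squares and bound each factor from below separately. Since $a,b\geq 0$,
\begin{align*}
    |a^2-b^2| = |a-b|\cdot(a+b),
\end{align*}
and the hypothesis gives $|a-b|\geq t$, so it suffices to show that $a+b\geq b$ and $a+b\geq t$. Multiplying each of these by $|a-b|\geq t>0$ then yields the two lower bounds $bt$ and $t^2$, and hence their maximum.

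The first inequality, $a+b\geq b$, is immediate from $a\geq 0$, and it gives $|a^2-b^2|\geq t\cdot b$.

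For the second inequality, I use nonnegativity once more. For $a,b\geq 0$ we have $a+b\geq \max\{a,b\}\geq |a-b|$, and combining this with the hypothesis gives $a+b\geq |a-b|\geq t$. Therefore $|a^2-b^2|\geq t\cdot t=t^2$.

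Taking the larger of the two bounds gives $|a^2-b^2|\geq\max\{bt,t^2\}$, as claimed. Every step is elementary. The only point needing care is that the bound $a+b\geq|a-b|$ uses nonnegativity of both $a$ and $b$; this is exactly why the lemma is stated for $a,b\geq 0$, as it is applied in the paper with $a=\|X\|$ and $b=\sqrt{\EE\|X\|^2}$.
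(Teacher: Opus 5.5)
Your proof is correct. It reaches the result by a slightly different route than the paper.

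The paper splits into the cases $a\ge b$ and $a\le b$.
\begin{itemize}
\item In the first case it squares $a\ge b+t$ to get $a^2-b^2\ge t^2+2bt$.
\item In the second case it uses $a\ge 0$ to square $0\le a\le b-t$, obtaining $b^2-a^2\ge 2bt-t^2\ge bt$. That last step, and the identification $\max\{bt,t^2\}=bt$, both rely on the fact that $b\ge t$ is forced in this case.
\end{itemize}

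You instead factor $|a^2-b^2|=|a-b|(a+b)\ge t(a+b)$ and bound $a+b$ from below by $b$ and by $|a-b|\ge t$. This removes the case distinction and never requires knowing which of $bt$ and $t^2$ is larger. Your intermediate bound $t(a+b)$ also recovers the paper's sharper estimate $t^2+2bt$ when $a\ge b$, since then $a+b\ge t+2b$.

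Both arguments use nonnegativity in the same essential way. The paper needs it to square $a\le b-t$, and you need it for $a+b\ge\max\{a,b\}\ge|a-b|$. So neither argument is more general; yours is simply the more compact of the two.
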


\textit{Proof}. 
    We will bound $|a^2-b^2|$ for the cases when $a\geq b$ and $a\leq b$ separately. When $a\geq b$, it follows from $|a-b|\geq t$ that $a\geq t+b$, which along with $b \geq 0$ and $t > 0$ entails that 
    $$|a^2-b^2| = a^2-b^2\geq (t+b)^2-b^2=t^2+2bt\geq \max\{t^2,bt\}.$$ When $a\leq b$, it follows from $|a-b|\geq t$ that $a\leq b-t$, which together with $a \geq 0$ and $t > 0$ yields that $$|a^2-b^2| = b^2-a^2\geq 2bt-t^2\geq bt=\max\{bt,t^2\}.$$ 
This concludes the proof of Lemma \ref{teclem:minus}.

\begin{lemma}[\cite{zhang2022sharper,zajkowski2020norms}]
    If $\|X\|_{\Psi_{\alpha}}< \infty$ for some $\alpha\geq 1$, there exists a constant $C>0$ such that for all $k\geq 1$, 
    \begin{align*}
        \|X\|_{k}\leq Ck^{\frac{1}{\alpha}}\|X\|_{\Psi_{\alpha}},
    \end{align*}
    where constant $C$ does not depend on $\alpha$, $k$, and $X$.
    \label{lem:moment_bound}
\end{lemma}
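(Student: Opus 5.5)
The statement to prove is Lemma~\ref{lem:moment_bound}: $\|X\|_k\leq Ck^{1/\alpha}\|X\|_{\Psi_\alpha}$ for all $k\geq 1$ and $\alpha\geq1$, with $C$ independent of $\alpha$, $k$ and $X$.

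By homogeneity we may assume $\|X\|_{\Psi_\alpha}=1$. Then $\EE\exp(|X|^\alpha)\leq 2$ by monotone convergence at the infimum. Markov's inequality gives the tail bound $\PP(|X|\geq t)\leq 2e^{-t^\alpha}$ for all $t>0$.

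Next integrate the tail. Using
\begin{align*}
\EE|X|^k=\int_0^\infty kt^{k-1}\PP(|X|\geq t)\,dt\leq 2k\int_0^\infty t^{k-1}e^{-t^\alpha}\,dt
\end{align*}
and the substitution $s=t^\alpha$, we obtain
\begin{align*}
\EE|X|^k\leq \frac{2k}{\alpha}\Gamma\l(\frac{k}{\alpha}\r)=2\Gamma\l(\frac{k}{\alpha}+1\r).
\end{align*}

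It remains to bound $\Gamma(x+1)^{1/k}$ with $x=k/\alpha\in(0,k]$, uniformly in $\alpha\geq1$. For $x\leq 1$ we have $\Gamma(x+1)\leq 1$. For $x\geq1$ we use $\Gamma(x+1)\leq x^x$, which holds because $\Gamma(x+1)\leq\lceil x\rceil!/\ldots$, or more cleanly by log-convexity together with Stirling's formula, which gives $\Gamma(x+1)\leq C x^{x+1/2}e^{-x+1}\leq C' x^x$. Hence
\begin{align*}
\|X\|_k\leq 2^{1/k}C'^{1/k}(k/\alpha)^{1/\alpha}\leq 2C'\,k^{1/\alpha},
\end{align*}
using $\alpha^{-1/\alpha}\leq1$ and $2^{1/k},C'^{1/k}\leq 2,C'$ for $k\geq1$.

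The only delicate point is ensuring that the constant does not depend on $\alpha$. This is handled by the explicit uniform bound $\Gamma(x+1)\leq \max\{1,C'x^x\}$ valid for all real $x>0$, combined with $(k/\alpha)^{1/\alpha}\leq k^{1/\alpha}$. No other step involves $\alpha$ beyond the change of variables.
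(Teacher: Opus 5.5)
Your proof is correct, and it takes a different, self-contained route from the paper. The paper does not actually prove this lemma. It imports from Zhang and Wei the bound $\|X\|_k\le C_\alpha k^{1/\alpha}\|X\|_{\Psi_\alpha}$ with the explicit constant $C_\alpha=(e^{11/12}\alpha)^{-1/\alpha}\max_{k\ge1}(2\sqrt{2\pi}/\alpha)^{1/k}(k/\alpha)^{1/(2k)}$. Its only contribution is the remark that $C_\alpha$ is bounded uniformly over $\alpha\ge1$, because $x\mapsto x^{1/x}$ is bounded.

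You instead derive everything from scratch: normalization, the Markov tail bound $\PP(|X|\ge t)\le 2e^{-t^\alpha}$, tail integration giving $\EE|X|^k\le 2\Gamma(k/\alpha+1)$, and a uniform Gamma estimate. The $\alpha$-uniformity is transparent in your version, since it comes from $(k/\alpha)^{1/\alpha}\le k^{1/\alpha}$. Your route buys:
\begin{itemize}
\item independence from the external reference;
\item an explicit constant;
\item as a by-product, the sub-Weibull tail bound that the paper later uses without proof, in Lemma~\ref{new.lem4} and Lemma~\ref{teclem:cube}.
\end{itemize}
The paper's route is shorter but only as reliable as the cited constant.

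Two loose ends should be tidied.
\begin{itemize}
\item Your justification of the Gamma bound for $x\ge1$ is left as an unfinished fragment. A clean replacement is the exact inequality $\Gamma(x+1)\le x^x$ for $x\ge1$. It follows because $g(x)=x\log x-\log\Gamma(x+1)$ satisfies $g(1)=0$ and $g'(x)=1+\log x-\psi(x+1)>1-\log(1+1/x)>0$, using the digamma bound $\psi(y)<\log y$.
\item Your final display covers only $k\ge\alpha$. For $k<\alpha$ you should add the line $\|X\|_k\le 2^{1/k}\le 2\le 2k^{1/\alpha}$.
\end{itemize}
With these two fixes the argument gives $C=2$.
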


We remark that \cite{zhang2022sharper} proved that $$\|X\|_{k}\leq C_{\alpha}k^{\frac{1}{\alpha}}\|X\|_{\Psi_{\alpha}}$$ with 
$$C_\alpha=(e^{11/12}\alpha)^{-1/\alpha}\cdot\max_{k\geq 1}\l(\frac{2\sqrt{2\pi}}{\alpha}\r)^{1/k}\l(\frac{k}{\alpha}\r)^{1/(2k)}.$$ 
Notice that function $f(x):=x^{\frac{1}{x}}$ is bounded on $[1,\infty]$. Consequently, for all $k\geq 1$ and $\alpha\geq 1$, there exists a universal constant $C>0$ that does \textit{not} depend on $\alpha$ and $k$ such that $C_{\alpha}\leq C$.

\end{document}